\newtheorem{thm}{Theorem}[section]
\newtheorem{lem}[thm]{Lemma}
\newtheorem{cor}[thm]{Corollary}
\newtheorem{prop}[thm]{Proposition}
\newtheorem{rem}{Remark}[section]
\numberwithin{equation}{section}
\renewcommand{\a}{\alpha}
\renewcommand{\b}{\beta}
\newcommand{\e}{\varepsilon}
\newcommand{\de}{\delta}
\newcommand{\fa}{\varphi}
\newcommand{\ga}{\gamma}
\newcommand{\la}{\lambda}
\renewcommand{\th}{\theta}
\newcommand{\si}{\sigma}
\newcommand{\om}{\omega}
\newcommand{\De}{\Delta}
\newcommand{\Ga}{\Gamma}
\newcommand{\Om}{\Omega}
\newcommand{\lan}{\langle}
\newcommand{\ran}{\rangle}
\newcommand{\dist}{\operatorname{dist}\,}
\def\R{{\mathbb{R}}}
\def\N{{\mathbb{N}}}
\def\Z{{\mathbb{Z}}}
\def\T{{\mathbb{T}}}
\def\SS{{\mathbb{S}}}
\def\uu{\underline{u}}
\def\ux{\underline{x}}
\title{KPZ equation, its renormalization and invariant measures}
\author{Tadahisa Funaki$^{1)}$ and Jeremy Quastel$^{2)}$}
\date{\today}
\begin{document}
\maketitle

\begin{abstract}
\noindent
The Kardar-Parisi-Zhang (KPZ) equation is a 
stochastic partial differential equation which is  ill-posed
because  the nonlinearity is marginally defined with respect to the 
roughness of the forcing noise.  However, its Cole-Hopf solution, defined as
the logarithm of the solution of the linear stochastic heat equation 
(SHE) with a multiplicative noise, is a mathematically well-defined
object.  In fact, Hairer \cite{Hai} has recently proved that the solution
of SHE can actually be derived through the Cole-Hopf transform of the 
solution of the KPZ equation with a suitable renormalization  under 
periodic boundary conditions.  This transformation is unfortunately not well adapted to studying the invariant measures of these Markov processes.

The present paper introduces a different type of regularization 
for the KPZ equation on the whole line $\R$ or under periodic
boundary conditions, which is appropriate 
from the viewpoint of studying the invariant measures.
The Cole-Hopf transform applied to this equation leads to 
an SHE with a smeared noise having an extra complicated nonlinear term.  Under time
average and in the stationary regime, it is shown that this term can be replaced by a simple linear term, so that the limit equation
is the linear SHE with an extra linear term with coefficient $\tfrac1{24}$.
The methods are essentially stochastic analytic: The
Wiener-It\^o expansion and a similar method for establishing the  Boltzmann-Gibbs principle are used.
As a result, it is shown that the distribution of a two-sided geometric 
Brownian motion with a height shift given by 
Lebesgue measure is invariant under the evolution determined by
the SHE on $\R$.
\footnote{
\hskip -6mm
$^{1)}$ Graduate School of Mathematical Sciences,
The University of Tokyo, Komaba, Tokyo 153-8914, Japan.
e-mail: funaki@ms.u-tokyo.ac.jp \\
$^{2)}$ Department of Mathematics, University of Toronto, 40 St.\
George Street, Toronto, Ontario, Canada M5S 2E4.
e-mail: quastel@math.toronto.edu}
\footnote{
\hskip -6mm
\textit{Keywords: Invariant measure, Stochastic partial differential equation,
KPZ equation, Cole-Hopf transform.}}
\footnote{
\hskip -6mm
\textit{Abbreviated title $($running head$)$: KPZ equation, renormalization and
invariant measures.}}
\footnote{
\hskip -6mm
\textit{2010 MSC: 60H15, 82C28.}}
\footnote{
\hskip -6mm
\textit{
Author$^{1)}$ was supported in part by the JSPS Grants $($A$)$ 22244007,
$($B$)$ 26287014 and 26610019.  Author$^{2)}$ was supported by Natural 
Sciences and Engineering Research Council of Canada, a Killam Fellowship,
and the Institute for Advanced Study.
}
}
\end{abstract}

\section{Introduction and main results}  \label{section:1}

The Kardar-Parisi-Zhang (KPZ) equation 
is the stochastic partial differential equation (SPDE) \begin{equation} \label{eq:KPZ}
\partial_t h = \frac12 \partial_x^2 h + \frac12 \big(\partial_x h \big)^2
+ \dot{W}(t,x), \quad x\in\R,
\end{equation}
where $\dot{W}(t,x)$ is 
the space-time Gaussian white noise, in particular, it has 
covariance
$$
E[\dot{W}(t,x)\dot{W}(s,y)] = \de(x-y)\de(t-s).
$$
We consider \eqref{eq:KPZ} in one dimension on the whole line,
or on finite intervals with periodic boundary conditions.
This SPDE is used in the physics literature as a general model for the 
fluctuations of a growing interface and $h=h(t,x)\in \R$ describes the 
height of the interface at time $t$ and position $x\in \R$.
The coefficients $\tfrac12$ are not essential, since one can change
them under scalings in time, position and values of $h$.  The importance of the 
KPZ equation comes from the fact that it reflects the behavior of a wide class of microscopic systems.
Unfortunately, the equation \eqref{eq:KPZ} is ill-posed and does not 
make sense as written.  Indeed, the linear SPDE obtained from
\eqref{eq:KPZ} by dropping the nonlinear term $\tfrac12 (\partial_x h)^2$
has a solution $h(t,x)$ which is $(\tfrac12-\e)$-H\"older continuous 
for every $\e>0$ in the spatial variable $x\in \R$, and this suggests
that the term $\tfrac12 (\partial_x h)^2$ would diverge in the equation \eqref{eq:KPZ}.
To compensate for this diverging factor, one needs to 
introduce a renormalization and the correct form of the KPZ
equation would be
\begin{equation} \label{eq:KPZ-ren}
\partial_t h = \frac12 \partial_x^2 h + \frac12 \left(\big(\partial_x h \big)^2
-\de_x(x)\right) + \dot{W}(t,x), \quad x\in\R.
\end{equation}
The delta function $\de_x$ evaluated at $x$ is certainly $+\infty$.
At least heuristically, one can derive \eqref{eq:KPZ-ren} from
the well-defined linear stochastic heat equation \eqref{eq:1.1} explained below
by applying Cole-Hopf transform and It\^o's formula and 
$-\tfrac12 \de_x(x)$ appears as an It\^o correction term; see 
\eqref{eq:1-aa} and \eqref{eq:2-a} below, in which 
$\xi^\e \to \de_0(0)=\de_x(x)$, since $\eta_2^\e(x)\to\de_0(x)$,
and $\dot{W}^\e\to\dot{W}$ as $\e\downarrow 0$ at least
formally.  The result is called the \emph{Cole-Hopf solution}.  Recently, Hairer \cite{Hai} has shown  how to 
make sense of \eqref{eq:KPZ-ren} (with periodic boundary conditions) by introducing a renormalization structure
without passing through the Cole-Hopf transform.  The resulting (unique) solutions 
are indeed the Cole-Hopf solutions \eqref{eq:1.6}.  
In this sense, and also because they arise in the weakly asymmetric limit of exclusion models \cite{BG}, the Cole-Hopf solution is 
the physically correct solution of the KPZ equation.

To explain the Cole-Hopf solution of the KPZ equation, let us
consider the following one dimensional linear stochastic heat 
equation (SHE) for $Z=Z(t,x)$ with a multiplicative noise:
\begin{equation} \label{eq:1.1}
\partial_t Z = \frac12 \partial_x^2 Z + Z \dot{W}(t,x), \quad x\in\R,
\end{equation}
having an initial value $Z(0,x)\ge 0$.  Unlike the KPZ equation \eqref{eq:KPZ}, this is a well-posed SPDE.  In fact, the solution of the SPDE
\eqref{eq:1.1} is defined in a weak sense, that is, $Z(t) = \{Z(t,x);x\in\R\}$
is called a {\it solution in generalized functions' sense} if it is adapted
to the increasing family
of $\si$-fields generated by $\{\dot{W}(s,x); 0\le s \le t, x\in \R\}$ and satisfies 
\begin{equation} \label{eq:1.2-a}
\lan Z(t),\fa\ran = \lan Z(0),\fa\ran
+ \frac12 \int_0^t \lan Z(s),\partial_x^2\fa\ran ds
+ \int_0^t \int_\R Z(s,x)\fa(x) W(dsdx),
\end{equation}
for every $\fa\in C_0^\infty(\R)$, where $\lan Z,\fa\ran = \int_\R Z(x)
\fa(x)dx$ and the last term is defined as the It\^o integral with
respect to the space-time Gaussian white noise.  The process
$Z(t)$ is called a {\it mild solution} if it is adapted and satisfies 
the stochastic integral equation
\begin{equation} \label{eq:1.3-a}
Z(t,x) = \int_\R p(t,x,y) Z(0,y)dy + 
\int_0^t\int_\R p(t-s,x,y) Z(s,y)W(dsdy),
\end{equation}
where $p(t,x,y) = \frac1{\sqrt{2\pi t}} e^{-(y-x)^2/2t}$ is the
heat kernel on $\R$.  These two
notions of solutions are equivalent in the class 
$\mathcal{C}_{\rm tem}$, which consists of all $Z\in 
\mathcal{C}= C(\R,\R)$ satisfying
$$
\|Z\|_{C_r} = \sup_{x\in\R} e^{-r |x|} |Z(x)| < \infty,
$$
for all $r >0$, equipped with the topology induced by norms
$\{\| \cdot \|_{C_r}; r>0\}$.  It is known that, if 
$Z(0)\in \mathcal{C}_{\rm tem}$, the SPDE \eqref{eq:1.1} has
a unique solution, in both generalized functions' and mild
senses, such that $Z(\cdot) \in C([0,\infty),
\mathcal{C}_{\rm tem})$ a.s. Moreover, the strong comparison theorem 
holds, that is, if $Z(0)$ satisfies in addition that
$Z(0) \in \bar{\mathcal{C}}_+ = C(\R,[0,\infty))$ and $Z(0,x)>0$ for
some $x\in \R$, then $Z(t) \in C((0,\infty),\mathcal{C}_+)$ a.s., 
where $\mathcal{C}_+ =C(\R,\R_+), \R_+ = (0,\infty)$, 
equipped with the usual topology of uniform convergence on 
each bounded interval; see \cite{Mue} and Corollary 1.4
of \cite{Shiga}.

The {\it Cole-Hopf solution} of the KPZ equation is defined from
the solution of \eqref{eq:1.1} as
\begin{equation}  \label{eq:1.6}
h(t,x) := \log Z(t,x),
\end{equation}
which is well-defined since $Z(t,x)>0$.
As we mentioned above, in order to link the Cole-Hopf solution
to the KPZ equation, we need to deal with an infinite It\^o correction
term.  In other words, a certain renormalization factor which balances 
with this diverging term should be introduced in the KPZ equation.

The simplest approximation scheme is to consider
\begin{equation}  \label{eq:1-aa}
\partial_t h = \frac12 \partial_x^2 h + \frac12 \big((\partial_x h)^2
 -\xi^\e \big)+ \dot{W}^\e(t,x), \quad x \in \R,
\end{equation}
where $\dot{W}^\e(t,x)= \big(\dot{W}(t)*\eta^\e\big)(x)$ is a smeared noise
defined from a usual symmetric convolution kernel $\eta^\e$ which tends to 
the $\de$-function $\de_0$ as $\e\downarrow 0$ and 
$\xi^\e=\eta_2^\e(0)$ with  $\eta_2^\e=\eta^\e*\eta^\e$;
see Section \ref{sec:2.1} for more details.  
Under the Cole-Hopf transform \eqref{eq:1.6} or $Z(t,x) :=
e^{h(t,x)}$, by applying It\^o's formula, 
this is equivalent to the SPDE
\begin{equation}  \label{eq:2-a}
\partial_t Z = \frac12 \partial_x^2 Z + Z \dot{W}^\e(t,x),
\quad x \in \R,
\end{equation}
see \cite{BG}, (3.6).  It is easy to see that the solution of \eqref{eq:2-a} 
converges to that of \eqref{eq:1.1} as $\e\downarrow 0$, and therefore 
the solution of \eqref{eq:1-aa} converges to the Cole-Hopf solution of 
the KPZ equation.  However, from the point of view of invariant measures,
\eqref{eq:1-aa} is not a good approximation; in fact it is an open problem to describe
the invariant measures of \eqref{eq:1-aa}.

The present paper introduces a renormalization different from 
\eqref{eq:1-aa} or the one considered in \cite{Hai}, better adapted to finding the invariant measures.  
We consider the following KPZ approximating equation:
\begin{equation}  \label{eq:1}
\partial_t h = \frac12 \partial_x^2 h + \frac12 \big((\partial_x h)^2
 -\xi^\e \big)* \eta_2^\e + \dot{W}^\e(t,x), \quad x \in \R.
\end{equation}
It is a common fact (or folklore) that the invariant measures are essentially
unchanged if we apply an operator $A$ (in our case the convolution with $\eta^\e$)
to the noise term and apply $A$ twice to the drift term at the same time;
see e.g., \cite{HH}.  Here, the convolution commutes with the
second derivatives so that we don't put it in the first term.
Then, the Cole-Hopf transform $Z^\e(t,x)
= e^{h^\e(t,x)}$ applied to the solution $h=h^\e(t,x)$ of \eqref{eq:1} leads to 
an SHE with a smeared noise and an extra complex nonlinear term involving 
a certain renormalization structure:
\begin{equation}  \label{eq:8}
\partial_t Z = \frac12 \partial_x^2 Z + \frac12 Z\left\{\left(
\frac{\partial_x Z}Z \right)^2*\eta_2^\e - \left(\frac{\partial_x Z}Z \right)^2
\right\} + Z \dot{W}^\e(t,x), \quad x\in \R.
\end{equation}

One of the main contributions of this paper is to show that this nonlinear term,
that is the middle term in the right hand side of \eqref{eq:8},  can be replaced by a simple 
linear term divided by a specific constant $24$ in the limit when the corresponding
tilt process is in equilibrium; see Theorems \ref{thm:3.1} and \ref{thm:3.1-M}
below.  Thus, we  derive the SPDE
\begin{equation}  \label{eq:8-b}
\partial_t Z = \frac12 \partial_x^2 Z + \frac1{24} Z+ Z \dot{W}(t,x), \quad x\in \R,
\end{equation}
in the limit as $\e\downarrow 0$.  Or, we can rephrase it, that the solution
$h^\e(t,x)$ of \eqref{eq:1} converges to $h(t,x)+\tfrac1{24}t$, where
$h(t,x)$ is the Cole-Hopf solution defined by \eqref{eq:1.6}.
The constant $\tfrac1{24}$
frequently appears in KPZ related papers and describes the speed of
the vertical drift of the interface.  The same constant
also appears in our formulation and this provides it with a probabilistic
meaning.  If the convolution kernel $\eta^\e$ is asymmetric and satisfies a certain
condition, a constant different from $\frac1{24}$
appears in the limit; see Remark \ref{rem:3.3}-(2).

For technical reasons (see Lemma \ref{cor:3.10} and Proposition
\ref{prop:3.12}), in order  to study the limit $\e\downarrow 0$ , we need to work with the KPZ 
approximating equation \eqref{eq:1}
on finite intervals with periodic boundary conditions:
\begin{equation}  \label{eq:1-M}
\partial_t h = \frac12 \partial_x^2 h + \frac12 \big((\partial_x h)^2
 -\xi^\e \big)* \eta_2^\e + \dot{W}^\e(t,x), \quad x \in \SS_M,
\end{equation}
where $\SS_M=\R/M\Z (=[0,M))$, $M\ge 1$, is a continuous one-dimensional
torus of size $M$.  The convolution $*\eta_2^\e$ is defined in a
periodic sense.  A similar SPDE was studied in Da Prato et.\ al.\ \cite{DDT}.
The Cole-Hopf transformed process $Z^{\e,M}(t,x)
= e^{h^{\e,M}(t,x)}, x\in\SS_M$ applied to the solution $h=h^{\e,M}(t,x)$ of \eqref{eq:1-M} 
satisfies the SPDE:
\begin{equation}  \label{eq:8-M}
\partial_t Z = \frac12 \partial_x^2 Z + \frac12 Z\left\{\left(
\frac{\partial_x Z}Z \right)^2*\eta_2^\e - \left(\frac{\partial_x Z}Z \right)^2
\right\} + Z \dot{W}^\e(t,x), \quad x\in \SS_M.
\end{equation}
We also consider the SPDE
\begin{equation}  \label{eq:8-b-M}
\partial_t Z = \frac12 \partial_x^2 Z + \frac1{24} Z+ Z \dot{W}(t,x), \quad x\in \SS_M,
\end{equation}
which will appear in the limit as $\e\downarrow 0$. 

To state our first result, we introduce some notation.
Let $B^M=\{B^M(x);x\in \SS_M\}$ be the pinned Brownian motion
satisfying $B^M(0)=B^M(M)=0$.  Let $\nu^M$ and $\nu^{\e,M}$ be the distributions
of $B^M$ and $\{B^M*\eta^\e(x) - B^M*\eta^\e(0);x\in\SS_M\}$ with the convolution
defined in a periodic sense on the space $\mathcal{C}_M=C(\SS_M)
(=C(\SS_M,\R))$, respectively.
For a random continuous function $h=\{h(x);x\in\SS_M\}$ on $\SS_M$, we call
$\nabla h=\{h(x)-h(y); x,y\in\R\}$ for $h$ periodically extended on $\R$
 the tilt variables of $h$ and denote
$\nabla h\stackrel{\rm{law}}{=} \nu^{\e,M}$ (or $\nu^M$) if the law of 
$\{h(x)-h(0);x\in\SS_M\}$ is given by $\nu^{\e,M}$ (or $\nu^M$).  
In fact, $\nu^{\e,M}$ is invariant for the tilt process determined from
the SPDE \eqref{eq:1-M}; see Theorem \ref{thm:2}-(1).

Our convergence result on $\SS_M$ is now formulated as follows.

\begin{thm} \label{thm:0}
We fix $M\ge 1$ and assume that the law of $Z^{\e,M}(0,\cdot) = e^{h^{\e,M}(0,\cdot)}$ 
is determined by $(h^{\e,M}(0,0), \nabla h^{\e,M}(0)) \stackrel{\rm{law}}{=} 
\de_{h_0}\otimes\nu^{\e,M}$ with some $h_0\in \R$.  Then, for every $t\ge 0$, 
the law of the solution $Z^{\e,M}(t,\cdot)$ of the SPDE \eqref{eq:8-M} on 
the space $\mathcal{C}_M$ weakly converges as $\e\downarrow 0$ to that
of the solution $Z^M(t,\cdot)$ of the SPDE \eqref{eq:8-b-M} with the initial
distribution determined by $Z^M(0,\cdot) = e^{h^M(0,\cdot)}$ such that
$(h^M(0,0), \nabla h^M(0)) \stackrel{\rm{law}}{=} \de_{h_0}\otimes\nu^M$.
\end{thm}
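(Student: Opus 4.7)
The plan is to put \eqref{eq:8-M} into its mild (Duhamel) form and pass to the limit term by term, with the heart of the argument being a Boltzmann--Gibbs-type replacement of the complicated nonlinearity by the constant drift $\tfrac1{24}$. The stationarity of the tilt is the key structural input: since $\nu^{\e,M}$ is invariant for the tilt process determined by \eqref{eq:1-M} (Theorem~\ref{thm:2}-(1)) and by hypothesis $\nabla h^{\e,M}(0)\stackrel{\mathrm{law}}{=}\nu^{\e,M}$, the identity $\nabla h^{\e,M}(t)\stackrel{\mathrm{law}}{=}\nu^{\e,M}$ persists for every $t\ge 0$. Writing $Z^{\e,M}(t,x) = Z^{\e,M}(t,0)\,\exp\!\bigl(h^{\e,M}(t,x)-h^{\e,M}(t,0)\bigr)$, the second factor has law given by the pointwise exponential of $\nu^{\e,M}$, and $\nu^{\e,M}$ converges weakly to $\nu^M$ on $\mathcal{C}_M$ by construction. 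Thus, modulo tightness and joint convergence, everything hinges on the scalar $Z^{\e,M}(t,0)$.

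To handle the scalar factor I would work with the mild form of \eqref{eq:8-M} with respect to the periodic heat kernel $p^M$:
\begin{align*}
Z^{\e,M}(t,x) & = \int_{\SS_M} p^M(t,x,y) Z^{\e,M}(0,y)\,dy \\
& \quad + \int_0^t\!\!\int_{\SS_M} p^M(t-s,x,y) F^{\e,M}(s,y)\,dy\,ds + \mathcal{M}^{\e,M}_t(x),
\end{align*}
where $F^{\e,M}(s,y) = \tfrac12 Z^{\e,M}\bigl\{(\partial_x Z^{\e,M}/Z^{\e,M})^2*\eta_2^\e - (\partial_x Z^{\e,M}/Z^{\e,M})^2\bigr\}(s,y)$ and $\mathcal{M}^{\e,M}_t(x) = \int_0^t\!\int p^M(t-s,x,y) Z^{\e,M}(s,y)\,W^\e(ds\,dy)$. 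The content of Theorem~\ref{thm:3.1-M}, applied in equilibrium of the tilt, is precisely that the Duhamel integral of $F^{\e,M}$ can be replaced by that of $\tfrac1{24}Z^{\e,M}$ with an $L^2$-vanishing error as $\e\downarrow 0$. Combined with the standard $L^2$ convergence of $\mathcal{M}^{\e,M}_t(x)$ to the stochastic integral driven by $\dot W$ and the convergence of the initial term, this shows that any subsequential limit of $Z^{\e,M}(t,\cdot)$ is a mild solution of \eqref{eq:8-b-M} with the prescribed initial law. Weak uniqueness then fixes the limit: the substitution $Z = e^{t/24}\widetilde Z$ transforms \eqref{eq:8-b-M} into the linear SHE \eqref{eq:1.1}, for which uniqueness is standard.

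Tightness of $\{Z^{\e,M}(t,\cdot)\}_\e$ in $\mathcal{C}_M$ follows from the factorization above, the weak convergence of $\nu^{\e,M}$, and an a~priori second-moment bound on $Z^{\e,M}(t,0)$ coming from the mild formula together with the replacement. The main obstacle is the Boltzmann--Gibbs replacement underlying Theorem~\ref{thm:3.1-M}: one must prove that the normal-ordered quadratic variable $(\partial_x h^{\e,M})^2*\eta_2^\e - (\partial_x h^{\e,M})^2$, which under $\nu^{\e,M}$ sits in the second Wiener chaos over the smeared pinned Gaussian noise, has mean tending to $\tfrac1{12}$ and fluctuations that average out when integrated in time against the smooth factor $p^M(t-s,x,y) Z^{\e,M}(s,y)$. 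This is where the Wiener--It\^o expansion and the Boltzmann--Gibbs methodology flagged in the abstract do the real work; granted that replacement, the passage to the limit outlined above is essentially routine functional-analytic bookkeeping.
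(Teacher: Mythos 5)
There is a genuine gap: you invoke Theorem \ref{thm:3.1-M} in a setting where it does not apply. That theorem is proved for the \emph{wrapped} process $Y^{\e,M}=e^{g^{\e,M}}$ started from the stationary law $\pi\otimes\nu^{\e,M}$, and only for fixed test functions $\fa$ with $\mathrm{supp}\,\fa\cap\mathrm{supp}\,\rho=\emptyset$. Stationarity of the tilt alone is not enough: the variational bound \eqref{eq:thm3-1-1} obtained from Lemma \ref{lem:3.6-a} requires an invariant \emph{probability} measure for the full process including the height coordinate, which exists only after compactifying the height by wrapping; and the static estimate of Proposition \ref{prop:Phi} works with $Y(x)/Y_\rho$ precisely because $Y_\rho\in[1,e]$ and this quotient is a tilt variable. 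Under your hypothesis $(h^{\e,M}(0,0),\nabla h^{\e,M}(0))\stackrel{\rm law}{=}\de_{h_0}\otimes\nu^{\e,M}$ the height is not stationary, so the replacement cannot be applied to $Z^{\e,M}$ directly. Bridging this is most of the paper's proof: one introduces $\tilde Y_L$ on an extended space $\tilde\Om\times[-L,L]$, wrapped modulo $2L$; proves the a priori bound $\sup_{0<\e<1}E[\sup_{0\le t\le T}h^{\e,M}(t,\fa)^2]<\infty$ (Proposition \ref{prop:3.12}, which itself rests on Lemma \ref{cor:3.10} and the Poincar\'e inequality on $\SS_M$ --- the very reason the whole argument is confined to the torus); shows via the events $\tilde\Om_L$ that the wrapping jumps $\tilde N_L$ vanish in the limit; and finally removes the restriction $\mathrm{supp}\,\fa\cap\mathrm{supp}\,\rho=\emptyset$ by rerunning the argument with a second weight $\bar\rho$ of disjoint support. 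None of this appears in your proposal.

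A second, smaller problem: you apply the replacement to the Duhamel integral, i.e.\ with the time-dependent kernel $\fa(s,y)=p^M(t-s,x,y)$ as test function. The heat kernel has full support (so the separation condition fails) and is singular as $s\uparrow t$; Theorem \ref{thm:3.1-M} gives nothing for it. The only statement in the paper for time-dependent test functions is Lemma \ref{cor:3.10}, which yields a uniform bound rather than convergence to $0$, and for a different functional. This is why the paper works with the weak formulation against fixed smooth $\fa$ rather than the mild form. Your factorization through $Z^{\e,M}(t,0)$ also leaves the joint convergence of the height at the origin and the tilt unaddressed. The overall skeleton --- tightness, replacement of the nonlinearity by $\tfrac1{24}Z$, identification of the limit by uniqueness for \eqref{eq:8-b-M} --- does match the paper, but the steps you dismiss as routine bookkeeping are where the actual work lies.
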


This theorem immediately implies that the distribution of the tilt variables
of the logarithm of 
the solution $Z^M(t,\cdot)$ of the SPDE \eqref{eq:8-b-M} is given by $\nu^M$
for every $t\ge 0$.

We next pass to the limit $M\to\infty$ by extending  $Z^M(t,x), x\in \SS_M$ 
and $\nu^M$ periodically on $\R$.
We need  some more notation.  Let 
$B=\{B(x);x\in \R\}$ be the two-sided Brownian motion satisfying $B(0)=0$,
that is,  $\{B(x);x\ge 0\}$ and $\{B(-x);x\ge 0\}$ are both Brownian motions
with time parameter $x\ge 0$ and mutually independent.  Let
$\nu$ and $\nu^\e$ be the distributions of $B$ and $\{B*\eta^\e(x)
- B*\eta^\e(0);x\in\R\}$ on the space $\mathcal{C}$, respectively.
The tilt variables $\nabla h$ of $h$ on $\R$ are similarly defined
as above and denote
$\nabla h\stackrel{\rm{law}}{=} \nu$ (or $\nu^\e$) if the law of 
$\{h(x)-h(0);x\in\R\}$ is given by $\nu$ (or $\nu^\e$).  We introduce
the weighted $L^2$-space $L_r^2(\R)$, $r>0$, which is a family of all 
measurable functions $u$ on $\R$ such that
\begin{equation}  \label{eq:L_r-norm}
\| u\|_{L_r^2} := \left(\int_\R u(x)^2 e ^{-2r\chi(x)} dx\right)^{1/2} < \infty,
\end{equation}
where $\chi\in C^\infty(\R)$ is a fixed function satisfying
$\chi(x)=|x|$ for $|x|\ge 1$.

Then, it is standard to show that $Z^M(t,x), x\in \R$ converges to
the solution of the SPDE \eqref{eq:8-b} weakly on $C([0,\infty), L_r^2(\R))$, $r>0$
and $\nu^M$ (periodically extended on $\R$)
converges to $\nu$ weakly on $L_r^2(\R)$, $r>1$ as $M\to\infty$;
see Proposition \ref{prop:SPDE-conv} and its consequences.

As a byproduct, though the factor $\tfrac1{24}Z$ is different in \eqref{eq:8-b},
we can investigate the invariant
measures of the SPDE \eqref{eq:1.1} on $\R$.  Let $\mu^c, c\in \R$ be
the distribution of $e^{B(x)+cx}, x\in\R$ on $\mathcal{C}_+$, where
$B(x)$ is the two-sided Brownian motion such that $\mu^c(B(0)\in dx) =dx$.
In particular, $\mu^c$ are not probability measures.  
Our second result is that $\mu^c$ are invariant for the process $Z(t)$,
which is a solution of the SPDE \eqref{eq:1.1}. 

\begin{thm} \label{thm:1.1}
For every bounded, integrable and continuous function $G$ on $\mathcal{C}_+$,
we have that
$$
\int_{\mathcal{C}_+} G(Z(t))d\mu^c = 
\int_{\mathcal{C}_+} G(Z(0))d\mu^c,
$$
for all $t\ge 0$ and $c\in \R$, where the integrals in both sides 
are defined under the condition that $Z(0)$ is distributed under
$\mu^c$.  More precisely, for example, the left hand side is given by
$$
\int_{\mathcal{C}_+} E_Z[G(Z(t))] \mu^c(dZ),
$$
where $E_Z[\cdot]$ stands for the expectation with respect to the
solution of the SPDE \eqref{eq:1.1} with an initial value $Z(0)=Z \in \mathcal{C}_+$.
\end{thm}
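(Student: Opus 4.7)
The plan is to derive Theorem \ref{thm:1.1} from Theorem \ref{thm:0} in four reduction steps: (i) upgrade the tilt invariance on $\SS_M$ to the whole line $\R$, (ii) cancel the $\tfrac1{24}$ drift to obtain the honest SHE \eqref{eq:1.1}, (iii) use the linearity of SHE in $Z$ to handle the Lebesgue-distributed height, and (iv) extend from $c=0$ to general $c$ by Galilean covariance of SHE.

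For (i) and (ii): Theorem \ref{thm:2}-(1) combined with Theorem \ref{thm:0} and the convergence $\nu^{\e,M}\to\nu^M$ as $\e\downarrow 0$ yields that $\nu^M$ is invariant for the tilt process of \eqref{eq:8-b-M}. Passing $M\to\infty$ via Proposition \ref{prop:SPDE-conv} and the convergence $\nu^M\to\nu$ on $L_r^2(\R)$ for $r>1$ gives invariance of $\nu$ for the tilt process of \eqref{eq:8-b}. Since $\tilde Z(t,x):=e^{-t/24}Z(t,x)$ converts a solution of \eqref{eq:8-b} into one of \eqref{eq:1.1} and preserves tilts, $\nu$ is also invariant for the tilt process of the SHE \eqref{eq:1.1}.

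For (iii), the SHE \eqref{eq:1.1} is linear in $Z$, so its solution map commutes with multiplication by positive constants; equivalently, in the log variable $h=\log Z$ one has $h(t,0)-h(0,0)=\Delta(\nabla h(0),W)$ for a measurable functional $\Delta$ of the initial tilt and the driving noise alone. Consequently, if $h(0,0)$ is Lebesgue-distributed and independent of $(\nabla h(0),W)$, a conditional Fubini argument (exploiting that Lebesgue measure is invariant under independent finite shifts) shows that $(h(t,0),\nabla h(t))$ has joint law $dx\otimes\nu$, which combined with (ii) proves invariance of $\mu^0$. For (iv), a direct computation gives the Galilean covariance of SHE: if $Z$ solves \eqref{eq:1.1} with noise $\dot W$, then $\hat Z(t,x):=Z(t,x+ct)\,e^{cx+c^2t/2}$ solves \eqref{eq:1.1} driven by the shifted noise $\dot W(t,x+ct)\stackrel{\rm{law}}{=}\dot W$. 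The transform sends $Z(0,\cdot)\sim\mu^0$ to $\hat Z(0,\cdot)\sim\mu^c$, so the invariance of $\mu^0$ established in (iii), together with the independent-increments property of two-sided BM (and the same Lebesgue-absorption argument), identifies $\hat Z(t,\cdot)\sim\mu^c$.

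The main obstacle is the $\sigma$-finiteness of $\mu^c$: the invariance must be formulated via bounded integrable test functions $G$, and the factorizations invoked in (iii) and (iv) rest crucially on the fact that Lebesgue measure on $\R$ is absorbing under independent finite shifts. A secondary technical issue is the well-posedness of \eqref{eq:1.1} from initial data in the support of $\mu^c$, which grow like $e^{cx}$ at infinity; this is accommodated by working in the weighted space $L_r^2(\R)$ with $r>|c|$, in the framework already set up in the paper.
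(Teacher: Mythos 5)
Your proposal follows essentially the same route as the paper: tilt invariance is transported from $\SS_M$ to $\R$ via Proposition \ref{prop:SPDE-conv} and the convergence $\nu^M\to\nu$, the factor $e^{-t/24}$ removes the drift to pass from \eqref{eq:8-b} to \eqref{eq:1.1}, the height at the origin is recovered by absorbing a tilt-and-noise-measurable increment into the Lebesgue factor, and Galilean covariance reduces general $c$ to $c=0$. The only real difference is in your step (iii): you obtain the factorization $h(t,0)-h(0,0)=\Delta(\nabla h(0),W)$ abstractly from the linearity and uniqueness of \eqref{eq:1.1}, whereas the paper, observing that a direct equation for $h(t,0)$ is not available, introduces the mollified height $h^\e(x,Z)=\log(Z*\eta^\e(x))$, for which the drift and diffusion coefficients in \eqref{smoothed1} are explicit tilt variables, and lets $\e\downarrow 0$ at the end; both variants feed into the same Lebesgue-absorption argument, so this is a minor streamlining rather than a different proof.
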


At  the level of the process $\partial_x \log Z(t,x)$, where the corresponding invariant 
measure is a white noise, the result was derived earlier by Bertini and Giacomin \cite{BG} 
via the weakly asymmetric limit of  simple exclusion processes.

Note that the measure $\mu^c$  is invariant, but not reversible
for the process $Z(t)$. Only a kind of Yaglom reversibility holds; see Remark
\ref{rem:Yaglom} below.

If $Z(t,x)$ is a solution of (\ref{eq:1.1}) and $c\in\mathbb{R}$
then $Z^c(t,x):=e^{cx + \tfrac12 c^2 t}Z(t,x+ct)$ is also a solution 
(with a new space-time Gaussian white noise $\dot{W}$).  Therefore, 
if one can show the invariance of $\mu^0$ for $Z(t)$, since $\mu^0$ is 
invariant under both shifts $Z(x)\mapsto Z(x+a)$ and $Z(x)\mapsto e^a Z(x)$, 
we have that the $\mu^c$ is also invariant for the process $Z(t)$.  For this
reason, in the proof of Theorem \ref{thm:1.1}, we may
assume $c=0$ without loss of generality and write $\mu$ for $\mu^0$.

We will mostly work with tilt variables $\partial_x h$ or $\partial_x \log Z$ 
rather than heights to avoid the difficulty caused by the non-normalizability
of the measure $\mu$.  This is carried
out by introducing an equivalence relation to the state space
$\mathcal{C}_+$ of $Z(t,x)$; see Section \ref{section:3.1}.  
Or, one can say that we are only
interested in the shapes of height functions
$h(t,x)=\log Z(t,x)$ by identifying its vertical translations:
$h(t,x) \sim h(t,x)+c$ for all $c\in \R$; see Remark \ref{rem:2.1-c}
below.

\begin{rem}  \label{rem:1.1}
One expects $\mu^c$,  $c\in\mathbb{R}$ to be all the extremal
invariant measures (except constant multipliers) for the process
$Z(t)$ as in \cite{FS}, but we will not investigate this here.  \end{rem}

The paper is organized as follows.  In Section 2, we consider
the KPZ approximating equations \eqref{eq:1} and \eqref{eq:1-M},
and study their invariant measures; see Theorems \ref{thm:2}
and \ref{thm:2.7}.  This is accomplished by introducing 
finite dimensional approximations due to the discretization in space, 
and then taking limits.  In Section 3, we consider its Cole-Hopf transform
and pass to the limit.  We need to replace a complicated nonlinear term
by a simple linear term in the limit; see Theorems \ref{thm:3.1} and 
\ref{thm:3.1-M}.  This procedure has a similarity 
to the so-called Boltzmann-Gibbs principle \cite{KL}, \cite{KLO}, 
which plays an important role in 
establishing the equilibrium fluctuation limits
for large scale interacting systems.  We rely
on Wiener-It\^o expansion.

\section{KPZ approximating equation}

This section studies the invariant measures of the KPZ approximating 
equations \eqref{eq:1} and \eqref{eq:1-M}.  For this purpose, we first consider the 
associated tilt process $u=\partial_x h$, which satisfies the SPDEs
\eqref{eq:b-4.1} and \eqref{eq:b-4.1-torus}
stated below, and introduce its finite dimensional 
approximations due to the spatial discretization. Indeed, in view of 
finding invariant measures, it is important to choose a special 
discretization scheme.
The infinitesimal invariance is shown in Lemma \ref{lem:D-IP}.
Then, we find the invariant measures of the SPDEs \eqref{eq:b-4.1} 
and \eqref{eq:b-4.1-torus} by passing to the limits; see Theorem \ref{thm:2}.
This part is standard, especially because $0<\e<1$ is fixed and
the noises in \eqref{eq:b-4.1},  \eqref{eq:b-4.1-torus} or \eqref{eq:1}, 
\eqref{eq:1-M} are smooth.  The results are the same even if
we replace $\xi^\e$ by any other constants.

One can actually check the infinitesimal invariance directly for 
\eqref{eq:b-4.1} without introducing the spatial discretization; see 
Remark \ref{rem:D^3}.   The reason we do not do it that way is that there are not clear enough results in the infinite dimensional setting 
telling us that the infinitesimal invariance implies the global invariance.

\subsection{Approximating equations and invariant measures
for tilt processes}  \label{sec:2.1}

Let $\eta \in C_0^\infty(\R)$ be a function satisfying $\eta(x)
\ge 0, \eta(x) = \eta(-x)$ and $\int_\R \eta(x)dx=1$.  We set
$\eta^\e(x) = \eta(x/\e)/\e$ for $\e>0$, $\eta_2(x) =
 \eta*\eta(x)$, and $\eta_2^\e(x) =\eta_2(x/\e)/\e$.  Note that
$\eta_2^\e(x) = \eta^\e*\eta^\e(x)$.  To fix ideas, we assume
that supp$\,\eta \subset [-1,1]$, so that supp$\,\eta^\e \subset
[-\e,\e]$ and supp$\,\eta_2^\e \subset [-2\e,2\e]$.
Define the smeared noise:
\begin{equation}  \label{eq:W^e}
W^\e(t,x) = \lan W(t), \eta^\e(x-\cdot) \ran,
\end{equation}
and consider the KPZ approximating equation \eqref{eq:1} on $\R$
for $h= h^\e(t,x)$.  By the symmetry of $\eta$, we have that
\begin{equation}  \label{eq:2.3}
\xi^\e = \int_\R \eta^\e(y)^2 dy \;  (=\eta_2^\e(0)),
\end{equation}
in \eqref{eq:1}.
The solution $h$ of the SPDE \eqref{eq:1} is smooth in $x$ and 
we are concerned with the associated tilt process $u=\partial_x h$,
which satisfies the stochastic Burgers' equation:
\begin{equation}  \label{eq:b-4.1}
\partial_t u = \frac12 \partial_x^2 u + \frac12 \partial_x(u^2*\eta_2^\e)
+ \partial_x \dot{W}^\e(t,x), \quad x \in \R.
\end{equation}
In this respect, \eqref{eq:1} is a kind of  stochastic 
Hamilton-Jacobi equation. 
Similarly, the tilt process $u=\partial_x h$ of the solution $h$ of 
the SPDE \eqref{eq:1-M} on $\SS_M$, $M\ge 1$, satisfies
\begin{equation}  \label{eq:b-4.1-torus}
\partial_t u = \frac12 \partial_x^2 u + \frac12 \partial_x(u^2*\eta_2^\e)
+ \partial_x \dot{W}^\e(t,x), \quad x \in \SS_M.
\end{equation}
Note that $\int_{\SS_M} u(t,x)dx=0$ holds for \eqref{eq:b-4.1-torus}.

Let $\nu^\e$ be the distribution of $\partial_x(B*\eta^\e(x))$ on 
$\mathcal{C}$, where $B$ is the two-sided Brownian motion satisfying 
$B(0)=0$; we abuse the notation for $\nu^\e$ compared with that
introduced in Section \ref{section:1}, since the meanings 
are clear.  Note that $\nu^\e$ is a probability measure
which is independent of the choice of the value of $B(0)$. 
Similarly, $\nu^{\e,M}$ is the distribution of $\partial_x(B^M*\eta^\e(x))$
on $\mathcal{C}_{M,0}$, where $B^M$ is the pinned Brownian motion,
$\mathcal{C}_{M,0} =\{u\in \mathcal{C}_M; \int_{\SS_M}u(x)dx=0\}$
and recall $\mathcal{C}_M = C(\SS_M)$.
Then, the first main result of this section is formulated as in the
following theorem.  This will be extended to the height process $h$
in Section \ref{section:2.7}; see Theorem \ref{thm:2.7}.

\begin{thm} \label{thm:2}
{\rm (1)} The probability measure $\nu^{\e,M}$ on $\mathcal{C}_{M,0}$ is invariant
under \eqref{eq:b-4.1-torus}, that is, for the tilt process $\partial_x h$ of 
the solution $h$ of the SPDE \eqref{eq:1-M}.\\
{\rm (2)} The probability measure $\nu^\e$ on $\mathcal{C}$ is invariant 
under \eqref{eq:b-4.1}, that is, for the tilt process determined from 
the SPDE \eqref{eq:1}.
\end{thm}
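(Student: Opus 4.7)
I follow the road map suggested by the authors: first prove (1) on the torus via a finite-dimensional spatial approximation, then deduce (2) by letting the period $M\to\infty$. Throughout, $\e>0$ is fixed, so all coefficients in \eqref{eq:b-4.1} and \eqref{eq:b-4.1-torus} are smooth in $x$.

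\textbf{Torus case via discretization.} For fixed $M\ge 1$, place a lattice $\{k/N:0\le k\le NM-1\}$ on $\SS_M$, replace $\partial_x$ by the symmetric centered difference $D_N$, $\partial_x^2$ by the associated discrete Laplacian $D_N^2$, and the periodic convolution $*\eta_2^\e$ by its Riemann-sum analogue $*_N\eta_2^\e$. This produces a finite-dimensional SDE system in $\R^{NM}$ with smooth, globally Lipschitz coefficients (thanks to the smoothing of $u^2$ by $\eta_2^\e$), naturally confined to the hyperplane $\{\sum_k u_k=0\}$. Let $\nu^{\e,M,N}$ be the centered Gaussian on this hyperplane whose covariance $C_N$ is the discrete analogue of the covariance of $\partial_x(B^M*\eta^\e)$, so that $\nu^{\e,M,N}\Rightarrow\nu^{\e,M}$ on $\mathcal{C}_{M,0}$ as $N\to\infty$.

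\textbf{Infinitesimal invariance---the main point.} Let $L_N$ denote the generator of the discretized SDE. The plan is to verify $L_N^*\nu^{\e,M,N}=0$ by integration by parts against smooth cylinder test functions. The linear Ornstein--Uhlenbeck part---drift $\tfrac12 D_N^2 u$ against noise $D_N\dot W^{\e,N}$---satisfies the discrete Lyapunov equation for $\nu^{\e,M,N}$: the spatial covariance of $D_N\dot W^{\e,N}$ is $-D_N^2\eta_2^\e$ up to discretization, which matches the fluctuation--dissipation relation for a Gaussian with covariance $\eta_2^\e$. The genuinely new content is that the Burgers-type drift $F(u)=\tfrac12 D_N(u^2*_N\eta_2^\e)$ is \emph{divergence-free} with respect to $\nu^{\e,M,N}$: both the pointwise divergence $\mathrm{div}\,F$ and the score term $F\cdot C_N^{-1}u$ integrate to zero against the Gaussian. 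The divergence vanishes because $D_N$ is antisymmetric while $*_N\eta_2^\e$ is symmetric; the score term vanishes because the \emph{double} mollification by $\eta_2^\e$ in $F$ cancels the inverse covariance $C_N^{-1}\sim(\eta_2^\e)^{-1}$, leaving a cubic form whose Gaussian expectation is forced to zero by the antisymmetry of $D_N$. I expect this identity to be the main technical obstacle; it is also what forces the specific pairing ``one $\eta^\e$ in the noise, two copies of $\eta^\e$ in the drift'' alluded to in the introduction.

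\textbf{Global invariance and passage to the limit.} Because the discretized SDE has smooth coefficients of at most linear growth, infinitesimal invariance upgrades to global invariance of $\nu^{\e,M,N}$ by a standard criterion (or by direct verification using the explicit Gaussian density). Sending $N\to\infty$ with $M,\e$ fixed, tightness of the solutions in $C([0,T],\mathcal{C}_{M,0})$ (uniform in $N$, thanks to the smooth mollifier $\eta_2^\e$) together with convergence of the discretized coefficients and of $\nu^{\e,M,N}\Rightarrow\nu^{\e,M}$ identifies the limit dynamics with \eqref{eq:b-4.1-torus} and proves (1). For (2), start the torus SPDE from $\nu^{\e,M}$; by (1), its law remains $\nu^{\e,M}$ at every $t\ge 0$. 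Extending the solutions periodically to $\R$ and letting $M\to\infty$, the periodized $\nu^{\e,M}$ converges on each compact window to $\nu^\e$, since the pinning of $B^M$ at distance $\sim M$ becomes irrelevant once convolved with $\eta^\e$ against points at distance $O(1)$. Tightness on $C([0,T],\mathcal{C})$ together with convergence of the (already smoothed) drift and noise coefficients on compact sets then transfers the invariance to the limit, yielding (2).
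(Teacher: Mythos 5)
Your road map (discretize in space on the torus, verify infinitesimal invariance of a discrete Gaussian, upgrade to global invariance, pass $N\to\infty$, then $M\to\infty$) is exactly the paper's, and the outer layers (Echeverria's criterion, tightness of stationary solutions, identification via the martingale problem, and the convergence $\nu^{\e,M}\to\nu^\e$) are all sound. The gap is precisely at the point you flag as ``the main technical obstacle'': your proposed verification of invariance for the Burgers drift is wrong, for two reasons.

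First, the criterion you state is too weak. Writing $\mu_N(du)=e^{-I(u)}du$, integration by parts gives $\int \mathcal{A}\Phi\,d\mu_N=\int \Phi\,\bigl(-\operatorname{div}F+F\cdot\nabla I\bigr)d\mu_N$, so infinitesimal invariance requires the \emph{pointwise} identity $\operatorname{div}F=F\cdot\nabla I$ (equivalently, vanishing of the integral against every cylinder $\Phi$), not merely that $\operatorname{div}F$ and $F\cdot C_N^{-1}u$ have zero Gaussian expectation. The latter is automatic for any odd polynomial drift and would ``certify'' every discretization, which cannot be right.

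Second, and more importantly, the pointwise identity fails for your scheme. After moving the mollifiers onto $C_N^{-1}u$ and cancelling (this step is fine, and is exactly why the drift carries $\eta_2^\e$ while the noise carries $\eta^\e$), the score term reduces to a constant times $\sum_i u_i^2\,(D_Nu)_i$. With the naive substitution $u^2(i):=u_i^2$ and a forward/centered difference this equals, up to constants, $\sum_i u_iu_{i+1}(u_{i+1}-u_i)$, which is \emph{not} identically zero on the hyperplane $\{\sum_iu_i=0\}$ (e.g.\ $u=(1,1,-2,0,\dots,0)$ gives $6$). The continuum cancellation $\int u^2\partial_xu\,dx=\tfrac13\int\partial_x(u^3)\,dx=0$ does not survive naive discretization. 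The paper's resolution is to discretize $(\partial_xh)^2$ by the special combination $\tfrac16\{G_1+G_2\}$ with $G_1(i)=(h_{i+1}-h_i)^2+(h_i-h_{i-1})^2$ and $G_2(i)=(h_{i+1}-h_i)(h_i-h_{i-1})$, i.e.\ $u^2\rightsquigarrow\tfrac13(u_i^2+u_iu_{i-1}+u_{i-1}^2)$, so that $(a^2+ab+b^2)(a-b)=a^3-b^3$ telescopes and $\sum_iG_1(i)\De h(i)=-\sum_iG_2(i)\De h(i)$ holds exactly (Lemma \ref{lem:D-IP}). Without this choice the discrete Gaussian is simply not invariant, and arguing that the resulting $O(1)$ cubic error washes out as $N\to\infty$ would require an additional replacement estimate of Boltzmann--Gibbs type that your proposal does not supply.
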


\subsection{Invariant measure of KPZ approximating equation 
on a discrete torus}

In this section, we introduce the KPZ approximating equation
on a discrete torus $\T_N=\{1,2, \ldots,N\}$ with periodic boundary condition.
To study its invariant measure, it is important to choose a special discretization
scheme as we will explain.  Let $\a: \Z \to [0,\infty)$ be given and
satisfy the conditions $\a(i)=\a(-i)$ and $\a(i)=0$ for 
$i: |i|\ge K$ with some $K\ge 1$.  We naturally regard $\a$ as a function on $\T_N$
assuming that $N$ is sufficiently large compared
with the size of the support of $\a$: $N> 2K$.

For $h=(h(i))_{i\in\T_N}\in \R^{\T_N}$, we define $\De h \in \R^{\T_N}$ by
$\De h(i) = h(i+1)+h(i-1)-2h(i), i \in \T_N$ and two functions $G_1(h)
=(G_1(i,h))_{i\in\T_N}$, $G_2(h)=(G_2(i,h))_{i\in\T_N}$ by
\begin{align*}
& G_1(i,h) = (h_{i+1}-h_i)^2 + (h_i-h_{i-1})^2, \\
& G_2(i,h) = (h_{i+1}-h_i)(h_i-h_{i-1}), \quad i\in \T_N,
\end{align*}
respectively.  We sometimes write $h_i$ for $h(i)$.
These are discrete analogues of $2(\partial_x h)^2$ and $(\partial_x h)^2$,
respectively.  For functions $\b, \ga$ on $\T_N$, we define the convolution 
$\b *\ga$ on $\T_N$ by $(\b*\ga)(i) = \sum_{k\in \T_N} \b(i-k)\ga(k), i\in \T_N$, 
where $i-k$ is defined in modulo $N$.

We consider the stochastic differential equation for $h_t
=(h_t(i))_{i\in\T_N} \in \R^{\T_N}$:
\begin{equation} \label{eq:SDE-discrete}
dh_t(i) = \frac{\la_1}2\De h_t(i)dt + \la_2 \{\a_2*G_1(i,h_t)+\a_2*G_2(i,h_t)\}dt + 
\la_3 dw_t^\a(i), \quad
i\in \T_N,
\end{equation}
where $\la_1, \la_2, \la_3\in\R$ are arbitrary constants,
$\a_2 = \a*\a$, $w_t^\a = \a*w_t$ and $w_t = (w_t(i))_{i\in\T_N}$ 
is a family of independent Brownian motions.  
We consider three operators on $\R^{\T_N}$:
\begin{align*}
& {\mathcal L}_0^\a f(h) = \frac{\la_1}2 \sum_{i\in \T_N} \De h(i)
\frac{\partial f}{\partial h_i} + \frac{\la_3^2}2 \sum_{i,j\in \T_N} 
\a_2(i-j) \frac{\partial^2 f}{\partial h_i\partial h_j}, \\
& {\mathcal A}_1^\a f(h) = \sum_{i\in \T_N} (\a_2*G_1)(i,h) 
\frac{\partial f}{\partial h_i}, \\
& {\mathcal A}_2^\a f(h) = \sum_{i\in \T_N} (\a_2*G_2)(i,h) 
\frac{\partial f}{\partial h_i},
\end{align*}
for $f\in C^2(\R^{\T_N})$.
Then, ${\mathcal L}^\a = {\mathcal L}_0^\a +\la_2 {\mathcal A}_1^\a
+ \la_2 {\mathcal A}_2^\a$
is the generator of the SDE \eqref{eq:SDE-discrete}.

Let $\a^{-1}=\a_N^{-1}$ be
the inverse matrix of $\a=\{\a(i-j)\}_{i,j\in\T_N}$.  Note that
the matrix $\a$ may not be invertible in general, but we can
always make $\det \a\not=0$ by slightly perturbing $\a$ 
and we consider such $\a$.
Let $\mu_N(dh) = e^{- I_N^\a(h)} dh$ be an infinite measure on
$\R^{\T_N}$, where $dh=\prod_{i\in\T_N} dh(i)$ and
$$
I_N^\a(h) = \frac{\la}2 \sum_{j\in\T_N} \{\a^{-1}*h(j+1)- \a^{-1}*h(j)\}^2,
\quad \la=\frac{\la_1}{\la_3^2}.
$$

\begin{lem} \label{lem:D-IP}
For every $f, g\in C_b^2(\R^{\T_N})$, we have that
\begin{equation} \label{eq:D-IP-1}
\int  g(h) {\mathcal L}_0^\a f(h) d\mu_N=\int  f(h) {\mathcal L}_0^\a g(h) d\mu_N.
\end{equation}
In particular, $\int{\mathcal L}_0^\a f(h)d\mu_N=0$.
We also have that
\begin{equation} \label{eq:D-IP-2}
\int  {\mathcal A}_1^\a f(h) d\mu_N= - \int  {\mathcal A}_2^\a f(h) d\mu_N.
\end{equation}
Accordingly, we have that
\begin{equation} \label{eq:D-IP-3}
\int  {\mathcal L}^\a f(h) d\mu_N=0.
\end{equation}
\end{lem}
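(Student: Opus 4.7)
\emph{Strategy.} The third assertion \eqref{eq:D-IP-3} follows from \eqref{eq:D-IP-1} (applied with $g\equiv 1$, which gives $\int\mathcal{L}_0^\a f\,d\mu_N=0$ since $\mathcal{L}_0^\a 1=0$) combined with \eqref{eq:D-IP-2}, so the real work is in the first two identities.

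For \eqref{eq:D-IP-1}, the plan is to exhibit $\mathcal{L}_0^\a$ as a reversible Langevin generator for $\mu_N$. With drift $b(h)_i=\tfrac{\la_1}{2}\De h(i)$ and diffusion matrix $D=\la_3^2\a_2$, one needs the gradient-flow identity $b=-\tfrac12 D\,\nabla I_N^\a$. To verify it, I would use that $\a$ (and hence $\a^{-1}$ and $\a_2=\a*\a$) is a symmetric Toeplitz matrix on $\T_N$ that commutes with $\De$. Setting $v:=\a^{-1}*h$ and applying a discrete summation by parts gives $I_N^\a=-\tfrac{\la}{2}\lan h,\a^{-1}\De\a^{-1}h\ran$, hence $\nabla I_N^\a=-\la\,\a^{-1}\De\a^{-1}h$; the algebraic identity $\a_2\cdot\a^{-1}=\a$, together with $\a\De=\De\a$, then yields $D\,\nabla I_N^\a=-\la_1\De h=-2b$, as required. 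Once this is in hand, the standard integration by parts for a reversible diffusion produces the symmetric Dirichlet form
$$
\int g\,\mathcal{L}_0^\a f\,d\mu_N=-\tfrac{\la_3^2}{2}\int\lan\nabla f,\a_2\nabla g\ran\,d\mu_N,
$$
which proves \eqref{eq:D-IP-1}.

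For \eqref{eq:D-IP-2}, set $F:=G_1+G_2$ and $V_i(h):=(\a_2*F)(i,h)$, so that one needs $\sum_i\int V_i\,\partial_i f\,d\mu_N=0$. Integration by parts in each $h_i$ (using $\partial_i e^{-I_N^\a}=-(\partial_i I_N^\a)\,e^{-I_N^\a}$) reduces this to the pointwise identity
$$
\sum_i V_i\,\partial_i I_N^\a=\sum_i\partial_i V_i\qquad\text{on }\R^{\T_N},
$$
and the plan is to show that \emph{both} sides vanish identically. On the left, swapping sums and using the convolution identity $\a_2*\a^{-2}=\delta_0$ (so that $\sum_i\a_2(i-k)\,\partial_i I_N^\a=-\la\De h(k)$) collapses the smearing and leaves $-\la\sum_k F(k,h)\,\De h(k)$. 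Writing $u_k:=h_{k+1}-h_k$, the crucial cubic identity
$$
(u_k^2+u_{k-1}^2+u_k u_{k-1})(u_k-u_{k-1})=u_k^3-u_{k-1}^3
$$
gives $F(k,h)\De h(k)=u_k^3-u_{k-1}^3$, whose sum over $\T_N$ telescopes to zero. On the right, a direct differentiation of $F(k,h)$ in $h_i$ (non-zero only for $i\in\{k-1,k,k+1\}$), combined with the symmetry $\a_2(1)=\a_2(-1)$, yields $\sum_i\a_2(i-k)\partial_i F(k,h)=(\a_2(1)-\a_2(0))\De h(k)$, whose $k$-sum again telescopes on the torus.

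The heart of the matter — and the reason the specific combination $G_1+G_2$ is built into \eqref{eq:SDE-discrete} — is the cubic identity above, which exhibits $F\cdot\De h$ as the discrete divergence of $u^3$. Without this exact structure (e.g.\ if one replaced $G_1+G_2$ by $G_1$ alone), neither side of the pointwise identity would be a total difference and the invariance would fail; everything else — the gradient-flow check for \eqref{eq:D-IP-1}, the convolution identities $\a_2*\a^{-1}=\a$ and $\a_2*\a^{-2}=\delta_0$, and the commutation of $\a$ with $\De$ — is routine algebraic bookkeeping with translation-invariant operators on $\T_N$. I do not foresee any serious technical obstacles beyond locating this algebraic core.
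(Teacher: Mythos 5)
Your proof is correct and follows essentially the same route as the paper: compute $\partial_i I_N^\a=-\la(\a_2^{-1}*\De h)(i)$, integrate by parts so the drift cancels against the Gaussian term to leave the symmetric Dirichlet form, and reduce \eqref{eq:D-IP-2} to the vanishing of $\sum_i(\a_2*G_\ell)(i)\,\partial_i$ acting on $e^{-I_N^\a}$. Your cubic factorization $(u_k^2+u_{k-1}^2+u_ku_{k-1})(u_k-u_{k-1})=u_k^3-u_{k-1}^3$ is just a cleaner packaging of the paper's term-by-term rearrangement showing $\sum_i G_1(i)\De h(i)=-\sum_i G_2(i)\De h(i)$.
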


\begin{proof}
We first compute derivatives of $ I_N^\a$:
\begin{align}  \label{eq:derivative-I}
& \frac{\partial}{\partial h_i} I_N^\a(h) \\
& =\la \sum_{j\in\T_N} \{\a^{-1}*h(j+1)- \a^{-1}*h(j)\}
\frac{\partial}{\partial h_i}\{\a^{-1}*h(j+1)- \a^{-1}*h(j)\} \notag \\
& =\la \sum_{j\in\T_N} \sum_{k\in\T_N} \{\a^{-1}(j+1-k)- \a^{-1}(j-k)\}h(k)\cdot
\{\a^{-1}(j+1-i)- \a^{-1}(j-i)\}  \notag \\
& = \la\sum_{k\in\T_N} \{2\a_2^{-1}(i-k)- \a_2^{-1}(i+1-k)- \a_2^{-1}(i-1-k)\}h(k)
 \notag \\
& = - \la\sum_{k\in\T_N} \a_2^{-1}(i-k) \De h(k)
= -\la (\a_2^{-1}*\De h)(i). \notag
\end{align}
We now prove the symmetry \eqref{eq:D-IP-1} of $ {\mathcal L}_0^\a$.  To this end, 
\begin{align*}
\int g \frac{\partial^2 f}{\partial h_i\partial h_j} d \mu_N
& = - \int \frac{\partial}{\partial h_j} \left(g e^{-I_N^\a(h)}\right)
\frac{\partial f}{\partial h_i} dh \\
& = - \int \left( \frac{\partial g}{\partial h_j} - g
\frac{\partial I_N^\a}{\partial h_j} \right) 
\frac{\partial f}{\partial h_i} d\mu_N \\
& = - \int \left( \frac{\partial g}{\partial h_j} + g \la
(\a_2^{-1}*\De h)(j) \right) 
\frac{\partial f}{\partial h_i} d\mu_N,
\end{align*}
by \eqref{eq:derivative-I}.  Therefore, we have that
\begin{align*}
\int g {\mathcal L}_0^\a f d\mu_N
& = -\frac{\la_3^2}2 \sum_{i,j} \int \a_2(i-j) \frac{\partial g}{\partial h_j}
\frac{\partial f}{\partial h_i} d\mu_N
+ \frac{\la_1}2 \sum_i \int g \De h(i) \frac{\partial f}{\partial h_i} d\mu_N\\
& \qquad - \frac{\la_3^2}2\cdot \la \sum_{i,j}\int g \, \a_2(i-j)
(\a_2^{-1}*\De h)(j) \frac{\partial f}{\partial h_i} d\mu_N \\
& = -\frac{\la_3^2}2 \sum_{i,j} \int \a_2(i-j) \frac{\partial g}{\partial h_j}
\frac{\partial f}{\partial h_i} d\mu_N.
\end{align*}
This shows \eqref{eq:D-IP-1}.
We next prove \eqref{eq:D-IP-2}.  For $\ell =1,2$,
\begin{align*} 
& \int  {\mathcal A}_\ell^\a f(h) d\mu_N=
- \sum_i \int f \frac{\partial}{\partial h_i}\left\{ (\a_2*G_\ell)(i)
 e^{-I_N^\a(h)}    \right\}  dh  \\
& = - \int f \left\{ \sum_i \frac{\partial}{\partial h_i} (\a_2*G_\ell)(i) 
- \sum_i  (\a_2*G_\ell)(i) \frac{\partial  I_N^\a}{\partial h_i}
   \right\}  d\mu_N.
\end{align*}
Here, noting that $\frac{\partial G_\ell}{\partial h_i}(j)=0$ if $j\not= i,
i\pm 1$, the first sum vanishes both for $\ell=1,2$:
\begin{align*} 
& \sum_i \frac{\partial}{\partial h_i} (\a_2*G_\ell)(i) \\
& = \sum_i \frac{\partial}{\partial h_i} \{\a_2(-1) G_\ell(i+1) + 
  \a_2(0) G_\ell(i) + \a_2(1) G_\ell(i-1)\} \\
& = \left\{
\begin{aligned}
& -2 (\a_2(0)+\a_2(1)) \sum_i \De h(i) = 0, \quad \text{ for } \ell=1, \\
& \a_2(0) \sum_i \De h(i) + \a_2(1) \sum_i\{
-(h_{i+2}-h_{i+1})+(h_{i-1}-h_{i-2})\} = 0, \quad \text{ for } \ell=2.
\end{aligned}
\right.
\end{align*}
On the other hand, from \eqref{eq:derivative-I}, the second sum
can be rewritten as
\begin{align*} 
- \sum_i  (\a_2*G_\ell)(i) \frac{\partial I_N^\a}{\partial h_i} 
 = \la \sum_i  (\a_2*G_\ell)(i)  (\a_2^{-1}* \De h)(i) 
= \la \sum_i G_\ell(i) \De h(i),
\end{align*}
for $\ell=1,2$.  However, 
\begin{align*} 
& \sum_i G_1(i) \De h(i) \\
& = \sum_i \{ (h_{i+1}-h_i)^2 + (h_i-h_{i-1})^2\} \{ (h_{i+1}-h_i)- (h_i-h_{i-1})\} \\
& = \sum_i (h_{i+1}-h_i)^3 - \sum_i  (h_{i+1}-h_i)^2(h_i-h_{i-1})
 + \sum_i (h_i-h_{i-1})^2(h_{i+1}-h_i) - \sum_i (h_i-h_{i-1})^3 \\
& = \sum_i  G_2(i) \{(h_i-h_{i-1}) - (h_{i+1}-h_i)\}  \\
& = - \sum_i  G_2(i) \De h(i).
\end{align*}
This implies \eqref{eq:D-IP-2}.  \eqref{eq:D-IP-3} is immediate
from \eqref{eq:D-IP-1} and \eqref{eq:D-IP-2}.
\end{proof}

We can apply Echeverria's result \cite{Ech} for the finite dimensional
SDE \eqref{eq:SDE-discrete}  and Lemma \ref{lem:D-IP} proves the
invariance of $\mu_N$. 

We define the tilt variables $u=(u(i))_{i\in\T_N}$ associated with 
$h=(h(i))_{i\in\T_N}$ by $u(i) = \nabla h(i) := h(i+1)-h(i)$.
Note that $u$ always satisfies $\sum_{i\in\T_N} u(i)=0$, and
$I_N^\a(h) = \tilde I_N^\a(u) := \frac{\la}2 \sum_j \{\a^{-1}*u(j)\}^2$.
Let $\nu_N(du) = e^{-\tilde I_N^\a(u)} du/ Z_N^\a$ be
a probability measure on $\R_0^{\T_N} := \{u\in
\R^{\T_N}; \sum_{i\in\T_N} u(i)=0\}$, where $du$ is the Lebesgue 
measure on this space and $Z_N^\a$ is a normalizing constant.  
Then, our result for the height process
$h$ can be transformed into that for the tilt process $u$:

\begin{prop}
The probability measure $\nu_N$ on $\R_0^{\T_N}$ is invariant
for the tilt process $u=\nabla h$ of the SDE \eqref{eq:SDE-discrete}.
\end{prop}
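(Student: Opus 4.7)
The plan is to descend the invariance of $\mu_N$ on $\R^{\T_N}$, already established via Lemma \ref{lem:D-IP} together with Echeverria's theorem, to an invariance of $\nu_N$ on the quotient $\R_0^{\T_N}$. The key symmetry is that the drift coefficients of \eqref{eq:SDE-discrete}, namely $\De h(i) = u(i) - u(i-1)$, $(\a_2 * G_1)(i, h)$, and $(\a_2 * G_2)(i, h)$, depend on $h$ only through the tilt $u = \nabla h$, while the noise is additive. Hence the SDE is equivariant under the vertical shift $T_c h := h + c\mathbf{1}$, the semigroup $P_t^h$ commutes with $T_c$, and $u_t := \nabla h_t$ is a Markov process on $\R_0^{\T_N}$ in its own right, with a semigroup $P_t^u$ satisfying $P_t^h(g \circ \nabla) = (P_t^u g) \circ \nabla$.

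Next, the map $h \mapsto (h(1), u)$, $u = \nabla h$, is a linear bijection $\R^{\T_N} \to \R \times \R_0^{\T_N}$ with unit Jacobian (triangular with unit diagonal in the basis $h(1), u(1), \ldots, u(N-1)$), and since $I_N^\a(h) = \tilde I_N^\a(u)$ by construction, the infinite measure factors as
\[
\mu_N(dh) \;=\; dh(1) \otimes Z_N^\a \, \nu_N(du).
\]
Given $g \in C_b(\R_0^{\T_N})$ and a nonnegative $\phi \in C_c(\R)$ with $\int_\R \phi = 1$, set $F(h) := \phi(h(1)) g(\nabla h)$. The factorization shows $F \in L^1(\mu_N)$ with $\int F \, d\mu_N = Z_N^\a \int g \, d\nu_N$, so invariance of $\mu_N$ yields $\int P_t^h F \, d\mu_N = Z_N^\a \int g \, d\nu_N$.

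To evaluate $\int P_t^h F \, d\mu_N$ a second way, I would use the shift equivariance: starting from $h$, one has $h_t(1) = h(1) + \De_t$ and $\nabla h_t = v_t$, where $\De_t$ and $v_t$ are functionals of $\nabla h$ and of the driving noise alone, independent of $h(1)$ (this follows because the drift in the $h(1)$-equation is a function of $u_s$ and the noise $w_t^\a(1)$ does not depend on $h$). Fubini, together with $\int_\R \phi(h(1) + \De_t) \, dh(1) = 1$, then gives
\[
\int P_t^h F \, d\mu_N \;=\; Z_N^\a \int P_t^u g \, d\nu_N.
\]
Equating the two expressions and canceling $Z_N^\a$ produces $\int P_t^u g \, d\nu_N = \int g \, d\nu_N$ for all $g \in C_b(\R_0^{\T_N})$, which is precisely the invariance of $\nu_N$ under the tilt process asserted in the proposition.

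The only subtlety is that $\mu_N$ is $\sigma$-finite rather than a probability measure, so its invariance must be tested against $L^1(\mu_N)$ functions; the compactly supported factor $\phi(h(1))$ furnishes exactly such a test function, while the translation invariance of $dh(1)$ decouples the vertical direction from the tilt dynamics. No further analytic input beyond Lemma \ref{lem:D-IP} and the shift equivariance of \eqref{eq:SDE-discrete} is required.
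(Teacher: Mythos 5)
Your argument is correct and is essentially the paper's (largely unwritten) route: the paper merely records $I_N^\a(h)=\tilde I_N^\a(u)$ and asserts that the height-level result ``can be transformed'' to the tilt level, and your factorization $\mu_N(dh)=dh(1)\otimes Z_N^\a\,\nu_N(du)$ combined with the vertical-shift equivariance of \eqref{eq:SDE-discrete} and the Fubini step is exactly that transformation, written out. The one point worth reordering is that Echeverria's criterion is stated for \emph{probability} measures, so the airtight version transfers the infinitesimal identity $\int\mathcal{L}^\a f\,d\mu_N=0$ of Lemma \ref{lem:D-IP} down to $\int\tilde{\mathcal{L}}^\a g\,d\nu_N=0$ first (using the same factorization and the fact that the coefficients depend only on $u$, so the tilt process is an autonomous SDE on $\R_0^{\T_N}$) and only then invokes Echeverria for $\nu_N$; this is a rearrangement of your steps rather than a gap.
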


Note that we will later consider $u^N\equiv \nabla_Nh = N\nabla h$;
see Lemma \ref{lem:2.5-a} and \eqref{eq:SDE-u} below.

\subsection{Invariant measure of KPZ approximating equation 
on a continuous torus}

Under a proper scaling in space $i\mapsto x = i/N$, parameters 
$\la_1, \la_2, \la_3$ and $\a(\cdot)$, one can show that
the stationary solution of \eqref{eq:SDE-discrete} converges 
weakly to that of the SPDE \eqref{eq:1-M} with $M=1$, i.e.,
\eqref{eq:1-M} for $x \in \SS=\R/\Z (=[0,1))$, or, for the 
corresponding tilt process, to the SPDE \eqref{eq:b-4.1-torus}
with $M=1$ for fixed $\e>0$, by showing the tightness
of the sequence of stationary solutions of the SDE \eqref{eq:SDE-discrete}.
The goal is to show the following proposition, whose proof will be 
completed in Section \ref{section:2.5}.  This proposition proves 
Theorem \ref{thm:2}-(1).

\begin{prop}\label{prop:2.4-M}
The probability measure $\nu^{\e,1}$ on $\mathcal{C}_{1,0}$
is invariant under the SPDE \eqref{eq:b-4.1-torus} with $M=1$.  
By a simple scaling argument, we see that the probability 
measure $\nu^{\e,M}$ on $\mathcal{C}_{M,0}$ is invariant 
under the SPDE \eqref{eq:b-4.1-torus}.
\end{prop}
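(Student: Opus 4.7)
The plan is to realize the SPDE \eqref{eq:b-4.1-torus} (with $M=1$) as the scaling limit of the finite-dimensional SDE \eqref{eq:SDE-discrete}, thereby transferring the explicit invariance of $\nu_N$ established in the preceding proposition to the desired invariance of $\nu^{\e,1}$. Specifically, I would take $\a=\a_N$ to be a discretization on $\T_N$ of the function $N^{-1}\eta^\e(\cdot/N)$ and choose the scaling constants $\la_1,\la_2,\la_3$ so that the rescaled height field $h^N(t,x):=h_t(\lfloor xN\rfloor)$, $x\in\SS$, provides a discretization of \eqref{eq:1-M} with $M=1$; parabolic scaling $\la_1\sim N^2$, together with $\la_3\sim\sqrt{N}$ and a compatible $\la_2$, is the natural choice so that the corresponding rescaled tilt $u^N(t,x):=N\nabla h^N(t,x)$ formally satisfies a discretization of \eqref{eq:b-4.1-torus}.

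Starting the discrete dynamics in stationarity, the law of $u_0^N$ is a push-forward of $\nu_N$, which I will call $\nu^{\e,1}_N$. Since $\nu_N$ is an explicit centered Gaussian with covariance determined by $\a_N$, and $\nu^{\e,1}$ is the centered Gaussian law of $\partial_x(B^1 *\eta^\e)$ on $\SS$, the weak convergence $\nu^{\e,1}_N\to\nu^{\e,1}$ on $\mathcal{C}_{1,0}$ reduces to convergence of covariance operators, which is a standard Riemann-sum approximation controlled by the smoothness of $\eta^\e$.

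The core step is to establish tightness of the processes $\{u^N(\cdot)\}$ in $C([0,T],\mathcal{C}_{1,0})$ (or in a weaker topology such as a negative Sobolev space, which still suffices for identifying the limit) and to characterize any weak limit as the solution of \eqref{eq:b-4.1-torus} with $M=1$. Tightness follows from uniform-in-$N$ moment bounds on spatial derivatives of $u^N$, available from the Gaussian form of the stationary law combined with energy estimates for the linear part; the nonlinear drift is a bounded operator on smooth functions because the kernel $\eta_2^\e$ already smooths out everything at scale $\e$. Identification of the limit is by the martingale problem: for each $\fa\in C^\infty(\SS)$, the semimartingale decomposition of $\lan u^N(t),\fa\ran$ has drift and quadratic variation converging to those of \eqref{eq:b-4.1-torus}. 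Pathwise uniqueness for this SPDE (valid because, for fixed $\e>0$, the nonlinearity is smooth and the noise regular) then forces the limit law to be the unique stationary solution, with marginal $\nu^{\e,1}$.

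The passage from $M=1$ to general $M\ge 1$ is by space-time scaling: if $u(t,x)$, $x\in\SS_M$, solves \eqref{eq:b-4.1-torus} with kernel $\eta^\e$, then $v(t,x):=M\,u(M^2 t,Mx)$, $x\in\SS_1$, solves the same equation on $\SS_1$ with rescaled kernel $\tilde\eta(y):=M\eta(My)$, i.e., with smearing parameter $\e/M$; this map sends $\nu^{\e,M}$ to $\nu^{\e/M,1}$, so the $\SS_1$ result for every smearing parameter yields the $\SS_M$ result. The main obstacle I anticipate is the consistency check in the martingale problem: matching the three discrete building blocks $\a_2*G_1$, $\a_2*G_2$, and the discrete gradient $\nabla$ so that their combination converges exactly to the continuum nonlinearity $\tfrac12\partial_x(u^2*\eta_2^\e)$ with the correct coefficient. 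Since $G_1(i,h)\approx 2u(i)^2$ and $G_2(i,h)\approx u(i)^2$ at leading order, careful bookkeeping of the subleading discrete terms is needed to verify that no spurious drift survives in the limit.
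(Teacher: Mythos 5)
Your treatment of the case $M=1$ is essentially the paper's own proof: the same discrete scheme \eqref{eq:SDE-discrete} with $\a(i)=\frac1N\eta^\e(\frac{i}{N})$ and $\la_1=N^2$, $\la_3=\sqrt{N}$ (the paper takes $\la_2=\frac16 N^2$ precisely so that $G_1+G_2\approx 3(\nabla h)^2$ produces the coefficient $\tfrac12$ — the ``bookkeeping'' issue you correctly single out as the main consistency check); convergence of the discrete stationary Gaussian laws to $\nu^{\e,1}$ (Lemma \ref{lem:2.5-a}); tightness of the stationary interpolated tilt process via a uniform $H^1$-energy bound and fourth-moment increment estimates (Lemma \ref{lem:2.6-a}); and identification of the limit through the martingale problem combined with the representation result Lemma \ref{lem:2.3.1}, uniqueness for the smooth-noise SPDE closing the argument. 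No substantive difference there.

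The step that does not hold as you state it is the scaling argument for general $M$. Under $v(t,x):=M\,u(M^2t,Mx)$ the diffusion and the nonlinearity both scale by $M^3$ and the kernel becomes $\eta^{\e/M}$, but the noise term does not come along: $(\partial_{x}\dot W^\e)(M^2t,Mx)$ contributes $M^3\cdot M^{-1}\cdot M^{-3/2}=M^{1/2}$ times a fresh smeared noise, so $v$ solves \eqref{eq:b-4.1-torus} on $\SS$ only with the noise multiplied by $M^{1/2}$ — this is just the familiar fact that the stochastic Burgers equation is not invariant under diffusive rescaling. Correspondingly, the pushforward of $\nu^{\e,M}$ under this map is the law of $M^{1/2}\partial_x(B^1*\eta^{\e/M})$, not $\nu^{\e/M,1}$ (pinned Brownian scaling gives $B^M(M\cdot)\stackrel{\rm law}{=}M^{1/2}B^1(\cdot)$). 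The two discrepancies are consistent with each other, so the argument is repairable: either carry the extra constant through — the discrete scheme has free constants $\la_1,\la_2,\la_3$ and the invariance of $\mu_N$ holds for all of them, so the $\SS$-statement extends to arbitrary noise strength — or simply rerun the $M=1$ proof on $\T_{NM}$ with mesh $1/N$. But the exact scale invariance you invoke, and the identity ``this map sends $\nu^{\e,M}$ to $\nu^{\e/M,1}$'', are both false as written.
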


We first show the convergence of the invariant measure.
For $u\in \R_0^{\T_N}$, we define $u^N=\{u^N(x);x\in\SS\}$ by a linear
interpolation of $\{u^N(\frac{i}N) := Nu(i)\}_{i\in \T_N}$, that is
\begin{align}  \label{eq:interpolation}
u^N(x) & = u^N(\tfrac{i+1}N)\cdot N(x-\tfrac{i}N)+u^N(\tfrac{i}N)
  \cdot N(\tfrac{i+1}N-x)  \\
& = N^2 u(i+1)(x-\tfrac{i}N)+N^2u(i)(\tfrac{i+1}N-x), 
\quad x \in [\tfrac{i}N,\tfrac{i+1}N).  \notag
\end{align}

\begin{lem}  \label{lem:2.5-a}
Consider $\nu_N$ on $\R_0^{\T_N}$ by choosing $\a: \a(i) = 
\frac1N\eta^\e(\frac{i}N)$ and $\la=N$.  Then, as $N\to\infty$,
the distribution of $u^N$ under $\nu_N$  weakly converges to
$\nu^{\e,1}$ on the space $\mathcal{C}_1=C(\SS)$.
\end{lem}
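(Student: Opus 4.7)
The plan is to identify $\nu_N$ as a centered Gaussian measure on $\R_0^{\T_N}$ and then verify weak convergence of the law of $u^N$ to $\nu^{\e,1}$ by the standard recipe: convergence of finite-dimensional distributions plus tightness in $\mathcal{C}_1=C(\SS)$.

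First I would rewrite $\tilde I_N^\a(u) = \tfrac{\la}{2}\lan u, \a_2^{-1} u\ran$ using $\a^{-1}*\a^{-1}=\a_2^{-1}$ and the symmetry of $\a$, so that $\nu_N$ is identified as the centered Gaussian on the mean-zero subspace $\R_0^{\T_N}$ whose covariance is the orthogonal projection of $\la^{-1}\a_2$ onto $\R_0^{\T_N}$. Specializing to $\la = N$ and $\a(i) = N^{-1}\eta^\e(i/N)$, and using $\sum_k \a(k) = 1 + O(N^{-1})$, a short projection computation gives at grid points $x = i/N$, $y = j/N$,
\[
\mathrm{Cov}_{\nu_N}\!\bigl(u^N(x), u^N(y)\bigr) = N\a_2(i-j) - 1 + O(N^{-1}).
\]
Since $N\a_2(i-j) = \tfrac{1}{N}\sum_k \eta^\e(\tfrac{i-k}{N})\eta^\e(\tfrac{k-j}{N})$ is a Riemann sum for $\int_\R \eta^\e(x-z)\eta^\e(z-y)\,dz = \eta_2^\e(x-y)$, the right-hand side converges to $\eta_2^\e(x-y)-1$.

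On the continuum side, integration by parts on the torus gives $\partial_x(B^1*\eta^\e)(x) = \int_\SS \eta^\e(x-z)\,dB^1(z)$, and the Brownian-bridge covariance identity $E\bigl[\bigl(\int f\,dB^1\bigr)\bigl(\int g\,dB^1\bigr)\bigr] = \int f(z)g(z)\,dz - \bigl(\int f\bigr)\bigl(\int g\bigr)$, together with $\int_\R \eta^\e = 1$, produces exactly $\eta_2^\e(x-y) - 1$ for the covariance of $\partial_x(B^1*\eta^\e)$. Since both measures are centered Gaussian, matching covariances yields convergence of all finite-dimensional distributions at grid points; continuity of the Riemann-sum approximations in $(x,y)$ extends this to arbitrary points, and the piecewise-linear interpolation in \eqref{eq:interpolation} causes no problem because on each cell $u^N$ is a convex combination of its node values. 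For tightness in $\mathcal{C}_1$ I would invoke the Gaussian Kolmogorov--Centsov criterion: smoothness of $\eta_2^\e$ yields the uniform bound $E_{\nu_N}[(u^N(x)-u^N(y))^2] \le C|x-y|^2$, and Gaussianity upgrades this to $E_{\nu_N}[|u^N(x)-u^N(y)|^p] \le C_p |x-y|^p$ for every $p \ge 2$, which is far more than enough for tightness in $C(\SS)$.

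The main obstacle, though essentially bookkeeping, is careful matching of the $-1$ correction arising from the pinning constraint $\sum_i u(i)=0$ on the discrete side with the $-1$ produced by the Brownian-bridge covariance on the continuum side, and checking that the small discrepancies $\sum_k\a(k) = 1 + O(N^{-1})$ and the Riemann-sum remainders are absorbed into negligible terms after the $N^2$-scaling implicit in the definition $u^N(i/N) = N u(i)$.
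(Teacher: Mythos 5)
Your argument is correct, but it is genuinely different from the one in the paper. You follow the generic Gaussian recipe: identify $\nu_N$ as a centered Gaussian on the mean-zero subspace, compute the covariance $\mathrm{Cov}(u^N(\tfrac iN),u^N(\tfrac jN))=N\a_2(i-j)-(\sum_k\a(k))^2$ (your projection computation and the Riemann-sum limit $N\a_2(i-j)\to\eta_2^\e(x-y)$, $(\sum_k\a(k))^2\to1$ are both right), match it with the Brownian-bridge covariance $\eta_2^\e(x-y)-1$ of $\partial_x(B^1*\eta^\e)$, and then add tightness via Kolmogorov's criterion. The paper instead makes the single change of variables $u=\a*\tilde u$ in the integral defining $E^{\nu_N}[f(u)]$, which turns the density into $e^{-\frac N2\sum_j\tilde u(j)^2}$ and hence identifies the law of $\{Nu(i)\}$ under $\nu_N$ \emph{exactly} with that of $\{\nabla_N(\a*B^1(\tfrac{\cdot}N))(i)\}$ for an actual pinned Brownian motion $B^1$; since the linear interpolation of the latter converges a.s.\ in $C(\SS)$ to $\partial_x(\eta^\e*B^1)$, weak convergence follows with no covariance computation, no finite-dimensional-distribution matching, and no tightness estimate. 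What the paper's coupling buys is brevity and the fact that the cancellation of the two ``$-1$'' corrections (pinning constraint versus bridge covariance), which you must track by hand, is automatic because the two fields are equal in law at the discrete level. What your route buys is robustness: it is the standard scheme one would use if the prelimit measure were not an exact convolution image of a white Gaussian, and your observation that Gaussianity upgrades the second-moment increment bound to all $p$-th moments makes the tightness step essentially free. Just make sure, when writing it up, to carry the Riemann-sum function $F(a,b)=\frac1N\sum_k\eta^\e(a-\tfrac kN)\eta^\e(\tfrac kN-b)$ rather than $\eta_2^\e(a-b)$ itself in the increment estimate, since it is $F$, uniformly smooth in $N$ for fixed $\e$, that gives the bound $E[(u^N(x)-u^N(y))^2]\le C|x-y|^2$.
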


\begin{proof}
We first observe that the law of  $\{\nabla_N(\a*B^1(\frac{\cdot}N))(i)\}$
coincides with that of $\{u^N(\frac{i}N) = Nu(i)\}_{i\in \T_N}$
under $\nu_N$, where $\{B^1(x);x\in \SS\}$ is the pinned Brownian 
motion such that $B^1(0)=B^1(1)=0$.  In fact, for every $f\in C_b(\R_0^{\T_N})$,
\begin{align*}
E^{\nu_N}[f(u)] 
& =\frac1{Z_N^\a}  \int_{\R_0^{\T_N}} f(u) 
   e^{-\frac{N}2\sum_j \{\a^{-1}*u(j)\}^2} du \\
& =\frac1{\tilde Z_N^\a}  \int_{\R_0^{\T_N}} f(\a*\tilde u) 
   e^{-\frac{N}2\sum_j \tilde u(j)^2} d\tilde u \\
& = E[f(\a*\nabla B^1(\tfrac{\cdot}N))],
\end{align*}
where we have applied the change of variables:
$u= \a*\tilde u$, that is, $\tilde u = \a^{-1}*u$ and 
$d\tilde u= C_Ndu$ for the second line and note that the distribution
of $\{\tilde u(j)\}$ under the probability measure
$e^{-\frac{N}2\sum_j \tilde u(j)^2} d\tilde u/\tilde Z_N^\a$
is equal to that of $\{(\nabla B^1(\frac{\cdot}N))(j)\}$ for the
third line.  Since $\a*\nabla B^1(\frac{\cdot}N)
= \nabla(\a* B^1(\frac{\cdot}N))$, the above computation
implies that the law of $\{\nabla_N (\a*B^1(\frac{\cdot}N))(i)\}$ 
is equal to that of $\{u^N(\frac{i}N)=Nu(i)\}$ under $\nu_N$.
However, it is easy to see that the linear interpolation of
$\{\nabla_N (\a*B^1(\frac{\cdot}N))(i)\}$ converges in
$C(\SS)$ to $\{\partial_x(\eta^\e*B^1)\}$ as $N\to\infty$ a.s., 
and this completes the proof.
\end{proof}

We choose $\la_1=N^2, \la_2=\frac16 N^2, \la_3= \sqrt{N}$
and $\a(i) = \frac1N\eta^\e(\frac{i}N)$ in the SDE 
\eqref{eq:SDE-discrete}, and set
$$
U_N(t) = \frac1N \sum_{i\in \T_N} \{u_t(i)^2 + (\nabla_Nu_t(i))^2\},
$$
where $u_t(i) := \nabla_Nh_t(i) = N(h_t(i+1)-h_t(i))$.  Note that 
$u_t=\{u_t(i)\}$ satisfies the following SDE:
\begin{align}\label{eq:SDE-u}
du_t(i) =& \frac12\De_Nu_t(i)dt + \frac16 \nabla_N[\a_2* \{
u_t(\cdot)^2+u_t(\cdot-1)^2+ u_t(\cdot)u_t(\cdot-1)\}](i) dt \\
&+ \sqrt{N} \nabla_N dw_t^\a(i),  \notag
\end{align}
where $\De_Nu(i):=N^2\De u(i)$.  We define 
$\{u_t^N(x); x \in \SS\}$ by the linear interpolation
of $\{u_t^N(\frac{i}N) := u_t(i)\}_{i\in\T_N}$
as in \eqref{eq:interpolation}.  Then 
$c_1 \|u_t^N\|_{H^1(\SS)}^2 \le U_N(t) \le c_2 \|u_t^N\|_{H^1(\SS)}^2$
with some $0<c_1$ and $c_2<\infty$.  We denote Sobolev spaces
of order $s\ge 0$ on $\SS$ by $H^s(\SS)$.

We consider the stationary
solution of \eqref{eq:SDE-u}, that is, the initial value
before scaling is taken as $\{u_0(i)/N\}_{i\in\T_N}
\stackrel{\rm{law}}{=} \nu_N$.

\begin{lem}  \label{lem:2.6-a}
{\rm (1)} For every $T>0$, we have the uniform bound:
$$
\sup_{N\in\N}E\left[\sup_{0\le t \le T} U_N(t) 
\right] <\infty.
$$
{\rm (2)} For every $T>0, \fa\in C^\infty(\SS)$ and $0\le s < t\le T$,
$$
E[\lan u_t^N-u_s^N,\fa\ran_{\SS}^4] \le C(\fa) (t-s)^2,
$$
holds with $C(\fa)=C_T(\fa)>0$, 
where $\lan u^N,\fa\ran_{\SS} = \int_\SS u^N(x)\fa(x)dx$.  \\
{\rm (3)}  In particular, $\{u_t^N\}_{N\in\N}$ is tight on
$C([0,T],C(\SS))$ for every $T>0$.
\end{lem}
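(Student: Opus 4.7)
Since $U_N(t)$ is comparable to the squared $H^1(\SS)$-norm of the interpolant $u_t^N$, all three claims are, in essence, spatial $H^1$-type estimates plus a Kolmogorov-Chentsov bound in time. The anchor is Lemma \ref{lem:2.5-a}: under $\nu_N$, $u_0^N$ converges in $C(\SS)$ to the smooth Gaussian field $\partial_x(\eta^\e * B^1)$, whose $H^1(\SS)$-norm has finite second moment. So $\sup_N E^{\nu_N}[U_N(0)] < \infty$, and by the invariance of $\nu_N$ under \eqref{eq:SDE-u} this already gives $E[U_N(t)]\le C_\e$ uniformly in $t \ge 0$ and $N\in\N$.

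To obtain (1) one upgrades the one-time bound to a $\sup_t$-bound by applying It\^o's formula to $F(u) = U_N$ along \eqref{eq:SDE-u}:
\begin{equation*}
U_N(t) = U_N(0) + \int_0^t \Phi_N(u_s)\,ds + M_N(t).
\end{equation*}
The drift $\Phi_N$ splits into three pieces. The Laplacian contribution gives the manifestly non-positive (dissipative) quantity $-\tfrac1N\sum_i[(\nabla_N u)^2 + (\De_N u)^2]$ after discrete summation by parts. The nonlinear convection $\tfrac16\nabla_N[\a_2 * G_\ell]$, once paired with $\partial F/\partial u_i$ and integrated by parts so that the discrete gradient lands on the smooth rescaled kernel $\eta_2^\e$, is bounded by $C_\e(1+U_N)$. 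The It\^o correction is $O(1)$ uniformly in $N$: the factor $N^2$ coming from $\partial^2 F/\partial u_i\partial u_j$ is compensated by the cancellation $(\sigma\sigma^T)_{ii}-(\sigma\sigma^T)_{i,i+1}=O(1/N^2)$, which in turn follows from $\eta_2^\e$ being a smooth \emph{symmetric} kernel. The quadratic variation of $M_N$ is bounded by $C_\e(1+\int_0^t U_N)$ by an analogous computation, so Doob's inequality on $M_N$ together with a Gronwall closure yields $\sup_N E[\sup_{t\le T}U_N(t)]<\infty$.

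For (2), test \eqref{eq:SDE-u} against $\fa\in C^\infty(\SS)$ and sum over $i\in\T_N$:
\begin{equation*}
\lan u_t^N - u_s^N,\fa\ran_\SS = \int_s^t (D_r^{(1)}+D_r^{(2)})\,dr + M_t^\fa - M_s^\fa.
\end{equation*}
Transferring $\De_N$ and $\nabla_N$ onto $\fa$ by discrete summation by parts yields the pathwise bounds $|D_r^{(1)}|\le C(\fa)\sqrt{U_N(r)}$ and $|D_r^{(2)}|\le C(\fa)\,U_N(r)$, whose time-integrals, raised to the fourth power, contribute $C(\fa)(t-s)^4$ after invoking part (1). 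The martingale $M^\fa$ has quadratic variation bounded by $C(\fa)(t-s)$ after two summations by parts moving $\nabla_N$ onto $\fa$, so Burkholder-Davis-Gundy gives $E[(M_t^\fa - M_s^\fa)^4]\le C(\fa)(t-s)^2$, dominating the drift contribution and proving (2).

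Part (3) reduces to standard tightness criteria. The bound in (1) and the compact embedding $H^1(\SS)\hookrightarrow C(\SS)$ provide compact containment: for every $\de>0$ there is a compact $K_\de\subset C(\SS)$ with $P(u_t^N\in K_\de \text{ for all } t\in[0,T])\ge 1-\de$ uniformly in $N$. The moment estimate in (2), applied to a countable dense family of smooth $\fa$ and combined with the uniform $H^1$-bound from (1), yields a uniform $C(\SS)$-valued modulus of continuity via a Kolmogorov-Chentsov argument, giving tightness in $C([0,T],C(\SS))$. The main technical obstacle throughout is the treatment of the nonlinear convection $\nabla_N[\a_2*G_\ell]$ in the It\^o step of (1): the explicit factor $N$ carried by $\nabla_N$ must be absorbed into the rescaled kernel $\eta_2^\e$ by summation by parts, and this only closes because $\e>0$ is kept fixed, keeping all rescaled kernels uniformly smooth in $N$.
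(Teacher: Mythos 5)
There is a genuine gap in your treatment of part (1). The nonlinear drift contribution to $\tfrac{d}{dt}U_N$ is \emph{cubic} in $u$: after summation by parts it has the form $\tfrac1N\sum_i u_t(i)\,[(\nabla_N\a_2)*G_\ell](i)$ with $G_\ell$ quadratic in $u$, and the best pathwise bound (using $|\nabla_N\a_2(j)|\le C/N$ on a support of $O(\e N)$ sites and Cauchy--Schwarz) is $C_\e\,U_N^{3/2}$, not $C_\e(1+U_N)$. A superlinear bound of this type cannot be closed by Gronwall, and the dissipative terms $-\tfrac1N\sum_i\{(\nabla_N u)^2+(\De_N u)^2\}$ do not absorb it either, since $\|u\|_{H^1}^3$ is not dominated by $\|u\|_{H^2}^2 + C(1+\|u\|_{H^1}^2)$. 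So the central step of your argument for (1) fails. The paper avoids this entirely: since the initial law is the invariant measure $\nu_N$, it bounds $\int_0^T E[|b_N(t)|]\,dt = T\,E^{\nu_N}[|b_N|]$ directly, using that all polynomial moments of $\bar u(i)$ and its discrete derivatives under the Gaussian measure $\nu_N$ are uniformly bounded in $N$; no dissipativity or Gronwall closure is needed. Your opening observation that stationarity gives $E[U_N(t)]\le C$ for each $t$ is exactly the right mechanism --- you just need to apply it to the drift integrand itself rather than attempting a pathwise closure.

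A related (smaller) slip occurs in part (2): you bound $|D_r^{(2)}|\le C(\fa)U_N(r)$ and then claim $E[(\int_s^t D_r^{(2)}dr)^4]\le C(\fa)(t-s)^4$ ``after invoking part (1).'' Part (1) is only a first-moment bound on $\sup_t U_N$, so it does not control $E[U_N(r)^4]$. Again stationarity is what is needed: $E[U_N(r)^4]=E^{\nu_N}[U_N^4]$ is uniformly bounded by Gaussian moment estimates, which is how the paper argues (``$E[|u_r(i)|^p]=E^{\nu_N}[|\bar u(i)|^p]$ are bounded in $N$''). Your part (3) --- compact containment from the uniform $H^1$ bound plus equicontinuity of $\lan u_t^N,\fa\ran$ --- is essentially the paper's route via tightness in $C([0,T],H^s(\SS))$, $1/2<s<1$, and is fine as a sketch once (1) and (2) are repaired.
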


\begin{proof}
The tightness on $C([0,T],H^s(\SS))$ with $s<1$
follows from (1) and (2) noting that the embedding $H^1(\SS)
\subset H^s(\SS)$ is compact by Rellich's theorem; see, e.g.,
the proof of Proposition 3.1 in \cite{F92}.  Therefore, (3) follows
by noting $H^s(\SS)\subset C(\SS)$ continuously embedded
if $s>1/2$.

To show (1), we apply It\^o's formula to see that
\begin{align*}
d U_N(t) & = \frac1N\sum_i \{2u_t(i) d u_t(i)+ (du_t(i))^2
+ 2\nabla_N u_t(i) d\nabla_N u_t(i)+ (d\nabla_N u_t(i))^2\}   \\
 & = \frac1N\sum_i  \big[ u_t(i) \big\{\De_N u_t(i)+
\frac13 \nabla_N[\a_2* \{u_t(\cdot)^2+u_t(\cdot-1)^2+
 u_t(\cdot)u_t(\cdot-1)\}](i) \big\} \\
& \qquad + \nabla_N u_t(i) \big\{\nabla_N \De_N u_t(i)+
\frac13 \nabla_N^2[\a_2* \{u_t(\cdot)^2+u_t(\cdot-1)^2+
 u_t(\cdot)u_t(\cdot-1)\}](i) \big\} \\
& \qquad + (-N \nabla_N^2 \a_2(0) + N \nabla_N^4\a_2(0))\big] 
 \cdot dt\\
& \qquad + \frac{\sqrt{N}}N \sum_i \{2u_t(i) \nabla_N dw_t^\a(i)
+ 2\nabla_Nu_t(i) \nabla_N^2 dw_t^\a(i)\} \\
&=: b_N(t)dt+dm_N(t),
\end{align*}
where $m_N(t)$ denotes the martingale part.
Therefore, we have that
\begin{align} \label{eq:U-N}
E[\sup_{0\le t \le T} U_N(t)] \le E[U_N(0)] + \int_0^TE[|b_N(t)|]dt
+ E[\sup_{0\le t \le T} m_N(t)].
\end{align}
However, by the stationarity of $u_t$, we easily see that
\begin{align*}
& E[U_N(0)] = \frac1N\sum_i E^{\nu_N}[\bar u(i)^2+(\nabla_N\bar u(i))^2] \le C,
\end{align*}
uniformly in $N$, where $\bar u(i)=u^N(\frac{i}N)
(\stackrel{\rm{law}}{=} \nabla_N(\a*B(\frac{\cdot}N))(i))$, and
\begin{align*}
& E[|b_N(t)|] \le \frac1N\sum_i \big(
E^{\nu_N}[ |\bar u(i) \big\{\De_N \bar u(i)+
\frac13 \nabla_N[\a_2* \{\bar u(\cdot)^2+\bar u(\cdot-1)^2+
 \bar u(\cdot)\bar u(\cdot-1)\}](i) \big\} \\
& \qquad\qquad\qquad
  + \nabla_N \bar u(i) \big\{\nabla_N \De_N \bar u(i)+
\frac13 \nabla_N^2[\a_2* \{\bar u(\cdot)^2+\bar u(\cdot-1)^2+
 \bar u(\cdot)\bar u(\cdot-1)\}](i) \big\}| ]  \\
&  \qquad\qquad\qquad
  +|-N \nabla_N^2 \a_2(0) + N \nabla_N^4\a_2(0)|\big) \\
& \qquad\quad\;\;  \le C,
\end{align*}
since $|-N \nabla_N^2 \a_2(0) + N \nabla_N^4\a_2(0)|$ is bounded in $N$
(asymptotically converging to $|-(\eta_2^\e)''(0)+(\eta_2^\e)''''(0)|$
as $N\to\infty$), and
$E^{\nu_N}[ |\nabla_N^\ell \bar u(i)|^p], \ell =0,1,2,3, p\ge 1$
are all independent of $i$ (because of the shift invariance of
$\nu_N$) and uniformly bounded in $N$.
Moreover, by Doob's inequality and then by the stationarity of $u_t$,
\begin{align*}
E[\sup_{0\le t \le T} m_N(t)]^2
& \le E[\sup_{0\le t \le T} m_N(t)^2] \le 4E[m_N(T)^2] \\
& = 4\int_0^Tdt \frac4{N}\sum_j E\big[\big\{\sum_i(u_t(i)\nabla_N\a(i-j)
+ \nabla_Nu_t(i)\nabla_N^2\a(i-j))\big\}^2\big] \\
& \le CT.
\end{align*}
Note that $\nabla_N\a(i-j) = \eta^\e(\frac{i+1-j}N)- \eta^\e(\frac{i-j}N)$
and $\nabla_{N}^2 \a(i-j) = N\{ \eta^\e(\frac{i+2-j}N) -2 \eta^\e(\frac{i+1-j}N)
+ \eta^\e(\frac{i-j}N)\}$ are both $O(1/N)$.  This proves (1).

To show (2), from the definition \eqref{eq:interpolation} of the linear
interpolation for $u_t^N(x)$, 
one can rewrite $\lan u_t^N,\fa\ran_{\SS}$ as a sum in $i$.  Then, applying the
 summation by parts in $i$, we obtain that
\begin{equation} \label{eq:u^N_t}
\lan u_t^N,\fa\ran_{\SS} = \frac1N \sum_i u_t(i) \tilde \fa^N(i),
\end{equation}
where
$$
\tilde\fa^N(i) = N^2 \int_\SS \{1_{[\tfrac{i-1}N,\tfrac{i}N)}(x) (x-\tfrac{i-1}N)
+ 1_{[\tfrac{i}N,\tfrac{i+1}N)}(x) (\tfrac{i+1}N-x) \}\fa(x)dx.
$$
However, the Taylor expansion of $\fa(x)$ around $x=i/N$ in the right 
hand side up to the third order leads to 
$$
\tilde\fa^N(i) = \fa(\tfrac{i}N) + \tfrac1{12N^2} \fa''(\tfrac{i}N) +
 r^N(i), \quad i\in \T_N,
$$
with remainder terms $r^N(i)$ satisfying $|r^N(i)|\le C/N^3$.
This implies that $|R^N(i)|$, $|\nabla_N R^N(i)|$, $|\De_N R^N(i)| \le C/N$
for $R^N(i) := \tilde\fa^N(i)-\fa(\tfrac{i}N)$.  Then, from \eqref{eq:SDE-u}
and \eqref{eq:u^N_t}, for $0\le s \le t$, we have that
\begin{align}  \label{eq: u_t-u_s}
\lan u_t^N-u_s^N,\fa\ran_{\SS}= I^{(1)}+  I^{(2)} +  I^{(3)},
\end{align}
where
\begin{align*}
 I^{(1)}&= \frac1{2N} \int_s^t \sum_i u_r(i) \{ (\De_N\fa(\tfrac{\cdot}N))(i) 
 + \De_N R^N(i)\} dr, \\
 I^{(2)}& = - \frac1{6N} \int_s^t \sum_i 
\a_2* \{u_r(\cdot)^2+u_r(\cdot-1)^2+ u_r(\cdot)u_r(\cdot-1)\}(i)  \\
& \qquad\qquad\qquad\qquad  \times
\{ (\nabla_N\fa(\tfrac{\cdot}N))(i)  + \nabla_N R^N(i)\} dr, \\
 I^{(3)}&= \frac1{\sqrt{N}}\sum_i\nabla_N(w_t^\a-w_s^\a)(i)\{\fa(\tfrac{i}N)+R^N(i)\}.
\end{align*}
Noting that $E[|u_r(i)|^p] = E^{\nu_N}[|\bar u(i)|^p]$ (by stationarity) are bounded
in $N$ for $p\ge 1$, we easily see that
$
E[(I^{(1)})^4], E[(I^{(2)})^4]  \le C(\fa)(t-s)^4.
$
Moreover, it is also easy to see that
$
E[(I^{(2)})^4] \le C(\fa)(t-s)^2.
$
This proves (2).
\end{proof}

\subsection{The martingale problems associated 
with the SPDEs \eqref{eq:b-4.1} and \eqref{eq:b-4.1-torus}}

To complete the proofs of Proposition \ref{prop:2.4-M} and then
Theorem \ref{thm:2}, we introduce the martingale formulations for the
SPDEs \eqref{eq:b-4.1}  and \eqref{eq:b-4.1-torus}.  To this end,
we first introduce the (pre) generators of the processes $h(t)$ 
determined by \eqref{eq:1} or \eqref{eq:1-M}.

Let $\mathcal{D} = \mathcal{D}(\mathcal{C})$ be the class of 
all tame functions $\Phi$ on 
$\mathcal{C}=C(\R)$, that is, those of the form:
\begin{equation} \label{eq:2.tame}
\Phi(h) = f(\lan h,\fa_1\ran, \ldots, \lan h,\fa_n\ran), \quad
h\in \mathcal{C},
\end{equation}
with $n= 1,2,\ldots$, $f=f(z_1,\ldots,z_n)\in C_b^2(\R^n), \fa_1
\ldots, \fa_n \in C_0^\infty(\R)$, where $\lan h,\fa\ran =
\int_\R h(x)\fa(x)dx$.  We define its functional derivatives by
\begin{align}  \label{eq:2.5-D}
& D\Phi(x;h) 
= \sum_{i=1}^n \partial_{z_i}f(\lan h,\fa_1\ran, \ldots, \lan h,
\fa_n\ran)  \fa_i(x), \\
& D^2\Phi(x_1,x_2;h)
 = \sum_{i,j=1}^n \partial_{z_i}\partial_{z_j}
f(\lan h,\fa_1\ran, \ldots, \lan h,\fa_n\ran) \fa_i(x_1)\fa_j(x_2).
     \label{eq:2.6-D}
\end{align}
The class $\mathcal{D}_{\infty} = \mathcal{D}_{\infty}
(\mathcal{C})$ stands for the family
of all $\Phi \in \mathcal{D}$ determined by \eqref{eq:2.tame}
with $f\in C_\infty^2(\R^n)$ such that 
$
\lim_{|z|\to\infty} \left\{|f(z)| + |\partial_{z_i}f(z)| + |\partial_{z_i}\partial_{z_j}f(z)| 
\right\} =0.
$

For $\Phi\in \mathcal{D}$, define two operators $\mathcal{L}_0^\e$
and $\mathcal{A}^\e$ by
\begin{align*}
\mathcal{L}_0^\e\Phi(h) & = \frac12 \int_{\R^2}
 D^2 \Phi(x_1,x_2;h) \eta_2^\e(x_1-x_2) dx_1 dx_2
  + \frac12 \int_\R \partial_x^2h(x) D\Phi(x;h) dx,  \\
\mathcal{A}^\e\Phi(h) & = \frac12\int_\R\big((\partial_x h)^2 -\xi^\e \big)
 *\eta_2^\e(x) D\Phi(x;h) dx.
\end{align*}
Then, $\mathcal{L}^\e := \mathcal{L}_0^\e + \mathcal{A}^\e$ is the (formal)
generator corresponding to the SPDE \eqref{eq:1}.  In fact, by applying
It\^o's formula, we have that
$$
d\Phi(h_t) = \lan D\Phi(x;h_t), d h_t(x)\ran_\R
+ \frac12 \lan D^2\Phi(x_1,x_2;h_t), dW^\e(t,x_1)
dW^\e(t,x_2)\ran_{\R^2}
$$
and note that
\begin{equation} \label{eq:2.cov}
dW^\e(t,x_1)dW^\e(t,x_2) = \eta_2^\e(x_1-x_2) dt.
\end{equation}

The (formal) generator corresponding to the SPDE 
\eqref{eq:b-4.1} for the tilt process $u=\partial_x h$ is given by
$\mathcal{L}^{\e,U} = \mathcal{L}_{0}^{\e,U} + \mathcal{A}^{\e,U}$,
where
\begin{align*}
\mathcal{L}_0^{\e,U}\Phi(u) & = \frac12 \int_{\R^2}
  D^2 \Phi(x_1,x_2;u) \, \partial_{x_1} \partial_{x_2}\{\eta_2^\e(x_1-x_2)\} dx_1 dx_2
  + \frac12 \int_\R \partial_x^2u(x) D\Phi(x;u) dx,  \\
\mathcal{A}^{\e,U}\Phi(u) & = \frac12\int_\R
\partial_x (u^2 *\eta_2^\e)(x)  D\Phi(x;u) dx,
\end{align*}
for $\Phi=\Phi(u)\in \mathcal{D}$, which is given by \eqref{eq:2.tame}
with $u$ in place of $h$.  Note that the derivatives $\partial_x^2$ and 
$\partial_x$ in these operators can be moved to $D\Phi(x;u)$
by the integration by parts.

We similarly define $\mathcal{D}(\mathcal{C}_M)$ and 
$\mathcal{D}(\mathcal{C}_{M,0})$ as the classes of all
$\Phi$ on $\mathcal{C}_M$ and $\mathcal{C}_{M,0}$, respectively,
of the forms \eqref{eq:2.tame} with $\fa_i \in C^\infty(\SS_M)$ 
and $\lan h,\fa\ran_{\SS_M} := \int_{\SS_M} h(x)\fa(x)dx$
in place of $\lan h,\fa\ran$.
Then, operators $\mathcal{L}_{0,M}^\e, \mathcal{A}_M^\e$ together
with $\mathcal{L}_M^\e := \mathcal{L}_{0,M}^\e + \mathcal{A}_M^\e$
on $\mathcal{D}(\mathcal{C}_M)$ and $\mathcal{L}_{0,M}^{\e,U}, 
\mathcal{A}_M^{\e,U}$ together with $\mathcal{L}_M^{\e,U} := 
\mathcal{L}_{0,M}^{\e,U} + \mathcal{A}_M^{\e,U}$ on 
$\mathcal{D}(\mathcal{C}_{M,0})$ are defined as
$\mathcal{L}_{0}^\e, \mathcal{A}^\e, \mathcal{L}^\e$
and $\mathcal{L}_{0,M}^{\e,U}, \mathcal{A}_M^{\e,U}, 
\mathcal{L}_M^{\e,U}$, respectively, by replacing the integrals over 
$\R^2$ and $\R$ by those over  $\SS_M^2$ and $\SS_M$,
respectively.  We also consider the classes of functions
 $\mathcal{D}_\infty(\mathcal{C}_M)$ and 
$\mathcal{D}_\infty(\mathcal{C}_{M,0})$.

\begin{rem}  \label{rem:2.1-c}
We can regard $\mathcal{L}^\e$ as the generator of the tilt process
$u$ by replacing its domain.  In fact, let $\mathcal{D}_{\nabla}=
\mathcal{D}_{\nabla}(\mathcal{C})$ be the class of all
$\Phi\in\mathcal{D}$ with $\fa_i$ satisfying $\int_\R\fa_i dx=0,
1\le i \le n$.  This is a natural class of functions for tilt variables,
since, under the equivalence relation $h \sim h+c$ with some
$c\in\R$, we have
$\Phi(h) = \Phi(h+c)$ if $\Phi \in \mathcal{D}_{\nabla}$ so that
$\Phi$ is a function on the quotient space $\tilde{\mathcal{C}}
= \mathcal{C}/\!\!\sim$.
For the function 
$\Phi \in \mathcal{D}_{\nabla}$, though we write its variable by
$h$, the height $h$ itself has no meaning.  In particular, if $h$ is
differentiable, $\Phi \in \mathcal{D}_{\nabla}$ can be considered
as a function of its tilt $u:=h'\equiv \partial_x h$:  if $\Phi(u) = 
f(\lan u,\psi_1\ran,\ldots, \lan u,\psi_n\ran)$ with $\psi_1,\ldots,
\psi_n\in C_0^\infty(\R)$, then $\lan u,\psi_i\ran =
\lan h, \fa_i\ran$ with $\fa_i:=-\psi_i'$ and $\fa_i$ satisfies the condition
$\int_\R \fa_i dx=0$, which is the additional condition
imposed on $\Phi \in \mathcal{D}_{\nabla}$.  We can also define
$\mathcal{D}_{\nabla}(\mathcal{C}_M)$ as the class of all
$\Phi\in \mathcal{D}(\mathcal{C}_M)$ with $\fa_i \in C^\infty(\SS_M)$ 
satisfying $\int_{\SS_M} \fa_i(x)dx =0$. 
\end{rem}

We now introduce the martingale problems  associated 
with the SPDEs \eqref{eq:b-4.1} and \eqref{eq:b-4.1-torus}
on extended spaces.  Recall that $\e>0$ is fixed so that the noise 
$\dot{W}^\e(t,x)$ is smooth in $x$.  
As a state spaces for the SPDE \eqref{eq:b-4.1},
we take $C(\R)\cap L_r^2(\R), r>0,$ where $L_r^2(\R)$
is the weighted $L^2$-space; recall \eqref{eq:L_r-norm}.

\begin{lem}\label{lem:2.3.1}
{\rm (1)} If the probability measure $P$ on 
$C([0,\infty), C(\R)\cap L_r^2(\R))$
is a solution of the 
$(\mathcal{L}^{\e,U},\mathcal{D}_{\infty})$-martingale problem,
then there exists  $\dot{W}^\e(t,x)$, which is defined on this space
and a Gaussian smeared noise under $P$, such that the
coordinate function $u(t)$ is a solution of the SPDE \eqref{eq:b-4.1}
in the generalized functions' sense; i.e., \eqref{eq:b-4.1} holds multiplied by
any test function $\fa\in C_0^\infty(\R)$ and integrated over $\R$
(as in \eqref{eq:1.2-a}).  \\
{\rm (2)} Similar results hold on $\SS_M$: Under the solution $P$ 
on $C([0,\infty),\mathcal{C}_{M,0})$ of the
$(\mathcal{L}_M^{\e,U},\mathcal{D}_{\infty}(\mathcal{C}_{M,0}))$-martingale
problem, the coordinate function $u(t)$ satisfies the SPDE
\eqref{eq:b-4.1-torus} in the generalized functions' sense
with a certain Gaussian smeared noise $\dot{W}^\e(t,x)$ on
$[0,\infty)\times \SS_M$.
\end{lem}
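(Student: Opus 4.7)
The plan is, for each test function $\psi\in C_0^\infty(\R)$, to apply the martingale problem to linear and quadratic tame functionals in $\lan u,\psi\ran$, extract a continuous square-integrable martingale $M^\psi_t$ together with its bracket, and then invoke Walsh's martingale representation theorem to produce the required smeared Gaussian noise.

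The tame functionals $\Phi^{(1)}_\psi(u):=\lan u,\psi\ran$ and $\Phi^{(2)}_\psi(u):=\lan u,\psi\ran^2$ are not themselves in $\mathcal{D}_\infty$, but each is approximated by $f_R(\lan u,\psi\ran)$ with $f_R\in C_\infty^2(\R)$ agreeing with $f(z)=z$ (respectively $z^2$) on $[-R,R]$. Uniform moment bounds on $\lan u(s),\psi\ran$ and $\lan u(s)^2*\eta_2^\e,\psi'\ran$ over compact time intervals (using $\psi\in C_0^\infty$, $u(\cdot)\in L_r^2(\R)$ almost surely, and the smoothness and compact support of $\eta_2^\e$) combined with a standard localization argument pass the martingale identity to the limit. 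After moving $\partial_x^2$ and $\partial_x(\cdot*\eta_2^\e)$ onto $\psi$ by integration by parts, one obtains that
\[
M^\psi_t \;:=\; \lan u(t)-u(0),\psi\ran - \tfrac12\int_0^t \lan u(s),\psi''\ran\,ds + \tfrac12\int_0^t \lan u(s)^2*\eta_2^\e,\psi'\ran\,ds
\]
is a continuous square-integrable $P$-martingale. Running the same approximation for $\Phi^{(2)}_\psi$, comparing the drift with It\^o's formula for $\lan u(t),\psi\ran^2$ and performing one more integration by parts in $\eta_2^\e$, we get
\[
\langle M^\psi\rangle_t \;=\; t\int_{\R^2}\psi(x_1)\psi(x_2)\partial_{x_1}\partial_{x_2}\eta_2^\e(x_1-x_2)\,dx_1\,dx_2 \;=\; t\,\|\psi'*\eta^\e\|_{L^2(\R)}^2,
\]
and polarization yields $\langle M^{\psi_1},M^{\psi_2}\rangle_t = t\,\lan \psi_1'*\eta^\e,\psi_2'*\eta^\e\ran_{L^2(\R)}$.

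Since $\psi\mapsto M^\psi_\cdot$ is linear and $\{M^\psi\}_\psi$ has deterministic linear-in-$t$ bracket, the field is a Gaussian martingale measure (L\'evy's characterization applied to each $M^\psi$ gives Gaussianity, and the covariances above give joint Gaussianity). By Walsh's martingale representation theorem, possibly after enlarging the probability space, there exists a space-time Gaussian white noise $W(ds\,dx)$ on $[0,\infty)\times\R$ with $M^\psi_t = -\int_0^t\!\int_\R (\psi'*\eta^\e)(x)\,W(ds\,dx)$; setting $\dot W^\e := \dot W*\eta^\e$ and substituting back into the definition of $M^\psi_t$ recovers exactly the weak form of \eqref{eq:b-4.1} against the arbitrary test $\psi$. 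Part (2) is identical after replacing $\R$ by $\SS_M$ and $\mathcal{D}_\infty$ by $\mathcal{D}_\infty(\mathcal{C}_{M,0})$; compactness of $\SS_M$ trivializes the moment bounds. The main subtlety is the first step, the approximation that extends the martingale identity from $\mathcal{D}_\infty$ to the unbounded functionals $\Phi^{(1)}_\psi$ and $\Phi^{(2)}_\psi$; once $M^\psi$ and its bracket are in hand, the identification of the noise is standard martingale measure theory.
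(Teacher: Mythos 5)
Your proposal is correct and follows essentially the same route as the paper: apply the martingale problem to cut-off versions of the linear and quadratic tame functionals, identify the martingale $M^\psi_t$ and its bracket $t\,\|\psi'*\eta^\e\|_{L^2}^2$ by polarization, and conclude via a martingale representation theorem. The only cosmetic difference is that you invoke Walsh's representation for martingale measures where the paper uses Da Prato--Zabczyk's Theorem 8.2 (and therefore has to add a remark handling the failure of the trace-class hypothesis on $\R$ by restriction to $[-N,N]$, periodization, and a consistency argument); your route sidesteps that particular technicality but is otherwise the same argument.
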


\begin{proof}
To prove (1), we use two types of functions $\Phi_1(u) = \lan u,\fa\ran$
and $\Phi_2(u) = \lan u,\fa_1\ran \lan u,\fa_2\ran$; more precisely, their
cut-off functions such as $\Phi_{1,N}(u) = g_N(\lan u,\fa\ran)$ with
$g_N\in C_\infty^2(\R)$ satisfying $g_N(x)=x$ for $|x|\le N$
and similarly defined functions $\Phi_{2,N}$ for $\Phi_2$. We denote by
$
b(u,\fa) = \tfrac12 \lan \partial_x^2 u + \partial_x (u^2*\eta_2^\e),\fa\ran.
$
Then,
\begin{align}  \label{eq:2.l.1}
M_t(\fa) := & \lan u(t),\fa\ran - \lan u(0),\fa\ran-
\int_0^t \mathcal{L}^{\e,U} \Phi_1(u(s))ds \\
= & \lan u(t),\fa\ran -\lan u(0),\fa\ran- \int_0^t b(u(s),\fa)ds   \notag
\end{align}
is a local martingale.  Moreover, by noting that  $\Phi_2(u(t))- \int_0^t 
\mathcal{L}^{\e,U} \Phi_2(u(s))ds$ is a local martingale and
$\mathcal{L}^{\e,U} \Phi_2(u) = \lan \partial_{x_1}\partial_{x_2}
\{\eta_2^\e(\cdot-\cdot)\},\fa_1\otimes
\fa_2\ran_{\R^2} + b(u,\fa_1)\lan u,\fa_2\ran +b(u,\fa_2)\lan u,\fa_1\ran$,
and by applying It\^o's formula, we can easily see that
\begin{align}  \label{eq:2.l.2}
M_t(\fa_1) M_t(\fa_2) -
t \int_{\R^2} \partial_{x_1}\partial_{x_2}
\{\eta_2^\e(x_1-x_2)\} \fa_1(x_1) \fa_2(x_2) dx_1dx_2
\end{align}
is a local martingale.  This implies that the cross variation of
two local martingales $M_t(\fa_1)$ and $M_t(\fa_2)$ with 
$\fa_1, \fa_2\in C_0^\infty(\R)$ is given by
\begin{align}  \label{eq:2.l.3}
\lan M(\fa_1), M(\fa_2)\ran_t = t \lan V\fa_1,\fa_2\ran,
\end{align}
where the right hand side denotes the inner product in
$L^2(\R) =L^2(\R,dx)$ and
$$
V\fa(x) = \partial_x \int_{\R} \eta_2^\e(x-x_1) (-\partial_{x_1}) \fa(x_1) dx_1.
$$
Introducing operators: $R=\partial_x$ and
\begin{align*}
& Q\fa(x) = \int_{\R} \eta_2^\e(x-x_1) \fa(x_1) dx_1, \\
& Q^{\frac12}\fa(x) = \int_{\R} \eta^\e(x-x_1) \fa(x_1) dx_1,
\end{align*}
we can rewrite $V$ as $V = (R Q^{\frac12})(R Q^{\frac12})^*$ as operators
on $L^2(\R)$.  Note that $Q$ and $Q^{\frac12}$ are symmetric on $L^2(\R)$, 
$(Q^{\frac12})^2 =Q$, and, in particular, $Q$ is non-negative, but
$\|Q^{\frac12}\|_{\rm HS}^2 = \int_{\R^2}(\eta^\e(x_1-x_2))^2 dx_1dx_2 =\infty$  
so that ${\rm Tr} \, Q =\infty$.

By the martingale representation theorem (see, e.g., \cite{DZ} Theorem 8.2,
actually stated only in case ${\rm Tr} \, Q<\infty$, and also
Remark \ref{rem:cov-Q} below), \eqref{eq:2.l.3} implies that
$$
M_t(x) = R W^Q(t,x) \equiv \partial_x W^Q(t,x),
$$
where $W^Q$ is the $Q$-Wiener process, which has the representation
\eqref{eq:W^e} with a space-time Gaussian white noise.  This with
\eqref{eq:2.l.1} implies the conclusion of (1).  The proof of (2)
is similar.
\end{proof}

\begin{rem}  \label{rem:cov-Q}
In our case, as we pointed out, the assumption ${\rm Tr} \, Q<\infty$
is not satisfied.  To overcome this, we may first define $M_t^N(x)$
by restricting $M_t(x)$ on $[-N,N]$ and periodically extending it to
$[-N-2\e,N+2\e]$.  For $M_t^N$, the corresponding $Q$-operator
is given by
\begin{align*}
Q^N\fa(x) = \int_{-N-2\e}^{N+2\e} \eta_2^\e(x-x_1) \fa(x_1) dx_1, \quad x\in [-N,N],
\end{align*}
with $\fa$ defined on $[-N,N]$ but periodically extended to
$[-N-2\e,N+2\e]$ and this operator becomes of trace class.  Therefore, one can
apply Theorem 8.2 of \cite{DZ} and construct $W^{Q^N}(t,x)$.  Then,
by the consistency of  $W^{Q^N}(t,x)$, one can extend it to the whole 
line $\R$.
\end{rem}

\subsection{Proof of Proposition \ref{prop:2.4-M}} \label{section:2.5}

We may assume $M=1$ without loss of generality.  Recall that 
$\{u_t^N(t); x\in \SS\}$ is defined by the linear interpolation of the 
stationary solution $u_t=\{u_t(i)\}$ of \eqref{eq:SDE-u} in such
a manner that $u_t^N(\tfrac{i}N) = u_t(i), i\in
\T_N$, and it is tight on $C([0,T],C(\SS))$ from Lemma 
\ref{lem:2.6-a}.  Therefore, by Skorohod's representation theorem,
we can realize on a proper probability space such that
$u_t^N$ converges to some $u_t$ in $C([0,T],C(\SS))$ as
$N\to\infty$ a.s.\ for every $T>0$.
We abuse the notation.  Then, for every $\Phi\in
\mathcal{D}(\mathcal{C}(\SS))$, we have that
\begin{equation} \label{eq:dPhi}
d\Phi(u_t^N) = \lan D\Phi(\cdot;u_t^N), du_t^N\ran_{\SS}
+ \frac12 \lan D^2\Phi(x_1,x_2;u_t^N), du_t^N(x_1) du_t^N(x_2)\ran_{\SS^2}.
\end{equation}
However, from \eqref{eq: u_t-u_s}, we have that
\begin{align}  \label{eq:du_t^N}
 d \lan u_t^N,\fa\ran
= &  \frac1{2N} \sum_i u_t^N(\tfrac{i}N) \{ (\De_N\fa(\tfrac{\cdot}N))(i) 
 + \De_N R^N(i)\} dt \\
&  - \frac1{6N} \sum_i \a_2* \{u_t(\frac{\cdot}N)^2+u_t(\tfrac{\cdot-1}N)^2
+ u_t(\tfrac{\cdot}N)u_t(\tfrac{\cdot-1}N)\}(i)  \notag \\
 & \qquad\qquad\qquad\qquad  \times
\{ (\nabla_N\fa(\tfrac{\cdot}N))(i)  + \nabla_N R^N(i)\} dt   \notag  \\
& - \frac1{\sqrt{N}}\sum_i \{(\nabla_N\fa(\tfrac{\cdot}N))(i)+\nabla_NR^N(i)\}
\sum_j \a(i-j) dw_t(j).  \notag  
\end{align}
In particular, recalling that $|\nabla_NR^N(i)| \le C/N$,
\begin{align*}
& d \lan u_t^N,\fa_1\ran d \lan u_t^N,\fa_2\ran  \\
& = \frac1N \sum_{i,j}\a_2(i-j)
\{(\nabla_N\fa_1(\tfrac{\cdot}N))(i)+\nabla_NR_1^N(i)\}
\{(\nabla_N\fa_2(\tfrac{\cdot}N))(j)+\nabla_NR_2^N(j)\} dt\\
& = \frac1{N^3} \sum_{i,j,k}\eta^\e(\tfrac{i-k}N)\eta^\e(\tfrac{k-j}N)
\{(\nabla_N\fa_1(\tfrac{\cdot}N))(i)+\nabla_NR_1^N(i)\}
\{(\nabla_N\fa_2(\tfrac{\cdot}N))(j)+\nabla_NR_2^N(j)\} dt\\
& \to \int_{\SS^3}\eta^\e(x-z)\eta^\e(y-z)\fa_1'(x)\fa_2'(y)dxdydz \cdot dt
= \int_{\SS^2}\eta_2^\e(x-y)\fa_1'(x)\fa_2'(y)dxdy \cdot dt,
\end{align*}
as $N\to\infty$.
Since $u_t^N$ converges to $u_t$ in the space
$C([0,T],C(\SS))$ a.s., for the limit $u_t$, we see 
from \eqref{eq:dPhi} and \eqref{eq:du_t^N} that
$$
\Phi(u_t)- \int_0^t  \mathcal{L}_1^{\e,U}\Phi(u_s)ds
$$
is a martingale for every  $\Phi\in \mathcal{D}(C(\SS))$ and therefore
for $\Phi\in \mathcal{D}(\mathcal{C}_{1,0})$.
This completes the proof of Proposition \ref{prop:2.4-M} with the
help of Lemma \ref{lem:2.3.1}-(2) and Lemma \ref{lem:2.5-a}.

\subsection{Invariant measure of KPZ approximating equation 
on $\R$}

Let $u_t^M=\{u_t^M(x); x\in \SS_M=[0,M)\}$ be the stationary solution of
the SPDE \eqref{eq:b-4.1-torus}, that is, $u_0^M \stackrel{\rm{law}}{=}
\nu^{\e,M}$,  constructed in  Proposition \ref{prop:2.4-M}.
We extend $u_t^M$ periodically  on $\R$.

\begin{lem}  \label{lem:2.8-a}
{\rm (1)} For every $T>0$ and  $r>0$, we have
$$
\sup_{M\ge 1}E\left[\sup_{0\le t \le T} \|u_t^M\|_{H_r^1(\R)}^2 
\right] <\infty,
$$
where $\|u\|_{H_r^1(\R)}^2 = \|u\|_{L_r^2(\R)}^2 
+ \|\partial_xu\|_{L_r^2(\R)}^2$.  \\
{\rm (2)} For every $T>0, \fa\in C_0^\infty(\R)$ and $0\le s<t\le T$,
$$
E[\lan u_t^M-u_s^M,\fa\ran^4] \le C(\fa) (t-s)^2,
$$
holds with $C(\fa)=C_T(\fa)>0$.  \\
{\rm (3)}  In particular, $\{u_t^M\}_{M\ge 1}$ is tight on
$C([0,T],C(\R)\cap L_r^2(\R))$ for every $T, r>0$.
\end{lem}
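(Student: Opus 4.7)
The plan is to parallel the proof of Lemma~\ref{lem:2.6-a}, replacing the discrete torus by the continuous torus $\SS_M$ (periodically extended to $\R$) and the finite summation domain by the weight $e^{-2r\chi(x)}$. The decisive input is that, by Proposition~\ref{prop:2.4-M}, the stationary law is $\nu^{\e,M}$, so that $u_t^M\stackrel{\rm{law}}{=}\partial_x(B^M*\eta^\e)$ is a smooth centered Gaussian field. Because $\eta^\e$ has support of size $O(\e)$ independent of $M$, I expect the covariance of this field and all its spatial derivatives to be bounded uniformly in $M\ge 1$ and $x,y\in\R$; hence, for every $p\ge 1$,
$$
\sup_{M\ge 1,\, x\in\R} E^{\nu^{\e,M}}\bigl[\, |u^M(x)|^{2p} + |\partial_x u^M(x)|^{2p} \,\bigr] < \infty.
$$
These pointwise moment bounds substitute for the shift invariance used in Lemma~\ref{lem:2.6-a} and, combined with $\int_\R e^{-2r\chi(x)}dx<\infty$, already give $\sup_{M\ge 1,\,0\le t\le T}E[\|u_t^M\|_{H_r^1(\R)}^2]<\infty$ by stationarity.

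To upgrade to the maximal bound in (1), I would apply It\^o's formula to the functional $\Phi_r(u):=\|u\|_{L_r^2(\R)}^2+\|\partial_x u\|_{L_r^2(\R)}^2$; spatial smoothness of $u_t^M$ (inherited from the mollification by $\eta^\e$) makes this legitimate, with a tame cut-off if a fully rigorous justification is required. Integrating by parts against $e^{-2r\chi}$ and using \eqref{eq:b-4.1-torus}, the drift of $\Phi_r(u_t^M)$ decomposes into a nonpositive dissipative part plus error integrals proportional to $\chi'(x)e^{-2r\chi(x)}$ and $\chi''(x)e^{-2r\chi(x)}$ times polynomials in $u_t^M$ and $\partial_x u_t^M$; since $\chi',\chi''\in L^\infty(\R)$, these have expectations bounded uniformly in $M$ and $t$ by the pointwise moment bounds. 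The martingale part has quadratic variation that is computable from \eqref{eq:2.cov} and is likewise bounded by a deterministic constant independent of $M$, so Doob's maximal inequality closes the estimate.

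For (2), I would test \eqref{eq:b-4.1-torus} against $\fa$ and decompose $\lan u_t^M-u_s^M,\fa\ran$ into the linear drift $\tfrac12\int_s^t\lan u_r^M,\fa''\ran dr$, the nonlinear drift $-\tfrac12\int_s^t\lan (u_r^M)^2*\eta_2^\e,\fa'\ran dr$, and a stochastic integral $N_t^\fa-N_s^\fa$ against $\dot{W}^\e$. Jensen's inequality plus the pointwise moment bounds yield fourth moment $O((t-s)^4)$ for each drift contribution, while Burkholder-Davis-Gundy applied with the explicit quadratic variation yields $O((t-s)^2)$ for the stochastic integral; summing gives (2). Finally, (3) follows from (1) and (2) via a Kolmogorov-type tightness criterion combined with Rellich compactness of $H^1$ into $C$ on bounded windows and the localization at infinity provided by $e^{-2r\chi}$, exactly as in the proof of Lemma~\ref{lem:2.6-a}-(3).

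The main technical obstacle I anticipate is the careful bookkeeping of the weight during the It\^o drift computation in (1): because $\chi$ is not exactly linear in $|x|$ near the origin, each integration by parts generates several $\chi'$- or $\chi''$-weighted polynomial-in-$u$ remainder integrals, every one of which has to be absorbed using the pointwise moment bounds under $\nu^{\e,M}$. Justifying It\^o's formula for the unbounded, non-tame functional $\Phi_r$ by cut-off and passage to the limit is a secondary but standard technicality.
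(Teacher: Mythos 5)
Your proposal follows essentially the same route as the paper's proof: stationarity under $\nu^{\e,M}$ together with shift-invariance gives the uniform-in-$M$ and $x$ moment bounds, It\^o's formula applied to $\|u_t^M\|_{H_r^1(\R)}^2$ plus Doob's inequality yields (1), the drift/stochastic-integral decomposition with the $O((t-s)^4)$ versus $O((t-s)^2)$ split yields (2), and compact embedding of the weighted Sobolev spaces yields (3). The only cosmetic difference is that the paper bounds $E[|b^M(t)|]$ directly from the moment bounds without extracting a dissipative part via integration by parts, and the relevant quantity for the martingale is the expectation of its (random, not deterministic) quadratic variation, which is bounded by stationarity.
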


\begin{proof}
The proof is parallel to that of Lemma \ref{lem:2.6-a}.  Indeed,
(3) follows from (1) and (2) noting that the embedding $H_r^1(\R)
\subset H_{r'}^s(\R)$ is compact if $r'>r>0$ and $s<1$, and also
$H_r^s(\R) \subset C(\R)$ if $s>1/2$; see \cite{F95}, p.284 for
the weighted Sobolev spaces $H_r^s(\R)$.

To show (1), set $U^M(t) = \|u_t^M\|_{H_r^1(\R)}^2$.  Then, by
It\^o's formula,
\begin{align*}
d U^M(t) & = \int_\R\{2u_t(x) d u_t(x)+ (du_t(x))^2
+ 2\partial_x u_t(x) d\partial_x u_t(x)+ (d\partial_x u_t(x))^2\}
e^{-2r\chi(x)}dx \\
 & = \int_\R [ u_t(x) \{\partial_x^2 u_t(x)+\partial_x (u_t^2*\eta_2^\e)\}
+ \partial_x u_t(x) \{\partial_x^3 u_t(x)+\partial_x^2 (u_t^2*\eta_2^\e)\} \\
& \qquad + (-(\eta_2^\e)''(0)+(\eta_2^\e)''''(0)) ] 
e^{-2r\chi(x)}dx  \cdot dt\\
& \qquad + \int_\R \{2u_t(x) d\partial_xW^\e(t,x) + 
2\partial_x u_t(x) d\partial_x^2 W^\e(t,x)\} e^{-2r\chi(x)}dx\\
&=: b^M(t)dt+dm^M(t),
\end{align*}
where $u_t=u_t^M$ and $W^\e(t,x)$ originally defined on $\SS_M$
is periodically extended on $\R$.  We can bound $E[\sup_{0\le t \le T} U^M(t)]$
by the sum of three terms similarly to \eqref{eq:U-N}.  However, we easily see that
\begin{align*}
& E[U^M(0)] = E^{\nu^{\e,M}}[\|\partial_x B^M*\eta^\e\|_{H_r^1(\R)}^2] \le C
\quad (\text{uniformly in }M),\\
\intertext{where $\{B^M(x);x\in \SS_M\}$ is periodically extended on $\R$, and}
& E[|b^M(t)|] \le \int_\R e^{-2r\chi(x)}dx \big(
E^{\nu^{\e,M}}[ | u(x) \{\partial_x^2 u(x)+\partial_x (u^2*\eta_2^\e)\} \\
& \qquad \qquad \qquad
+ \partial_x u(x) \{\partial_x^3 u(x)+\partial_x^2 (u^2*\eta_2^\e)\}|]
+ (-(\eta_2^\e)''(0)+(\eta_2^\e)''''(0))\big) \\
& \qquad\qquad\;\,  \le C,
\end{align*}
since $E^{\nu^{\e,M}}[ |\partial_x^\ell u(x)|^p], \ell =0,1,2,3, p\ge 1$
are all independent of $x$ (because of the shift invariance of
$u(x)$ under $\nu^{\e,M}$) and uniformly bounded in $M$.
Moreover, by Doob's inequality and then by the stationarity of $u_t$,
\begin{align*}
E[\sup_{0\le t \le T} m^M(t)]^2
& \le 4\int_0^Tdt \int_{\R^2} 8e^{-2r(\chi(x)+\chi(y))}dxdy \big\{
 E^{\nu^{\e,M}}[u(x)u(y)] \partial_x \partial_y\eta_{2,M}^\e(x-y) \\
& \qquad\qquad \qquad\qquad \qquad
+ E^{\nu^{\e,M}}[\partial_x u(x)\partial_y u(y)] \partial_x^2 \partial_y^2
\eta_{2,M}^\e(x-y) \big\}  \\
& \le CT,
\end{align*}
where $\eta_{2,M}^\e(x-y)$ is defined in the sense of modulo $M$
in $x-y$; note that $\partial_x \partial_y \eta_{2,M}^\e$ and
$\partial_x^2 \partial_y^2\eta_{2,M}^\e$ are bounded in $M, x, y$. 
We have estimated as $m^M(T)^2 \le 2(m_1^M(T)^2+m_2^M(T)^2)$
by decomposing $m^M(T)$ into the sum of two stochastic integrals
$m_1^M(T)$ and $m_2^M(T)$.  This proves (1).

For (2), denoting $u_t=u_t^M$ again, we see that
\begin{align*}
\lan u_t-u_s,\fa\ran
&= \frac12 \int_s^t \{\lan u_r,\fa''\ran - 
\lan u_r^2*\eta_2^\e,\fa'\ran \}dr
-\{W^\e(t,\fa')-W^\e(s,\fa')\} \\ 
&=: I^{(1)}+  I^{(2)}.
\end{align*}
However, we easily see that
$
E[(I^{(1)})^4] \le C(\fa)(t-s)^4
$
by the stationarity of $u_t$, and
$
E[(I^{(2)})^4] = C(\fa)(t-s)^2,
$
since $W^\e(t,\fa')$ is a Brownian motion multiplied by a certain
constant.  This proves (2).
\end{proof}

Let $u=\{u(x);x\in \SS_M\}$ be a $\mathcal{C}_{M,0}$-valued random variable
distributed under $\nu^{\e,M}$ and, by periodically extending $u$
on $\R$, we can regard $\nu^{\e,M}$ as a probability distribution
on $\mathcal{C}$.  Then, the following lemma is easy and the proof 
is omitted.

\begin{lem} \label{lem:2.9-b}
The distribution  $\nu^{\e,M}$ weakly converges to $\nu^\e$
on the space $\mathcal{C}$ as $M\to\infty$.
\end{lem}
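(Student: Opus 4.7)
The strategy is to use that, after periodic extension to $\R$, both $\nu^{\e,M}$ and $\nu^\e$ are centered stationary Gaussian measures on $\mathcal{C}$.  For such measures, weak convergence on $\mathcal{C}$ reduces to (i) convergence of covariance functions and (ii) tightness of the sequence.

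For the covariances, a direct computation from the white-noise representation of $B$ gives
\begin{equation*}
C^\e(x-y) := E\!\left[\partial_x(B*\eta^\e)(x)\,\partial_x(B*\eta^\e)(y)\right] = \eta_2^\e(x-y).
\end{equation*}
Using the bridge decomposition $B^M(x) = W(x) - \tfrac{x}{M}W(M)$ with $W$ a standard Brownian motion on $[0,M]$, one obtains the formal identity $E[\dot B^M(s)\,\dot B^M(t)] = \delta(s-t) - \tfrac{1}{M}$ on $[0,M]^2$, and the analogous computation (with the periodic convolution, noting that the boundary terms from integration by parts cancel since $B^M(0)=B^M(M)=0$) then yields
\begin{equation*}
C^{\e,M}(x-y) := E\!\left[\partial_x(B^M*\eta^\e)(x)\,\partial_x(B^M*\eta^\e)(y)\right] = \eta_{2,M}^\e(x-y) - \tfrac{1}{M},
\end{equation*}
where $\eta_{2,M}^\e$ is the $M$-periodization of $\eta_2^\e$.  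Since $\eta_2^\e$ has support in $[-2\e,2\e]$, for any fixed $x, y \in \R$ and all $M > 2(|x-y| + 2\e)$ one has $\eta_{2,M}^\e(x-y) = \eta_2^\e(x-y)$, so $C^{\e,M} \to C^\e$ uniformly on compacts, giving convergence of all finite-dimensional distributions.

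For tightness, recall that $\mathcal{C}$ carries the Fr\'echet topology induced by $\{\|\cdot\|_{C_r}\}_{r>0}$, so it suffices to establish tightness in $C([-K,K])$ for each $K$ together with a uniform tail estimate controlled by the weights $e^{-r|x|}$.  Stationarity and Gaussianity give $E|u^M(x)|^p \le C_p$ uniformly in $M, x$ for every $p \ge 1$, while the elementary variance estimate $\mathrm{Var}[u^M(x) - u^M(y)] \le C\,|x-y|$ (uniform in $M$, coming from the smoothness of $\eta_2^\e$) combined with Gaussian moment bounds yields a Kolmogorov-type estimate $E|u^M(x) - u^M(y)|^p \le C_p\,|x-y|^{p/2}$ for $|x-y|\le 1$.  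Kolmogorov's criterion then gives tightness on each $C([-K,K])$, and the uniform moment bound together with Markov's inequality and the weight $e^{-r|x|}$ provides the tail control required for tightness on $\mathcal{C}$.

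Combining convergence of finite-dimensional distributions with tightness yields $\nu^{\e,M} \Rightarrow \nu^\e$ on $\mathcal{C}$.  No step here is truly delicate; the only item demanding a brief calculation is the covariance of the bridge tilt, and even there the correction to the free Brownian case is just the constant $-\tfrac{1}{M}$, which vanishes in the limit.
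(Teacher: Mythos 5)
The paper itself omits the proof of this lemma, stating only that it ``is easy,'' so there is no argument of the authors' to compare against; your write-up supplies the standard argument that they evidently had in mind, and it is correct. The covariance identity $C^{\e,M}(x-y)=\eta_{2,M}^\e(x-y)-\tfrac1M$ is consistent with the paper's own computation in Section 3.4, where the bridge representation $B^M(x)=B(x)-\tfrac{x}{M}B(M)$ yields $\xi^{\e,M}=\xi^\e-\tfrac1M$, and since $\eta_2^\e$ is smooth and compactly supported the periodization agrees with $\eta_2^\e$ on any compact set once $M$ is large, so finite-dimensional convergence of the centered Gaussian fields follows. Your tightness argument (stationarity plus the Kolmogorov estimate $E|u^M(x)-u^M(y)|^p\le C_p|x-y|^{p/2}$, with the weighted tail bound) is also sound; note only that if $\mathcal{C}=C(\R,\R)$ is given the topology of uniform convergence on compacts, the tail estimate is not even needed, while for the $\mathcal{C}_{\rm tem}$ topology your extra step is exactly what is required.
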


We are now ready to give the proof of Theorem \ref{thm:2}.

\begin{proof}[Proof of Theorem \ref{thm:2}]
The assertion (1) is already shown by Proposition \ref{prop:2.4-M}.
Let us prove (2).  We have shown in Lemma \ref{lem:2.8-a} that  the periodically
extended stationary solution  $\{u_t^M\}_{M\ge 1}$ of
the SPDE \eqref{eq:b-4.1-torus} is tight on
$C([0,T],C(\R)\cap L_r^2(\R))$ for every $r>0$.
Therefore, by Skorohod's representation theorem, we can
realize on a proper probability space that $u_t^M$ converges to
some $u_t$ in $C([0,T],C(\R)\cap L_r^2(\R))$ for every $T, r>0$
as $M\to\infty$ a.s.    Then, for every
$\Phi\in \mathcal{D}(\mathcal{C})$,
$$
\Phi(u_t^M)- \int_0^t \mathcal{L}_M^{\e,U}\Phi(u_s^M)ds
$$
is a martingale.  Here, in the operator $\mathcal{L}_M^{\e,U}$,
the function $\eta_2^\e$ should be understood in
the sense of modulo $M$.  However, noting that the supports of the functions
$\fa_1,\ldots,\fa_n$ appearing in $\Phi$ are compact, we see that
$\mathcal{L}_M^{\e,U}\Phi(u^M)$ converges to $\mathcal{L}^{\e,U}\Phi(u)$
as $M\to\infty$ if $u^M$ converges to $u$ in $C([0,T],C(\R)\cap L_r^2(\R))$.
Thus, one can prove that, for the limit $u_t$,
\begin{equation} \label{eq:mart-d}
\Phi(u_t)- \int_0^t  \mathcal{L}^{\e,U}\Phi(u_s)ds
\end{equation}
is a martingale for every  $\Phi\in \mathcal{D}(\mathcal{C})$.
This completes the proof of Theorem \ref{thm:2}-(2) with the
help of Lemma \ref{lem:2.3.1}-(1) and Lemma \ref{lem:2.9-b}.
\end{proof}

As a corollary, we can prove the infinitesimal invariance of
$\mathcal{L}^{\e,U}$, the symmetry of $\mathcal{L}_0^{\e,U}$ and the
asymmetry of $\mathcal{A}^{\e,U}$ under $\nu^\e$, respectively, or integration
by parts formulas, and similar results on $\SS_M$.

\begin{cor}  \label{cor:asymmetry}
{\rm (1)}
For every $\e>0$ and $\Phi\in \mathcal{D}(\mathcal{C})$, we have that
\begin{equation}  \label{eq:Inv-7}
\int \mathcal{L}^{\e,U} \Phi d\nu^\e =0.
\end{equation}
The operators $\mathcal{L}_0^{\e,U}$ and $\mathcal{A}^{\e,U}$ are symmetric
and asymmetric with respect to $\nu^\e$, respectively, that is,
for every $\Phi, \Psi\in \mathcal{D}(\mathcal{C})$,
\begin{align}  \label{eq:Inv-aaa}
\int \Psi\mathcal{L}_0^{\e,U} \Phi d\nu^\e 
&= \int \Phi\mathcal{L}_0^{\e,U} \Psi d\nu^\e,\\
\intertext{and}
\int \Psi \mathcal{A}^{\e,U} \Phi d\nu^\e 
&= -\int \Phi \mathcal{A}^{\e,U} \Psi d\nu^\e.  \label{Inv-bbb}
\end{align}
{\rm (2)} Similar results hold on $\SS_M$ with $\mathcal{L}_M^{\e,U}$,
$\mathcal{L}_{0,M}^{\e,U}$, $\mathcal{A}_M^{\e,U}$ and $\nu^{\e,M}$
in place of $\mathcal{L}^{\e,U}$, $\mathcal{L}_0^{\e,U}$, 
$\mathcal{A}^{\e,U}$ and $\nu^\e$, respectively.
\end{cor}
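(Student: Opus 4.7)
The plan is to derive the three identities of part (1) in succession, then observe that part (2) follows by identical reasoning with the obvious substitutions ($\SS_M$ for $\R$, $\nu^{\e,M}$ for $\nu^\e$, Theorem \ref{thm:2}-(1) for Theorem \ref{thm:2}-(2), and Lemma \ref{lem:2.3.1}-(2) for Lemma \ref{lem:2.3.1}-(1)). The key inputs are the global invariance of $\nu^\e$ from Theorem \ref{thm:2} and the martingale characterization of the solution.

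For the infinitesimal invariance \eqref{eq:Inv-7}, I start from the stationary solution $u_t$ of \eqref{eq:b-4.1} with $u_0\stackrel{\rm{law}}{=}\nu^\e$, whose existence is given by Theorem \ref{thm:2}-(2). By Lemma \ref{lem:2.3.1}-(1), $\Phi(u_t)-\int_0^t \mathcal{L}^{\e,U}\Phi(u_s)ds$ is a local martingale for every $\Phi\in\mathcal{D}_\infty$. Taking expectations and using stationarity gives
\[
0 = E^{\nu^\e}[\Phi(u_t)]-E^{\nu^\e}[\Phi(u_0)] = t\int \mathcal{L}^{\e,U}\Phi\, d\nu^\e,
\]
so $\int\mathcal{L}^{\e,U}\Phi\,d\nu^\e=0$ on $\mathcal{D}_\infty$. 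I then extend to $\mathcal{D}$ by approximation, since $\e>0$ keeps $u$ smooth and the relevant polynomial moments under $\nu^\e$ are finite.

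For the symmetry \eqref{eq:Inv-aaa}, I reduce to tame test functions $\Phi=f(G_1,\dots,G_n)$ and $\Psi=g(G_1,\dots,G_n)$ with $G_i=\lan u,\fa_i\ran$, and exploit that $\nu^\e$ is the centered Gaussian with covariance $E^{\nu^\e}[u(x)u(y)]=\eta_2^\e(x-y)$. Two integrations by parts yield
\[
\int_{\R^2} \fa_i(x_1)\fa_j(x_2)\,\partial_{x_1}\partial_{x_2}\eta_2^\e(x_1-x_2)\,dx_1dx_2 = \int_{\R^2} \fa_i'(x_1)\fa_j'(x_2)\eta_2^\e(x_1-x_2)\,dx_1dx_2 =: \tilde K_{ij},
\]
while a similar computation gives $E^{\nu^\e}[\lan u,\fa_i''\ran G_j]=-\tilde K_{ij}$. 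Applying Stein's identity to the drift part $\tfrac12\sum_i \partial_i f(G)\lan u,\fa_i''\ran$ of $\mathcal{L}_0^{\e,U}\Phi$ produces a term that exactly cancels the Hessian contribution $\tfrac12\sum_{ij}\tilde K_{ij}g\,\partial_i\partial_j f$, leaving the manifestly $(\Phi,\Psi)$-symmetric bilinear form
\[
\int \Psi\,\mathcal{L}_0^{\e,U}\Phi\,d\nu^\e = -\frac12\sum_{i,j}\tilde K_{ij}\,E^{\nu^\e}\!\big[\partial_i g(G)\,\partial_j f(G)\big],
\]
where symmetry of $\tilde K_{ij}$ in $(i,j)$ yields \eqref{eq:Inv-aaa}.

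For the asymmetry \eqref{Inv-bbb}, I apply \eqref{eq:Inv-7} to the product $\Phi\Psi\in\mathcal{D}$ to obtain $\int \mathcal{L}^{\e,U}(\Phi\Psi)\,d\nu^\e=0$, and combine with \eqref{eq:Inv-aaa} specialized to $\Psi\equiv 1$ (using $\mathcal{L}_0^{\e,U}1=0$) to obtain $\int \mathcal{L}_0^{\e,U}(\Phi\Psi)\,d\nu^\e=0$. Subtraction yields $\int \mathcal{A}^{\e,U}(\Phi\Psi)\,d\nu^\e=0$, and since $\mathcal{A}^{\e,U}$ is a first-order differential operator, the Leibniz rule $\mathcal{A}^{\e,U}(\Phi\Psi)=\Psi\mathcal{A}^{\e,U}\Phi+\Phi\mathcal{A}^{\e,U}\Psi$ gives \eqref{Inv-bbb} at once. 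The main obstacle is the sign and derivative bookkeeping in the Gaussian integration by parts step for \eqref{eq:Inv-aaa}; an alternative route is to pass to the limit from the discrete identity \eqref{eq:D-IP-1} via Lemmas \ref{lem:2.5-a} and \ref{lem:2.9-b}, but the direct Gaussian calculation is more self-contained given the explicit form of $\nu^\e$.
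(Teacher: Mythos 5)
Your proposal is correct and follows essentially the same route as the paper: infinitesimal invariance \eqref{eq:Inv-7} from stationarity of $u_t$ via the martingale \eqref{eq:mart-d}, and the asymmetry \eqref{Inv-bbb} from $\int\mathcal{A}^{\e,U}(\Phi\Psi)\,d\nu^\e=0$ together with the Leibniz rule, exactly as in the paper's proof. For the symmetry \eqref{eq:Inv-aaa} the paper only sketches two options (passing to the limit from the discrete identity \eqref{eq:D-IP-1}, or invoking reversibility of $\nu^\e$ under the Ornstein--Uhlenbeck dynamics), and your explicit Gaussian integration-by-parts computation --- with covariance $E^{\nu^\e}[u(x)u(y)]=\eta_2^\e(x-y)$ and the cancellation of the Hessian term against the Stein contribution of the drift --- is a correctly worked-out version of the second option.
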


\begin{proof}
We give the proof of (1) only.
\eqref{eq:Inv-7} follows by taking the average of the martingale
\eqref{eq:mart-d} and noting that $u_t\stackrel{\rm{law}}{=} \nu^\e$.
\eqref{eq:Inv-aaa} can be shown from \eqref{eq:D-IP-1}
rewritten at the tilt level and by taking the limits twice as we did,
or it can be directly shown by noting that $\nu^\e$ is reversible
for the Ornstein-Uhlenbeck process determined by the SPDE:
\begin{equation*}
\partial_t u = \frac12 \partial_x^2 u 
+ \partial_x \dot{W}^\e(t,x), \quad x \in \R.
\end{equation*}
Since $\mathcal{A}^{\e,U} = \mathcal{L}^{\e,U} - \mathcal{L}_0^{\e,U}$,
\eqref{eq:Inv-7} and \eqref{eq:Inv-aaa} with $\Psi=1$ prove that
\begin{equation}  \label{eq:Inv-6}
\int \mathcal{A}^{\e,U}\Phi d\nu^\e =0,
\end{equation}
and \eqref{Inv-bbb} follows from this by noting that 
$\mathcal{A}^{\e,U}(\Phi \Psi) = \Psi \mathcal{A}^{\e,U} \Phi 
+\Phi \mathcal{A}^{\e,U} \Psi$.
\end{proof}

\begin{rem}\label{rem:D^3}
We can alternatively prove the infinitesimal invariance \eqref{eq:Inv-7} directly 
using the Wiener-It\^o expansion, see \cite{F14}.
\end {rem}

\begin{rem}  \label{rem:Yaglom}
(Yaglom reversibility)
Corollary \ref{cor:asymmetry}
suggests that the generator of the time reversed process
under $\nu^\e$ is given by $\mathcal{L}_0^{\e,U} - 
\mathcal{A}^{\e,U}$.  Coming back to the level of the height processes,
a simple computation shows that $\mathcal{L}_0^\e \check{\Phi}(h) = 
\mathcal{L}_0^\e \Phi(\check{h})$ and $\mathcal{A}^\e \check{\Phi}(h) = 
-\mathcal{A}^\e \Phi(\check{h})$ for $\Phi\in \mathcal{D}$, where
$\check{h}$ and $\check{\Phi}$ are defined by the transformations
 $\check{h}(x) = -h(-x)$ and $\check{\Phi}(h) = \Phi(\check{h})$,
respectively.   This means that
$\check{h}(t,x):= - h(t,-x)$ determined from the solution 
$h(t,x)$ of the SPDE \eqref{eq:1} admits the (pre) generator   
$\mathcal{L}_0^\e - \mathcal{A}^\e$.
\end{rem}

\subsection{Invariant measure for the height process}
\label{section:2.7}

Theorem \ref{thm:2} deals with the tilt processes only, but this can
be easily extended to the height process.  Theorem \ref{thm:2.7}
will not be used later, but we state it for its own interest.  Set
\begin{equation*}
X_t^\e = \frac12\int_0^t \partial_x^2 h^\e(s,0)ds
+ \frac12 \int_0^t \big((\partial_x h^\e(s))^2
 -\xi^\e \big)* \eta_2^\e(0)ds + W^\e(t,0),
\end{equation*}
for the solution $h^\e(t,x)$ of \eqref{eq:1}.
The key point is that, as functions of $h^\e$, the first and
second terms of $X_t^\e$ are defined on the quotient space
$\tilde{\mathcal{C}}$ defined in Remark \ref{rem:2.1-c}.
Therefore, once $h^\e(t)\in 
\tilde{\mathcal{C}}$ is determined by solving the SPDE 
\eqref{eq:b-4.1}, we can recover its height at $x=0$ as
\begin{equation}\label{smoothed2}
h^\e(t,0) = h^\e(0,0) + X_t^\e.
\end{equation}

\begin{thm}  \label{thm:2.7}
For any bounded, integrable and continuous function $G=G(h_0,h)$
on $\R\times\tilde{\mathcal{C}}$ and for any $\e>0$, $t\ge 0$,
we have that
\begin{equation}\label{smoothed3}
\int_{\R\times\tilde{\mathcal{C}}} G(h^\e(t,0),h^\e(t))dh_0d\nu^\e
= \int_{\R\times\tilde{\mathcal{C}}} G(h^\e(0,0),h^\e(0))
dh_0 d\nu^\e,
\end{equation}
where $dh_0$ means that $h^\e(0,0)$ is distributed under the
Lebesgue measure on $\R$.  Note that
$\R\times\tilde{\mathcal{C}}$ can be identified with 
$\mathcal{C}$.  Similar results hold on $\SS_M$.
\end{thm}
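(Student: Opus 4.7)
My plan is to reduce Theorem \ref{thm:2.7} to the tilt-level invariance already established in Theorem \ref{thm:2}, by exploiting two features: the vertical-shift symmetry of the SPDE \eqref{eq:1}, and the translation invariance of Lebesgue measure $dh_0$ on $\R$.

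First I would record two structural observations. The SPDE \eqref{eq:1} is invariant under $h \mapsto h + c$, $c\in\R$, so the induced dynamics on the quotient space $\tilde{\mathcal{C}}$ is autonomous: the trajectory $[h^\e(\cdot)]\in\tilde{\mathcal{C}}$ depends on the initial condition only through $\nabla h^\e(0)$ and on the noise $\dot W^\e$, never on $h^\e(0,0)$. Moreover, each of the three summands in the definition of $X_t^\e$ is a functional of $\partial_x h^\e$ (together with $W^\e(\cdot,0)$), hence a functional of the tilt process alone; in particular $X_t^\e$ is independent of $h^\e(0,0)$. Combined with the decomposition $h^\e(t,0) = h^\e(0,0) + X_t^\e$ from \eqref{smoothed2}, this shows that, once the tilt trajectory and the noise are fixed, varying $h_0:=h^\e(0,0)$ merely translates $h^\e(t,0)$ by the same amount.

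With these in hand, I would rewrite the left-hand side of \eqref{smoothed3} by conditioning on the tilt. For $G$ bounded and integrable on $\R\times\tilde{\mathcal{C}}$, the LHS equals
\begin{equation*}
\int_\R dh_0 \int_{\tilde{\mathcal{C}}} E_{[h]}\!\left[G\bigl(h_0 + X_t^\e,\, [h^\e(t)]\bigr)\right] d\nu^\e([h]),
\end{equation*}
where $E_{[h]}$ denotes expectation for the tilt process started at $[h]$ under \eqref{eq:b-4.1}. By Fubini (legitimate since $G$ is integrable), I would swap the two integrals and then perform the change of variable $h_0\mapsto h_0 - X_t^\e$ in the inner $dh_0$ integral; this has Jacobian $1$ and is valid pathwise because $X_t^\e$ does not depend on $h_0$. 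The resulting expression
\begin{equation*}
\int_{\tilde{\mathcal{C}}} E_{[h]}\!\left[\int_\R G\bigl(h_0,[h^\e(t)]\bigr) dh_0\right] d\nu^\e([h])
\end{equation*}
is then handled by Theorem \ref{thm:2}-(2): the function $[h]\mapsto \int_\R G(h_0,[h])\,dh_0$ is bounded on $\tilde{\mathcal{C}}$ and $\nu^\e$ is invariant for the tilt process, so after swapping back the two integrals we recover the right-hand side of \eqref{smoothed3}. The periodic version is identical, with Theorem \ref{thm:2}-(1) in place of Theorem \ref{thm:2}-(2).

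The only real obstacle is bookkeeping against an infinite reference measure: because $dh_0\otimes\nu^\e$ is not finite, the Fubini interchange and the translation must be performed on $G$ rather than on the measure. The assumption that $G$ is bounded and integrable on $\R\times\tilde{\mathcal{C}}$ makes every step routine.
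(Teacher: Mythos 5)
Your proposal is correct and follows essentially the same route as the paper: the identity $h^\e(t,0)=h^\e(0,0)+X_t^\e$ with $X_t^\e$ a functional of the tilt trajectory and the noise alone, the translation invariance of $dh_0$ to absorb $X_t^\e$, and then Theorem \ref{thm:2} for the invariance of $\nu^\e$ at the tilt level. The paper's proof is a condensed version of exactly this argument, so no further comparison is needed.
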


\begin{proof}
From \eqref{smoothed2} and then by the translation-invariance
of the Lebesgue measure and performing the integral in $dh_0$
first, the left hand side of \eqref{smoothed3} is equal to
$$
\int_{\R\times\tilde{\mathcal{C}}} G(h^\e(0,0)+X_t^\e,h^\e(t))
 dh_0d\nu^\e
= \int_{\R\times\tilde{\mathcal{C}}} G(h^\e(0,0),h^\e(t)) 
dh_0 d\nu^\e.
$$
But, this is equal to the right hand side of \eqref{smoothed3}
by the invariance of $\nu^\e$ under $h^\e(t)\in
\tilde{\mathcal{C}}$  due to Theorem \ref{thm:2}-(2).
\end{proof}

\section{Cole-Hopf transform of KPZ approximating equation
and proofs of Theorems \ref{thm:0} and \ref{thm:1.1}}

Our goal is to study the limit of the KPZ approximating equation
\eqref{eq:1} on $\R$ or \eqref{eq:1-M} on $\SS_M$ 
as $\e\downarrow 0$.  To this end,
we move to the level of the corresponding Cole-Hopf transformed
process $Z^\e(t)$ rather than staying with \eqref{eq:1} or \eqref{eq:1-M}, 
and show that $Z^\e(t)$ converges to the solution $Z(t)$ of the 
SPDE \eqref{eq:8-b} on $\R$ or \eqref{eq:8-b-M} on $\SS_M$
at least if the corresponding
tilt process is stationary.  This implies that the solution $h^\e(t)$ of
\eqref{eq:1}  or \eqref{eq:1-M} converges to $h(t) + \tfrac1{24}t$
as $\e\downarrow 0$, where $h(t)$ is the Cole-Hopf solution of 
the KPZ equation defined by \eqref{eq:1.6}.  We can actually
do this only for \eqref{eq:1-M}; due to a technical reason,
we do not have Proposition \ref{prop:3.12}  on $\R$.
Since all arguments except this do work  on $\R$,
we state the results on $\R$ in Sections 
\ref{section:3.1}--\ref{section:3.3}.  Then, we study the
SPDEs on $\SS_M$ in Sections \ref{section:3.4} and
\ref{subsection:3.4.3}.  Finally in Section \ref{section:3.6},
letting $M\to\infty$, as a byproduct, we find an invariant measure
of the SHE \eqref{eq:1.1} on $\R$.

\subsection{The equation for $Z^\e(t)$}\label{section:3.1}

Under the transformation $h\mapsto Z$ defined by $Z=e^h$, the
KPZ approximating equation \eqref{eq:1} is transformed into the
equation \eqref{eq:8} for $Z=Z^\e(t)$.
In fact, by applying It\^o's formula and recalling \eqref{eq:2.cov}
with $x_1=x_2=x$,
\begin{align*}
dZ & = e^h dh + \frac12 e^h (dW^\e)^2 \\
& = \frac12 Z \Big(\partial_x^2 h + \big((\partial_x h)^2
 -\xi^\e \big)* \eta_2^\e\Big) dt + Z dW^\e  + \frac12 Z \xi^\e dt \\
& = \frac12 Z \Big(\partial_x^2 h + (\partial_x h)^2* \eta_2^\e\Big) dt
 +  Z dW^\e.
\end{align*}
Thus, \eqref{eq:8} is obtained noting that $\partial_x^2 h + (\partial_x h)^2
= Z^{-1} \partial_x^2 Z$ and $\partial_x h = \partial_x Z/Z$. 
The derivation of \eqref{eq:8-M} from \eqref{eq:1-M} is the same.

We define the notion of tilt variables associated with the process
$Z(t)$.  This is a reformulation of those defined for $h$
above Remark \ref{rem:1.1} or in Remark \ref{rem:2.1-c}.
For $Z^1, Z^2\in \mathcal{C}_+$, we say $Z^1 \sim Z^2$
if there exists $c>0$ such that $Z^1(x)=cZ^2(x)$ for all $x\in \R$.
Then, by the linearity and uniqueness of solutions of the SPDE
\eqref{eq:1.1}, we see that $Z^1(t) \sim Z^2(t)$ holds if $Z^1(0)
\sim Z^2(0)$ for two solutions $Z^1(t), Z^2(t)$ of \eqref{eq:1.1}. 
Thus, \eqref{eq:1.1} defines a stochastic evolution $\tilde{Z}(t)$
on the quotient space $\tilde{\mathcal{C}}_+ := 
\mathcal{C}_+/\!\!\sim$.  The SPDE \eqref{eq:8} has the same
character, though it is nonlinear.

\subsection{Wrapped processes}  \label{sec:3.2}

To avoid the complexity arising from the infiniteness of the
invariant measure of $h^\e(t,x)$, we introduce a
modified process $g^\e(t,x)$ of $h^\e(t,x)$.
Let us take $\rho\in C_0^\infty(\R)$ satisfying $\rho\ge 0$,
supp $\rho\subset [-1,1]$, supp $\rho$ is connected,
and $\int_\R\rho(x)dx=1$, and fix it in the rest of the paper
except the last step of Section \ref{subsection:3.4.3},
where we take two different $\rho$'s.
We define a wrapped process $g^\e(t,x)$ of $h^\e(t,x)$ by
$g^\e(t,x)= h^\e(t,x) + N^\e(t)$ with $N^\e(t) = - [h^\e(t,\rho)]$,
more precisely its right continuous modification, where 
$[h]\in \Z$ stands for the integer part of $h\in \R$ and
$h^\e(t,\rho) = \int_\R h^\e(t,x)\rho(x)dx$.  
In particular,  $g^\e(x,\rho)$ defined from $g^\e(t,x)$ 
similarly to $h^\e(t,\rho)$ always satisfies
$g^\e(t,\rho) \in [0,1]$ a.s.\ and $g^\e(t,\rho) \equiv h^\e(t,\rho)$
modulo $1$. 
 
In the next lemma, the initial distribution of $h^\e(0,\cdot)$ is
taken to be $\pi\otimes\nu^\e$, where $\pi$ is a uniform
measure on $[0,1]$, under the decomposition of the height:
\begin{equation} \label{eq:3.1-map}
g \mapsto (g(\rho),\{g(x)-g(\rho);x\in\R\}),
\end{equation}
into the height averaged by $\rho$ and the tilt variable.  
Then, $g^\e(t)$ considered as a $[0,1]\times\tilde{\mathcal{C}}$-valued
process under the map \eqref{eq:3.1-map} is stationary in $t$:

\begin{lem}  \label{lem:3.1-a}
The probability measure $\pi\otimes\nu^\e$ on $[0,1]\times
\tilde{\mathcal{C}}$ is invariant under $g^\e(t,x)$.
\end{lem}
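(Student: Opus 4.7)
The plan is to reduce the statement to the invariance of $\nu^\e$ for the tilt dynamics (Theorem~\ref{thm:2}-(2)), using the translation-invariance of the uniform measure $\pi$ on the torus $\R/\Z$. Wrapping by the integer $N^\e(t)=-[h^\e(t,\rho)]$ leaves tilts unchanged, so $\nabla g^\e(t)=\nabla h^\e(t)$, while
\begin{equation*}
g^\e(t,\rho)=\{h^\e(0,\rho)+\Xi_t^\e\}, \qquad \Xi_t^\e:=h^\e(t,\rho)-h^\e(0,\rho),
\end{equation*}
where $\{\cdot\}$ denotes the fractional part. The whole strategy hinges on the claim that $\Xi_t^\e$ depends only on the tilt process and the driving noise, not on the initial mean height $h^\e(0,\rho)$.

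First I would verify this independence. Integrating the SPDE \eqref{eq:1} in $x$ against $\rho$ yields
\begin{equation*}
\Xi_t^\e=\tfrac12\int_0^t\lan h^\e(s),\rho''\ran ds+\tfrac12\int_0^t\lan\big((\partial_x h^\e(s))^2-\xi^\e\big)*\eta_2^\e,\rho\ran ds+\lan W^\e(t),\rho\ran.
\end{equation*}
Since $\int_\R \rho''\,dx=0$ (as $\rho$ has compact support), I may rewrite $\lan h^\e(s),\rho''\ran=\int_\R \rho''(x)(h^\e(s,x)-h^\e(s,\rho))\,dx$, so the first term is a measurable functional of the tilt; the second term depends on $\partial_x h^\e$ alone, and the third on the noise alone. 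Under the initial law $\pi\otimes\nu^\e$, the pair $(\nabla h^\e(0),W^\e|_{[0,t]})$ is independent of $h^\e(0,\rho)\sim\pi$, and hence so is $\Xi_t^\e$; moreover the tilt $\nabla h^\e(t)$ is itself a functional of $(\nabla h^\e(0),W^\e|_{[0,t]})$, hence independent of $h^\e(0,\rho)$ as well.

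Next I would condition on $(\nabla h^\e(0),W^\e|_{[0,t]})$. Under this conditioning both $\Xi_t^\e$ and $\nabla h^\e(t)$ become deterministic while $h^\e(0,\rho)\sim\pi$ remains free, so for any bounded continuous $F$ on $[0,1]\times\tilde{\mathcal{C}}$ the translation-invariance of $\pi$ on $\R/\Z$ gives
\begin{equation*}
E\big[F(g^\e(t,\rho),\nabla g^\e(t))\,\big|\,\nabla h^\e(0),W^\e\big]=\int_0^1 F(a,\nabla h^\e(t))\,da.
\end{equation*}
Taking expectations and invoking Theorem~\ref{thm:2}-(2) to obtain $\nabla h^\e(t)\stackrel{\mathrm{law}}{=}\nu^\e$, I conclude $E[F(g^\e(t,\rho),\nabla g^\e(t))]=\int F\,d(\pi\otimes\nu^\e)$, which is the desired invariance.

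I do not expect a serious obstacle: the only care needed is the first step, interpreting \eqref{eq:1} in the generalized-function sense of Lemma~\ref{lem:2.3.1} so that the cancellation $\int_\R\rho''\,dx=0$ rigorously makes $\Xi_t^\e$ tilt-measurable. Once this is in place, the modular-translation argument combined with the already-established invariance of $\nu^\e$ closes the proof.
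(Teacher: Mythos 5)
Your proof is correct, but it takes a genuinely different route from the paper's own proof of Lemma \ref{lem:3.1-a}. The paper argues infinitesimally: it takes product test functions $\Psi(h)\Phi(\nabla h)$ with $\Psi(h)=f(h(\rho))$ and $f$ periodic on $[0,1]$, computes $\mathcal{L}^\e(\Psi\Phi)$ explicitly, and shows that its integral against $\pi\otimes\nu^\e$ vanishes using the periodicity of $f$ (so that $\int_0^1 L_{\nabla h}^\e f\,d\pi=\int_0^1 f'\,d\pi=0$) together with the infinitesimal invariance $E^{\nu^\e}[\mathcal{L}^{\e,U}\Phi]=0$ from Corollary \ref{cor:asymmetry}. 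Your argument is instead pathwise: you write the wrapped mean height as the fractional part of $h^\e(0,\rho)+\Xi_t^\e$, observe that $\Xi_t^\e$ and $\nabla h^\e(t)$ are measurable with respect to $(\nabla h^\e(0),W^\e)$ and hence independent of the uniformly distributed initial mean height, and conclude by translation invariance of $\pi$ modulo $1$ together with Theorem \ref{thm:2}-(2). This is in fact exactly the mechanism the paper itself uses for Theorem \ref{thm:2.7} (and again at the end of Section \ref{section:3.6}), transplanted to the wrapped setting. What your route buys is that it delivers \emph{global} invariance directly from the already-established global invariance of $\nu^\e$, whereas the generator computation as written only establishes $\int\mathcal{L}^\e(\Psi\Phi)\,d\pi\otimes\nu^\e=0$, i.e.\ infinitesimal invariance, and the passage to genuine invariance is left implicit --- precisely the delicate step the authors flag at the start of Section 2. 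What the paper's route buys is the explicit form of the generator acting on $h(\rho)$ and of the cross term, which is reused immediately afterwards (Lemma \ref{lem:3.4}, Lemma \ref{lem:3.6}) to identify $\mathcal{L}_0^\e$ as the symmetric part and compute the Dirichlet form on $[0,1]\times\tilde{\mathcal{C}}$. The only point you should make fully explicit is that $\nabla h^\e(t)$ is a pathwise functional of $(\nabla h^\e(0),W^\e|_{[0,t]})$, i.e.\ pathwise uniqueness for the closed tilt equation \eqref{eq:b-4.1}; for fixed $\e>0$ with smeared noise this is standard, and with it your conditioning step is airtight.
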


\begin{proof}
Take a periodic and smooth function $f$ on $[0,1]$ and
set $\Psi(h) = f(h(\rho))$ for $h\in \mathcal{C} \cong
[0,1]\times\tilde{\mathcal{C}}$ under the map 
\eqref{eq:3.1-map}, where $h(\rho) = \int_\R h(x)\rho(x)dx$.  Then, since
$$
D\Psi(x;h) = f'(h(\rho)) \rho(x), \quad
D^2\Psi(x_1,x_2;h) = f''(h(\rho)) \rho(x_1)\rho(x_2),
$$
we have that
\begin{align*}
L_{\nabla h}^\e f(h(\rho)) :=& \mathcal{L}^\e \Psi(h) 
= \frac{\xi_\rho^\e}2 f''(h(\rho)) + \frac12 b^\e(\nabla h)
 f'(h(\rho)),
\end{align*}
where 
\begin{align*}
& \xi_\rho^\e = \int_{\R^2} \rho(x_1)\rho(x_2) \eta_2^\e(x_1-x_2)
dx_1 dx_2, \\ 
& b^\e(\nabla h) = h(\rho'') + \int_\R
((\partial_x h)^2-\xi^\e)*\eta_2^\e(x) \rho(x)dx.
\end{align*}
Note that $b^\e(\nabla h)$ is a tilt variable.
Take another function $\Phi=\Phi(\nabla h)$ of tilt variables
$\nabla h = \{\partial_x h; x \in \R\}$.  Then, since
\begin{align*}
\mathcal{L}^\e(\Psi\Phi)
& = \Phi\mathcal{L}^\e\Psi +  \Psi\mathcal{L}^\e\Phi
+ \int_{\R^2} D\Psi(x_1;h)D\Phi(x_2;h) \eta_2^\e(x_1-x_2) 
dx_1dx_2 \\
& = \Phi L_{\nabla h}^\e f +  f\mathcal{L}^\e\Phi
+ f'(h(\rho)) \lan D\Phi(\cdot;h)* \eta_2^\e,\rho\ran,
\end{align*}
noting that $\mathcal{L}^\e\Phi$ and $\lan D\Phi(\cdot;h)* \eta_2^\e,\rho\ran$
are tilt variables, we have that
\begin{align}  \label{eq:3.aa}
& \int_{[0,1]\times\tilde{\mathcal{C}}}\mathcal{L}^\e(\Psi\Phi)
d\pi\otimes\nu^\e
 = E^{\nu^\e}\left[\Phi(\nabla h) \int_0^1 L_{\nabla h}^\e f(h(\rho))d\pi\right]\\
& \qquad \qquad  \notag+  \int_0^1f(h(\rho))d\pi \,
E^{\nu^\e}[\mathcal{L}^\e\Phi]
+ \int_0^1 f'(h(\rho)) d\pi\,
 E^{\nu^\e}[\lan D\Phi(\cdot;h)* \eta_2^\e,\rho\ran ].
\end{align}
However, we easily see that
$$
\int_0^1 L_{\nabla h}^\e f(h(\rho))d\pi =
\int_0^1 L_{\nabla h}^\e f(a)da =0
$$
for all fixed $\nabla h$ by the periodicity of $f$, and also
$$
\int_0^1 f'(h(\rho)) d\pi = \int_0^1 f'(a) da=0.
$$
Moreover, noting that $\mathcal{L}^\e$ acting on $\Phi=\Phi(\nabla h)$ 
through $h$ coincides with $\mathcal{L}^{\e,U}$ acting on $\Phi=\Phi(u)$
through $u$, Corollary \ref{cor:asymmetry} shows that the second term
in the right hand side of \eqref{eq:3.aa} vanishes, and therefore
we have that
$$
\int_{[0,1]\times\tilde{\mathcal{C}}}\mathcal{L}^\e(\Psi\Phi)
d\pi\!\otimes\!\nu^\e=0.
$$
This can be extended to linear combinations of the functions of the
form $\Psi\Phi$, and concludes the proof of the lemma.
\end{proof}

We next introduce the Cole-Hopf transform $Y^\e(t,x) =e^{g^\e(t,x)}$ of 
the wrapped process $g^\e(t,x)$.  The initial distribution of
$h^\e(0,\cdot)$ is taken as mentioned above
Lemma \ref{lem:3.1-a}.  $Y^\e(t,x)$ is called a wrapped
process of $Z^\e(t,x) = e^{h^\e(t,x)}$ and satisfies $Y_\rho^\e(t)\in [1,e]$
a.s., where we define
\begin{equation}  \label{eq:3.5-c}
Y_\rho= \exp\left\{ \int_\R \log Y(x) \rho(x)dx\right\},
\end{equation}
for  $Y=\{Y(x)>0; x\in \R\}$ and $Y_\rho^\e(t) = (Y^\e(t))_\rho$.

\begin{lem}  \label{lem:3.2-R}
$Y^\e(t,x)$ satisfies the following equation in generalized functions' sense:
\begin{align}  \label{eq:3-Y^e-2}
Y^\e(t,x) =  Y^\e(0,x) + & \frac12 \int_0^t \partial_x^2 Y^\e(s,x)ds 
+\int_0^t A^\e(x,Y^\e(s)) ds \\
 & + \int_0^t Y^\e(s,x) dW^\e(s,x) + N^\e(t,x),   \notag
\end{align}
where
\begin{align}  \label{eq:3.4}
A^\e(x,Y) & = \frac12 Y(x)\left\{\left(
\frac{\partial_x Y}Y \right)^2*\eta_2^\e(x) 
- \left(\frac{\partial_x Y}Y \right)^2(x) \right\},  \\
N^\e(t,x) & =
\int_0^t \left\{ (e-1) 1_{\{ Y_\rho^\e(s-)=1\}}
+  (e^{-1}-1) 1_{\{ Y_\rho^\e(s-)=e\}} \right\}Y^\e(s-,x) N^\e(ds).
  \label{eq:3.5}
\end{align}
\end{lem}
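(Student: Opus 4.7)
The plan is to apply the Itô product rule to the factorization $Y^\e(t,x)=e^{N^\e(t)}Z^\e(t,x)$, where $Z^\e(t,x):=e^{h^\e(t,x)}$ already satisfies \eqref{eq:8} by the derivation recorded at the beginning of Section \ref{section:3.1}, and $N^\e(t)=-[h^\e(t,\rho)]$ is a spatially constant, $\Z$-valued, càdlàg process (the right-continuous modification of the integer part). Rewriting \eqref{eq:8} compactly as
$$
dZ^\e(t,x)=\tfrac12\partial_x^2Z^\e(t,x)\,dt+A^\e(x,Z^\e(t))\,dt+Z^\e(t,x)\,dW^\e(t,x)
$$
with $A^\e$ as in \eqref{eq:3.4}, the crucial algebraic observation is that the nonlinear functional $Y\mapsto A^\e(\cdot,Y)$ is positively homogeneous of degree one: the bracket $(\partial_xY/Y)^2*\eta_2^\e-(\partial_xY/Y)^2$ is scale invariant while the prefactor $Y$ is linear, and the heat and multiplicative-noise pieces are manifestly linear. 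Consequently, multiplying each continuous contribution by the scalar $e^{N^\e(t-)}$ produces exactly the $\tfrac12\partial_x^2Y^\e\,dt+A^\e(x,Y^\e)\,dt+Y^\e\,dW^\e$ part of \eqref{eq:3-Y^e-2}.

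For the jump contribution, I would analyse $H(t):=h^\e(t,\rho)$, which by pairing \eqref{eq:1} with $\rho$ is a real continuous semimartingale whose martingale part has quadratic variation $\xi_\rho^\e t$ with $\xi_\rho^\e=\int_{\R^2}\rho(x)\rho(y)\eta_2^\e(x-y)\,dx\,dy>0$ (as computed in the proof of Lemma \ref{lem:3.1-a}). At any jump time $s$ of $N^\e$, the right-continuous modification forces $g^\e(s-,\rho)\in\{0,1\}$ and hence $Y_\rho^\e(s-)\in\{1,e\}$: a downward integer crossing of $H$ corresponds to $Y_\rho^\e(s-)=1$, $\Delta N^\e(s)=+1$, and $Y^\e(s,x)=eY^\e(s-,x)$; an upward crossing corresponds to $Y_\rho^\e(s-)=e$, $\Delta N^\e(s)=-1$, and $Y^\e(s,x)=e^{-1}Y^\e(s-,x)$. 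Because $Z^\e$ is continuous, the cross-variation in the product rule vanishes, so the jumps of $Y^\e$ are exactly $Z^\e(s-,x)\Delta e^{N^\e(s)}$, which in the two cases evaluate to $(e-1)Y^\e(s-,x)$ and $(e^{-1}-1)Y^\e(s-,x)$. Summing along jump times gives precisely \eqref{eq:3.5}, with $N^\e(ds)$ interpreted as the jump-counting measure of $N^\e$.

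Putting the pieces together after pairing with $\fa\in C_0^\infty(\R)$ yields \eqref{eq:3-Y^e-2} in the generalized functions' sense. I expect the main technical obstacle to be the rigorous handling of $N^\e$: since $H$ behaves locally like a Brownian motion near each integer, its directional level crossings may accumulate and $[H(\cdot)]$ need not have locally finite variation in the classical sense. Overcoming this requires either an Itô–Tanaka-type argument applied to $H$ at each integer level (using its local time) to isolate the genuine sign-change times, or else working directly with the wrapped process $g^\e(\cdot,\rho)\in[0,1]$ as a continuous semimartingale on the circle $\R/\Z$, transferring the jumps of $N^\e$ into the wrapping structure. Once this is in place, everything else reduces to the scale invariance noted above and a careful application of the product rule to the càdlàg adapted scalar $e^{N^\e(t)}$ against the continuous semimartingale $\lan Z^\e(t),\fa\ran$.
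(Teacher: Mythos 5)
Your argument is essentially the paper's proof: the authors likewise apply It\^o's formula to $F(Z^\e(t,x),N^\e(t))$ with $F(z,n)=ze^n$, writing $N^\e(t)=\int_0^t\sum_{a=\pm1}a\,n^\e(ds,a)$ for a point process $n^\e$ on $\{\pm1\}$, so that the continuous part yields $\int_0^te^{N^\e(s)}\,dZ^\e(s,x)$ --- converted into the $Y^\e$-form of \eqref{eq:3-Y^e-2} via \eqref{eq:8} and exactly the degree-one homogeneity of $A^\e$ that you point out --- while the jump part gives \eqref{eq:3.5}. The regularity caveat you raise about $[h^\e(\cdot,\rho)]$ (possible accumulation of integer crossings of the continuous semimartingale $h^\e(t,\rho)$) is not addressed in the paper, which simply posits the point-process representation of $N^\e$ and its right-continuous modification.
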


\begin{proof}
Note that $N^\e(t)=-[h^\e(t,\rho)]$ is expressed as
$$
N^\e(t) = \int_0^t \sum_{a=\pm1} a \, n^\e(ds,a)
$$
with a certain point process $n^\e(ds,a)$ on $\mathbb{X}
= \{\pm1\}$.  Thus, applying It\^o's formula for $Y^\e(t,x) =
Z^\e(t,x) e^{N^\e(t)} \equiv F(Z^\e(t,x),N^\e(t))$ with $F(z,n) = ze^n,
z\in\R, n\in \Z$, we have that
\begin{align*} 
Y^\e(t,x) &=  Y^\e(0,x) + \int_0^t \frac{\partial F}{\partial z}
(Z^\e(s,x),N^\e(s)) dZ^\e(s,x) \\
&\qquad +\int_0^{t+} \sum_{a=\pm1} \{ F(Z^\e(s,x),N^\e(s-)+a)
- F(Z^\e(s,x),N^\e(s-))\} n^\e(ds,a) \\
& = Y^\e(0,x) + \int_0^t e^{N^\e(s)} dZ^\e(s,x)
+ N^\e(t,x),
\end{align*}
where $N^\e(t,x)$ is defined by \eqref{eq:3.5}.
The conclusion follows from \eqref{eq:8}.
\end{proof}

\subsection{Asymptotic behavior of the nonlinear term in
\eqref{eq:3-Y^e-2}}  \label{section:3.3}

We need to analyze the limit of the third term in the right hand
side of \eqref{eq:3-Y^e-2} as $\e\downarrow 0$ at least in the 
stationary situation.  The goal of this subsection is to show the
following theorem, by which one can replace $A^\e(x,Y^\e(s))$
with a linear function $\tfrac1{24} Y^\e(s,x)$ if $h^\e(0,\cdot)$
is distributed under $\pi\otimes\nu^\e$.

\begin{thm}  \label{thm:3.1}
For every $\fa\in C_0(\R)$ satisfying $\rm{supp}\,\fa \cap  
\rm{supp}\,\rho = \emptyset$ (so that $\dist(\rm{supp}\,\fa,
\rm{supp}\,\rho)>0$), we have that
$$
\lim_{\e\downarrow 0} E^{\pi\otimes\nu^\e}
\left[ \sup_{0\le t \le T}\left\{ \int_0^t \hat A^\e(\fa,Y^\e(s))ds \right\}^2 
\right] =0,
$$
where
\begin{align*}
\hat A^\e(\fa,Y) & = \int_\R \hat A^\e(x,Y)
\fa(x)dx, \\
\hat A^\e(x,Y) & = A^\e(x,Y) - \frac1{24} Y(x).
\end{align*}
In particular, under the time average, $A^\e(\fa,Y^\e(s))$
can be replaced by $\tfrac1{24}\int_\R Y^\e(s,x)
\fa(x)dx$ in $L^2(\Om)$ in a strong topology as $\e\downarrow 0$
under the equilibrium situation, if $\fa$ satisfies the above
conditions.
\end{thm}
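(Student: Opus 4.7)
The plan is to combine a Wiener--It\^o chaos decomposition of $A^\e(\fa, Y^\e)$ with a Kipnis--Varadhan $H_{-1}$ estimate for the stationary tilt process. Work at the tilt level: under $\pi\otimes\nu^\e$, the tilt $u^\e(t)$ is stationary with centered Gaussian invariant measure $\nu^\e$, and generator $\mathcal{L}^{\e,U} = \mathcal{L}_0^{\e,U} + \mathcal{A}^{\e,U}$ whose symmetric part $\mathcal{L}_0^{\e,U}$ is an Ornstein--Uhlenbeck operator on the Gaussian Hilbert space associated with $B$ (Corollary~\ref{cor:asymmetry}). Factor $Y^\e(s,x) = e^{g^\e(s,\rho)}\cdot e^{g^\e(s,x) - g^\e(s,\rho)}$; the first factor is $\pi$-distributed on $[1,e]$ independently of the tilt, while the second is the Wick exponential of a centered Gaussian linear functional of $B$. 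Since $u^2*\eta_2^\e - u^2 = {:}u^2{:}*\eta_2^\e - {:}u^2{:}$ lies in the second Wiener chaos, Wick's multiplication rule supplies an explicit chaos decomposition of $A^\e(\fa, Y^\e)$.

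The constant $\tfrac1{24}$ is identified by a direct computation of the mean. Using the Gaussian identity $E[e^X Z^2] = e^{\mathrm{Var}(X)/2}(\mathrm{Var}(Z) + \mathrm{Cov}(X,Z)^2)$ with $X = h(x)-h(\rho)$ and $Z = u(z)$, the variance-of-$u$ terms cancel in the difference $u(x-y)^2 - u(x)^2$, reducing the mean to an integral of $\eta_2^\e(y)\bigl[C_\e(x,x-y)^2 - C_\e(x,x)^2\bigr]$ against $\fa$ (up to $\pi$- and Gaussian-exponential prefactors), where $C_\e(x,z) = \mathrm{Cov}(h(x)-h(\rho),u(z))$. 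The hypothesis $\dist(\mathrm{supp}\,\fa, \mathrm{supp}\,\rho)>0$ kills the $\rho$-contribution for small $\e$, giving $C_\e(x,x-y) = f_\e(y) := \int_{-\infty}^{y}\eta_2^\e(s)\,ds$ exactly; then $f_\e(0) = 1/2$ by symmetry and $\int \eta_2^\e(y) f_\e(y)^2 \,dy = \bigl[f_\e^3/3\bigr]_{-\infty}^{\infty} = 1/3$, so the inner integral equals $1/12$. The factor $\tfrac12$ from $A^\e$ then yields the universal constant $\tfrac1{24}$, so $E^{\pi\otimes\nu^\e}[\hat A^\e(\fa, Y^\e)] = 0$ exactly for $\e$ small enough.

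For the centered functional $V^\e := \hat A^\e(\fa, Y^\e)$, apply the Kipnis--Varadhan inequality
\begin{align*}
E^{\pi\otimes\nu^\e}\!\left[\sup_{0\le t\le T}\!\left(\int_0^t V^\e(u_s)\,ds\right)^{\!2}\right] \le C T \|V^\e\|_{-1,\mathcal{L}_0^{\e,U}}^2,
\end{align*}
valid for the non-reversible $\mathcal{L}^{\e,U}$ provided $\mathcal{A}^{\e,U}$ satisfies a graded sector condition against $\mathcal{L}_0^{\e,U}$, a property following from the Gaussian structure and the smoothness of $\eta^\e$. Since $\mathcal{L}_0^{\e,U}$ is a second-quantized OU operator, the $H_{-1}$ norm diagonalizes over Wiener chaos orders, each $\|V_n^\e\|_{-1}^2$ being computed by solving a Poisson equation $-\mathcal{L}_0^{\e,U}\Phi_n = V_n^\e$ on the $n$-fold symmetric tensor product of the Cameron--Martin space. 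The main obstacle is the second chaos component of $V^\e$, which carries the ``slow'' Boltzmann--Gibbs mode: showing $\|V_2^\e\|_{-1}^2 \to 0$ requires exhibiting a nearly explicit $\Phi_2$ with small Dirichlet energy under $\mathcal{L}_0^{\e,U}$, a calculation exploiting the $\e$-scale oscillations of $\eta_2^\e$. Higher chaos components are controlled by standard Gaussian moment bounds on the Wick exponential factor.
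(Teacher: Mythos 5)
Your skeleton matches the paper's: reduce to a static $H_{-1}$ bound via a Kipnis--Varadhan inequality for the stationary process, expand in Wiener chaos, and extract $\tfrac1{24}$ from the Gaussian structure. Your identification of the constant is correct and is an equivalent (arguably tidier) route to the paper's computation $J_1=P(R_1+R_3>0,R_2+R_3>0)-\tfrac14=\tfrac13-\tfrac14=\tfrac1{12}$. One minor objection first: no sector condition, graded or otherwise, is needed for the bound $E[\sup_{t\le T}(\int_0^tV\,ds)^2]\le 24T\|V\|_{-1}^2$; it holds for an arbitrary stationary Markov process with $\|\cdot\|_{-1}$ built from the symmetric part (this is Lemma \ref{lem:3.6-a}). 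By invoking a graded sector condition for $\mathcal{A}^{\e,U}$ you create an extra, unverified, and quite delicate obligation that the argument does not require.

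The decisive gap is that the core estimate is missing. The functional $\hat A^\e(\fa,Y)$ is the product of the second-chaos object $\Psi^\e*\eta_2^\e-\Psi^\e$, whose $L^2(\nu^\e)$ norm diverges like $\e^{-1}$, with the Wick exponential $Y(x)/Y_\rho$; consequently \emph{every} chaos order of $V^\e$ inherits this singular factor, and it is a misdiagnosis to isolate ``the second chaos component'' as the only obstacle while deferring the rest to ``standard Gaussian moment bounds.'' The paper's Proposition \ref{prop:Phi} supplies the mechanism: via the diagram formula the pairing $E^{\nu^\e}[B^\e(\fa,Y)\Phi]$ splits into four contraction types, and for the contractions $\bar\fa_2^{(1)},\bar\fa_2^{(3)},\bar\fa_2^{(4)}$ the difference $\Psi^\e*\eta_2^\e-\Psi^\e$ is written as a telescoping displacement of size $O(\e)$ acting on the kernels $f_{n+1},f_{n+2}$ of $\Phi$; this costs one derivative $\partial f_{n+2}$, which is exactly what $\|\Phi\|_{1,\e}$ controls (Lemma \ref{lem:3.5}), and gains a factor $\e$, while Cauchy--Schwarz against $\eta_2^\e\le\|\eta_2\|_\infty/\e$ costs $\e^{-1/2}$, netting $\sqrt\e\,\|\Phi\|_{1,\e}$ uniformly after summing $\sum_n 1/n!$. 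Moreover the $\tfrac1{12}$-cancellation must hold against \emph{every} test functional $\Phi$: the contraction $\bar\fa_2^{(2)}$, in which both legs of the second chaos land on the Wick exponential, produces exactly $\tfrac1{12}E^{\nu^\e}[\tfrac{Y(x)}{Y_\rho}\Phi]$, and only then does subtracting $\tfrac1{24}Y(x)$ centre the functional in $H_{-1}$. Your mean computation establishes this only for $\Phi=1$. Without the derivative-for-$\e$ trade carried out uniformly in chaos order and the exact cancellation of the both-legs-on-the-exponential contraction against arbitrary $\Phi$, the proof does not close.
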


\begin{rem}  \label{rem:3.1}
{\rm (1)} The time average is essential to show this theorem.
At each fixed time, we never have this type of statement;
see Remark \ref{rem:3.1} below.  \\
{\rm (2)} The constant $\frac1{24}$ frequently appears in KPZ computations;
see e.g.\ Theorem 2.3 of \cite{BG}, Theorem 1.1 of \cite{C} and
Proposition 5.1 of \cite{BCF}.
\end{rem}

The proof of Theorem \ref{thm:3.1} will be carried out at the
level of the height processes $g^\e(t,x)$ or $h^\e(t,x)$ not at that
of the transformed processes $Y^\e(t,x)$ or $Z^\e(t,x)$,
and in a similar way
to that of the Boltzmann-Gibbs principle, which is needed in
the study of the equilibrium fluctuation and establishes a
replacement of a certain complex term by a linear term.
In particular, we deduce an equilibrium dynamic problem into
 a static problem.
To this end, we first consider the symmetric part 
$\mathcal{S}^\e := \tfrac12(\mathcal{L}^\e+\mathcal{L}^{\e\ast})$
of the generator $\mathcal{L}^\e$ of the height process
and the corresponding Dirichlet
form.  Since  $\mathcal{L}^\e = \mathcal{L}_0^\e + \mathcal{A}^\e$,
and $\mathcal{L}_0^\e$ is symmetric and $\mathcal{A}^\e$ is asymmetric
with respect to $\pi\otimes \nu^\e$, we see that $\mathcal{S}^\e
= \mathcal{L}_0^\e$; see Corollary \ref{cor:asymmetry} (at least
at the level of tilt variables) and
arguments in the proof of Lemma \ref{lem:3.1-a}.

The corresponding Dirichlet form is given in the next lemma.

\begin{lem}  \label{lem:3.4}
\begin{align*}
\|\Phi\|_{1,\e}^2 := & \lan \Phi, (-\mathcal{L}_0^\e)\Phi\ran_{\pi\otimes\nu^\e}  
=  \frac12 E^{\pi\otimes\nu^\e}\left[ \int_\R
\left( D\Phi(\cdot;h)*\eta^\e\right)^2(x)dx \right].
\end{align*}
\end{lem}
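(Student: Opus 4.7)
The plan is to derive this identity as a standard carr\'e-du-champ calculation for $\mathcal{L}_0^\e$, which is the generator of the linear SPDE $\partial_t h = \tfrac12\partial_x^2 h + \dot{W}^\e(t,x)$---an infinite-dimensional Ornstein--Uhlenbeck process. The first thing to verify is that $\pi\otimes\nu^\e$ is invariant for $\mathcal{L}_0^\e$: one reruns the argument of Lemma \ref{lem:3.1-a} after discarding the asymmetric part $\mathcal{A}^\e$. Since $\lan h,\rho''\ran$ is a tilt variable (because $\int\rho''\,dx=0$), the averaged height $h(\rho)$ enters $\mathcal{L}_0^\e\Psi$ for $\Psi(h)=f(h(\rho))$ only through $f''$ and $f'$, whose $\pi$-integrals vanish by periodicity of $f$, while the tilt-level contribution $\int\mathcal{L}_0^{\e,U}\Phi\,d\nu^\e$ is zero by Corollary \ref{cor:asymmetry} applied with $\Psi=1$.

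Next, for tame $\Phi,\Psi\in\mathcal{D}$, direct differentiation gives the product rule for $D(\Phi\Psi)=\Psi D\Phi+\Phi D\Psi$ together with
\[
D^2(\Phi\Psi)(x_1,x_2;h)=\Psi D^2\Phi+\Phi D^2\Psi+D\Phi(x_1;h)D\Psi(x_2;h)+D\Phi(x_2;h)D\Psi(x_1;h).
\]
Substituting these into the definition of $\mathcal{L}_0^\e$ and using the symmetry $\eta_2^\e(x_1-x_2)=\eta_2^\e(x_2-x_1)$ to identify the two cross terms produces the carr\'e-du-champ identity
\[
\mathcal{L}_0^\e(\Phi\Psi)=\Psi\,\mathcal{L}_0^\e\Phi+\Phi\,\mathcal{L}_0^\e\Psi+\int_{\R^2}D\Phi(x_1;h)\,D\Psi(x_2;h)\,\eta_2^\e(x_1-x_2)\,dx_1dx_2.
\]
Taking $\Psi=\Phi$ (again tame), integrating against $\pi\otimes\nu^\e$, and invoking the invariance $\int\mathcal{L}_0^\e(\Phi^2)\,d(\pi\otimes\nu^\e)=0$ yields
\[
-2\lan\Phi,\mathcal{L}_0^\e\Phi\ran_{\pi\otimes\nu^\e}=E^{\pi\otimes\nu^\e}\!\left[\int_{\R^2}D\Phi(x_1;h)\,D\Phi(x_2;h)\,\eta_2^\e(x_1-x_2)\,dx_1dx_2\right].
\]

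The last step is the elementary convolution identity
\[
\int_{\R^2}\psi(x_1)\psi(x_2)\,\eta_2^\e(x_1-x_2)\,dx_1dx_2=\int_\R(\psi*\eta^\e)(y)^2\,dy,
\]
valid for $\psi\in L^2(\R)$, which follows directly from $\eta_2^\e=\eta^\e*\eta^\e$, the symmetry $\eta^\e(-x)=\eta^\e(x)$, and Fubini. Applied pointwise in $h$ to $\psi(\cdot)=D\Phi(\cdot;h)$---a finite linear combination of the compactly supported $\varphi_i$ with bounded $h$-dependent coefficients---and divided by two, it produces the announced right-hand side. There is no genuine obstacle in this proof: the whole argument is bookkeeping around the basic fact that $\mathcal{L}_0^\e$ generates a reversible Gaussian dynamics whose diffusion covariance kernel is $\eta_2^\e = \eta^\e*\eta^\e$, so that the natural quadratic form is the squared $L^2$-norm of $D\Phi*\eta^\e$.
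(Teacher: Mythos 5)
Your argument is correct, but it follows a genuinely different route from the paper's. You work entirely at the level $\e>0$: you establish the invariance of $\pi\otimes\nu^\e$ under $\mathcal{L}_0^\e$ (by stripping $\mathcal{A}^\e$ out of the argument of Lemma \ref{lem:3.1-a}), derive the carr\'e-du-champ identity $\mathcal{L}_0^\e(\Phi^2)=2\Phi\,\mathcal{L}_0^\e\Phi+\int_{\R^2}D\Phi(x_1;h)D\Phi(x_2;h)\eta_2^\e(x_1-x_2)\,dx_1dx_2$ from the product rule, integrate, and finish with the factorization $\eta_2^\e=\eta^\e*\eta^\e$. The paper instead proves the identity only for the limiting operator $\mathcal{L}_0$ (the white-noise Ornstein--Uhlenbeck generator, for which $\pi\otimes\nu$ is reversible and the Dirichlet form is $\tfrac12 E[\int(D\Phi)^2]$ with no convolution), and then transfers it to $\e>0$ via the substitution $\tilde\Phi^\e(h):=\Phi(h*\eta^\e)$, using the intertwining $\mathcal{L}_0\tilde\Phi^\e(h)=\mathcal{L}_0^\e\Phi(h*\eta^\e)$ together with the fact that $\nu^\e$ is the image of $\nu$ under $h\mapsto h*\eta^\e$; the convolution on the right-hand side then appears automatically from the chain rule $D\tilde\Phi^\e(x;h)=D\Phi(\cdot;h*\eta^\e)*\eta^\e(x)$. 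Your route is more self-contained and only needs invariance of $\pi\otimes\nu^\e$ rather than reversibility, but it does put the burden on justifying $\int\mathcal{L}_0^\e(\Phi^2)\,d(\pi\otimes\nu^\e)=0$ for all tame $\Phi$ in the relevant class (periodic in the $h(\rho)$ variable); this requires extending the product-form computation of Lemma \ref{lem:3.1-a} by density, which you should make explicit. The paper's route avoids that by piggybacking on the known reversibility of the flat OU dynamics, at the cost of introducing the auxiliary map $h\mapsto h*\eta^\e$ and two changes of variables.
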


Before giving the proof of this lemma, we note that the limit 
as $\e\downarrow 0$ of $\nu^\e$ for tilt variables
(and therefore defined on $\tilde{\mathcal{C}}$) can be identified
with the Gaussian random measure $\nu$ on $(\R, 
\mathcal{B}(\R))$ determined from $dB$. More precisely, under $\nu$, random variables $\{X(A); A\in 
\mathcal{B}(\R)\}$ are given and
\begin{enumerate}
\item $X(A) \stackrel{\text{law}}{=} N(0,|A|)$ with $|A|=$
the Lebesgue measure of $A$,
\item If $\{A_i \in \mathcal{B}(\R)\}_{i=1}^n$ are disjoint, then
$\{X(A_i)\}_{i=1}^n$ are independent and $X(\cup_{i=1}^nA_i) = \sum_{i=1}^n
X(A_i)$ a.s.
\end{enumerate}
Such $X(A)$ can be constructed from $X((a,b]) := B(b)-B(a)$
in terms of the two-sided Brownian motion $\{B(x);x\in \R\}$
satisfying, for instance, $B(0)=0$.

\begin{proof}[Proof of Lemma \ref{lem:3.4}]
We first note that  $\mathcal{L}_0$ defined as the limit of 
$\mathcal{L}_0^\e$ as $\e\downarrow 0$, that is,
$$
\mathcal{L}_0\Phi(h) = \frac12 \int_{\R}
 D^2 \Phi(x,x;h) dx + \frac12 \int_\R \partial_x^2h(x) D\Phi(x;h) dx,  
$$
is the generator of
the Ornstein-Uhlenbeck process determined by the SPDE
$$
\partial_t h = \frac12 \partial_{x}^2 h + \dot{W}(t,x),\quad x \in \R,
$$
and $\pi\otimes\nu$ is reversible under the wrapped process
$g(t,x)$ of $h(t,x)$ so that it is reversible under $\mathcal{L}_0$.
It is easy to see that
\begin{equation} \label{eq:Lem3-1-a}
\lan \Psi, (-\mathcal{L}_0)\Phi\ran_{\pi\otimes\nu}
=  \frac12 E^{\pi\otimes\nu}\left[ \int_\R
D\Psi(x;h)D\Phi(x;h)dx \right].
\end{equation}
Now, for a given $\Phi$, we set $\tilde\Phi^\e(h) := 
\Phi(h*\eta^\e)$ and take $\tilde\Phi^\e$ and $\tilde\Psi^\e$
in place of $\Phi$ and $\Psi$, respectively, in \eqref{eq:Lem3-1-a}.
Then, noting that
\begin{align*}
D \tilde\Phi^\e(x;h) & = D\Phi(\cdot;h*\eta^\e)*\eta^\e(x) \\
D^2 \tilde\Phi^\e(x_1,x_2;h) & = D^2\Phi(\cdot,\cdot;h*\eta^\e)*(\eta^\e)^{\otimes 2}(x_1,x_2),
\end{align*}
we can show that
$
\mathcal{L}_0 \tilde\Phi^\e(h)
= \mathcal{L}_0^\e \Phi(h*\eta^\e)
$
and therefore
$
\lan \tilde\Psi^\e, (-\mathcal{L}_0) \tilde\Phi^\e\ran_{\pi\otimes\nu}
= \lan \Psi, (-\mathcal{L}_0^\e) \Phi\ran_{\pi\otimes\nu^\e}
$
by the change of variables.  On the other hand, the right hand
side of \eqref{eq:Lem3-1-a} with $\tilde\Phi^\e$ and 
$\tilde\Psi^\e$ in place of $\Phi$ and $\Psi$, respectively,
is rewritten as
\begin{align*}
\frac12 & E^{\pi\otimes\nu}\left[ \int_\R
D\Psi(\cdot;h*\eta^\e)*\eta^\e(x)
D\Phi(\cdot;h*\eta^\e)*\eta^\e(x) dx \right]  \\
& = \frac12 E^{\pi\otimes\nu^\e}\left[ \int_\R
D\Psi(\cdot;h)*\eta^\e(x)
D\Phi(\cdot;h)*\eta^\e(x) dx \right],
\end{align*}
by the change of variables again.
This concludes the proof of the lemma.
\end{proof}

The basic tool of the proof of Theorem \ref{thm:3.1} is the
Wiener-It\^o expansion.
Recall that the multiple Wiener integral of order $n\ge 1$ with
a kernel $\fa_n \in \hat{L}^2(\R^n)$, i.e.\ $\fa_n \in L^2(\R^n)$
and symmetric in $n$-variables, is defined by
\begin{align*}
I(\fa_n) & = \frac1{n!} \int_{\R^n} \fa_n(x_1,\ldots,x_n) dB(x_1)\cdots dB(x_n) \\
& = \int_{-\infty}^\infty dB(x_1) \int_{-\infty}^{x_1} dB(x_2)
\cdots \int_{-\infty}^{x_{n-1}}  \fa_n(x_1,\ldots,x_n) dB(x_n),
\end{align*}
where $B$ is the two-sided Brownian motion on $\R$ introduced
above.
Set $\mathcal{H}_n = \{I(\fa_n) \in L^2(\tilde{\mathcal{C}},\nu); 
\fa_n \in \hat{L}^2(\R^n)\}$ for $n\ge 1$ and $\mathcal{H}_0
= \{\text{const}\}$.   Then, the well-known
Wiener-It\^o (Wiener chaos) expansion of $\Phi \in \mathcal{H}
:= L^2(\tilde{\mathcal{C}},\nu)$ is given by
\begin{equation}\label{eq:11-b}
\Phi = \sum_{n=0}^\infty I(\fa_n) \in \bigoplus_{n=0}^\infty \mathcal{H}_n,
\end{equation}
with some $\fa_0\in\R$ and $\fa_n\in \hat{L}^2(\R^n)$, 
where $I(\fa_0)=\fa_0$, and
\begin{align} \label{eq:2.6}
\|\Phi\|_{L^2(\nu)}^2 
 = \sum_{n=0}^\infty \| I(\fa_n) \|_{L^2(\nu)}^2
 = \sum_{n=0}^\infty \frac1{n!} \|\fa_n \|_{L^2(\R^n)}^2
\end{align}
holds because of the orthogonality and then by It\^o isometry.
The expansion \eqref{eq:11-b} identifies $\Phi \in
L^2(\tilde{\mathcal{C}},\nu)$ with the element 
$
\varphi = \{\varphi_n\}_{n=0}^\infty \in 
\bigoplus_{n=0}^{\infty} \hat{L}^2( \mathbb{R}^n)
$
of the symmetric Fock space.  The reason to do this is that it 
gives an explicit representation of $D$: $D\Phi(x)$ has
representation $\{D\varphi_n\}_{n=1}^\infty$ 
where $D\varphi_n \in \hat{L}^2( \mathbb{R}^{n-1})$ is given by 
\begin{align}\label{five}
D\varphi_n(x; x_1,\ldots,x_{n-1})
& = -\frac1n \sum_{i=1}^n \partial_i \varphi_n 
( x_1,\ldots,x_{i-1}, x, x_i, \ldots, x_{n-1})\\
& = - \partial_1 \varphi_n 
(x, x_1,\ldots,x_{n-1}).  \notag
\end{align}
The factor $\tfrac1n$ arises when we replace $\tfrac1{n!}$
with $\tfrac1{(n-1)!}$, and the second equality is due to the
symmetry of $\fa_n$.

The next task toward the proof of Theorem \ref{thm:3.1}
is to express the norm $\|\Phi\|_{1,\e}$ of
$\Phi\in L^2(\tilde{\mathcal{C}},\nu)$ in terms of its Wiener
chaos expansion \eqref{eq:11-b}.  

\begin{lem}  \label{lem:3.5}
For $\Phi\in L^2(\tilde{\mathcal{C}},\nu)$,
$$
\|\Phi\|_{1,\e}^2 = \frac12 \sum_{n=0}^\infty \frac1{n!}
\int_{\R^{n+1}} \big(D\fa_{n+1}(x;x_1,\ldots,x_n)*
(\eta^\e)^{\otimes (n+1)}\big)^2 dxdx_1\cdots dx_n.
$$
\end{lem}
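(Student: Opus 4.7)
The plan is to reduce Lemma \ref{lem:3.5} to a computation for the flat Dirichlet form under $\nu$ by the same change of variables $h\mapsto h*\eta^\e$ that was used in the proof of Lemma \ref{lem:3.4}, and then read off both sides using the Wiener-It\^o chaos expansion.

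First, set $\tilde\Phi^\e(h):=\Phi(h*\eta^\e)$. The argument in the proof of Lemma \ref{lem:3.4} already shows, via the identity $\mathcal{L}_0\tilde\Phi^\e(h)=(\mathcal{L}_0^\e\Phi)(h*\eta^\e)$ together with the fact that $h\sim \pi\otimes\nu$ implies $h*\eta^\e\sim \pi\otimes\nu^\e$, that
$$
\|\Phi\|_{1,\e}^2 \;=\; \lan\tilde\Phi^\e,(-\mathcal{L}_0)\tilde\Phi^\e\ran_{\pi\otimes\nu} \;=\; \frac12 E^{\pi\otimes\nu}\!\left[\int_\R \bigl(D\tilde\Phi^\e(x;h)\bigr)^2 dx\right].
$$
Thus the problem becomes: compute the flat Dirichlet form $\|\tilde\Phi^\e\|_1^2$ using the chaos decomposition of $\tilde\Phi^\e$.

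Next, I would identify the chaos kernels of $\tilde\Phi^\e$. Starting from $\Phi=\sum_{n\ge 0}I(\fa_n)$ and using the symmetry $\lan h*\eta^\e,\fa\ran=\lan h,\fa*\eta^\e\ran$, the substitution $h\mapsto h*\eta^\e$ is the second quantization of the bounded symmetric operator $\fa\mapsto \fa*\eta^\e$ on $L^2(\R)$, so
$$
\tilde\Phi^\e \;=\; \sum_{n=0}^\infty I\bigl(\fa_n*(\eta^\e)^{\otimes n}\bigr),
$$
where the convolution acts in all $n$ variables. For tame $\Phi\in\mathcal{D}$ this is a direct consequence of the Hermite-polynomial representation of multiple Wiener integrals, and extends to all $\Phi\in L^2(\tilde{\mathcal{C}},\nu)$ by $L^2$-density.

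Given this, combining the formula \eqref{five} for $D$ with the isometry \eqref{eq:2.6} and the orthogonality of chaoses of different orders yields
$$
\|\tilde\Phi^\e\|_1^2 \;=\; \frac12 \sum_{n=1}^\infty \frac1{(n-1)!} \int_{\R^{n}} \Bigl(D\bigl(\fa_n*(\eta^\e)^{\otimes n}\bigr)(x;x_1,\ldots,x_{n-1})\Bigr)^2 dx\,dx_1\cdots dx_{n-1}.
$$
Finally, the commutation $\partial_1\bigl(\fa_n*(\eta^\e)^{\otimes n}\bigr)=(\partial_1\fa_n)*(\eta^\e)^{\otimes n}$ gives
$$
D\bigl(\fa_n*(\eta^\e)^{\otimes n}\bigr) \;=\; (D\fa_n)*(\eta^\e)^{\otimes n},
$$
and the re-indexing $n\mapsto n+1$ produces exactly the claimed identity. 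The main (only) nontrivial step is the chaos identification of $\tilde\Phi^\e$; everything else is the change of variables already embedded in the proof of Lemma \ref{lem:3.4} and a standard Itô-isometry book-keeping with $D$ and convolution commuting.
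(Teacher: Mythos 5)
Your proposal is correct and follows essentially the same route as the paper: both reduce the computation to the change of variables $h\mapsto h*\eta^\e$ from the proof of Lemma \ref{lem:3.4}, identify the chaos kernels of the composed functional as $\fa_n*(\eta^\e)^{\otimes n}$ (second quantization), and conclude via \eqref{five} and the isometry \eqref{eq:2.6}. The only cosmetic difference is that the paper expands $D\Phi(\cdot;B*\eta^\e)*\eta^\e$ directly after invoking the statement of Lemma \ref{lem:3.4}, whereas you first expand $\tilde\Phi^\e$ and then commute $D$ with the convolution; these are the same bookkeeping.
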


\begin{proof}
Lemma \ref{lem:3.4} applied for a function $\Phi$ of tilt variables
gives
\begin{align*}
\|\Phi\|_{1,\e}^2
= \frac12 \int_\R E^{\nu}\left[ 
\left( D\Phi(\cdot;B*\eta^\e)*\eta^\e\right)^2(x)\right] dx.
\end{align*}
Here, we see that
\begin{align*}
& \left( D\Phi(\cdot;B*\eta^\e)*\eta^\e\right)(x)\\
& = \sum_{n=1}^\infty \frac1{(n-1)!} \int_{\R^{n-1}} 
 D\fa_n(x;x_1,\ldots,x_{n-1})*(\eta^\e)^{\otimes n}
 dB(x_1)\cdots dB(x_{n-1}).
\end{align*}
Therefore, the conclusion follows from \eqref{eq:2.6}.
\end{proof}

We are now almost ready to start the proof of Theorem \ref{thm:3.1}.
But, before starting, we slightly extend Lemma 2.4 of \cite{KLO}, 
p.48 stated for temporally homogeneous functions
$V(x)$ to more general temporally inhomogeneous $V(s,x)$
as follows.  Recall that this lemma holds generically under the stationary situation:
$L$ is the generator of a process $X_t$, $\pi$ is its invariant probability
measure, $S=(L+L^*)/2$ and semi-norms $\|\cdot\|_{-1}$ and $\|\cdot\|_1$ are
defined based on the operator $S$: $\|f\|_1^2 = E^\pi[f\cdot (-S)f]$ and
$\|f\|_{-1}^2 = \sup_g \{2E^\pi[fg]-\|g\|_1^2\}$.  This extension is actually
needed only for the proof of Lemma \ref{cor:3.10}, and not for that
of Theorem \ref{thm:3.1}.

\begin{lem}  \label{lem:3.6-a}
For $V=V(s,x)$,
$$
E^\pi\left[ \sup_{0\le t \le T} \left(\int_0^tV(s,X_s)ds\right)^2\right]
\le 24 \int_0^T \|V(s,\cdot)\|_{-1}^2ds.
$$
\end{lem}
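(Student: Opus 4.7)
The strategy is to adapt the resolvent proof of the time-homogeneous Lemma~2.4 by replacing the static corrector with a time-dependent one. For $\la>0$, let $u_\la(s,x)$ solve the backward problem $(\partial_s + L - \la)u_\la = -V$ on $[0,T]$ with terminal condition $u_\la(T,\cdot)=0$, equivalently $u_\la(s,\cdot) = \int_s^T e^{-\la(r-s)} P_{r-s} V(r,\cdot)\, dr$. Applying It\^o's formula to $u_\la(s,X_s)$ along the $\pi$-stationary process and inserting $L u_\la = -\partial_s u_\la + \la u_\la - V$ gives the decomposition
\[
\int_0^t V(s,X_s)\, ds = u_\la(0,X_0)-u_\la(t,X_t) + \la\int_0^t u_\la(s,X_s)\,ds + M^\la_t,
\]
where $M^\la_t$ is a martingale with $E^\pi[\langle M^\la\rangle_t] = 2\int_0^t \|u_\la(s,\cdot)\|_1^2\, ds$.

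The core analytic input is the energy estimate obtained by pairing the backward equation with $u_\la$ in $L^2(\pi)$ and integrating over $s\in[0,T]$: using $\langle u_\la(s,\cdot), V(s,\cdot)\rangle_\pi \le \|u_\la(s,\cdot)\|_1 \|V(s,\cdot)\|_{-1}$ and absorbing half of the $\|u_\la\|_1^2$ contribution, one obtains
\[
\tfrac12 E^\pi[u_\la(0,\cdot)^2] + \la\!\int_0^T\! E^\pi[u_\la(s,\cdot)^2]\,ds + \int_0^T\!\|u_\la(s,\cdot)\|_1^2\, ds \le \int_0^T\! \|V(s,\cdot)\|_{-1}^2\, ds.
\]
Squaring the It\^o decomposition, applying Doob's $L^2$-inequality to $M^\la_t$ (yielding $E^\pi[\sup_t (M^\la_t)^2]\le 8\int_0^T\|V(s,\cdot)\|_{-1}^2 ds$), and invoking these energy bounds handles three of the four terms, dominating each by a universal constant times $\int_0^T \|V(s,\cdot)\|_{-1}^2\, ds$ (the drift term $\la\int_0^t u_\la$ is controlled after tuning $\la\sim 1/T$).

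The only delicate term is $\sup_{0\le t\le T} u_\la(t,X_t)^2$. My plan is to use the martingale representation $u_\la(t,X_t) = e^{\la t}(N^\la_t - \int_0^t e^{-\la s}V(s,X_s)\,ds)$ with $N^\la_t := E[\int_0^T e^{-\la r}V(r,X_r)\,dr\mid \mathcal{F}_t]$ a closed $L^2$-martingale, apply Doob's inequality to $N^\la_t$, and run a self-consistent absorption of the $(\int_0^t V)^2$ contribution on the right-hand side. Carefully tracking all multiplicative constants as in the classical Kipnis-Varadhan argument produces the factor $24$. A preliminary regularization of $V$ by mollification in $s$ together with a density argument in $L^2([0,T];H^{-1})$ extends the bound to all $V$ for which the right-hand side is finite. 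The main obstacle is precisely this boundary-term bound: a naive stationarity estimate only controls $E^\pi[u_\la(t,X_t)^2]$ pointwise in $t$, so the martingale representation together with the self-consistent absorption step is essential to avoid introducing a constant that grows with $T$.
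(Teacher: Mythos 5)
Your setup (the backward corrector $u_\la$, the energy estimate, and the Doob bound on $M^\la_t$) is sound, but the step you yourself flag as delicate is where the argument breaks, and it cannot be repaired along the lines you describe. Write $G(t)=\int_0^t V(s,X_s)\,ds$. Your martingale representation gives $u_\la(t,X_t)=e^{\la t}N^\la_t-G(t)-\la e^{\la t}\int_0^t e^{-\la r}G(r)\,dr$, i.e.\ $G(t)$ sits inside $u_\la(t,X_t)$ with coefficient exactly $-1$; since $u_\la(t,X_t)$ in turn enters your It\^o decomposition of $G(t)$ with coefficient $-1$, substituting the representation back merely cancels $G(t)$ on both sides and yields a tautology. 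If instead you estimate, any splitting of the square produces $E^\pi\big[\sup_t u_\la(t,X_t)^2\big]\le(\cdots)+\theta\,E^\pi\big[\sup_t G(t)^2\big]$ with $\theta\ge 1$ (with the crude splitting you propose, $\theta\ge 2e^{2\la T}$), so the ``self-consistent absorption'' can never close with a coefficient strictly less than one, for any choice of $\la$. (You would also need a priori finiteness of $E^\pi[\sup_t G(t)^2]$ to run an absorption at all, a separate though more minor issue.) The obstruction is structural: stationarity controls $E^\pi[u_\la(t,X_t)^2]$ for each fixed $t$, but the running supremum of this non-martingale boundary term cannot be recovered from the resolvent corrector alone.

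The paper's proof is organized precisely to kill this boundary term. For an approximate solution $f$ of $Sf=V$ with $\|f(s,\cdot)\|_1\le\|V(s,\cdot)\|_{-1}+\de$, it forms both the forward Dynkin martingale $M_t$ and the backward martingale $M_t^-$ built from the time-reversed process with generator $L^*$, and observes that in the combination $M_t+M_T^--M_{T-t}^-$ the boundary terms $f(t,X_t)$, $f(0,X_0)$, $f(T,X_T)$ and the $\partial_s f$ contributions cancel exactly, leaving $-\int_0^t(L+L^*)f(s,X_s)\,ds=-2\int_0^t\{V(s,X_s)-R(s,X_s)\}\,ds$ with a remainder $R=V-Sf$ of $L^2$-norm at most $\de$. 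The supremum is then handled by Doob's inequality applied to the two genuine martingales $M$ and $M^-$, each with $E^\pi[M_t^2]=2\int_0^t\|f(s,\cdot)\|_1^2\,ds$, which is what yields the constant $24$. To salvage your write-up, replace the martingale-representation-plus-absorption step by this forward--backward decomposition.
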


\begin{proof}
We give the outline of the proof.  For $f=f(t,x)$, let  $M_t$ be
the  martingale 
\begin{equation} \label{eq:Dynkin-1}
M_t = f(t,X_t)-f(0,X_0)-\int_0^t \left(\frac{\partial}{\partial s} +L\right) f(s,X_s)ds.
\end{equation}
Then, we have
\begin{equation} \label{eq:Dynkin-2}
E^\pi[M_t^2] = 2\int_0^t \|f(s,\cdot)\|_1^2ds. 
\end{equation}
In fact, the proof of \eqref{eq:Dynkin-2} is similar to that of (2.16) in \cite{KLO}, p.47.
Note that, because of the temporal inhomogeneity in our situation,  
two terms $E^\pi[f(t,X_t)^2]- E^\pi[f(0,X_0)^2]$ and 
$E^\pi [\int_0^t \frac{\partial}{\partial s} f(s,X_s)^2ds] (= 
\int_0^t \frac{\partial}{\partial s} E^\pi [f(s,X_s)^2]ds)$ appear, but these terms
just cancel.

For given $V(s,\cdot)$, let us take $f(s,\cdot)$ such that 
$E^\pi[(Sf(s,\cdot)-V(s,\cdot))^2] \le \de$ and $\|f(s,\cdot)\|_1 \le 
\|V(s,\cdot)\|_{-1} +\de$ for $\de>0$ and $s\in [0,T]$,
and define $M_t$ as in \eqref{eq:Dynkin-1} and $M_t^-$ by
$$
M_t^- = f(T-t,X_{T-t})-f(T,X_T)-\int_0^t \left(-\frac{\partial}{\partial s} +L^*\right)
 f(T-s,X_{T-s})ds, \quad t \in [0,T].
$$
Then, by a simple computation, we have
\begin{align*}
M_t+M_T^--M_{T-t}^- & = - \int_0^t (L+L^*)f(s,X_s)ds \\
& =-2 \int_0^t \{V(s,X_s) - R(s,X_s)\} ds,
\end{align*}
where $R:=V-Sf$ satisfies $E^\pi[R^2(s,\cdot)]\le \de$.
This combined with \eqref{eq:Dynkin-2} implies the concluding estimate as in \cite{KLO}.
\end{proof}

Due to this lemma, we have the bound:
\begin{align}  \label{eq:thm3-1-1}
& E^{\pi\otimes\nu^\e}
\left[ \sup_{0\le t \le T} \left\{ \int_0^t \hat A^\e(\fa(s,\cdot),Y^\e(s))ds \right\}^2 
\right] \\
& \qquad \le 24 T \sup_{\Phi\in L^2(\pi\otimes\nu^\e)}
\left\{ 2E^{\pi\otimes\nu^\e}
\left[ \hat A^\e(\fa,Y) \Phi \right] - \|\Phi\|_{1,\e}^2\right\}.
 \notag
\end{align}
In fact, the reason we introduced the wrapped process mostly
lies in applying this bound.
For $\Phi = \Phi(h(\rho),\nabla h)\in L^2(\pi\otimes\nu^\e)$,
we can rewrite
\begin{align}  \label{eq:3.14-b}
2 E^{\pi\otimes\nu^\e}
\left[ \hat A^\e(\fa,Y) \Phi \right] 
= E^{\pi}\left[ Y_\rho E^{\nu^\e}
\left[ B^\e(\fa,Y) \Phi(h(\rho),\nabla h) \right] \right],
\end{align}
where
\begin{align*}
B^\e(x,Y) & = \left\{\left(\frac{\partial_x Y}Y \right)^2
*\eta_2^\e(x) - \left(\frac{\partial_x Y}Y \right)^2(x)
-\frac1{12} \right\} \frac{Y(x)}{Y_\rho}
\left(= \frac{2\hat A^\e(x,Y)}{Y_\rho} \right), \\
B^\e(\fa,Y) & = \int_\R B^\e(x,Y) \fa(x) dx,
\end{align*}
and recall that $Y_\rho\equiv e^{h(\rho)}$,
$h(x)=\log Y(x)$  is defined by \eqref{eq:3.5-c}.
The integration of $\Phi(h(\rho),\nabla h)$ under $\nu^\e$ is
performed in $\nabla h$ by fixing $h(\rho)$.  Note that
$B^\e(x,Y)$ is a tilt variable, though $\hat A^\e(x,Y)$ is not.
The bound \eqref{eq:thm3-1-1} reduces the equilibrium
dynamic problem into a static problem.

The key for the proof of Theorem \ref{thm:3.1} is the following static bound:

\begin{prop}  \label{prop:Phi}
For $\Phi = \Phi(\nabla h) \in L^2(\tilde{\mathcal{C}},\nu)$
such that $\|\Phi\|_{1,\e}<\infty$, and $\fa\in C_0(\R)$ satisfying the
condition of Theorem \ref{thm:3.1}, we have that
\begin{equation} \label{eq:B-1}
\left| E^{\nu^\e}\left[ B^\e(\fa,Y) \Phi\right] \right|
\le C(\fa) \sqrt{\e} \|\Phi\|_{1,\e},
\end{equation}
for all $0 < \e <1 \wedge \left(\frac14 \dist(\rm{supp}\,\fa,\rm{supp}\,\rho)\right)$
with some positive constant $C(\fa)$, which depends only on 
$\|\fa\|_\infty$ and the size of $\rm{supp}\,\fa$. In particular, taking $\Phi=1$,
we see $E^{\nu^\e}\left[ B^\e(\fa,Y) \right] =0$.
\end{prop}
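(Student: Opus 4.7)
The plan is to perform a Wiener-It\^o chaos expansion under $\nu^\e$ and reduce \eqref{eq:B-1} to a chaos-by-chaos pairing that matches the Dirichlet norm of Lemma \ref{lem:3.5}. Under $\nu^\e$ the tilt field $u(x)=\int_\R\eta^\e(x-z)dB(z)$ is a stationary centred Gaussian process with covariance $\eta_2^\e(x-y)$, and both $Y(x)/Y_\rho=\exp(\int(h(x)-h(y))\rho(y)dy)$ and $F^\e(x):=(u^2*\eta_2^\e)(x)-u^2(x)-\tfrac{1}{12}$ are functionals of $u$. Because $E^{\nu^\e}[u(y)^2]=\eta_2^\e(0)$ does not depend on $y$, the quadratic part of $F^\e$ is a pure chaos-$2$ element with kernel
\[
k^\e_x(z_1,z_2)=\int_\R\bigl[\eta^\e(y-z_1)\eta^\e(y-z_2)-\eta^\e(x-z_1)\eta^\e(x-z_2)\bigr]\eta_2^\e(x-y)\,dy,
\]
which vanishes at $y=x$. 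Expanding the log-normal prefactor $Y(x)/Y_\rho=\sum_{n\ge 0}I(K_n^\e(x;\cdot))$ and applying the product formula for multiple Wiener integrals yields a chaos decomposition of $F^\e\cdot (Y/Y_\rho)$; the constant $\tfrac{1}{12}$ is chosen exactly so that after integration against $\fa$ the chaos-$0$ component vanishes, absorbing the full contraction of $k^\e_x$ with the chaos-$2$ part of $K_2^\e$. This pins down the constant, and the consistency $E^{\nu^\e}[B^\e(\fa,Y)]=0$ then drops out of \eqref{eq:B-1} with $\Phi\equiv 1$ since $\|1\|_{1,\e}=0$.

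Writing $\Phi=\sum_n I(\fa_n)$ and applying the Wiener-It\^o isometry gives
\[
E^{\nu^\e}[B^\e(\fa,Y)\Phi]=\sum_{n\ge 1}\frac{1}{n!}\langle G_n^\e,\fa_n\rangle_{L^2(\R^n)},
\]
where $G_n^\e(y_1,\ldots,y_n)=\int g_n^\e(x;y_1,\ldots,y_n)\fa(x)\,dx$ is the chaos-$n$ kernel of $B^\e(\fa,Y)$. To bring the pairing into the shape $\langle\tilde G_n^\e,\,D\fa_n*(\eta^\e)^{\otimes n}\rangle$ controlled by Lemma \ref{lem:3.5}, I would integrate by parts in one of the $y_i$-variables and use \eqref{five} to convert the resulting partial derivative of $\fa_n$ into $D\fa_n$; the required $(\eta^\e)^{\otimes n}$-smoothing is inherited from the built-in $\eta^\e$ factors in $k^\e_x$ and in each $K_n^\e$. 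Cauchy-Schwarz chaos-by-chaos together with Lemma \ref{lem:3.5} then reduces the estimate to
\[
\sum_{n\ge 1}\frac{1}{n!}\|\tilde G_n^\e\|_{L^2(\R^n)}^2\le C(\fa)^2\,\e,
\]
with $\tilde G_n^\e$ the post-integration-by-parts kernel. The factor $\e$ on the right is extracted from the vanishing of the integrand of $k^\e_x$ at $y=x$: a one-term Taylor expansion in $y$ around $x$ gives a pointwise improvement of order $|y-x|/\e$ over the naive bound, contributing a factor $\sqrt{\e}$ to $\tilde G_n^\e$ in $L^2$-norm.

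The support separation $\dist(\mathrm{supp}\,\fa,\mathrm{supp}\,\rho)>0$ is critical here: for $x\in\mathrm{supp}\,\fa$ the variance of $h(x)-h(\rho)$ stays uniformly bounded in $\e$, so that $\|K_n^\e(x;\cdot)\|_{L^2(\R^n)}\le C^n/\sqrt{n!}$ for some $C=C(\fa)$, making the chaos series summable and producing a constant depending only on $\|\fa\|_\infty$ and $|\mathrm{supp}\,\fa|$. The main obstacle will be the chaos-level bookkeeping: setting up the integration by parts uniformly across all $n$ so that it simultaneously delivers the $\eta^\e$-smoothing matching $\|\cdot\|_{1,\e}$ and the $\sqrt{\e}$ cancellation from $k^\e_x$, while keeping the full sum over chaos levels summable despite the log-normal prefactor contributing nontrivially at every chaos order.
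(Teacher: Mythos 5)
Your overall architecture --- expand $Y(x)/Y_\rho$ in Wiener chaos with kernels $\phi_x^{\otimes n}$, apply the product (diagram) formula, pair the resulting second-order kernel against the chaos-$2$ kernel of $\Psi^\e*\eta_2^\e-\Psi^\e$, and close with Cauchy--Schwarz against Lemma \ref{lem:3.5} --- is the same as the paper's. But three execution points, which are exactly where the work lies, are missing or wrong. First, the $\tfrac1{12}$ must cancel at \emph{every} chaos level, not just level $0$: the $(n,n+2)$-pairings produce, for each $n$, the fully contracted factor $\int_\R(\phi_x^{\otimes2}*(\eta_2^\e)^{\otimes2})(y,y)\,\eta_2^\e(x-y)dy-(\phi_x^{\otimes2}*(\eta_2^\e)^{\otimes2})(x,x)$ multiplying the level-$n$ pairing of $\Phi$ with $Y/Y_\rho$; this factor equals $\tfrac1{12}+2J_2^\e(x)+J_3^\e(x)$, where $\tfrac1{12}=\tfrac13-\tfrac14$ comes from $P(R_3=\min_i R_i)$ for i.i.d.\ symmetric $R_i\sim\eta_2^\e$, and only the constant part is removed by the subtraction. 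You only discuss the chaos-$0$ component; without the level-by-level cancellation the remaining kernels are $O(1)$, not $O(\sqrt\e)$.

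Second, you misattribute the role of the support-separation hypothesis: uniform boundedness of the variance of $h(x)-h(\rho)$ holds for any $x$ in a compact set regardless of separation. The hypothesis is actually needed because the leftover terms $2J_2^\e+J_3^\e$ above, and the analogous terms $\th*\eta_4^\e(x)-\th*\eta_2^\e(x)$ arising in the one-contraction pairings $(n+1,n+1)$, are $O(1)$ but supported in an $O(\e)$-neighbourhood of ${\rm supp}\,\rho$; they vanish only because $\fa$ is supported away from there. Your proposal never identifies these terms. Third, your mechanism for extracting $\sqrt\e$ is not viable as stated: a pointwise Taylor expansion of the kernel $k_x^\e$ in $y$ around $x$ puts derivatives on $\eta^\e$, each costing $\e^{-1}$, which destroys the gain. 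The working argument contracts against $f_{n+2}=\fa_{n+2}*(\eta^\e)^{\otimes(n+2)}$ \emph{first}, rewrites the convolutions as expectations over auxiliary variables $U^\e,X_i^\e,Y^\e$ with $|Y^\e|\le2\e$, and Taylor-expands in the shift $Y^\e$ so the derivative lands on $f_{n+2}$ --- precisely the object whose derivative $L^2$-norms resum to $\|\Phi\|_{1,\e}^2$ via Lemma \ref{lem:3.5}; the net $\sqrt\e$ then comes from $\e\cdot\|\eta_2^\e\|_\infty^{1/2}\sim\e\cdot\e^{-1/2}$ after Cauchy--Schwarz. Your ``integration by parts in $y_i$'' produces a derivative of $\fa_n$ but supplies neither the factor $\e$ nor the correct $(\eta^\e)^{\otimes n}$-smoothing.
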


As we pointed out, we will work with the height processes
and not with the Cole-Hopf transformed processes.
In this respect, $\left(\frac{\partial_x Y}Y \right)^2 -\xi^\e$
is transformed back to $(\partial_x h)^2 -\xi^\e$. However,
recalling that $\partial_x h=\partial_x(B*\eta^\e)$ under $\nu^\e$
in the stationary situation, by It\^o's formula, we have that
\begin{align}\label{eq:6.1}
(\partial_xh)^2 & = \left\{ \int_\R \eta^\e(x-y)dB(y)\right\}^2 \\
& = \Psi^\e(x) + \int_\R \eta^\e(x-y)^2dy = \Psi^\e(x)+\xi^\e,  \notag
\end{align}
where $\Psi^\e(x)$ is a Wiener functional of second order given by
\begin{align} \label{eq:2.Psi}
\Psi^\e(x) &
= \int_{\R^2} \eta^\e(x-x_1)\eta^\e(x-x_2)dB(x_1)dB(x_2)\\
&\equiv 2 \int_{x_1<x_2} \eta^\e(x-x_1)\eta^\e(x-x_2)dB(x_1)dB(x_2).  \notag
\end{align}
Therefore, $\left(\frac{\partial_x Y}Y \right)^2 -\xi^\e= \Psi^\e(x)$
and $A^\e(x,Y)$ is rewritten as
\begin{equation} \label{eq:10-b}
\frac12 Y(x) \{\Psi^\e*\eta_2^\e(x) - \Psi^\e(x)\}.
\end{equation}

\begin{rem} \label{rem:3.1}
{\rm (1)} The term $\Psi^\e*\eta_2^\e(x) - \Psi^\e(x)$ 
does not converge in a strong sense.  In fact, if we take 
$\eta(x) = \tfrac1{\sqrt{2\pi}} e^{-x^2/2}$ for simplicity, then
an explicit computation shows that
$$
E^{\nu^\e} [\{\Psi^\e*\eta_2^\e(x) - \Psi^\e(x)\}^2]
= \frac1{\pi \e^2} \left( \frac1{4\sqrt{5}}- \frac1{2\sqrt{3}}
+ \frac14 \right).
$$
{\rm (2)} We easily have that
$$
E^{\nu^\e} [\Psi^\e(x)^2] = \frac1{\e^2} \eta_2(0)^2.
$$
Comparing with {\rm (1)}, we see that taking the difference
does not  improve the magnitude in $\e$.\\
{\rm (3)} The term $\Psi^\e*\eta_2^\e(x) - \Psi^\e(x)$ converges
to $0$ in a weak sense.  More precisely, for every
$\Phi\in L^2(\nu)$ whose second order Wiener chaos has a
continuous kernel $\fa_2\in C(\R^2)\cap \hat{L}^2(\R^2)$, 
we have that
$$
\lim_{\e\downarrow 0}
E^{\nu^\e} [\{\Psi^\e*\eta_2^\e(x) - \Psi^\e(x)\}\Phi]=0.
$$
However, this is not sufficient to analyze
the limit of \eqref{eq:10-b} because of the extra $Y(x)$. \\
{\rm (4)} If one could have a bound on $\Psi^\e*\eta_2^\e(x) 
- \Psi^\e(x)$ in a Sobolev norm $\|\cdot\|_{H_r^{-\a}}$ with 
possibly $\a< 1/2$,
then one might be able to control the limit of \eqref{eq:10-b}.
However, we only have that
$$
E^{\nu^\e} \left[\left| \int_\R \{\Psi^\e*\eta_2^\e(x) - \Psi^\e(x)\}
\psi(x)dx \right|\right] \le C \e \|\partial_x^2\psi\|_\infty,
$$
for every $\psi\in C_b^2(\R)$.  This (with interpolation) roughly
implies
$$
E^{\nu^\e} \left[\| \Psi^\e*\eta_2^\e(x) - \Psi^\e(x)
\|_{H^{-\a}_r(\R)} \right] \le C \e^{\a-1},
$$
which is expected to converge to $0$ only if $\a>1$.
Under the multiplication of $Y^\e(x)$, which is roughly in
$C^{\frac12-\de}$ (uniformly in $\e$), the convergence of 
\eqref{eq:10-b} to $0$ cannot be expected. \\
{\rm (5)} Proposition \ref{prop:Phi} 
shows that $\|B^\e(\fa,Y)\|_{-1,\e} \le C(\fa)\sqrt{\e}$.
For its $L^2$-norm, we only have  $\|B^\e(\fa,Y)\|_{L^2(\Om)} 
\le C(\fa)\e^{-1/2}$.
Indeed, in the proof of the proposition stated below, we can estimate 
$|f_{n+2}-f_{n+2}| \le |f_{n+2}| + |f_{n+2}|$ to avoid to have their derivatives,
but we lose the factor $\e$ in doing so.
\end{rem}

The results mentioned in Remark \ref{rem:3.1} are  
not useful in our situation because of the extra term $Y(x)$.
Since $Y(x)$ is not a tilt variable, we need to consider
$\frac{Y(x)}{Y_\rho}$ instead as in $B^\e(x,Y)$.

Once Proposition \ref{prop:Phi} is shown, \eqref{eq:3.14-b} with $\Phi\equiv 1$
implies that $E^{\pi\otimes\nu^\e}\left[ \hat A^\e(\fa,Y) \right] =0$.
This means that we may assume $\Phi\in L^2(\pi\otimes\nu^\e)$
in the right hand side of \eqref{eq:thm3-1-1} satisfies
$E^{\pi\otimes\nu^\e}\left[ \Phi \right] =0$.  For such $\Phi$, 
we have that
\begin{align*}
\left| 2E^{\pi\otimes\nu^\e}
\left[ \hat A^\e(\fa,Y) \Phi \right] \right|
& \le E^\pi\left[  Y_\rho C(\fa) \sqrt{\e} \|\Phi(h(\rho),\cdot)\|_{1,\e}
\right] \\
& \le e \, C(\fa) \sqrt{\e}  \int_0^1 d\pi \left\{\tfrac12
E^{\nu^\e}\left[ \int_\R
\left( D\Phi(\cdot;h(\rho),\nabla h)*\eta^\e\right)^2(x)dx
\right] \right\}^{1/2}  \\
& \le e \, C(\fa) \sqrt{\e}  \left\{\tfrac12 
E^{\pi\otimes\nu^\e}\left[ \int_\R
\left( D\Phi(\cdot;h(\rho),\nabla h)*\eta^\e\right)^2(x)dx
\right] \right\}^{1/2}  \\
& \le e\, C(\fa) \sqrt{\e} \|\Phi\|_{1,\e},
\end{align*}
where the operator $D$ acts only on the tilt variable $\nabla h$.
We have used Proposition \ref{prop:Phi}, Lemma \ref{lem:3.4}
and $Y_\rho\in [1,e]$ for the second line, Schwarz's inequality for
the third line and Lemma  \ref{lem:3.6} stated below for the
fourth line.  Therefore, the right hand side of \eqref{eq:thm3-1-1}
is bounded by
$$
24 T \sup_{\a\in \R} \{ e C(\fa) \sqrt{\e}\a-\a^2\}
= 24 T (\tfrac{e}2\, C(\fa) \sqrt{\e})^2,
$$
in which we write $\a = \|\Phi\|_{1,\e}$.  This tends to $0$ as 
$\e\downarrow 0$, and concludes the proof of Theorem
\ref{thm:3.1}.

\begin{lem} \label{lem:3.6}
For $\Phi=\Phi(h(\rho),\nabla h)$ such that
$E^{\pi\otimes\nu^\e}\left[ \Phi \right] =0$,
\begin{align*}
\|\Phi\|_{1,\e}^2 = & \frac{\xi_\rho^\e}2 E^{\pi\otimes\nu^\e}\left[ 
\left( \frac{\partial\Phi}{\partial h(\rho)}(h(\rho),\nabla h)\right)^2
  \right] \\
& \quad +  \frac12 E^{\pi\otimes\nu^\e}\left[ \int_\R
\left( D\Phi(\cdot;h(\rho),\nabla h)*\eta^\e\right)^2(x)dx \right].
\end{align*}
In the above formula, $D$ acts only on the tilt variables and
$\xi_\rho^\e$ is defined in the proof of Lemma  \ref{lem:3.1-a}.
\end{lem}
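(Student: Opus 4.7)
My plan is to start from Lemma~\ref{lem:3.4}, which gives
\[
\|\Phi\|_{1,\e}^2 \;=\; \tfrac12\, E^{\pi\otimes\nu^\e}\!\Bigl[\int_\R\bigl(D\Phi(\cdot;h)*\eta^\e\bigr)^2(x)\,dx\Bigr],
\]
and then to decompose $D\Phi$ according to the splitting $h\mapsto(h(\rho),\nabla h)$ that underlies Section~\ref{sec:3.2}. Writing $\Phi(h)=\tilde\Phi(h(\rho),\nabla h)$, the chain rule yields
\[
D\Phi(x;h) \;=\; \frac{\partial\tilde\Phi}{\partial h(\rho)}\,\rho(x) \;+\; D\Phi(x;h(\rho),\nabla h),
\]
where the second summand is the tilt-only derivative (variation in $\nabla h$ with $h(\rho)$ held fixed). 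Because adding a constant to $h$ leaves $\nabla h$ unchanged, this tilt derivative has vanishing integral over $x$.

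Next I would convolve with $\eta^\e$, square, and integrate in $x$. Using the identity $\int_\R(\rho*\eta^\e)^2(x)\,dx = \int_{\R^2}\rho(x_1)\rho(x_2)\eta_2^\e(x_1-x_2)\,dx_1dx_2 = \xi_\rho^\e$, one obtains three contributions: a radial piece $\xi_\rho^\e\bigl(\partial\tilde\Phi/\partial h(\rho)\bigr)^2$, a tilt piece $\int_\R\bigl(D\Phi(\cdot;h(\rho),\nabla h)*\eta^\e\bigr)^2(x)\,dx$, and a cross term $2\,\bigl(\partial\tilde\Phi/\partial h(\rho)\bigr)\int_\R(\rho*\eta^\e)(y)\bigl(D\Phi(\cdot;h(\rho),\nabla h)*\eta^\e\bigr)(y)\,dy$. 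Averaging against $\pi\otimes\nu^\e$ and multiplying by $\tfrac12$, the radial and tilt contributions already reproduce exactly the two terms on the right-hand side of the stated identity.

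What remains --- and this is the main obstacle --- is to show that the cross-term expectation vanishes. My approach would combine two structural features. First, $\tilde\Phi$ lives on the wrapped state space $[0,1]\times\tilde{\mathcal C}$ of Lemma~\ref{lem:3.1-a}, so it is periodic in $a=h(\rho)$ with period one; integration by parts in $a$ over $[0,1]$ kills the boundary terms and swaps $\partial_a\tilde\Phi$ onto the companion factor. Second, the Gaussian structure of $\nu^\e$ (via the Wiener-It\^o representation of Lemma~\ref{lem:3.5}) provides a symmetry between $D$-type pairings of tilt functionals. The cleanest implementation is to expand $\Phi$ in a joint basis of Fourier modes in $a$ times Wiener chaos in $\nabla h$: the cross term then decomposes mode-by-mode, and in each mode the antisymmetry from the $a$-Fourier factor pairs against the symmetry of the Gaussian bilinear form on the tilt side to produce the cancellation. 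The subtlety to watch is that $a$-periodicity alone merely interchanges the two factors, so the Gaussian structure on the tilt variable is genuinely needed to close the identity; making that cancellation transparent is the technical heart of the argument.
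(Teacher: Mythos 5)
Your decomposition of $\tilde D\Phi$ into the radial piece $\frac{\partial\Phi}{\partial h(\rho)}\,\rho(x)$ plus the tilt derivative, the identification $\int_\R(\rho*\eta^\e)^2\,dx=\xi_\rho^\e$, and the reduction of the lemma to the vanishing of the cross term are exactly what the paper does. The gap is in your mechanism for killing the cross term. You propose that, after expanding in Fourier modes in $a=h(\rho)$ times Wiener chaos in $\nabla h$, the cross term vanishes because an antisymmetric matrix of $a$-coefficients is traced against a \emph{symmetric} Gaussian bilinear form on the tilt side. But the tilt-side form
\[
A_{ij}\;=\;E^{\nu^\e}\!\Bigl[\Phi_i\int_\R(\rho*\eta^\e)(x)\,\bigl(D\Phi_j(\cdot;\nabla h)*\eta^\e\bigr)(x)\,dx\Bigr]
\]
is \emph{not} symmetric in $(i,j)$: Gaussian integration by parts gives $A_{ij}+A_{ji}=E^{\nu}\bigl[\Phi_i\Phi_j\,I(\rho*\eta_2^\e)\bigr]$ (a first-order Wiener integral appears on the right), not $2A_{ij}$. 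A concrete failure: take $\Phi_i$ a second chaos and $\Phi_j$ a first chaos; then $A_{ij}=0$ while $A_{ji}$ is generically nonzero. So ``antisymmetric times symmetric'' does not close, and the sentence in which you assert that the Gaussian structure is ``genuinely needed'' points in the wrong direction.

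What the paper actually does is purely one-dimensional, on the $h(\rho)$ side: it expands $\Phi=\sum_i f_i(h(\rho))\Phi_i(\nabla h)$ in the sine system $f_i(a)=\sqrt2\sin(\pi i a)$, uses the hypothesis $E^{\pi\otimes\nu^\e}[\Phi]=0$ to restrict to even indices $i$, and then checks by direct computation that $\int_0^1 f_i'f_j\,d\pi=0$ for \emph{every} pair of even $i,j$ --- not merely that the matrix $\bigl(\int_0^1 f_i'f_j\,d\pi\bigr)$ is antisymmetric. Since each coefficient vanishes individually, the tilt-side expectations $A_{ij}$ are irrelevant and no property of $\nu^\e$ is invoked for the cross term; the general case follows by density. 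Your instinct that bare antisymmetry from periodicity is insufficient is correct, but the missing ingredient is the stronger entrywise vanishing in a suitably chosen basis, not a symmetry of the Gaussian pairing. To repair your argument you would need to either adopt the paper's basis computation or supply a genuinely different reason why $\sum_{i,j}\bigl(\int_0^1 f_i'f_j\,d\pi\bigr)A_{ij}=0$.
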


\begin{proof}
Denote $D$ acting on $h=(h(\rho),\nabla h)$ by $\tilde D$ for
distinction.  Then, it can be expressed as
$$
\tilde D \Phi(x;h(\rho),\nabla h) = 
\frac{\partial\Phi}{\partial h(\rho)}(h(\rho),\nabla h) \rho(x)
+ D\Phi(x;h(\rho),\nabla h),
$$
so that Lemma \ref{lem:3.4} implies that
\begin{align*}
\|\Phi\|_{1,\e}^2 =  \frac12 E^{\pi\otimes\nu^\e}\left[ 
\int_\R \left\{ \frac{\partial\Phi}{\partial h(\rho)}(h(\rho),\nabla h)
 \rho*\eta^\e(x) 
+ D\Phi(\cdot;h(\rho),\nabla h)*\eta^\e(x)\right\}^2dx \right].
\end{align*}
We expand the square inside the integration, then the cross term
becomes
\begin{align} \label{eq:cross}
E^{\pi\otimes\nu^\e}\left[ 
\frac{\partial\Phi}{\partial h(\rho)}(h(\rho),\nabla h)
\int_\R \rho*\eta^\e(x) 
\big( D\Phi(\cdot;h(\rho),\nabla h)*\eta^\e\big)(x)dx \right] =0
\end{align}
and this shows the conclusion. \eqref{eq:cross} is shown, 
first for $\Phi$ of the form $\Phi = \sum_{i=1}^\ell f_i(h(\rho))
\Phi_i(\nabla h)$, where we choose $\{f_i(a)=\sqrt{2}\sin
\pi i a \}_{i=1}^\infty$ which is a complete orthonormal system
of $L^2([0,1],\pi)$.  Since $E^{\pi\otimes\nu^\e}\left[ \Phi \right] =0$,
we may assume $E^\pi[f_i]=0$ so that $i$ are even.
Indeed for such $\Phi$, the left hand side of
\eqref{eq:cross} can be rewritten as
\begin{align*}
\sum_{i,j=2: \text{ even}}^\ell \int_0^1 f_i' f_j d\pi \,
E^{\nu^\e}\left[ \Phi_i \int_\R \rho*\eta^\e(x) 
\big( D\Phi_j(\cdot;\nabla h)*\eta^\e\big)(x)dx \right].
\end{align*}
However, we easily see that
$
\int_0^1 f_i' f_j d\pi =0
$
for all even $i, j \ge 2$ and this proves \eqref{eq:cross}. The general
$\Phi \in L^2(\pi\!\otimes\!\nu^\e)$ can be
approximated by the functions of the above form.
\end{proof}

Only the proof of Proposition \ref{prop:Phi} is left.  Before
giving it, we recall the diagram formula in order to compute 
the second order chaos in the products of two
Wiener functionals $\frac{Y(x)}{Y_\rho}$ and $\Phi$.
Let $n_1, \ldots, n_m \in\N$ be given.  We call $\Ga$ the set of all 
diagrams $\ga$, which are collections of (undirected) edges connecting
vertices in $V :=\{(j,\ell); \ell=1,\ldots, m, j=1,\ldots, n_\ell\}$,
in such a way that each edge in $\ga$ connects two vertices $(j_1,\ell_1)$ 
and $(j_2,\ell_2)$ only when $\ell_1 \not= \ell_2$ and each 
vertex is attached to at most one edge.  We denote 
$N= \sum_{\ell=1}^m n_\ell$ and the number of edges in $\ga$ by
$|\ga|$.  We define the function $\fa_\ga\in \hat{L}^2(\R^{N-2|\ga|})$ 
as follows: We first introduce a function $\fa$ of $N$ variables $\{x_{j,\ell}\}$ by
$$
\fa(x_{j,\ell}, \ell=1,\ldots, m, j=1,\ldots, n_\ell)
:= \prod_{\ell=1}^m \fa_\ell(x_{j,\ell}, j=1,\ldots, n_\ell).
$$
We call the variables $\{x_{j,\ell}, \ell=1,\ldots, m, j=1,\ldots, n_\ell\}$ 
simply as $\{x_1,\ldots,x_N\}$ and call $\fa$ again its symmetrization.
Then, $\fa_\ga$ is defined from $\fa$ by truncating the last
 $2|\ga|$-variables:
$$
\fa(x_1,\ldots,x_{N-2|\ga|}) := \int_{\R^{|\ga|}} 
\fa(x_1,\ldots,x_{N-2|\ga|}, p_1,p_1, \ldots. p_{|\ga|},p_{|\ga|})
dp_1\cdots dp_{|\ga|}.
$$
Then, we have the following diagram formula; see Major
\cite{Ma}, Section 5, under a slightly different setting.

\begin{lem}  
For $\fa_1\in \hat{L}^2(\R^{n_1}), \ldots, \fa_m\in \hat{L}^2(\R^{n_m})$
with $n_1, \ldots, n_m \ge 1$, we have
$$
I(\fa_1) \cdots I(\fa_m) = \sum_{\ga\in \Ga} \frac{(N-2|\ga|)!}
{n_1! \cdots n_m!} I (\fa_\ga).
$$
\end{lem}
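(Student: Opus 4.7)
The plan is to prove this diagram formula by reducing to tensor-product kernels and invoking the product formula for Hermite polynomials; an equivalent route is induction on $m$ via the two-factor product formula. By multilinearity and $L^2$-density, it suffices to treat the case where each $\fa_\ell$ is (the symmetrization of) a pure tensor product $e_{i_1^{(\ell)}}\otimes\cdots\otimes e_{i_{n_\ell}^{(\ell)}}$ of elements from a fixed orthonormal basis $\{e_i\}$ of $L^2(\R)$. Setting $\xi_i = \int_\R e_i(x)\, dB(x)$ gives a sequence of i.i.d.\ standard Gaussians, and on such kernels the Wiener integral factorizes as $I(\fa_\ell) = \prod_i H_{k_i^{(\ell)}}(\xi_i)/k_i^{(\ell)}!$, where $k_i^{(\ell)}$ is the multiplicity of $e_i$ in the $\ell$-th factor and $H_n$ denotes the probabilist Hermite polynomial.

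Next I would apply the classical Hermite product formula
$$
H_p(\xi)\,H_q(\xi) = \sum_{k=0}^{p\wedge q} k!\binom{p}{k}\binom{q}{k}H_{p+q-2k}(\xi)
$$
iteratively, separately for each basis index $i$, to the full product $\prod_{\ell=1}^m I(\fa_\ell)$. The resulting combinatorial sum is indexed exactly by diagrams $\ga\in\Ga$: an edge of $\ga$ joining $(j_1,\ell_1)$ to $(j_2,\ell_2)$ corresponds to a Hermite contraction between the two occurrences of the common basis element at those vertices, while two vertices carrying different basis elements contribute nothing, consistent with orthogonality. For each diagram $\ga$, translating the remaining product of Hermite polynomials back to a multiple Wiener integral reconstructs $I(\fa_\ga)$, where $\fa_\ga$ is precisely the contracted-and-symmetrized kernel defined in the excerpt.

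The main obstacle is the combinatorial bookkeeping: one must verify that the product of Hermite coefficients $k!\binom{p}{k}\binom{q}{k}$ collected over all basis indices, together with the $1/k_i^{(\ell)}!$ normalizations and the multinomial factors arising from symmetrization, re-assemble into the single coefficient $(N-2|\ga|)!/(n_1!\cdots n_m!)$. This is a careful but routine count: within each vertex block $\ell$ there are $n_\ell!$ orderings absorbed into the denominator, while $(N-2|\ga|)!$ in the numerator arises from the symmetrization applied to the $N-2|\ga|$ surviving half-edges. Once the identity is established for pure tensor-product kernels, the general case follows immediately by linearity and continuity. An alternative proof by induction on $m$, based on the two-factor product formula and identifying each new diagram as a pairing between the $m$-th vertex block and the free half-edges of a diagram on the first $m-1$ blocks, has the same combinatorial heart and arrives at the same coefficient.
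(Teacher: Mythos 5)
The paper itself gives no proof of this lemma; it is quoted directly from Major \cite{Ma}, Section 5. So the only question is whether your sketch would actually produce the stated coefficient. Your overall strategy --- reduce by multilinearity and $L^2$-density to symmetrized pure tensors of orthonormal basis vectors, represent each factor through Hermite polynomials in the i.i.d.\ Gaussians $\xi_i=\int_\R e_i\,dB$, expand the product with the Hermite multiplication formula, and re-identify each surviving term as $I(\fa_\ga)$ for a diagram $\ga$ --- is the standard route and is sound; the alternative induction on $m$ via the two-factor product formula is equally standard and arguably cleaner here, since with the paper's normalization $I(\fa_n)=\frac1{n!}\int\fa_n\,dB^{\otimes n}$ the classical identity $I_p(f)I_q(g)=\sum_k k!\binom{p}{k}\binom{q}{k}I_{p+q-2k}(f\otimes_k g)$ (for the unnormalized integrals $I_n$) divides through to give exactly $\sum_{\ga}\frac{(p+q-2|\ga|)!}{p!\,q!}\,I(\fa_\ga)$, because the $k!\binom{p}{k}\binom{q}{k}$ counts precisely the diagrams with $|\ga|=k$ and all of them yield the same symmetrized contraction.

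There is, however, one concrete error in the single formula you commit to. With the paper's convention, the Wiener integral of the symmetrization of $e_{i_1^{(\ell)}}\otimes\cdots\otimes e_{i_{n_\ell}^{(\ell)}}$ is
$I(\fa_\ell)=\frac1{n_\ell!}\prod_i H_{k_i^{(\ell)}}(\xi_i)$, not $\prod_i H_{k_i^{(\ell)}}(\xi_i)/k_i^{(\ell)}!$; the two differ by the multinomial factor $n_\ell!/\prod_i k_i^{(\ell)}!$, so they agree only in degenerate cases. Since the entire content of the lemma is the exact constant $(N-2|\ga|)!/(n_1!\cdots n_m!)$, this is not cosmetic: starting from your version of the representation, the bookkeeping you defer as ``routine'' would terminate in the wrong coefficient. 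The argument is repaired simply by using the correct normalization (or by quoting the two-factor product formula and inducting), but as written the decisive step would fail.
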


We are now ready to give the proof of Proposition \ref{prop:Phi}.

\begin{proof}[Proof of Proposition \ref{prop:Phi}]
We first notice that, under $\nu$, $\frac{Y(x)}{Y_\rho}$ has an expression:
$$
\frac{Y(x)}{Y_\rho} = e^{B(x)-\int_\R B(y)\rho(y)dy},
$$
and the exponent can be rewritten as
$$
B(x)-\int_\R B(y)\rho(y)dy= \int_\R \phi_x(u)dB(u),
$$
where
\begin{equation}  \label{eq:3.17-x}
\phi_x(u) = 1_{(-\infty,x]}(u) + \th(u), \quad \th(u) = - \int_u^\infty\rho(y)dy.
\end{equation}
Note that, from the condition of $\rho$, $\phi_x(\cdot)$ has a compact support: 
supp $\phi_x \subset [x\wedge(-1),x\vee 1]$, $|\phi_x(u)|\le 1$, has
a jump only at $u=x$ and $\th$ is smooth.  Therefore, $\frac{Y(x)}{Y_\rho}$ has 
the following Wiener-It\^o expansion under $\nu$:
\begin{align} \label{eq.WIexp-Y}
\frac{Y(x)}{Y_\rho} 
= e^{a(x)} \left\{ 1 + \sum_{n=1}^\infty  \frac1{n!}\int_{\R^n}
  \phi_x^{\otimes n} (u_1,\ldots,u_n) dB(u_1) \cdots dB(u_n)\right\},
\end{align}
where $a(x) = \tfrac12 \int_\R \phi_x^2(u)du$.  In fact, one can apply the well-known
result for the expansion of exponential martingales 
written, e.g., in \cite{KS}, p.167 for $M_t= \int_{-\infty}^t\phi_x(u)dB(u)$,
and letting $t\to\infty$.

Since $\Psi^\e$ and $\Psi^\e*\eta^\e$ are both second order Wiener chaoses, 
to compute the expectation
\begin{equation}\label{eq:Exp}
E^{\nu^\e} \left[\{\Psi^\e*\eta_2^\e(x)
- \Psi^\e(x)\} \frac{Y(x)}{Y_\rho} \Phi \right],
\end{equation}
what we need to obtain is the kernel $\bar\fa_2(x_1,x_2)$
of the second order Wiener chaos in the product $\Phi\cdot
\frac{Y(x)}{Y_\rho}$.  Denoting the kernel of the $n$th order Wiener
chaos in $\frac{Y(x)}{Y_\rho}$ except the factor $e^{a(x)}$
by $\psi_n(u_1,\cdots,u_n) =  
\phi_x^{\otimes n}(u_1,\cdots,u_n)$, in the expansion of the product
\begin{equation}  \label{eq:3.product}
\Phi\cdot \frac{Y(x)}{Y_\rho} = e^{a(x)}\left( \sum_{m_1=0}^\infty I(\fa_{m_1})\right)
 \left( \sum_{m_2=0}^\infty I(\psi_{m_2})\right)
= e^{a(x)}\sum_{m_1,m_2=0}^\infty I(\fa_{m_1})  I(\psi_{m_2}),
\end{equation}
we apply the diagram formula to get the explicit formula for 
$\bar\fa_2(x_1,x_2)$:
\begin{align*}
\bar\fa_2(x_1,x_2)&= e^{a(x)} \sum_{n=0}^\infty 
\begin{pmatrix} n+2 \\ 2 \end{pmatrix} n!
\times
\frac{2!}{(n+2)!n!} \int_{\R^n} \fa_{n+2}(\uu,x_1,x_2) 
\psi_n(\uu) d\uu \\
&\;\; + e^{a(x)} \sum_{n=0}^\infty 
\begin{pmatrix} n+2 \\ 2 \end{pmatrix} n!
\times
\frac{2!}{(n+2)!n!} \int_{\R^n} \fa_n(\uu) 
\psi_{n+2}(\uu,x_1,x_2) d\uu \\
&\;\; +e^{a(x)} \sum_{n=0}^\infty (n+1)^2 n!\times
\frac{2!}{(n+1)!(n+1)!} \int_{\R^n} \fa_{n+1}(\uu,x_1) 
\psi_{n+1}(\uu,x_2) d\uu \\
&\;\; +e^{a(x)} \sum_{n=0}^\infty (n+1)^2 n!\times
\frac{2!}{(n+1)!(n+1)!} \int_{\R^n} \fa_{n+1}(\uu,x_2) 
\psi_{n+1}(\uu,x_1) d\uu \\
&=: \bar\fa_2^{(1)}(x_1,x_2) + \bar\fa_2^{(2)}(x_1,x_2) +
\bar\fa_2^{(3)}(x_1,x_2) + \bar\fa_2^{(4)}(x_1,x_2),
\end{align*}
where we denote $\uu = (u_1,\ldots,u_n)$ and $d\uu = du_1\cdots du_n$.
Note that, to obtain the second order term, $(m_1,m_2)$ which
we need to take care are only of the forms $\{(n+2,n), (n,n+2),
(n+1,n+1), n \ge 0\}$, in which case $N= 2n+2$, and we may only 
consider $\ga$ satisfying $N-2|\ga| = 2$, i.e., $|\ga|=n$.  For example,
when $(m_1,m_2) = (n+2,n)$, the prefactor in the diagram formula
becomes as above, since $|\Ga| = \begin{pmatrix} n+2 \\ 2 \end{pmatrix}n!$,
and when $(m_1,m_2)=(n+1,n+1)$, $|\Ga|=(n+1)^2n!$

In the rest, we will show that the contributions of $\bar\fa_2^{(1)}$,
$\bar\fa_2^{(3)}$, $\bar\fa_2^{(4)}$ to the expectation \eqref{eq:Exp}
are small, while that of $\bar\fa_2^{(2)}$ exactly cancels with 
$\frac1{12}  E^{\nu^\e} \left[\frac{Y(x)}{Y_\rho} \Phi \right]$,
when integrated with a test function $\fa=\fa(x)$.

Under $\nu^\e$, the kernels $\fa_n$ and $\psi_n$ are replaced by
$\fa_n*(\eta^\e)^{\otimes n}$ and $\psi_n*(\eta^\e)^{\otimes n}$,
respectively, where $\fa_n*(\eta^\e)^{\otimes n}$ is defined by
$$
\fa_n*(\eta^\e)^{\otimes n}(\uu)
:= \int_{\R^n} \fa_n(\underline{v}) \eta^\e(u_1-v_1)\cdots
\eta^\e(u_n-v_n) d\underline{v},
$$
where $\underline{v}= (v_1,\ldots,v_n)$ and $d\underline{v}=
 dv_1\cdots dv_n$.  Recall that $\Psi^\e*\eta_2^\e(x) - \Psi^\e(x)$
is a second order Wiener chaos with the kernel:
\begin{equation}\label{eq:ker-a}
2 \left\{ \int_\R \eta^\e(y-x_1) \eta^\e(y-x_2) \eta_2^\e(x-y) dy
- \eta^\e(x-x_1) \eta^\e(x-x_2) \right\}.
\end{equation}

Then, the contribution of the first term $\bar\fa_2^{(1)}$ to the
expectation \eqref{eq:Exp}
is given by, neglecting the factor $e^{a(x)} \sum_{n=0}^\infty
\frac1{n!}$ ($2$ in \eqref{eq:ker-a} cancels with $\frac12$
appearing in \eqref{eq:2.6} for $n=2$),
\begin{align*}
& \int_{\R^2}dx_1dx_2 \left\{
\int_{\R^n} \left( \fa_{n+2}*(\eta^\e)^{\otimes (n+2)} \right)
(\uu,x_1,x_2) \left(\phi_x^{\otimes n}*(\eta^\e)^{\otimes n}\right)
(\uu) d\uu \right\}  \\
& \qquad \times 
\left\{ \int_\R \eta^\e(y-x_1) \eta^\e(y-x_2) \eta_2^\e(x-y) dy
- \eta^\e(x-x_1) \eta^\e(x-x_2) \right\} \\
& = \int_{\R^n} \phi_x^{\otimes n}(\uu) d\uu \left\{ \int_\R \left(\fa_{n+2}*(\eta_2^\e)^{\otimes (n+2)} \right)
(\uu,y,y) \eta_2^\e(x-y)dy \right. \\
& \qquad\qquad\qquad\qquad\qquad \left. \phantom{\int_{\R^n}}
-  \left(\fa_{n+2}*(\eta_2^\e)^{\otimes (n+2)} \right)(\uu,x,x)\right\}.
\end{align*}
In the above computation, we first move $\eta^\e$ in 
$\phi_x^{\otimes n}*(\eta^\e)^{\otimes n}$ to $\fa_{n+2}*(\eta^\e)^{\otimes (n+2)}$, which gives $(\fa_{n+2}*(\eta_2^\e)^{\otimes n}*(\eta^\e)^{\otimes 2})(\uu,x_1,x_2)$.  Then, we integrate in $x_1$ and $x_2$ and obtain the above formula.

Therefore, the contribution of $\bar\fa_2^{(1)}$ to the
left hand side of \eqref{eq:B-1} is given by
$
\sum_{n=0}^\infty \frac1{n!} I_n^{(1)},
$
where
\begin{align*}
I_n^{(1)} := & \int_\R \fa(x) e^{a(x)}dx \int_{\R^n} \phi_x^{\otimes n}
 (\uu) d\uu\\
& \times \left\{ \int_R (f_{n+2}*(\eta^\e)^{\otimes (n+2)})
(\uu,y,y) \eta_2^\e(x-y)dy - (f_{n+2}*(\eta^\e)^{\otimes (n+2)})
(\uu,x,x) \right\},
\end{align*}
with $f_{n+2} :=  \fa_{n+2}*(\eta^\e)^{\otimes (n+2)}$.
We have rewritten as $\fa_{n+2}*(\eta_2^\e)^{\otimes (n+2)}=
f_{n+2}*(\eta^\e)^{\otimes (n+2)}$ in the above formula.
The difference in the braces in the right hand side of
 $I_n^{(1)}$ can be expressed as the
expectation:
$$
E[f_{n+2}(\uu+U^\e, x+Y^\e+X_1^\e, x+Y^\e+X_2^\e) -
f_{n+2}(\uu+U^\e, x+X_1^\e, x+X_2^\e)],
$$
where $U^\e = (U_i^\e)_{i=1}^n$ is an $\R^n$-valued random
variable with independent components $U_i^\e$ distributed
under $\eta^\e(y)dy$ ($\eta^\e$ in short), $X_1^\e, X_2^\e$
are $\R$-valued random variables distributed under $\eta^\e$,
$Y^\e$ is an  $\R$-valued random variable distributed under
$\eta_2^\e$, and all random variables are mutually independent.
We estimate
\begin{align*}
& | f_{n+2}(\uu+U^\e, x+Y^\e+X_1^\e, x+Y^\e+X_2^\e) -
f_{n+2}(\uu+U^\e, x+X_1^\e, x+X_2^\e) | \\
& =\left| \int_0^1  \frac{\partial}{\partial \la}
 f_{n+2}(\uu+U^\e, x+\la Y^\e+X_1^\e, x+\la Y^\e+X_2^\e) d\la\right|     \\
& =\left| \int_0^1  Y^\e \sum_{k=n+1}^{n+2} \partial_k
 f_{n+2}(\uu+U^\e, x+\la Y^\e+X_1^\e, x+\la Y^\e+X_2^\e) d\la
  \right|\\
& \le 4\e \int_0^1 \left| \partial_{n+2}
 f_{n+2}(\uu+U^\e, x+\la Y^\e+X_1^\e, x+\la Y^\e+X_2^\e) \right|
d\la, \quad \text{ a.s.}.
\end{align*}
Note that supp$\,\eta \subset [-1,1]$ implies 
supp$\,\eta_2^\e \subset [-2\e,2\e]$ and therefore
$|Y^\e| \le 2\e$ a.s.  We also used the symmetry of $f_{n+2}$
in $(n+2)$-variables.  Accordingly, since $\fa\in C_0(\R)$
implies supp $\fa\subset [-K,K]$ with some $K\ge 1$ and
supp $\phi_x \subset [x\wedge(-1),x\vee 1] \subset [-K,K]$
for $x\in [-K,K]$, we have
\begin{align*}
|I_n^{(1)}| & \le 4\e \|\fa e^{a(x)}\|_\infty \int_{-K}^K dx \int_{\R^n}     
  1_{[-K,K]^n} (\uu) d\uu  \\
& \qquad \times \int_0^1 E[|\partial_{n+2}
 f_{n+2}(\uu+U^\e, x+\la Y^\e+X_1^\e, x+\la Y^\e+X_2^\e)|] d\la\\
& \le 4\e \|\fa e^{a(x)}\|_\infty \int_{-K-3\e}^{K+3\e} dx \int_{\R^n}     
  1_{[-K-\e,K+\e]^n} (\uu) d\uu  \\
& \qquad \times E[|\partial_{n+2}
 f_{n+2}(\uu, x, x+X_2^\e-X_1^\e)|].
\end{align*}
The last line is obtained by putting the integrals in $(x,\uu)$
inside the expectation and then applying the change of variables $\uu'=\uu+U^\e$ and $x'=x+\la Y^\e+X_1^\e$.  We enlarge
the domains of the integrations a little bit.  Since $X_2^\e-X_1^\e$
is distributed under $\eta_2^\e$, this is equal to
\begin{align*}
4\e \|\fa e^{a(x)}\|_\infty \int_{\R^{n+2}}  1_{[-K-3\e,K+3\e]}(x)
  1_{[-K-\e,K+\e]^n} (\uu) 
 |\partial_{n+2} f_{n+2}(\uu, x, x+y)| d\uu dx \eta_2^\e(y)dy.
\end{align*}
Apply Schwarz's inequality, and we obtain that
\begin{align*}
|I_n^{(1)}| & \le 4\e \|\fa e^{a(x)}\|_\infty 
  \left\{ \int_{\R^{n+2}}  1_{[-K-3\e,K+3\e]}(x)
  1_{[-K-\e,K+\e]^n} (\uu)  d\uu dx \eta_2^\e(y)dy \right\}^{1/2}  \\
& \qquad \times  \left\{ \int_{\R^{n+2}} 
   |\partial_{n+2} f_{n+2}(\uu, x, x+y)|^2 d\uu dx \eta_2^\e(y)dy
 \right\}^{1/2}  \\
& \le 4 \sqrt{\e} \|\fa e^{a(x)}\|_\infty \|\eta_2\|_\infty^{1/2}
  (2K+6)^{(n+1)/2} \left\{\int_{\R^{n+2}} 
   |\partial_{n+2} f_{n+2}(\uu, x, y)|^2 d\uu dx dy
 \right\}^{1/2},
\end{align*}
since $0<\e<1$.  We have used a rough estimate: $|\eta_2^\e(y)| \le 
\|\eta_2\|_\infty/\e$.  Thus, we obtain that
\begin{align*}
\sum_{n=0}^\infty \frac1{n!} |I_n^{(1)}| & 
\le C \sqrt{\e} \sum_{n=0}^\infty \frac{ (2K+6)^{(n+1)/2}}{n!} 
   \left\{\int_{\R^{n+2}} 
   |\partial_{n+2} f_{n+2}|^2 d\ux \right\}^{1/2} \\
& \le C \sqrt{\e} \left\{\sum_{n=0}^\infty \frac{ (n+1)^2(2K+6)^{(n+1)}}{(n+1)!} 
   \right\}^{1/2} \left\{\sum_{n=0}^\infty \frac1{(n+1)!} 
   \int_{\R^{n+2}}    |\partial_{n+2} f_{n+2}|^2 d\ux \right\}^{1/2} \\
& \le C' \sqrt{\e}  \|\Phi\|_{1,\e},
\end{align*}
by Schwarz's inequality and Lemma \ref{lem:3.5}.

Next, the contribution of the second term $\bar\fa_2^{(2)}$ to the
expectation \eqref{eq:Exp} is given by, again neglecting the factor $e^{a(x)} \sum_{n=0}^\infty\frac1{n!}$,
\begin{align*}
& \int_{\R^2}dx_1dx_2 \left\{
\int_{\R^n} \left( \fa_{n}*(\eta^\e)^{\otimes n} \right)
(\uu) \left(\phi_x^{\otimes (n+2)}*(\eta^\e)^{\otimes (n+2)}\right)
(\uu,x_1,x_2) d\uu \right\}  \\
& \qquad\qquad \times 
\left\{ \int_\R \eta^\e(y-x_1) \eta^\e(y-x_2) \eta_2^\e(x-y) dy
- \eta^\e(x-x_1) \eta^\e(x-x_2) \right\} \\
& = \int_{\R^n} \left(f_{n}*(\eta^\e)^{\otimes n} \right)
(\uu) \phi_x^{\otimes n}(\uu) d\uu \\
& \times \left\{ \int_\R 
 \left(\phi_x^{\otimes 2}*(\eta_2^\e)^{\otimes 2}\right)(y,y)
 \eta_2^\e(x-y)dy - \left(\phi_x^{\otimes 2}*(\eta_2^\e)^{\otimes 2}\right)(x,x)
\right\}.
\end{align*}
Here, the last term in the braces can be represented by means of expectations:
\begin{align*}
& \int_\R  \left(\phi_x^{\otimes 2}*(\eta_2^\e)^{\otimes 2}\right)(y,y)
 \eta_2^\e(x-y)dy - \left(\phi_x^{\otimes 2}*(\eta_2^\e)^{\otimes 2}\right)(x,x)\\
&= E[\phi_x^{\otimes 2}(x+R_1+R_3, x+R_2+R_3)]
- E[\phi_x^{\otimes 2}(x+R_1, x+R_2)],
\end{align*}
where $\{R_1, R_2, R_3\}$ are independent random
variables distributed under $\eta_2^\e$.  By expanding
$$
\phi_x^{\otimes 2}= 1_{(-\infty,x]}^{\otimes 2} +1_{(-\infty,x]}\otimes \th
+ \th \otimes 1_{(-\infty,x]}+ \th^{\otimes 2},
$$
the above difference of two expectations can be rewritten as
$$
J_1+2J_2^\e(x)+J_3^\e(x),
$$
where
\begin{align*}
& J_1 = E[1_{(-\infty,0]}^{\otimes 2}(R_1+R_3, R_2+R_3)]
-  E[1_{(-\infty,0]}^{\otimes 2}(R_1, R_2)] \\
& J_2^\e(x) = E[1_{(-\infty,0]}(R_1+R_3) \th(x+ R_2+R_3)]
-  E[1_{(-\infty,0]}(R_1)\th(x+R_2)] \\
& J_3^\e(x) = E[\th(x+R_1+R_3) \th(x+ R_2+R_3)]
-  E[\th(x+R_1)\th(x+R_2)].
\end{align*}
However, we see that ${\rm supp}\, J_2^\e \subset ({\rm supp}\, \rho)^{4\e}
:= \{y\in\R; |y-x|\le 4\e \text{ for some } x\in {\rm supp}\, \rho\}$ and
${\rm supp}\, J_3^\e \subset ({\rm supp}\, \rho)^{4\e}$; recall that
${\rm supp}\, \rho$ is connected and $E[1_{(-\infty,0]}(R_1+R_3)] =
E[1_{(-\infty,0]}(R_1)] = \frac12$ by the symmetry of $R_i$ for $J_2^\e(x)$.
Therefore, from our assumption: $\dist({\rm supp}\, \fa, {\rm supp}\, \rho)>4\e$,
$\fa(x) J_2^\e(x) = \fa(x) J_3^\e(x) =0$
for all $x\in\R$.  On the other hand, we have
$$
J_1=\frac1{12}.
$$
In fact, by the symmetry of $\eta_2^\e$,
\begin{align}\label{eq:1/3}
P(R_1+R_3>0, R_2+R_3>0)
& = P(R_1-R_3>0, R_2-R_3>0)  \\
& = P \left(R_3 = \min_{i=1,2,3} R_i\right) = \tfrac13, \notag
\end{align}
for the first expectation and the second one is $\frac14$
as easily seen. The constant $\frac1{12}$ is
universal in the sense that it does not depend on the specific
distributions of independent random variables $\{R_1, R_2, R_3\}$
if they are symmetric (and have densities).  
Summarizing these, we see that the contribution of $\bar\fa_2^{(2)}$
to the expectation \eqref{eq:Exp}, when multiplied by $\fa$, is given by
\begin{equation}\label{eq:ker-b}
\tfrac1{12}e^{a(x)} \sum_{n=0}^\infty \frac1{n!} 
\int_{\R^n} \left(f_{n}*(\eta^\e)^{\otimes n} \right)(\uu)
 \phi_x^{\otimes n}(\uu) d\uu.
\end{equation}

However, the series in \eqref{eq:ker-b} just cancels with the expectation 
$E^{\nu^\e} \left[\frac{Y(x)}{Y_\rho} \Phi \right]$ divided by $12$. Indeed,
this expectation can be computed by picking up the $0$th order term in the product \eqref{eq:3.product}, and it is again an application of the diagram formula.  Indeed, we may take care $(m_1,m_2)$ of the forms $(n,n), n\ge 0$ only, in which case $N=2n$, and may consider only
$\ga$ such that $|\ga|=n$, i.e., diagrams connecting all vertices of the forms $(j_1,\ell_1)$ and $(j_2,\ell_2)$ with $\ell_1\not= \ell_2$.  The number of such $\ga$'s is given by $|\Ga|=n!$.  Thus, we have
\begin{align*}
E^{\nu^\e} \left[\frac{Y(x)}{Y_\rho} \Phi \right]
& = e^{a(x)} \sum_{n=0}^\infty \frac1{n!} 
\int_{\R^n} \left( \fa_{n}*(\eta^\e)^{\otimes n} \right)
(\uu) \left(\phi_x^{\otimes n}*(\eta^\e)^{\otimes n}\right)(\uu) d\uu  \\
&  = e^{a(x)} \sum_{n=0}^\infty \frac1{n!} 
\int_{\R^n} \left(f_{n}*(\eta^\e)^{\otimes n} \right)
(\uu) \phi_x^{\otimes n}(\uu) d\uu,
\end{align*}
which is exactly the same series in \eqref{eq:ker-b} except
the factor $\frac1{12}$.  

The contribution from the third term $\bar\fa_2^{(3)}$ to the
expectation \eqref{eq:Exp} is given by, neglecting the factor
$a(x) \sum_{n=0}^\infty \frac{2}{n!}$, 
\begin{align*}
& \int_{\R^2}dx_1dx_2 \left\{
\int_{\R^n} \left( \fa_{n+1}*(\eta^\e)^{\otimes (n+1)} \right)
(\uu,x_1) \left( \phi_x^{\otimes (n+1)}*(\eta^\e)^{\otimes (n+1)}\right)
(\uu,x_2) d\uu \right\}  \\
& \qquad\qquad \times 
\left\{ \int_\R \eta^\e(y-x_1) \eta^\e(y-x_2) \eta_2^\e(x-y) dy
- \eta^\e(x-x_1) \eta^\e(x-x_2) \right\} \\
& = \int_{\R^n} \phi_x^{\otimes n}(\uu) d\uu 
\left\{ \int_\R 
 \left(\fa_{n+1}*(\eta_2^\e)^{\otimes (n+1)} \right)
(\uu,y) \left(\phi_x*\eta_2^\e\right)(y)
 \eta_2^\e(x-y)dy  \right. \\
& \qquad \qquad\qquad\qquad  \left. \phantom{\int_\R }
- \left(\fa_{n+1}*(\eta_2^\e)^{\otimes (n+1)} \right)
(\uu,x) \left(\phi_x*\eta_2^\e\right)(x) \right\}.
\end{align*}
However, we see that
\begin{align*}
& \int_\R  \left(\phi_x*\eta_2^\e\right)(y)
 \eta_2^\e(x-y)dy  = E[\phi_x(x+R_1+R_2)]  = \frac12+\th*\eta_4^\e(x), \\
& \phi_x*\eta_2^\e(x)  = E[\phi_x(x+R_1)]  = \frac12+\th*\eta_2^\e(x),
\end{align*}
where $\{R_1, R_2\}$ are independent random variables
distributed under $\eta_2^\e$, from the symmetry of $R_i$.

Therefore, the contribution of $\bar\fa_2^{(3)}$ in the
left hand side of \eqref{eq:B-1} is given by
$
\sum_{n=0}^\infty \frac2{n!} I_n^{(3)},
$
where
\begin{align*}
I_n^{(3)} := & \int_\R \fa(x) e^{a(x)}dx \int_{\R^n} \phi_x^{\otimes n}
 (\uu) d\uu\\
& \; \times \int_\R \left\{ f_{n+1}*(\eta^\e)^{\otimes (n+1)}
(\uu,y)  - f_{n+1}*(\eta^\e)^{\otimes (n+1)}(\uu,x) \right\}
(\phi_x*\eta_2^\e)(y) \eta_2^\e(x-y)dy  \\
& + \int_\R \fa(x) e^{a(x)} \{\th*\eta_4^\e(x)- \th*\eta_2^\e(x)\}dx 
\int_{\R^n}  f_{n+1}*(\eta^\e)^{\otimes (n+1)}(\uu,x) \phi_x^{\otimes n}
 (\uu) d\uu.
\end{align*}
The second term in $I_n^{(3)}$
vanishes from our assumption by noting that
${\rm supp}\{\th*\eta_4^\e(x)- \th*\eta_2^\e(x)\} \subset
({\rm supp} \,\rho)^{4\e}$.  For the first term, 
estimating $|(\phi_x*\eta_2^\e)(y)| \le 1$, the absolute value of 
the last integral in $y$ can be bounded by
$$
E[|f_{n+1}(\uu+U^\e, x+Y^\e+X_1^\e) -f_{n+1}(\uu+U^\e, x+X_1^\e)|],
$$
where $U^\e, X_1^\e, Y^\e$ are the same as before, and this can be estimated further by
\begin{align*}
2\e \int_0^1 E\left[ \left| \partial_{n+1}
 f_{n+1}(\uu+U^\e, x+\la Y^\e+X_1^\e) \right|\right] d\la.
\end{align*}
Thus,
\begin{align*}
|I_n^{(3)}| & \le 2\e \|\fa e^{a(x)}\|_\infty \int_{\R^{n+1}}
1_{[-K-3\e,K+3\e]}(x) 1_{[-K-\e,K+\e]^n} (\uu) 
  |\partial_{n+1} f_{n+1}(\uu, x)| d\uu dx \\
& \le 2\e \|\fa e^{a(x)}\|_\infty (2K+6)^{(n+1)/2}
\left\{ \int_{\R^{n+1}}  |\partial_{n+1} f_{n+1}(\ux)|^2 d\ux
\right\}^{1/2},
\end{align*}
by Schwarz's inequality.  Therefore,
we get that
\begin{align*}
\sum_{n=0}^\infty \frac2{n!} |I_n^{(3)}| 
& \le C\e \sum_{n=0}^\infty \frac1{n!} (2K+6)^{(n+1)/2}
\left\{ \int_{\R^{n+1}}  |\partial_{n+1} f_{n+1}(\ux)|^2 d\ux
\right\}^{1/2}  \\
& \le C'\e \|\Phi\|_{1,\e}.
\end{align*}
The contribution of $\bar\fa_2^{(4)}$ can be estimated similarly,
and this concludes the proof of the proposition.
\end{proof}

\begin{rem} \label{rem:3.3}
{\rm (1)} 
The assumption that ${\rm supp}\, \fa$ and
${\rm supp}\, \rho$ separate is needed to treat the terms 
$\bar\fa_2^{(3)}$  and $\bar\fa_2^{(4)}$.  For $\bar\fa_2^{(2)}$,
this assumption is unnecessary by changing $\frac1{12}$
into $\frac1{12} + 2J_2^\e(x) + J_3^\e(x)$ in the definition of
$B^\e(x,Y)$. \\
{\rm (2)} Due to the symmetry of $\eta(x)$, we obtain the constant
$\frac1{12}$.  For general asymmetric $\eta$, if it satisfies
$\int_0^\infty \eta_4(y)dy= \int_0^\infty \eta_2(y)dy$
(i.e., $P(R_1+R_2>0) = P(R_1>0)$ to treat $\bar\fa_2^{(3)}$ and
$\bar\fa_2^{(4)}$), this constant $\frac1{12}$ is replaced by
$$
J_1 = P(R_1+R_3>0, R_2+R_3>0) - P(R_1>0)^2,
$$
where $\{R_1, R_2, R_3\}$ are independent random variables
distributed under $\eta_2(y)dy$.  For example, if 
$\text{supp}\,\eta\subset [0,\infty)$ (or $(-\infty,0]$), then
$R_1, R_2, R_3>0$ a.s.\ and we have $J_1=0$.
\end{rem} 

\subsection{SPDE on $\SS_M$}  \label{section:3.4}

The arguments developed in Sections 
\ref{section:3.1}--\ref{section:3.3} work for the
SPDE's on $\SS_M, M\ge 2,$ instead of $\R$ in a similar
way.  We outline it.  Let $h^{\e,M}(t,x)$, $x\in \SS_M$ be
the solution of the SPDE \eqref{eq:1-M}.
It is periodically extended on $\R$.  Then, the Cole-Hopf
transformed process $Z^{\e,M}(t,x) = e^{h^{\e,M}(t,x)}$
satisfies the SPDE \eqref{eq:8-M}.
Taking $\rho$ as in Section \ref{sec:3.2}, we consider
wrapped processes $g^{\e,M}(t,x)$ and $Y^{\e,M}(t,x)$ of
$h^{\e,M}(t,x)$ and $Z^{\e,M}(t,x)$, respectively. Note that
the integral $h^{\e,M}(t,\rho)  =\int_{\SS_M}h^{\e,M}(t,x)
\rho(x)dx$ is defined in a periodic sense; that is, 
$\int_{-M/2}^{M/2} h^{\e,M}(t,x)\rho(x)dx$ recalling that $M\ge 2$.

We take the initial distribution of  $h^{\e,M}(0,\cdot)$ to be
$\pi\otimes\nu^{\e,M}$ under the map \eqref{eq:3.1-map}
with $\R$ replaced by $\SS_M$.  Then, the probability measure
$\pi\otimes \nu^{\e,M}$ is
invariant under $g^{\e,M}(t,x)$ as in Lemma \ref{lem:3.1-a}.
Lemma \ref{lem:3.2-R} is also parallel:

\begin{lem}  \label{lem:3.2-R-M}
$Y^{\e,M}(t,x)$ satisfies the following equation in generalized functions' sense:
\begin{align}  \label{eq:3-Y^e-2-M}
Y^{\e,M}(t,x) =  Y^{\e,M}(0,x) + & \frac12 \int_0^t \partial_x^2 Y^{\e,M}(s,x)ds 
+\int_0^t A^\e(x,Y^{\e,M}(s)) ds \\
 & + \int_0^t Y^{\e,M}(s,x) dW^\e(s,x) + N^{\e,M}(t,x),  \quad x\in \SS_M, \notag
\end{align}
where $A^\e(x,Y)$ is defined by \eqref{eq:3.4} with convolution $*\eta_2^\e$ 
considered in a periodic sense, 
\begin{align} \label{eq:3.5-M}
N^{\e,M}(t,x) & =
\int_0^t \left\{ (e-1) 1_{\{ Y_\rho^{\e,M}(s-)=1\}}
+  (e^{-1}-1) 1_{\{ Y_\rho^{\e,M}(s-)=e\}} \right\}\\
&\hspace{5cm} \times Y^{\e,M}(s-,x) N^{\e,M}(ds),  \notag
\end{align}
and $N^{\e,M}(t) = -[h^{\e,M}(t,\rho)]$.
\end{lem}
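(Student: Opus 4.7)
The plan is to mimic the proof of Lemma \ref{lem:3.2-R} verbatim, with all spatial objects interpreted in the periodic sense on $\SS_M$. Since $M\ge 2$, the integral $h^{\e,M}(t,\rho)=\int_{-M/2}^{M/2} h^{\e,M}(t,x)\rho(x)dx$ is unambiguously defined, and the Cole-Hopf transformed process $Z^{\e,M}(t,x)=e^{h^{\e,M}(t,x)}$ already satisfies the SPDE \eqref{eq:8-M} (derived at the start of Section \ref{section:3.1} by the same application of It\^o's formula and the identity \eqref{eq:2.cov}, where the convolution with $\eta_2^\e$ is periodic).

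First I would represent the jump process $N^{\e,M}(t)=-[h^{\e,M}(t,\rho)]$ as
\[
N^{\e,M}(t)=\int_0^t\sum_{a=\pm 1} a\, n^{\e,M}(ds,a),
\]
where $n^{\e,M}(ds,a)$ is the integer-valued random measure that counts upcrossings ($a=-1$) and downcrossings ($a=+1$) of the integer levels by $h^{\e,M}(\cdot,\rho)$. Since $g^{\e,M}(t,\rho)\in[0,1]$ by construction, a jump $a=-1$ can occur only when $Y_\rho^{\e,M}(s-)=e$ and a jump $a=+1$ can occur only when $Y_\rho^{\e,M}(s-)=1$; this is precisely the indicator structure that shows up in \eqref{eq:3.5-M}.

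Next I would apply It\^o's formula for semimartingales with jumps to
\[
Y^{\e,M}(t,x)=F(Z^{\e,M}(t,x),N^{\e,M}(t)), \qquad F(z,n)=ze^{n},
\]
exactly as in the proof of Lemma \ref{lem:3.2-R}. Because $N^{\e,M}$ is a pure jump process independent of the continuous part in the sense needed to separate the two contributions, one obtains
\[
Y^{\e,M}(t,x)=Y^{\e,M}(0,x)+\int_0^t e^{N^{\e,M}(s)}\,dZ^{\e,M}(s,x)+N^{\e,M}(t,x),
\]
where $N^{\e,M}(t,x)$ is given by the jump-accumulation formula \eqref{eq:3.5-M} upon evaluating $F(Z^{\e,M}(s,x),N^{\e,M}(s-)\pm 1)-F(Z^{\e,M}(s,x),N^{\e,M}(s-))=(e^{\pm 1}-1)Y^{\e,M}(s-,x)$ and using the indicator constraints on when $\pm 1$ jumps can occur.

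Finally, substituting the SPDE \eqref{eq:8-M} for $dZ^{\e,M}(s,x)$ and absorbing the factor $e^{N^{\e,M}(s)}$ into $Z^{\e,M}(s,x)$ to recover $Y^{\e,M}(s,x)$, one directly obtains \eqref{eq:3-Y^e-2-M} with the nonlinear term $A^\e(x,Y^{\e,M}(s))$ of \eqref{eq:3.4}, the multiplicative noise $Y^{\e,M}(s,x)dW^\e(s,x)$, and the jump term $N^{\e,M}(t,x)$ as in \eqref{eq:3.5-M}. The identity is understood in the generalized functions' sense after testing against $\fa\in C^\infty(\SS_M)$. There is no substantive obstacle here beyond what is already handled on $\R$; the only thing to be careful about is the periodic interpretation of $*\eta_2^\e$, and the fact that since $M\ge 2$ we have $2\e<M/2$ for small $\e$ so the convolution support fits strictly inside one period, which makes the periodic and straight convolutions agree up to periodization.
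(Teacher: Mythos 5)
Your proposal is correct and follows essentially the same route as the paper, which proves the $\SS_M$ version simply by declaring it ``parallel'' to Lemma \ref{lem:3.2-R}: represent $N^{\e,M}(t)$ via a point process on $\{\pm 1\}$, apply It\^o's formula to $F(z,n)=ze^n$, and substitute the SPDE \eqref{eq:8-M}. Your added observations (the indicator structure tied to $Y_\rho^{\e,M}(s-)\in\{1,e\}$ and the remark that the periodic convolution is unambiguous for $2\e<M/2$) are consistent with what the paper leaves implicit.
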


The limit $\nu$ of $\nu^\e$ as $\e\downarrow 0$ was 
identified with the distribution of Gaussian random measure
$X=\{X(A); A\in \mathcal{B}(\R)\}$.  The limit $\nu^M$
of $\nu^{\e,M}$ as $\e\downarrow 0$ is nothing but
$X=\{X(A); A\in \mathcal{B}([0,M))\}$ restricted on
$[0,M)$ and conditioned to be $X([0,M))=0$.  Such conditional
random variables $X^M=\{X^M(A); A\in \mathcal{B}([0,M))\}$ 
can be realized by $X^M((a,b]) =B^M(b)-B^M(a)$,
$0\le a<b\le M$, where $B^M = \{B^M(x);x\in [0,M)\}$ is
the pinned Brownian motion such that $B^M(0)=B^M(M)=0$.

The Wiener-It\^o expansion \eqref{eq:11-b} is modified in the following
way under $\nu^M$:  Taking a usual standard Brownian motion
$B=\{B(x); x\in [0,M]\}$ such that $B(0)=0$, the pinned Brownian
motion is realized as
\begin{equation}  \label{eq:3.B^M}
B^M(x) = B(x)- \frac{x}MB(M), \quad x\in [0,M],
\end{equation}
so that $B^M(0)=B^M(M)=0$.  By applying It\^o's formula and
noting $B(M) = \int_0^MdB(x)$, the multiple Wiener integral
with respect to $B^M$ can be rewritten into that with respect
to $B$ as follows:
\begin{align*}
I^M(\fa_n) := & \frac1{n!} \int_{\SS_M^n} \fa_n(x_1,\ldots,x_n) dB^M(x_1)\cdots dB^M(x_n) \\
=& \frac1{n!} \int_{[0,M)^n} \fa_n^M(x_1,\ldots,x_n) dB(x_1)\cdots dB(x_n) = I(\fa_n^M),
\end{align*}
where $\fa_n^M$ is defined by
\begin{align}  \label{eq:3.fa^M}
\fa_n^M(x_1,\ldots,x_n) & = \sum_{k=0}^n \frac{(-1)^k}{M^k} \sum_{1\le i_1<\cdots<i_k\le n}
\int_{[0,M)^k} \check{\fa}_{n; i_1,\ldots,i_k} dy_{i_1}\cdots dy_{i_k},  \\
\check{\fa}_{n; i_1,\ldots,i_k} &= \fa_n(x_1,\ldots,x_{i_1-1},y_{i_1},x_{i_1+1},\ldots,x_{i_k-1},y_{i_k},x_{i_k+1},
\ldots,x_n).  \notag
\end{align}
Note that $\fa_n^M$ is symmetric in $\ux=(x_1,\ldots,x_n)$ and satisfies
\begin{equation}  \label{eq:fa_n=0}
\int_{[0,M)} \fa_n^M(x_1,\ldots,x_n)dx_i=0,
\end{equation}
for every $1\le i \le n$.  Thus, $\Phi \in L^2(\tilde{\mathcal{C}}_M,\nu^M)$
has a Wiener-It\^o expansion
\begin{equation*}
\Phi = \sum_{n=0}^\infty I(\fa_n^M),
\end{equation*}
with some $\fa_0\in\R$ and $\fa_n^M\in \hat{L}_0^2([0,M)^n)
:= \{\fa_n^M\in \hat{L}^2([0,M)^n); \fa_n^M$ satisfies the condition
\eqref{eq:fa_n=0}$\}$.  The quotient space $\tilde{\mathcal{C}}_M$
is defined similarly to $\tilde{\mathcal{C}}$.

Note that, for two symmetric functions $f_n=f_n(\ux)$ and $\fa_n=\fa_n(\ux)$,
from \eqref{eq:3.fa^M}, we see that
\begin{equation}  \label{eq:P^M-dual}
\int_{[0,M)^n} f_n^M(\ux)\fa_n(\ux)d\ux= 
\int_{[0,M)^n} f_n(\ux)\fa_n^M(\ux)d\ux,
\end{equation}
and $(\fa_n^M)^M = \fa_n^M$, or $\fa_n^M =\fa_n$ for 
$\fa_n\in \hat{L}_0^2([0,M)^n)$.    Moreover, under the representation
\eqref{eq:3.B^M} of $B^M$, \eqref{eq:6.1} is modified as
\begin{align*}
(\partial_x h)^2 &= \left\{\int_{[0,M)} \eta^\e(x-y) dB^M(y)\right\}^2 \\
&= \left\{\int_{[0,M)} \left(\eta^\e(x-y) -\tfrac1M\right) dB(y)\right\}^2
= \Psi^{\e,M}(x)+\xi^{\e,M},
\end{align*}
where $\xi^{\e,M}= \xi^\e- \frac1{M}$ and
\begin{align}  \label{eq:Psi^M}
\Psi^{\e,M}(x)
& = \int_{[0,M)^2} \left(\eta^\e(x-x_1)-\tfrac1M\right)
\left(\eta^\e(x-x_2)-\tfrac1M\right) dB(x_1)dB(x_2)  \\
& = \int_{[0,M)^2} \Big(\eta^\e(x-x_1)\eta^\e(x-x_2)\Big)^M dB(x_1)dB(x_2).  \notag
\end{align}

Theorem \ref{thm:3.1} can be reformulated as follows in the present setting:

\begin{thm}  \label{thm:3.1-M}
For every $\fa\in C(\SS_M)$ satisfying $\rm{supp}\,\fa \cap  
\rm{supp}\,\rho = \emptyset$ (so that $\dist(\rm{supp}\,\fa,
\rm{supp}\,\rho)>0$ in a periodic sense), we have that
$$
\lim_{\e\downarrow 0} E^{\pi\otimes\nu^{\e,M}}
\left[ \sup_{0\le t \le T}\left\{ \int_0^t \hat A^\e(\fa,Y^{\e,M}(s))ds \right\}^2 
\right] =0,
$$
where $\hat A^\e(\fa,Y)$ is defined similarly to that in Theorem \ref{thm:3.1} 
by taking the integral over $\SS_M$.
\end{thm}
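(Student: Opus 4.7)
The plan is to imitate, step by step, the proof of Theorem \ref{thm:3.1}, replacing $\R$ by $\SS_M$ throughout and using the periodic Wiener--It\^o framework already set up at the end of Section \ref{section:3.4}.

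First I would apply Lemma \ref{lem:3.6-a} to the stationary wrapped process $g^{\e,M}(t)$ on $[0,1]\times\tilde{\mathcal{C}}_M$, whose invariant measure is $\pi\otimes\nu^{\e,M}$ by the direct periodic analog of Lemma \ref{lem:3.1-a}. The symmetric part of its generator is $\mathcal{L}_{0,M}^\e$ by Corollary \ref{cor:asymmetry}-(2), so with $\|\Phi\|_{1,\e,M}^2 := \lan\Phi,(-\mathcal{L}_{0,M}^\e)\Phi\ran_{\pi\otimes\nu^{\e,M}}$, the reduction \eqref{eq:thm3-1-1}--\eqref{eq:3.14-b} (integrating out $\pi$ and using $Y_\rho^{\e,M}\in [1,e]$) bounds the quantity in Theorem \ref{thm:3.1-M} by a constant times $\e$, provided we can establish the static estimate
\begin{equation*}
|E^{\nu^{\e,M}}[B^\e(\fa,Y)\Phi]|\le C(\fa)\sqrt{\e}\,\|\Phi\|_{1,\e,M}
\end{equation*}
uniformly in tilt functions $\Phi=\Phi(\nabla h)$. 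This periodic analog of Proposition \ref{prop:Phi} is the only genuinely new ingredient.

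Second, I would prove the static bound via Wiener--It\^o expansion with respect to $\nu^M$. Using the realization \eqref{eq:3.B^M}, every $\Phi\in L^2(\tilde{\mathcal{C}}_M,\nu^M)$ expands as $\Phi=\sum_n I(\fa_n^M)$ with $\fa_n^M$ given by \eqref{eq:3.fa^M}, and a standard exponential-martingale computation gives the analog of \eqref{eq.WIexp-Y} for $Y(x)/Y_\rho$ with $\phi_x$ from \eqref{eq:3.17-x} replaced by its zero-mean projection $\phi_x^M$ on $[0,M)$. The kernel of $\Psi^{\e,M}*\eta_2^\e(x)-\Psi^{\e,M}(x)$ is obtained from \eqref{eq:ker-a} and \eqref{eq:Psi^M} via the projection in \eqref{eq:3.fa^M}. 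The diagram formula applied to the product $\Phi\cdot Y(x)/Y_\rho$ then produces a second-order chaos kernel $\bar\fa_2 = \bar\fa_2^{(1)}+\bar\fa_2^{(2)}+\bar\fa_2^{(3)}+\bar\fa_2^{(4)}$ with exactly the same four-term structure as in the $\R$ case.

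The crucial cancellation proceeds unchanged. The terms $J_2^\e(x),J_3^\e(x)$ remain supported in the $4\e$-neighborhood of $\text{supp}\,\rho$ on $\SS_M$ (because $\eta_2^\e$ is supported in $[-2\e,2\e]$), so the support-separation hypothesis kills them once $\e$ is small; meanwhile the universal constant $J_1 = P(R_1+R_3>0,R_2+R_3>0)-\tfrac14=\tfrac{1}{12}$ from \eqref{eq:1/3} is intact, since it depends only on the symmetry of the common law $\eta_2^\e$ of the $R_i$'s. This produces the exact cancellation of $\bar\fa_2^{(2)}$ against $\tfrac{1}{12}E^{\nu^{\e,M}}[(Y(x)/Y_\rho)\Phi]$, and the remaining pieces $\bar\fa_2^{(1)},\bar\fa_2^{(3)},\bar\fa_2^{(4)}$ are absorbed by the same Taylor/telescoping estimate $|f_{n+2}(\ldots+Y^\e)-f_{n+2}(\ldots)|\le 2\e\int_0^1|\partial_{n+2}f_{n+2}|\,d\la$ combined with Schwarz and the periodic form of Lemma \ref{lem:3.5}. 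The main obstacle is the bookkeeping associated with the projections $\fa_n\mapsto\fa_n^M$ and $\phi_x\mapsto\phi_x^M$: one must verify that they intertwine correctly with the diagram formula and do not destroy the $O(\sqrt{\e})$ gain. This is handled by the duality \eqref{eq:P^M-dual}, which lets projections be transferred from one factor onto another in every inner product, so that the required derivatives $\partial_{n+2}f_{n+2}$ reappear inside the $\eta^\e$-convolution that defines $\|\Phi\|_{1,\e,M}$.
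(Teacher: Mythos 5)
Your proposal follows essentially the same route as the paper's proof: reduce to a static bound via Lemma \ref{lem:3.6-a}, expand in the periodic Wiener chaos built from \eqref{eq:3.B^M}, transfer the projections $P^M$ between factors using the duality \eqref{eq:P^M-dual}, and reuse the four-term diagram decomposition together with the $J_1=\tfrac1{12}$ cancellation and the support-separation argument for $J_2^\e, J_3^\e$. The only step you leave implicit --- and which the paper spells out --- is that after moving $P^M$ onto the kernel of $\Psi^{\e,M}*\eta_2^\e-\Psi^{\e,M}$ one must expand $\big(\eta^\e(x-x_1)\eta^\e(x-x_2)\big)^M$ as in \eqref{eq:eta-eta} and check separately that the $-\tfrac1M$ cross terms are harmless, which follows from the zero-mean property \eqref{eq:fa_n=0} for the contributions of $\bar\fa_2^{(1)},\bar\fa_2^{(4)}$ and from the same support and Taylor estimates for those of $\bar\fa_2^{(2)},\bar\fa_2^{(3)}$.
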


\begin{proof}
We modify the proofs of Proposition \ref{prop:Phi} and Theorem \ref{thm:3.1}.
By \eqref{eq:Psi^M}, the expectation \eqref{eq:Exp} under $\nu^{\e,M}$
in the present setting is equal to
\begin{align}  \label{eq:3.XYZ}
\int_{[0,M)^2} \bar\fa_2^\e(x_1,x_2) &  \left(
\int_{\SS_M} \Big(\eta^\e(y-x_1)\eta^\e(y-x_2)\Big)^M\eta_2^\e(x-y)dy 
\right.\\
& \qquad\qquad\qquad \left. - \Big(\eta^\e(x-x_1)\eta^\e(x-x_2)\Big)^M\right)dx_1dx_2,  
\notag
\end{align}
where $\bar\fa_2^\e$ is defined from $\bar\fa_2$ given below \eqref{eq:3.product}
with $\R^n$ replaced by $\SS_M^n$ and $\fa_n$, $\psi_n$ replaced by
$P^M\fa_n*(\eta^\e)^{\otimes n}$, $P^M\psi_n*(\eta^\e)^{\otimes n}$,
respectively.  The operator $P^M$ is defined by $P^M\fa_n=\fa_n^M$ and note 
that the constant $\xi^{\e,M}$ plays no role in \eqref{eq:3.XYZ}.

By \eqref{eq:P^M-dual}, the operator 
$P^M$ acting on $\psi_n, \psi_{n+1}, \psi_{n+2}$ can be moved to 
$P^M\fa_{n+2}$, $P^M\fa_{n+1}$, $P^M\fa_{n}$ under the integrals in the
variable $\uu$ and to $(\eta^\e(y-x_1)\eta^\e(y-x_2))^M$ under the 
integrals in the variable $(x_1,x_2)$.  Note that we can separate $P^M$
in these two variables.  Thus, we can remove $P^M$ from
$\psi_n, \psi_{n+1}, \psi_{n+2}$.  Writing $P^M\fa_{n}, P^M\fa_{n+1}, P^M\fa_{n+2}$
simply by $\fa_{n}, \fa_{n+1}, \fa_{n+2}$ again and noting $(P^M)^2 = P^M$,
we can also drop $P^M$ from $\fa_{n}, \fa_{n+1}, \fa_{n+2}$.

Now, we expand
\begin{equation}  \label{eq:eta-eta}
\Big(\eta^\e(x-x_1)\eta^\e(x-x_2)\Big)^M
= \eta^\e(x-x_1)\eta^\e(x-x_2) - \frac1M\big(\eta^\e(x-x_1)+\eta^\e(x-x_2) \big)
+ \frac1{M^2}.
\end{equation}

The contribution to \eqref{eq:3.XYZ} from the first term of \eqref{eq:eta-eta}
is exactly the same as in the proof of Proposition \ref{prop:Phi}.
Note that the expectation $\frac1{12} E^{\nu^{\e,M}}[\frac{Y(x)}{Y_\rho}\Phi]$
can be computed similarly in the present setting.  The contribution of the last constant 
$\frac1{M^2}$ cancels when we take the difference in \eqref{eq:3.XYZ}.

The contribution to \eqref{eq:3.XYZ} from the second term of \eqref{eq:eta-eta}
becomes
\begin{align*}
-\frac2M & \int_{[0,M)^2} \bar\fa_2^\e(x_1,x_2)
\left( \int_{\SS_M} \eta^\e(y-x_1)\eta_2^\e(x-y)dy- \eta^\e(x-x_1)\right) dx_1dx_2\\
& = -\frac2M \int_{[0,M)^2} \bar\fa_2^\e(x_1,x_2)
\left( \eta_3^\e(x-x_1)- \eta^\e(x-x_1)\right) dx_1dx_2
\end{align*}
by the symmetry of $\bar\fa_2^\e(x_1,x_2)$.  However, since
$\int_{[0,M)}\fa_{n+2}(\uu,x_1,x_2)dx_2= \int_{[0,M)}\fa_{n+1}(\uu,x_2)dx_2=0$
by \eqref{eq:fa_n=0}, the contributions of $\bar\fa_2^{(1)}$ and $\bar\fa_2^{(4)}$
vanish.  It is therefore enough to compute the contributions of $\bar\fa_2^{(2)}$ 
and $\bar\fa_2^{(3)}$ only.

The computation of the contribution of 
$$
\int_{[0,M)^2} \bar\fa_2^{(2),\e}(x_1,x_2)
\left( \eta_3^\e(x-x_1)- \eta^\e(x-x_1)\right) dx_1dx_2,
$$
with $\bar\fa_2^{(2),\e}$ defined from $\bar\fa_2^{(2)}$ by replacing $\fa_n$, $\psi_{n+2}$ by
$\fa_n*(\eta^\e)^{\otimes n}$, $\psi_{n+2}*(\eta^\e)^{\otimes (n+2)}$, respectively,
can be essentially reduced to the computation of
\begin{align*}
& \int_{[0,M)^2} \phi_x^{\otimes 2}*(\eta^\e)^{\otimes 2}(x_1,x_2)
\left( \eta_3^\e(x-x_1)- \eta^\e(x-x_1)\right) dx_1dx_2\\
& = \{\phi_x*\eta_4^\e(x)-\phi_x*\eta_2^\e(x)\} \int_{[0,M)} (\phi_x*\eta^\e)(x_2)dx_2\\
& = \{\th*\eta_4^\e(x)-\th*\eta_2^\e(x)\} \int_{[0,M)} (\phi_x*\eta^\e)(x_2)dx_2.
\end{align*}
As we saw in the proof of Proposition \ref{prop:Phi}, the support of this function 
is contained in $(\text{supp}\,\rho)^{4\e}$ so that it vanishes when we multiply
the test function $\fa=\fa(x)$.

The computation of the contribution of 
$$
\int_{[0,M)^2} \bar\fa_2^{(3),\e}(x_1,x_2)
\left( \eta_3^\e(x-x_1)- \eta^\e(x-x_1)\right) dx_1dx_2,
$$
with $ \bar\fa_2^{(3),\e}$ defined from $ \bar\fa_2^{(3)}$ similarly as above
can be essentially reduced to the computation of
\begin{align*}
\int_{\SS_M^{n+1}} & \phi_x^{\otimes n}(\uu)(\phi_x*\eta^\e)(x_2)d\uu dx_2\\
& \times \big\{f_{n+1}*((\eta^\e)^{\otimes n}\otimes \eta_3^\e)(\uu,x)
- f_{n+1}*((\eta^\e)^{\otimes n}\otimes \eta^\e)(\uu,x)\big\}.
\end{align*}
However, the difference in the braces can be rewritten as
$$
E[f_{n+1}(\uu+U^\e,x+Y^\e+X_1^\e) - f_{n+1}(\uu+U^\e,x+X_1^\e)],
$$
where $U^\e=\{U_i^\e\}_{i=1}^n$, $U_i^\e\stackrel{\rm{law}}{=}\eta^\e$,
$X_1^\e\stackrel{\rm{law}}{=}\eta^\e$, $Y^\e\stackrel{\rm{law}}{=}\eta_2^\e$
and $\{U_i^\e, X_1^\e, Y^\e\}$ are independent.  This can be estimated exactly
in the same way as $I_n^{(3)}$ in the proof of Proposition \ref{prop:Phi}.
Therefore, we have a parallel assertion to Proposition \ref{prop:Phi}
in the present setting and this completes the proof of the theorem.
\end{proof}

The proof of Theorems \ref{thm:3.1} or  \ref{thm:3.1-M} can be extended 
to obtain the following lemma for $h^{\e,M}(t,x)$.  To prove this lemma,
we need Poincar\'e inequality so that this holds only on $\SS_M$ and
not on $\R$.

\begin{lem} \label{cor:3.10}
Assume that a measurable and bounded function $\fa=\fa(s,x)$ on
$[0,T]\times \SS_M$ is given.  Then, we have
\begin{align} \label{eq:cor3.10}
\sup_{0<\e<1} & E^{\pi\otimes\nu^{\e,M}}\left[ \sup_{0\le t \le T} \left( 
\int_0^t H^{\e,M}(\fa(s,\cdot),\partial h^{\e,M}(s))ds\right)^2 \right] \\
& \qquad  \le 12M^2 \int_0^T \|\fa(s,\cdot)\|_{L^\infty(\SS_M)}^2 ds,   \notag
\end{align}
where
\begin{equation}  \label{eq:H-e}
H^{\e,M}(x,\partial h) = \frac12 \big((\partial_x h)^2 -\xi^{\e,M} \big)
*\eta_2^\e(x)
\end{equation}
and $H^{\e,M}(\fa,\partial h) = \int_{\SS_M} H^{\e,M}(x,\partial h) \fa(x)dx$.
\end{lem}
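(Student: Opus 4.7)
The plan is to apply Lemma \ref{lem:3.6-a} with $V(s,\cdot)=H^{\e,M}(\fa(s,\cdot),\cdot)$, regarded as a time-inhomogeneous function on the configuration space of the tilt process $\partial h^{\e,M}(t)$, which is stationary under $\nu^{\e,M}$ by Theorem \ref{thm:2}-(1) (equivalently, of the wrapped height process $g^{\e,M}(t)$ under $\pi\otimes\nu^{\e,M}$, by the analogue on $\SS_M$ of Lemma \ref{lem:3.1-a}). Since the constant in Lemma \ref{lem:3.6-a} is $24$, the assertion \eqref{eq:cor3.10} reduces to the static $H^{-1}$-bound
\[
\|H^{\e,M}(\fa(s,\cdot),\cdot)\|_{-1,\e}^2 \;\le\; \tfrac{M^2}{2}\,\|\fa(s,\cdot)\|_{L^\infty(\SS_M)}^2
\]
uniform in $\e\in(0,1)$ and $s\in[0,T]$, since $24\cdot\tfrac{M^2}{2}=12M^2$.

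For the static bound I would follow the Wiener--It\^o strategy of Section \ref{section:3.3}, adapted to $\SS_M$ via \eqref{eq:Psi^M}. Under $\nu^{\e,M}$, the functional $H^{\e,M}(\fa,\partial h)$ is a mean-zero second-order Wiener chaos with kernel $K_\fa(z_1,z_2)=\tfrac12\int_{\SS_M}(\fa*\eta_2^\e)(x)(\eta^\e(x-z_1)\eta^\e(x-z_2))^M\,dx$. By the variational characterisation of the $-1$-norm and Wiener chaos orthogonality, only the second-chaos component $I(\fa_2^M)$ of a test function $\Phi=\sum_n I(\fa_n^M)$ contributes to $E^{\nu^{\e,M}}[H^{\e,M}(\fa,\cdot)\Phi]$; applying the duality \eqref{eq:P^M-dual} (since $\fa_2^M$ already lies in the image of $P^M$) the pairing reduces to
\[
E^{\nu^{\e,M}}[H^{\e,M}(\fa,\cdot)\Phi] \;=\; \tfrac14\,\int_{\SS_M}(\fa*\eta_2^\e)(x)\,g(x)\,dx,\qquad g(x):=(\fa_2^M*(\eta^\e)^{\otimes 2})(x,x).
\]

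The key new input, replacing the $\sqrt{\e}$-gain of Proposition \ref{prop:Phi}, is the bound $\|g\|_{L^2(\SS_M)}^2\le \tfrac{M}{6}\|\Phi\|_{1,\e}^2$. To prove it I would expand $\fa_2^M$ in Fourier modes on $\SS_M$ and apply the Cauchy--Schwarz inequality with weight $1/k_1^2$ in the variable $k_1$ dual to $z_1$; the mean-zero condition $\widehat{\fa_2^M}(0,k_2)=0$ from \eqref{eq:fa_n=0} restricts the sum to $k_1\ne 0$, where $\sum_{k_1\in\Z\setminus\{0\}}k_1^{-2}=\pi^2/3$, and comparison with the identification $\|\Phi\|_{1,\e}^2=\tfrac12\|\partial_1\fa_2^M*(\eta^\e)^{\otimes 2}\|_{L^2(\SS_M^2)}^2$ from Lemma \ref{lem:3.5} produces the factor $M$ via the Fourier weight $(2\pi k_1/M)^2$. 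Combined with $\|\fa*\eta_2^\e\|_{L^\infty}\le\|\fa\|_{L^\infty}$ and $\int_{\SS_M}|g|\le\sqrt M\,\|g\|_{L^2}$, this gives $|E[H^{\e,M}(\fa,\cdot)\Phi]|\le (M/4\sqrt 6)\|\fa\|_\infty\|\Phi\|_{1,\e}$, hence $\|H^{\e,M}\|_{-1,\e}^2\le (M^2/96)\|\fa\|_\infty^2$, which is much smaller than the claimed $(M^2/2)\|\fa\|_\infty^2$.

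The main obstacle is the diagonal restriction $g(x)=(\fa_2^M*(\eta^\e)^{\otimes 2})(x,x)$: a crude Cauchy--Schwarz applied on the kernel $\fa_2^M$ introduces a factor $\xi^\e=O(1/\e)$ and blows up as $\e\downarrow 0$. The resolution is the Poincar\'e-type weighted Cauchy--Schwarz in Fourier described above, in which the mean-zero property of $\fa_2^M$ in each variable (from \eqref{eq:fa_n=0}) is exploited \emph{before} the diagonal is formed. This mechanism, together with the periodicity of $\SS_M$, is unavailable on $\R$ and is precisely why Proposition \ref{prop:3.12} and hence Lemma \ref{cor:3.10} are stated only on the torus, as announced at the start of Section \ref{section:3.4}.
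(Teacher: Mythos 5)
Your proposal is correct and follows essentially the same route as the paper: reduction via Lemma \ref{lem:3.6-a} to a static variational bound, restriction to the second Wiener chaos by orthogonality, a Poincar\'e-type inequality made possible by the mean-zero property \eqref{eq:fa_n=0} of the kernels on $\SS_M$, and Cauchy--Schwarz against the $H^1$-control of Lemma \ref{lem:3.5}, followed by the quadratic optimization. The only difference is cosmetic: you implement the Poincar\'e step in Fourier variables on the diagonal trace (weighted Cauchy--Schwarz with $k_1^{-2}$), whereas the paper uses the real-space bound $|f_2(x_1,x_2)|\le\int_{\SS_M}|\partial f_2/\partial z(z,x_2)|\,dz$ paired in $L^\infty$--$L^1$ against $\psi$; both give a bound of the form $C M\|\fa\|_\infty\|\Phi\|_{1,\e}$ and hence the stated estimate.
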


\begin{proof}
By Lemma \ref{lem:3.6-a}, noting that $H^{\e,M}(\fa,\partial h)$ is a
tilt variable, the expectation in \eqref{eq:cor3.10} is bounded by
\begin{align}  \label{eq:24sup}
24\int_0^T \sup_{\Phi\in L^2(\nu^{\e,M})}
\left\{ 2E^{\nu^{\e,M}}
\left[ \tilde\Psi^{\e,M}(\fa(s,\cdot)) \Phi \right] - \|\Phi\|_{1,\e}^2\right\} ds,
\end{align}
where  $\tilde\Psi^{\e,M}(\fa) = \frac12\int_{\SS_M} \Psi^{\e,M}*\eta_2^\e(x)
\fa(x)dx$, $\Psi^{\e,M}(x)$ is given by \eqref{eq:Psi^M} and 
$\|\Phi\|_{1,\e}^2$ is defined on $\SS_M$.  Since $\Psi^{\e,M}$
is a second order Wiener chaos, we have that
\begin{align*}
2 E^{\nu^{\e,M}}
\left[ \tilde\Psi^{\e,M}(\fa) \Phi \right] 
& = \int_{\SS_M}  \fa*\eta_2^\e(x)dx
\int_{\SS_M^2} f_2(x_1,x_2) \Big(\eta^\e(x-x_1)\eta^\e(x-x_2)\Big)^Mdx_1dx_2  \\
& = \int_{\SS_M^2} f_2^M(x_1,x_2)\psi(x_1,x_2)dx_1dx_2,
\end{align*}
where 
$$
\psi(x_1,x_2) = \int_{\SS_M}\fa*\eta_2^\e(x)\eta^\e(x-x_1)\eta^\e(x-x_2)dx,
$$
and $f_2=\fa_2\otimes (\eta^\e)^{\otimes 2}$ with 
the kernel $\fa_2\in \hat L_0^2([0,M)^2)$ of the second order Wiener chaos of $\Phi$.
Note that  \eqref{eq:fa_n=0} implies $\int_{\SS_M} \fa_2(x_1,x_2)dx_i=0$,
$i=1,2$,  so that
\begin{equation}  \label{eq:vanish}
\int_{\SS_M} f_2(x_1,x_2)dx_i=0.
\end{equation}
This shows $f_2^M=f_2$.  Moreover, Lemma \ref{lem:3.5} (similar on $\SS_M$) implies
\begin{equation}  \label{eq:Sob-Poin}
\frac12 \int_{\SS_M^2} \left( \frac{\partial f_2}{\partial x_1}(x_1,x_2)\right)^2 dx_1dx_2 
\le \| \Phi\|_{1,\e}^2.
\end{equation}
We now estimate
\begin{align}  \label{eq:3.33}
\left| 2E^{\nu^{\e,M}}
\left[ \tilde\Psi^{\e,M}(\fa) \Phi \right]  \right|
\le \int_{\SS_M} \|f_2(\cdot,x_2)\|_{L^\infty(\SS_M)} \|\psi(\cdot,x_2)\|_{L^1(\SS_M)} dx_2.
\end{align}
However, we easily see that
$$
\|\psi(\cdot,x_2)\|_{L^1(\SS_M)} \le \|\fa\|_{L^\infty(\SS_M)},
$$
for every $x_2\in \SS_M$ and, by \eqref{eq:vanish} with $i=1$,
\begin{align*}
|f_2(x_1, x_2)| & = \left| f_2(x_1, x_2) - \frac1M \int_{\SS_M} f_2(y, x_2)dy \right| \\
& = \frac1M \left| \int_{\SS_M} dy \int_y^{x_1} \frac{\partial f_2}{\partial z}(z,x_2)dz\right|
\le \int_{\SS_M} \left| \frac{\partial f_2}{\partial z}(z,x_2) \right| dz.
\end{align*}
Thus, by \eqref{eq:3.33}, Schwarz's inequality and \eqref{eq:Sob-Poin}, we obtain
\begin{align*}
\left| 2E^{\nu^{\e,M}}
\left[ \tilde\Psi^{\e,M}(\fa) \Phi \right]  \right|
& \le \|\fa\|_{L^\infty(\SS_M)} 
  \int_{\SS_M^2} \left| \frac{\partial f_2}{\partial x_1}(x_1,x_2) \right| dx_1dx_2 \\
& \le M \|\fa\|_{L^\infty(\SS_M)} 
  \left( \int_{\SS_M^2} \left| \frac{\partial f_2}{\partial x_1}(x_1,x_2) \right|^2 dx_1dx_2
\right)^{1/2} \\
& \le \sqrt{2} M \|\fa\|_{L^\infty(\SS_M)} \| \Phi\|_{1,\e}.
\end{align*}
Combining this with \eqref{eq:24sup}, the conclusion is shown
similarly to the last part of the proof of Theorem \ref{thm:3.1}.
\end{proof}

\begin{rem}  \label{rem:3.4-c}
{\rm (1)} For $p>1$, $L^p$-norms of $\psi$ diverge as $\e\downarrow 0$ in general.\\
{\rm (2)} On $\R$, we have the same estimate as \eqref{eq:3.33} and, if 
$\text{supp}\, \fa \subset [-K,K]$, then $\|\psi(\cdot,x_2)\|_{L^1(\R)} =0$
if $|x_2|\ge K+3\e$.  Therefore, Morrey's theorem (\cite{Adams}, p.97 (8)),
which implies $\|f_2(\cdot,x_2)\|_{L^\infty(\R)} \le C \|f_2(\cdot,x_2)\|_{H^1(\R)}$,
shows that
$$
\left| 2E^{\nu^{\e}}
\left[ \tilde\Psi^\e(\fa) \Phi \right]  \right|
\le C \sqrt{K} \|\fa\|_{L^\infty(\R)} \{\|\partial f_2/\partial x_1\|_{L^2(\R^2)} + 
\|f\|_{L^2(\R^2)}\}.
$$
However, since Poincar\'e inequality is missing, this cannot be bounded by
$\|\Phi\|_{1,\e}$, because of the last term $\|f\|_{L^2(\R^2)}$.  This estimate
can be easily extended to $\fa$ such that $\sup_{x\in\R}|\fa(x)|\sqrt{1+|x|}
< \infty$.
\end{rem}

This lemma is applied to prove the following proposition, which shows that 
the height average cannot move very quickly.   Recall that 
the initial distribution of $h^{\e,M}(t,x)$ is given by $h^{\e,M}(0,\cdot)
\stackrel{\rm{law}}{=} \pi\otimes\nu^{\e,M}$ under the map \eqref{eq:3.1-map}
defined on $\SS_M$.
Note that the wrapping is not introduced for $h^{\e,M}(t,x)$.
This result will be used to remove the wrapping from
$Y^{\e,M}(t,x)$ in the next section.

\begin{prop} \label{prop:3.12}
For every $T>0$ and $\fa\in C^2(\SS_M)$,
\begin{equation}  \label{eq:H-3}
\sup_{0<\e<1} E\left[\sup_{0\le t \le T}h^{\e,M}(t,\fa)^2\right] <\infty.
\end{equation}
\end{prop}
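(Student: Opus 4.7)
Integrating the SPDE \eqref{eq:1-M} in time against a test function $\fa\in C^2(\SS_M)$, using integration by parts for the Laplacian, and rewriting the nonlinear drift in terms of \eqref{eq:H-e} (whose constant differs from the one in \eqref{eq:1-M} by $\xi^\e-\xi^{\e,M}=1/M$), gives
\begin{equation*}
h^{\e,M}(t,\fa)=h^{\e,M}(0,\fa)+\tfrac12\int_0^t h^{\e,M}(s,\fa'')\,ds+\int_0^t H^{\e,M}(\fa,\partial h^{\e,M}(s))\,ds- \tfrac{t}{2M}\!\int_{\SS_M}\!\fa\,dx+\mathcal{M}_t,
\end{equation*}
where $\mathcal{M}_t:=\int_0^t\int_{\SS_M}\fa(x)\,W^\e(dsdx)$ is a continuous martingale. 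The plan is to expand $\sup_{t\le T}h^{\e,M}(t,\fa)^2$ via $(a_1+\cdots+a_5)^2\le 5\sum a_i^2$ and bound each of the five contributions in $L^2$ uniformly in $\e$.

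The initial term is easy: under \eqref{eq:3.1-map}, $h^{\e,M}(0,\cdot)$ decomposes into $h^{\e,M}(0,\rho)\in[0,1]$ and a tilt part distributed as $\nu^{\e,M}$, so $h^{\e,M}(0,\fa)$ is the sum of a bounded term and a Gaussian linear functional of the smoothed pinned Brownian motion whose variance is uniformly bounded in $\e$. The deterministic term is bounded by $\tfrac{T}{2}\|\fa\|_\infty$. The martingale has quadratic variation
\begin{equation*}
\lan\mathcal{M}\ran_t=t\int_{\SS_M^2}\fa(x)\fa(y)\eta_2^\e(x-y)\,dxdy=t\|\fa*\eta^\e\|_{L^2(\SS_M)}^2\le t\|\fa\|_{L^2(\SS_M)}^2,
\end{equation*}
by Young's convolution inequality, so Doob's $L^2$-inequality gives $E[\sup_{t\le T}\mathcal{M}_t^2]\le 4T\|\fa\|_{L^2}^2$ uniformly in $\e$.

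The crucial observation for the Laplacian term is that $\fa$ is periodic, hence $\int_{\SS_M}\fa''(x)dx=0$. Consequently $h^{\e,M}(s,\fa'')=\lan h^{\e,M}(s,\cdot)-h^{\e,M}(s,\rho),\fa''\ran_{\SS_M}$ depends only on the tilt of $h^{\e,M}(s)$, and by the invariance of $\nu^{\e,M}$ under the tilt dynamics (Theorem \ref{thm:2}-(1)) this random variable has the same law at every $s\ge 0$, with second moment bounded uniformly in $\e$. Cauchy--Schwarz then yields
\begin{equation*}
E\!\left[\Bigl(\int_0^t h^{\e,M}(s,\fa'')\,ds\Bigr)^2\right]\le T\int_0^T\!E\bigl[h^{\e,M}(s,\fa'')^2\bigr]\,ds\le C(\fa,T).
\end{equation*}

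The main obstacle and the only non-elementary step is the nonlinear drift $\int_0^t H^{\e,M}(\fa,\partial h^{\e,M}(s))\,ds$, which is handled by direct application of Lemma \ref{cor:3.10} with the time-independent test function $\fa$: it yields the uniform bound $\sup_\e E[\sup_{t\le T}(\int_0^t H^{\e,M}(\fa,\partial h^{\e,M}(s))ds)^2]\le 12M^2 T\|\fa\|_\infty^2$. This is precisely what forces the proposition to be stated on $\SS_M$ rather than on $\R$, since the static bound behind Lemma \ref{cor:3.10} relies on the Poincar\'e inequality on the torus (cf.\ Remark \ref{rem:3.4-c}). Combining the five estimates establishes \eqref{eq:H-3}.
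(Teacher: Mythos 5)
Your proof is correct, and it follows the paper's decomposition \eqref{eq:H-1} and its treatment of the initial, deterministic, martingale and nonlinear-drift terms essentially verbatim (in particular, the single application of Lemma \ref{cor:3.10} with the time-independent test function $\fa$). Where you genuinely diverge is the Laplacian term. You observe that $\int_{\SS_M}\fa''\,dx=0$, so that $h^{\e,M}(s,\fa'')=-\lan \partial_x h^{\e,M}(s),\fa'\ran_{\SS_M}$ is a tilt variable; stationarity of the tilt process under $\nu^{\e,M}$ (Theorem \ref{thm:2}-(1)) then gives the uniform-in-$\e$ second moment directly. The paper instead proves the stronger intermediate estimate \eqref{eq:H-2}, namely $\sup_{0<\e<1}\sup_{0\le t\le T}E[h^{\e,M}(t,\fa)^2]<\infty$ for \emph{arbitrary} $\fa\in C(\SS_M)$ (not just zero-mean ones), by passing to the mild formulation with the heat kernel $p^M$ and applying Lemma \ref{cor:3.10} a second time with the genuinely time-dependent kernel $\fa(s,y)=\psi(t-s,y)$ --- this second application is in fact the reason the time-inhomogeneous extension in Lemma \ref{lem:3.6-a} is needed at all. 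Your shortcut is legitimate and makes the proof lighter, at the cost of not obtaining the paper's quantitative control on the full height average $h^{\e,M}(t,\fa)$ for test functions with nonzero mean (the statement that ``the height average cannot move very quickly''), which is a slightly stronger by-product of the paper's argument though it is not needed for \eqref{eq:H-3} itself.
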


\begin{proof}
For every $\fa\in C^2(\SS_M)$, from the SPDE \eqref{eq:1-M}, we have
\begin{align}  \label{eq:H-1}
h^{\e,M}(t,\fa) = & h^{\e,M}(0,\fa)  +  \frac12 \int_0^t h^{\e,M}(s,\fa'')ds
+ \int_0^t H^{\e,M}(\fa,\partial h^{\e,M}(s))ds \\
&+ \frac12(\xi^{\e,M}-\xi^\e)t \lan\fa,1\ran_{\SS_M}
+ \lan W(t),\fa*\eta^\e\ran_{\SS_M}.   \notag
\end{align}
By Lemma \ref{cor:3.10}, it is shown that
$$
\sup_{0<\e<1} E\left[ \sup_{0\le t \le T} \left( 
\int_0^t H^{\e,M}(\fa,\partial h^{\e,M}(s))ds\right)^2 \right] <\infty.
$$
It is easy to see that $\xi^{\e,M}-\xi^\e= -\frac1M$ is finite,
$$
E\left[ \sup_{0\le t \le T}  \lan W(t),\fa*\eta^\e\ran_{\SS_M}^2\right]
\le 4 E[\lan W(T),\fa*\eta^\e\ran_{\SS_M}^2]
= 4T \|\fa*\eta^\e\|_{L^2(\SS_M)}^2  
\le 4T \|\fa\|_{L^2(\SS_M)}^2 <\infty,
$$
and
$$
E\left[ \sup_{0\le t \le T} \left( 
\int_0^t h^{\e,M}(s,\fa'')ds\right)^2 \right] \le T \int_0^T E[h^{\e,M}(s,\fa'')^2]ds.
$$
Therefore, once we can prove
\begin{equation}  \label{eq:H-2}
\sup_{0<\e<1} \sup_{0\le t \le T}E[h^{\e,M}(s,\fa)^2] <\infty,
\end{equation}
for every $\fa\in C(\SS_M)$, \eqref{eq:H-1} shows \eqref{eq:H-3}.

The next task is to give the proof of \eqref{eq:H-2}.  To this end,
we rewrite the equation for $h^{\e,M}(t,x)$ into a mild form:
\begin{align*}
h^{\e,M}(t,\fa) = & \int_{\SS_M} h^{\e,M}(0,y)\psi(t,y)dy + \int_0^t
\int_{\SS_M} \psi(t-s,y) H^{\e,M}(y,\partial h^{\e,M}(s))dsdy \\
& + \frac12 (\xi^{\e,M}-\xi^\e) \int_0^t\int_{\SS_M} \psi(t-s,y) dsdy
+ \int_0^t \int_{\SS_M} \psi(t-s,y) W^\e(dsdy),
\end{align*}
where $\psi(t,y) = \int_{\SS_M} p^M(t,x,y)\fa(x)dx$ and
$p^M$ is the heat kernel on $\SS_M$.  We apply Lemma \ref{cor:3.10}
by dropping $\sup_{0\le t\le T}$ and then regarding $\fa(s,y) =\psi(t-s,y)$
for fixed $t$, we have
\begin{align*}
& E^{\pi\otimes \nu^{\e,M}}\left[\left( \int_0^t 
\int_{\SS_M} \psi(t-s,y) H^{\e,M}(y,\partial h^{\e,M}(s))dsdy \right)^2\right] \\
& \le 12M^2 \int_0^t \|\psi(t-s,\cdot)\|_{L^\infty(\SS_M)}^2ds 
\le 12M^2 t \|\fa\|_{L^\infty(\SS_M)}^2<\infty.
\end{align*}
We also see
$$
E^{\pi\otimes \nu^{\e,M}}\left[\left( \int_0^t \int_{\SS_M}
 \psi(t-s,y) W^\e(dsdy) \right)^2 \right]
= \int_0^t \int_{\SS_M} (\psi(t-s,\cdot)*\eta^\e(y))^2dsdy
\le C <\infty.
$$
Since the first term has a uniform bound recalling
 $h^{\e,M}(0,\cdot) \stackrel{\rm{law}}{=} \pi\otimes\nu^{\e,M}$
and the integral in the third term is simply $t\lan\fa,1\ran_{\SS_M}$, 
this completes the proof of  \eqref{eq:H-2}. 
\end{proof}

\begin{rem}  \label{rem:3.5}
As we will see in the next subsection, $N^{\e,M}(t,\fa) = \int_{\SS_M}
N^{\e,M}(t,x)\fa(x)dx$ 
in \eqref{eq:3-Y^e-2-M} converges weakly in $L^2(\Om)$ as $\e\downarrow 0$,
since all other terms do.  Therefore, its $L^2$-norm is automatically bounded in $\e$:
\begin{equation} \label{eq:3.4-a}
\sup_{0<\e<1} E\big[\big(N^{\e,M}(t,\fa)\big)^2\big] < \infty.
\end{equation}
Proposition \ref{prop:3.12} gives a stronger estimate with supremum in $t$
inside the expectation. 
\end{rem}

\subsection{Proof of Theorem \ref{thm:0}} \label{subsection:3.4.3}

We are now at the position to complete the proof of Theorem \ref{thm:0}.
We fix $M\ge 2$ (or $M\ge 1$ by making the support of $\rho$ smaller)
and denote $h^{\e,M}(t,x)$, $Z^{\e,M}(t,x)$, $Y^{\e,M}(t,x)$
simply by $h^{\e}(t,x)$,  $Z^{\e}(t,x)$, $Y^{\e}(t,x)$, respectively, in
this subsection.

\vskip 2mm
\noindent
{\it Step 1.} $\,$ 
Let $h^{\e}(t,x)$ be the solution of the SPDE \eqref{eq:1-M} such that 
$h^{\e}(0,0)=h_0\in \R$ and $\nabla h^{\e}(0)\stackrel{\rm{law}}{=}\nu^{\e,M}$.
We may assume $h_0=0$ without loss of generality.
Then, $h^\e(0,\cdot) (\in \mathcal{C}_M) \stackrel{\rm{law}}{=} 
\de_0\otimes \nu^{\e,M}$, $0<\e<1$ is tight.

\begin{lem}  \label{lem:3.12-ub}
{\rm (1)} The uniform bound \eqref{eq:H-3} holds also for this $h^{\e}(t,x)$.  \newline
{\rm (2)} The tilt variable of $h^\e(t,\cdot)$ under the map \eqref{eq:3.1-map}
on $\SS_M$ is $\nu^{\e,M}$-distributed for all $t\ge 0$.
\end{lem}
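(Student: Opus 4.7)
The plan for (2) is immediate: the tilt process $\partial_x h^\e(t)$ satisfies the stochastic Burgers equation \eqref{eq:b-4.1-torus}, and Theorem \ref{thm:2}-(1) asserts that $\nu^{\e,M}$ on $\mathcal{C}_{M,0}$ is invariant under this dynamics. Since $\nabla h^\e(0) \stackrel{\rm{law}}{=} \nu^{\e,M}$ by assumption, the tilt distribution is preserved, giving (2).

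For (1), the plan is to mimic the proof of Proposition \ref{prop:3.12} with the obvious substitution of initial distributions. First I would establish the pointwise-in-$t$ bound $\sup_{0<\e<1}\sup_{0\le t\le T} E[h^\e(t,\fa)^2] < \infty$ for $\fa\in C(\SS_M)$ from the mild formulation used in Proposition \ref{prop:3.12}. The four terms are handled as there, except that the initial data term now reads $E[(\int_{\SS_M} h^\e(0,y)\psi(t,y)\,dy)^2]$ with $h^\e(0,\cdot)$ pinned at $h^\e(0,0)=0$ and tilt distribution $\nu^{\e,M}$. Since $\nu^{\e,M}$ is the law of $\partial_x(B^M*\eta^\e)$ with $B^M$ the pinned Brownian motion on $\SS_M$, we may realize $h^\e(0,x)=B^M*\eta^\e(x)-B^M*\eta^\e(0)$, which has moments uniformly bounded in $\e$, so this term is controlled. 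The drift term involving $H^{\e,M}$ is handled by Lemma \ref{cor:3.10}, and the constant-in-$\e$ correction term and stochastic integral term are handled exactly as in Proposition \ref{prop:3.12}.

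The key observation enabling the use of Lemma \ref{cor:3.10} in the present setting is that although its statement gives the bound under the initial distribution $\pi\otimes\nu^{\e,M}$, its proof proceeds entirely at the tilt level: the integrand $H^{\e,M}(\fa,\partial h)$ depends only on the tilt $\partial h$, and the application of Lemma \ref{lem:3.6-a} uses only that the tilt process is stationary with law $\nu^{\e,M}$. Part (2), established above, gives exactly this stationarity in our situation, so the bound \eqref{eq:cor3.10} applies verbatim with $E$ in place of $E^{\pi\otimes\nu^{\e,M}}$.

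Once the pointwise $L^2$-bound is in hand, I would return to the weak form \eqref{eq:H-1} and estimate $\sup_{0\le t\le T}h^\e(t,\fa)^2$ term by term: the initial term is already controlled, the smooth-drift integral $\int_0^t h^\e(s,\fa'')\,ds$ is handled by Cauchy--Schwarz combined with the pointwise bound just established, the $H^{\e,M}$-integral is bounded by Lemma \ref{cor:3.10}, the constant-in-$t$ correction term is trivial, and the martingale term is handled by Doob's inequality and It\^o isometry, yielding \eqref{eq:H-3}. I do not anticipate a serious obstacle; the only point deserving a brief remark in a full write-up is the observation that Lemma \ref{cor:3.10} in fact depends on the initial data only through the initial tilt distribution, so that replacing $\pi$ by $\de_0$ in the height coordinate is harmless.
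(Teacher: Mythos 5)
Your proposal is correct, but it takes a different route from the paper's own proof, which is considerably shorter. The paper does not re-run the argument of Proposition \ref{prop:3.12}; instead it exploits the vertical-shift covariance of the SPDE \eqref{eq:1-M} to couple the two solutions directly: the solution $\tilde h^\e$ started from $\pi\otimes\nu^{\e,M}$ and the solution $h^\e$ started from $\de_0\otimes\nu^{\e,M}$ are related pathwise by $\tilde h^{\e}(t,x) = h^{\e}(t,x) - h^{\e}(0,\rho)+m$ with $m$ a $\pi$-distributed random variable independent of the initial tilt. Hence $|h^\e(t,\fa)-\tilde h^\e(t,\fa)| \le \{1+|h^\e(0,\rho)|\}\int_{\SS_M}|\fa|\,dx$, and \eqref{eq:H-3} for $h^\e$ follows from Proposition \ref{prop:3.12} applied to $\tilde h^\e$ together with the uniform moment bound on $h^\e(0,\rho)$; part (2) follows from the same identity since the tilts of $h^\e(t,\cdot)$ and $\tilde h^\e(t,\cdot)$ coincide. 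Your route instead re-derives the estimate from scratch, and its one nontrivial ingredient --- that Lemma \ref{cor:3.10} remains valid under the initial law $\de_0\otimes\nu^{\e,M}$ --- is correctly justified: the integrand $H^{\e,M}(\fa,\partial h)$ is a functional of the tilt path alone, the tilt SPDE \eqref{eq:b-4.1-torus} is autonomous, and by your part (2) the tilt path has exactly the same (stationary) law under either initial distribution, so the bound \eqref{eq:cor3.10} transfers verbatim. What the paper's coupling buys is brevity and the avoidance of any re-examination of the $H^{-1}$ machinery; what your argument buys is an explicit record of the fact that the uniform bound really only uses the stationarity of the tilt, which is a useful observation but one the paper's one-line coupling renders unnecessary.
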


\begin{proof}
The bound \eqref{eq:H-3} was shown for 
the solution $\tilde h^{\e}(t,x)$ of the SPDE \eqref{eq:1-M} such that 
$\tilde h^{\e}(0,\cdot) = \big(\tilde h^{\e}(0,\rho), \tilde h^{\e}(0,\cdot)
-\tilde h^{\e}(0,\rho)\big) \stackrel{\rm{law}}{=}\pi\otimes\nu^{\e,M}$
under the map \eqref{eq:3.1-map} on $\SS_M$.  However, two solutions have a 
simple relation: $\tilde h^{\e}(t,x) = h^{\e}(t,x) - h^{\e}(0,\rho)+m$ with 
a $\pi$-distributed random variable $m$ (which is independent of
tilt variables of $h^\e(0,\cdot)$).  Therefore, we see that
$|h^\e(t,\fa)-\tilde h^\e(t,\fa)| \le \{1+|h^\e(0,\rho)|\}\int_{\SS_M}|\fa(x)|dx$
so that \eqref{eq:H-3} holds for the solution $h^\e(t,x)$ 
we are now considering.  The second assertion (2) follows by
noting that the tilt variables of $h^\e(t,\cdot)$ and $\tilde h^\e(t,\cdot)$
coincide: $h^\e(t,x)-h^\e(t,\rho) = \tilde h^\e(t,x) - \tilde h^\e(t,\rho)$.
\end{proof}

Lemma \ref{lem:3.12-ub} taking $\fa=\rho$ in \eqref{eq:H-3}
shows that, for every fixed $t>0$, $h^\e(t,\cdot)$, $0<\e<1$, is tight on 
$\mathcal{C}_M$.
We fix $T>0$ arbitrarily.  Then, from Lemma \ref{lem:3.12-ub} again,  
$(h^\e(0,\cdot), h^\e(T,\cdot), X^\e:=\sup_{0\le t \le T}|h^\e(t,\rho)|, W(t))$, 
$0<\e<1$, is jointly tight on
$\mathcal{C}_M\times \mathcal{C}_M\times \R\times C([0,T], H^{-\a}(\SS_M))$
with suitably chosen $\a>0$.  In particular, every subsequence $\{\e'\downarrow 0\}$ of 
$\{\e\in (0,1)\}$ contains a weakly convergent subsequence (in law sense)
$(h^{\e''}(0,\cdot), h^{\e''}(T,\cdot), X^{\e''}, W(t))$.   Thus, by Skorohod's
theorem, we can find a probability space $(\tilde\Om,\tilde P)$ and
$(\tilde h^{\e''}(0,\cdot), \tilde h^{\e''}(T,\cdot), \tilde X^{\e''}, \tilde W_{\e''}(t))$
defined on this space having the same law as 
$(h^{\e''}(0,\cdot), h^{\e''}(T,\cdot), X^{\e''}, W(t))$,
and $(\tilde h(0,\cdot), \tilde h(T,\cdot), \tilde X, \tilde W(t))$ also
defined on this space such that
$(\tilde h^{\e''}(0,\cdot), \tilde h^{\e''}(T,\cdot), \tilde X^{\e''}, \tilde W_{\e''}(t))$
converges to 
$(\tilde h(0,\cdot), \tilde h(T,\cdot), \tilde X, \tilde W(t))$
$\tilde P$-a.s.\ $\tilde \om$ as $\e'' \downarrow 0$.

From the space-time Gaussian white noise $\tilde W_{\e''}(t)$,
one can construct a smeared noise $\tilde W_{\e''}^{\e''}(t) := \tilde W_{\e''}(t)*
\eta^{\e''}$ and consider the SPDE \eqref{eq:8-M} with initial value 
$\tilde Z^{\e''}(0,\cdot) = e^{\tilde h^{\e''}(0,\cdot)}$.  Then, we have its solution
$\tilde Z^{\e''}(t,\cdot)$.  $\tilde Z^{\e''}(T,\cdot)$ is consistent with
$\tilde h^{\e''}(T,\cdot)$ and $\tilde Z^{\e''}(t,0)$ is consistent with
$\tilde h^{\e''}(t,0)$, respectively, $\tilde P$-a.s.

For every $L\ge 1$, we define the wrapped process  $\tilde Y_L^{\e''}(t,x) 
\equiv \tilde Y_L^{\e''}(t,x;\tilde \om,m)$
on the extended probability space $(\tilde\Om\times [-L,L],\tilde P\otimes\pi_L)$
in such a manner that $\tilde Y_L^{\e''}(t,x) = e^m \tilde Z^{\e''}(t,x)$ modulo
$2L$ averaged by $\rho$ in an exponential sense such as \eqref{eq:3.5-c}, 
i.e., $\log (\tilde Y_L^{\e''}(t))_\rho \in [e^{-L},e^L]$ (instead of $[1,e]$)
and $\log (\tilde Y_L^{\e''}(t))_\rho = m+ \log (\tilde Z^{\e''}(t))_\rho$
modulo $2L$ with $\pi_L$-valued random variable $m$, where $\pi_L$
is a uniform probability measure on $[-L,L]$.
In other words, the initial value
$\log (\tilde Y_L^{\e''})_\rho(0)$ is distributed under $\pi_L$.

\noindent
{\it Step 2}. $\,$ Since $\tilde X^{\e''} = \sup_{0\le t \le T}|\tilde h^{\e''}(t,\rho)|$
 converges to $\tilde X$ as $\e''\downarrow 0$
in $\R$, $\tilde P$-a.s., and $0\le \tilde X<\infty$, $\tilde P$-a.s.,
we see that $\lim_{L\to\infty} \tilde P(\tilde \Om_L)=1$ holds, where
$$
\tilde \Om_L := \left\{ \sup_{0<\e''<1} \sup_{0\le t \le T}
|\tilde h^{\e''}(t,\rho)| \le \tfrac{L}2 \right\} \; (\subset \tilde \Om).
$$
In particular, we have that
\begin{equation} \label{eq:A-1}
\tilde Y_L^{\e''}(t,x;\tilde \om,m) = e^m \tilde Z^{\e''}(t,x;\tilde \om)
\end{equation}
for all $t\in [0,T], m: |m|\le \tfrac{L}2, 0<\e''<1$ and $\tilde \om \in
\tilde \Om_L$.

We prepare the following uniform bound on $\tilde Y_L^{\e''}(t,x)$ defined on $\tilde\Om$.

\begin{lem}
We have that
\begin{equation} \label{eq:3.Y}
\tilde E[\tilde Y_L^{\e''}(t,x;\cdot,m)^{2p}] \le e^{4p^2(|x|+c)+2Lp},
\end{equation}
for every $0<\e''<1$, $t\ge 0$, $p\ge 1$, $x\in [-\tfrac{M}2,\tfrac{M}2]$ and $m\in [-L,L]$ with some
$c>0$. 
\end{lem}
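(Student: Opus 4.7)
The plan is to reduce the exponential moment bound on $\tilde Y_L^{\e''}(t,x)$ to a static Gaussian exponential-moment estimate for the tilt variable against the time-invariant measure $\nu^{\e'',M}$. The key observation is that the wrapping operation only adds a common additive constant to the height at every spatial point, hence preserves tilt, while forcing the $\rho$-average of the log into $[-L,L]$ by construction. Thus
\[
\log \tilde Y_L^{\e''}(t,x) = \log\bigl(\tilde Y_L^{\e''}(t)\bigr)_\rho + \bigl(\tilde h^{\e''}(t,x)-\tilde h^{\e''}(t,\rho)\bigr),\qquad \bigl|\log\bigl(\tilde Y_L^{\e''}(t)\bigr)_\rho\bigr|\le L,
\]
and consequently
\[
\tilde Y_L^{\e''}(t,x)^{2p}\le e^{2pL}\,\exp\bigl\{2p\bigl(\tilde h^{\e''}(t,x)-\tilde h^{\e''}(t,\rho)\bigr)\bigr\}.
\]

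Next, since $\tilde h^{\e''}(t,\cdot)$ has the same law as $h^{\e''}(t,\cdot)$, Lemma \ref{lem:3.12-ub}-(2) ensures that for every $t\ge 0$ the tilt $\tilde h^{\e''}(t,x)-\tilde h^{\e''}(t,\rho)$ is distributed under $\nu^{\e'',M}$. Under this law the tilt equals the centered Gaussian $\int_{\SS_M}B^M(y)\,g_x^{\e''}(y)\,dy$ with $g_x^{\e''}(y)=\eta^{\e''}(x-y)-(\rho*\eta^{\e''})(y)$ (periodic convention; note $\int g_x^{\e''}=0$). A routine covariance computation gives its variance as
\[
\sigma^2(x,\e'')=\iint_{\SS_M^2} K(y,y')\,g_x^{\e''}(y)\,g_x^{\e''}(y')\,dy\,dy',\qquad K(y,y')=y\wedge y'-\tfrac{yy'}{M},
\]
which I would bound by $2(|x|+c)$ uniformly in $\e''\in(0,1)$ and $x\in[-M/2,M/2]$, for some $c=c(M,\rho)$. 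The Gaussian moment identity $E[e^{\lambda X}]=e^{\lambda^2\sigma^2/2}$ with $\lambda=2p$ then yields $e^{2p^2\sigma^2(x,\e'')}\le e^{4p^2(|x|+c)}$, and multiplying by $e^{2pL}$ produces \eqref{eq:3.Y}.

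The only substantive step is the uniform-in-$\e''$ variance bound. Passing to the limit $\e''\downarrow 0$ formally gives $\sigma^2(x,0) = K(x,x) - 2\langle K(x,\cdot),\rho\rangle + \langle K,\rho\otimes\rho\rangle$, and since $K(x,x)=x(M-x)/M\le |x|$ while $\rho$ has compact support of size $O(1)$, the limiting variance is bounded by $|x|+c$ on $[-M/2,M/2]$. Transferring this back to $\e''>0$ requires only that $K$ is Lipschitz and bounded on $\SS_M^2$ so that mollification by $\eta^{\e''}$ produces an $O(\e'')$ correction; since this correction is controlled uniformly in $\e''$, no serious obstacle arises, and the remaining steps are mechanical.
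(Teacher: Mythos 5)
Your proof is correct and follows essentially the same route as the paper's: both isolate the $\rho$-averaged height, which the wrapping confines to $[-L,L]$, and reduce the remaining factor to a Gaussian exponential moment of the tilt under the stationary law $\nu^{\e'',M}$, with variance growing linearly in $|x|$. The only cosmetic differences are that you split off the $\rho$-average pointwise rather than by the paper's Cauchy--Schwarz step, and that you compute the variance through the covariance kernel of the pinned Brownian motion instead of the Wiener-integral representation $\int_{\SS_M}\phi_x*\eta^{\e''}\,dB^M$ with $|\phi_x|\le 1$ compactly supported.
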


\begin{proof}
Since the tilt variable for $\tilde Y^{\e''}(t)$ is distributed under $\nu^{\e'',M}$, 
we have that
\begin{align*}
\tilde E[\tilde Y_L^{\e''}(t,x)^{2p}] 
& \le \tilde E[(\tilde Y_L^{\e''}(t))_\rho^{4p}]^{1/2} 
\tilde E \left[ \frac{\tilde Y_L^{\e''}(t,x)^{4p}}{(\tilde Y_L^{\e''}(t))_\rho^{4p}}\right]^{1/2}\\
& \le e^{2Lp} E[e^{4p\big(B^M(\cdot)-\int_{\SS_M} B^M(y)\rho(y)dy\big)*\eta^{\e''}(x)}]^{1/2} \\
& = e^{2Lp} E[e^{4p\int_{\SS_M} \phi_\cdot(u)*\eta^{\e''}(x)dB^M(u)}]^{1/2} \\
& = e^{2Lp} e^{\tfrac14 (4p)^2\int_{\SS_M} \big(\phi_\cdot(u)*\eta^{\e''}(x)
- \frac1M \int_{\SS_M} \phi_\cdot(v)*\eta^{\e''}(x)dv\big)^2 du} \\
& \le e^{2Lp} e^{4p^2\int_{\SS_M} \big(\phi_\cdot(u)*\eta^{\e''}(x)\big)^2 du} \\
& \le e^{4p^2(|x|+ c)+2Lp},
\end{align*}
where $\phi_x(u)$ is defined in \eqref{eq:3.17-x}.  Note that 
$\phi_\cdot(u)*\eta^{\e''}(x) = 1_{[u,\infty)}*\eta^\e(x)+\th(u)$,
$1_{[u,\infty)}*\eta^\e(x) =0$ $(u\ge x+\e$), $1 (u\le x-\e$),
$\in [0,1]$ (otherwise), and $\th(u)=0 (u\ge K)$, $\th(u)=-1
(u\le -K)$, where $K>0$ is taken such that $\text{supp}\, \rho 
\subset [-K,K]\subset [-1,1]$.  Recall that supp$\,\eta^\e \subset [-\e,\e]$.
This shows \eqref{eq:3.Y}.  
\end{proof}

Since \eqref{eq:3.Y} with $p=1$ implies the weak compactness of 
$\tilde Y_L^{\e''}(\cdot,\cdot)$, by the similar argument to Krylov and 
Rozovskii \cite{KR}, p.1264 and by the diagonal argument
in $L\in \N$, one can find a subsequence $\e'''\downarrow 0$ of $\e''$
such that
\begin{align}  \label{eq:3.25}
\tilde Y_L^{\e'''}(t,x) \to \tilde Y_L(t,x;\tilde\om,m),
\end{align}
weakly in $L^2([0,T]\times\tilde\Om\times [-L,L] ,\bar{\mathcal{T}}_L,
dtd\tilde Pd\pi_L; L^2(\SS_M))$ with some $\tilde Y_L(t,x;\tilde\om,m)$,
where $\bar{\mathcal{T}}_L$ is the completion of the $\si$-field 
of progressively measurable sets on $[0,T]\times\tilde\Om\times [-L,L]$ 
with respect to $dtd\tilde Pd\pi_L$ for every $L\in \N$.
 This combined with \eqref{eq:A-1} shows
\begin{align}  \label{eq:3.26}
\tilde Z^{\e'''}(t,x) \to \tilde Z(t,x),  
\end{align}
weakly in $L^2([0,T]\times\tilde\Om_L, \bar{\mathcal{T}}, 
dtd\tilde P; L^2(\SS_M))$,
for all $L\in \N$ with some $\tilde Z(t,x)$, where $\bar{\mathcal{T}}$ is 
defined similarly to $\bar{\mathcal{T}}_L$.  Furthermore, by definition,
we see that
\begin{equation} \label{eq:A-2}
\tilde Y_L(t,x;\tilde \om,m) = e^m \tilde Z(t,x;\tilde \om)
\end{equation}
in $L^2([0,T],L^2(\SS_M))$ for a.e.\ $m: |m|\le \tfrac{L}2, L\in \N$ 
and $\tilde P$-a.s.\ $\tilde \om \in\tilde 
\Om_L$. 

The identity \eqref{eq:A-2} can be rewritten as
\begin{equation*}
 \tilde Z(t,x;\tilde \om) = e^{-m}\tilde Y_L(t,x;\tilde \om,m),
\end{equation*}
for a.e.\ $m: |m|\le \tfrac{L}2, L\in \N$ and $\tilde P$-a.s.\
$\tilde \om \in\tilde \Om_L$. 

\noindent
{\it Step 3}.$\,$ 
From \eqref{eq:3-Y^e-2-M} considered on $\tilde\Om$ and modulo $2L$
rather than modulo $1$, under a multiplication of $\fa = \fa(x)
\in C^\infty(\SS_M)$ and dropping superscripts $M$ as above, we have that
\begin{align}  \label{eq:3-Y^e-3}
& \lan \tilde Y_L^{\e'''}(t),\fa\ran_{\SS_M}
= \lan \tilde Y_L^{\e'''}(0),\fa\ran_{\SS_M} \\
& \quad + \int_0^t 
   \lan \tilde Y_L^{\e'''}(s), \tfrac12 \partial_x^2\fa 
+ \tfrac1{24}\fa\ran_{\SS_M} ds
 + \int_0^t \int_{\SS_M} \hat A^{\e'''}(x,\tilde Y_L^{\e'''}(s)) \fa(x) dsdx
 \notag \\
& \quad 
+ \int_0^t \int_{\SS_M} \tilde Y_L^{\e'''}(s,x) \fa(x) \tilde W^{\e'''}(dsdx)
+ \int_{\SS_M} \tilde N_L^{\e'''}(t,x) \fa(x) dx, \notag
\end{align}
where $\tilde W^{\e'''} := \tilde W_{\e'''}*\eta^{\e'''}$ and 
$\tilde N_L^{\e'''}$ is defined correspondingly.

By Theorem \ref{thm:3.1-M}, for each fixed $L\in \N$,
the third term in the right hand side 
of \eqref{eq:3-Y^e-3} converges to $0$ strongly in $L^2(\tilde\Om\times [-L,L],
d\tilde Pd\pi_L)$ as $\e (=\e''') \downarrow 0$, if $\text{supp}\,\fa \cap
\text{supp}\,\rho=\emptyset$.
The fourth term in the right hand side of \eqref{eq:3-Y^e-3}
involving stochastic integrals converges in the following sense:

\begin{lem}\label{lem:3.1-ab}
For every $0\le t\le T$, as $\e (=\e''') \downarrow 0$,
\begin{equation} \label{eq:3.stoch-int}
\int_0^t \int_{\SS_M} \tilde Y_L^{\e'''}(s,x)\fa(x) \tilde W^{\e'''}(dsdx)
\rightarrow \int_0^t \int_{\SS_M} \tilde Y_L(s,x)\fa(x) \tilde W(dsdx),
\end{equation}
weakly in $L^2(\tilde \Om\times [-L,L],\tilde{\mathcal F}_t,\tilde P\otimes\pi_L)$, 
where $\tilde{\mathcal F}_t$ is the $\si$-field generated by
$\{\tilde{W}(s); 0\le s \le t\}$.
\end{lem}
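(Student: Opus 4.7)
The plan is to verify weak convergence in $L^2(\tilde\Om\times[-L,L],\tilde{\mathcal F}_t,\tilde P\otimes\pi_L)$ by combining a uniform $L^2$ bound on the prelimit with pairings against a dense family of test functions, using both the weak $L^2$ convergence of $\tilde Y_L^{\e'''}$ to $\tilde Y_L$ from \eqref{eq:3.25} and the Skorohod a.s.\ convergence of $\tilde W_{\e'''}$ to $\tilde W$ in $C([0,T],H^{-\a}(\SS_M))$. First, exploit the symmetry of $\eta^{\e'''}$ and the identity $\tilde W^{\e'''}=\tilde W_{\e'''}*\eta^{\e'''}$ to rewrite the left-hand side of \eqref{eq:3.stoch-int} in convolution-dual form
\begin{equation*}
X_{\e'''}:=\int_0^t\!\int_{\SS_M}\tilde Y_L^{\e'''}(s,x)\fa(x)\,\tilde W^{\e'''}(ds,dx)
=\int_0^t\!\int_{\SS_M}\bigl(\tilde Y_L^{\e'''}(s,\cdot)\fa\bigr)*\eta^{\e'''}(y)\,\tilde W_{\e'''}(ds,dy).
\end{equation*}
It\^o's isometry for $\tilde W_{\e'''}$, Young's inequality, and the moment bound \eqref{eq:3.Y} give $\sup_{0<\e'''<1}\|X_{\e'''}\|_{L^2(\tilde P\otimes\pi_L)}<\infty$, so it suffices to check the convergence $E[X_{\e'''}\Psi]\to E[X\Psi]$ on a dense subset, with $X$ denoting the right-hand side of \eqref{eq:3.stoch-int}.

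By the It\^o chaos representation for the white noise $\tilde W$, I would test against $\Psi=h(m)\bigl(c+\int_0^t\!\int_{\SS_M}\gamma(s,y)\,d\tilde W(s,y)\bigr)$ with $h\in L^\infty([-L,L])$, $c\in\R$, and $\gamma$ smooth deterministic; the factor $h(m)$ and the constant $c$ are routinely absorbed using the product structure of $\tilde P\otimes\pi_L$ and the linearity in the multiplicative factor $e^m$ coming from \eqref{eq:A-1}. The essential pairing is handled by the split
\begin{equation*}
\int\gamma\,d\tilde W=\int\gamma\,d\tilde W_{\e'''}+\int\gamma\,d(\tilde W-\tilde W_{\e'''}).
\end{equation*}
For the same-noise contribution, It\^o's isometry against $\tilde W_{\e'''}$ together with the self-adjointness of $*\eta^{\e'''}$ yields
\begin{equation*}
E\Bigl[X_{\e'''}\int\gamma\,d\tilde W_{\e'''}\Bigr]
=E\!\left[\int_0^t\!\int_{\SS_M}\tilde Y_L^{\e'''}(s,x)\fa(x)\,(\gamma(s,\cdot)*\eta^{\e'''})(x)\,ds\,dx\right].
\end{equation*}
Since $\gamma*\eta^{\e'''}\to\gamma$ strongly in $L^2([0,t]\times\SS_M)$ while $\tilde Y_L^{\e'''}\to\tilde Y_L$ weakly in $L^2$ by \eqref{eq:3.25}, the weak--strong pairing lemma gives convergence to $E\bigl[\int_0^t\!\int_{\SS_M}\tilde Y_L\fa\,\gamma\,ds\,dx\bigr]$, which equals $E[X\int\gamma\,d\tilde W]$ by a second application of It\^o's isometry (now for $\tilde W$).

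For the cross-noise error, I would apply Cauchy--Schwarz with the uniform $L^2$ bound on $X_{\e'''}$ and show $\int\gamma\,d\tilde W_{\e'''}\to\int\gamma\,d\tilde W$ in $L^2(\tilde P)$: integration by parts in time expresses $\int_0^t\!\int_{\SS_M}\gamma(s,y)\,d\tilde W_{\e'''}(s,y)$ as a pairing of $\tilde W_{\e'''}(s)$ against $\gamma(t,\cdot)$ and $\partial_s\gamma(s,\cdot)$, and the Skorohod a.s.\ convergence of $\tilde W_{\e'''}$ in $C([0,T],H^{-\a}(\SS_M))$ combined with smoothness of $\gamma$ yields a.s.\ convergence; since both integrals are centered Gaussians with the fixed variance $\int_0^t\!\int|\gamma|^2$, Gaussian hypercontractivity upgrades this to $L^2(\tilde P)$ convergence, and the error vanishes. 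The main obstacle is precisely the reconciliation of stochastic integrals against the two distinct noises $\tilde W_{\e'''}$ and $\tilde W$ inside a single $L^2(\tilde P)$ pairing on the common Skorohod space; the split above circumvents this by reducing matters to a same-noise It\^o pairing, where the weak/strong duality of \eqref{eq:3.25} applies cleanly, plus a Gaussian remainder killed by Skorohod's a.s.\ convergence of the noises.
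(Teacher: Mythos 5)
Your overall strategy (uniform $L^2$ bound on the prelimit, then pairing against a family of test functionals, splitting the noise into a same-noise part and a cross-noise error) is close in spirit to the paper's, and several pieces are sound: the convolution-dual rewriting, the uniform bound via It\^o isometry, Young's inequality and \eqref{eq:3.Y}, the weak--strong pairing for the same-noise term, and the $L^2$ convergence of $\int\gamma\,d\tilde W_{\e'''}$ to $\int\gamma\,d\tilde W$ (a.s.\ convergence plus uniformly bounded fourth moments already gives this; hypercontractivity is not really needed). However, there is a genuine gap at the heart of the argument: the test class $\Psi=h(m)\bigl(c+\int_0^t\int_{\SS_M}\gamma(s,y)\,d\tilde W\bigr)$ with \emph{deterministic} $\gamma$ spans only $(\mathcal{H}_0\oplus\mathcal{H}_1)\otimes L^2([-L,L])$, i.e.\ constants plus the first Wiener chaos of $\tilde W$. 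This is a proper closed subspace of $L^2(\tilde\Om\times[-L,L],\tilde{\mathcal F}_t,\tilde P\otimes\pi_L)$, so even with the uniform $L^2$ bound you only conclude that every weak limit point of $X_{\e'''}$ agrees with $X$ after projection onto that subspace; the higher-chaos components (which are present, since $\tilde Y_L^{\e'''}$ is itself a nontrivial functional of the noise) are not controlled. Testing against a non-dense family does not yield weak convergence.

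The natural repair --- allowing adapted random integrands $\gamma(s,y,\tilde\om)$, which by the martingale representation theorem do exhaust $L^2(\tilde\Om,\tilde{\mathcal F}_t,\tilde P)\ominus\{\text{const}\}$ --- breaks the step you rely on most: the polarization identity $E[X_{\e'''}\int\gamma\,d\tilde W_{\e'''}]=E[\int(\tilde Y_L^{\e'''}\fa)*\eta^{\e'''}\,\gamma\,ds\,dy]$ requires $\gamma$ to be adapted to the filtration of $\tilde W_{\e'''}$, whereas a $\gamma$ produced by martingale representation is adapted to $\tilde{\mathcal F}_t=\si(\tilde W)$; for two distinct noises on the Skorohod space there is no clean It\^o pairing between their stochastic integrals. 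The paper circumvents exactly this obstacle by a different decomposition: it writes the difference of the two sides of \eqref{eq:3.stoch-int} as $I^{(1,\e''')}+\cdots+I^{(4,\e''')}$, kills the noise-difference term $I^{(1,\e''')}$ in the \emph{strong} $L^2$ norm (using that $\tilde W_{\e'''}-\tilde W$ is a white noise times a constant $c_{\e'''}\to0$), and only then tests the remaining terms --- which are all integrals against the single limit noise $\tilde W$ --- against the dense family $\Phi=\int_0^t\int\fa(s,x,\tilde\om)\,\tilde W(dsdx)$ with bounded continuous \emph{adapted} kernels. You would need to reorganize your split along these lines (isolate the noise discrepancy first, then work entirely with $\tilde W$) for the density argument to close.
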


\begin{proof}
Rewriting as
\begin{align*}
\int_0^t \int_{\SS_M} \tilde Y_L^{\e'''}(s,x)\fa(x) \tilde W^{\e'''}(dsdx)
& =  \int_0^t \int_{\SS_M} 
\left(\tilde Y_L^{\e'''}(s,\cdot)\fa(\cdot)\right)* \eta^{\e'''}(x) \tilde W_{\e'''}(dsdx)\\
& =  \int_0^t \int_{\SS_M^2} \tilde Y_L^{\e'''}(s,y)\fa(y)
\eta^{\e'''}(x-y) \tilde W_{\e'''}(dsdx)dy,
\end{align*}
the difference of the both sides of \eqref{eq:3.stoch-int} is given by
\begin{align*}
&  \int_0^t \int_{\SS_M^2} \tilde Y_L^{\e'''}(s,y) \fa(y)\eta^{\e'''}(x-y)
\{ \tilde W_{\e'''}(dsdx) -  \tilde W(dsdx)\} dy  \\
& \quad + 
\int_0^t \int_{\SS_M^2} \{\tilde Y_L^{\e'''}(s,y)-\tilde Y_L(s,y)\}
\fa(y)\eta^{\e'''}(x-y) \tilde W(dsdx)dy  \\
& \quad + \int_0^t \int_{\SS_M^2} \{\tilde Y_L(s,y)-\tilde Y_L(s,x)\}
\fa(y)\eta^{\e'''}(x-y) \tilde W(dsdx)dy \\
& \quad + \int_0^t \int_{\SS_M} \tilde Y_L(s,x) 
\{ \fa*\eta^{\e'''}(x)-\fa(x)\} \tilde W(dsdx) \\
& \quad =: I^{(1,\e''')} + I^{(2,\e''')} +  I^{(3,\e''')} + I^{(4,\e''')}.
\end{align*}

For the first term $I^{(1,\e''')}$, since both $\tilde W_{\e'''}$ and $\tilde W$ are 
the space-time Gaussian white noises and $\tilde W_{\e'''}$ converges to $\tilde W$
a.s., we see that $\tilde W_{\e'''} - \tilde W$ is also a space-time Gaussian white noise
multiplied by a constant $c_{\e'''}$ converging to $0$ as $\e'''\downarrow 0$.  Thus,
\begin{align*}
\tilde E[\{I^{(1,\e''')}\}^2] 
& = c_{\e'''}^2 \int_0^t \int_{\SS_M} \tilde E \left[ \left(
\int_{\SS_M} \tilde Y_L^{\e'''}(s,y) \fa(y)\eta^{\e'''}(x-y) dy 
\right)^2 \right]ds dx\\
& \le c_{\e'''}^2 \int_0^t \int_{\SS_M} \fa(y)^2 \tilde E [ 
\tilde Y_L^{\e'''}(s,y)^2 ]ds dy,
\end{align*}
which converges to $0$ as $\e'''\downarrow 0$ by
noting \eqref{eq:3.Y}.  For the last term,
since $\|\fa*\eta^{\e'''}-\fa\|_{L^\infty(\SS_M)} \to 0$, we easily see that
$$
\tilde E[\{I^{(4,\e''')}\}^2] \to 0 \quad \text{as } \e'''\downarrow 0.
$$

For other terms, note that functions $\Phi$ of the forms
$$
\Phi = \int_0^t \int_{\SS_M} \fa(s,x,\tilde\om) \tilde W(dsdx)
$$
with bounded and continuous kernels $\fa(s,x,\tilde\om)$ form a dense
family in  $L^2(\tilde\Om,\tilde{\mathcal F}_t,\tilde P)\ominus \{\text{const}\}$. 
Then, we have that
$$
\tilde E[I^{(2,\e''')}\Phi] = \int_0^t ds \int_{\SS_M^2} 
 \tilde E[\{\tilde Y_L^{\e'''}(s,y)-\tilde Y_L(s,y)\} \fa(s,x,\tilde\om)] 
\fa(y)\eta^{\e'''}(x-y) dxdy.
$$
If we can replace $\fa(s,x,\tilde\om)$ with $\fa(s,y,\tilde\om)$, then
$\eta^{\e'''}(x-y)$ disappears under the integration in $x$ and
this expectation converges to $0$ by \eqref{eq:3.25}
and applying Lebesgue's convergence theorem noting 
\eqref{eq:3.Y}.  The replacement of $\fa(s,x,\om)$ with
$\fa(s,y,\om)$ with a small error negligible as $\e'''\downarrow 0$
can be justified by the continuity of $\fa(\cdot,\cdot,\om)$ 
noting \eqref{eq:3.Y}.  

Similarly for $I^{(3,\e''')}$, we have that
$$
\tilde E[I^{(3,\e''')}\Phi] = \int_0^t ds \int_{\SS_M^2} 
 \tilde E[\{\tilde Y_L(s,y)-\tilde Y_L(s,x)\} \fa(s,x,\tilde\om)] \fa(y)\eta^{\e'''}(x-y) dxdy.
$$
First, by the continuity of $\fa(\cdot,\cdot,\tilde\om)$ and noting
\eqref{eq:3.Y}, which implies a corresponding bound on $\tilde Y_L(s,y)$, 
we can replace
this by
\begin{equation}  \label{eq:3.YYY}
\int_0^t ds \int_{\SS_M^2}  \tilde E[\tilde Y_L(s,y)\fa(s,y,\tilde\om)
-\tilde Y_L(s,x) \fa(s,x,\tilde\om)] \fa(y)\eta^{\e'''}(x-y) dxdy
\end{equation}
with a small error negligible as $\e'''\downarrow 0$.  Then, noting the
symmetry of $\eta^{\e'''}$, we can rewrite \eqref{eq:3.YYY} as
$$
\int_0^t ds \int_{\SS_M^2}  \tilde E[\tilde Y_L(s,y)\fa(s,y,\tilde\om)] 
(\fa(y) -\fa(x)) \eta^{\e'''}(x-y) dxdy.
$$
However, since $\tilde E[\tilde Y_L(s,y)\fa(s,y,\tilde\om)] \in L^2([0,t]\times \SS_M)$,
this tends to $0$.  This shows that $E[I^{(3,\e''')}\Phi]$ converges to $0$ as $\e'''\downarrow 0$.
\end{proof}

From Theorem \ref{thm:3.1-M} applied on $\tilde\Om$, \eqref{eq:3.25} and Lemma
\ref{lem:3.1-ab}, the last term in the right hand side of 
\eqref{eq:3-Y^e-2-M} (on $\tilde\Om$ and modulo $2L$, or 
\eqref{eq:3-Y^e-3} integrated with $\fa(x)$)
must converge weakly 
to a certain $\tilde{N}_L(t,x) = \tilde{N}_L(t,x;\tilde\om,m)$ in 
$L^2(\tilde\Om\times [-L,L], \tilde{\mathcal F}_T,\tilde P\otimes\pi_L)$;
c.f.\ Remark \ref{rem:3.5}.  Thus we obtain the equation:
\begin{align}  \label{eq:3-12-a}
\tilde Y_L(t,x) =  \tilde Y_L(0,x) + & \frac12 \int_0^t \partial_x^2 \tilde Y_L(s,x)ds 
+ \frac1{24} \int_0^t \tilde Y_L(s,x) ds \\
 & + \int_0^t \tilde Y_L(s,x) d\tilde W(s,x) + \tilde{N}_L(t,x),  \notag
\end{align}
for a.e.\ $x\in (\text{supp}\, \rho)^c$, and a.e.\ $(t,\tilde\om,m)$ in the limit.  

Recalling the definition of $\tilde\Om_L$, we see that the limit 
$\tilde{N}_L(t,x) = \tilde{N}_L(t,x;\tilde\om,m)$ of the jump part of $\tilde Y_L^{\e'''}$
vanishes on $\tilde\Om_L\times [-\frac{L}2,\frac{L}2]$, i.e., $\tilde N_L(t,x)=0$
for all $t\in [0,T]$ on $\tilde \Om_L\times [-\frac{L}2,\frac{L}2]$.  Moreover, 
since $\tilde P(\cup_{L\in\N}\tilde\Om_L)=1$ holds, from 
\eqref{eq:3-12-a} and \eqref{eq:A-2},
we see that $\tilde Z(t,x;\tilde \om)$ satisfies the SPDE
\eqref{eq:8-b-M} on $(\text{supp}\, \rho)^c$.  However, choosing another $\bar\rho$,
which satisfies the same condition as $\rho$ stated at the beginning of Section 
\ref{sec:3.2}, such that $\text{supp}\, \rho \cap \text{supp}\, \bar\rho =
\emptyset$, we see that $\tilde Z(t,x;\tilde \om)$ satisfies the SPDE
\eqref{eq:8-b-M} on the whole torus $\SS_M$.  Note that $\tilde Z$ is defined
independently of the choice of $\rho$.

Since the solution of \eqref{eq:8-b-M} is unique and continuous in $(t,x)$,
we find that  $\tilde Z(t,x;\tilde \om)$ is continuous in $(t,x)$, $\tilde P$-a.s.
In particular, $\tilde Z(T,x;\tilde \om)$ is consistent with $\tilde h(T,\cdot)$,
which was introduced in Step 1 of this subsection.  Since $\tilde Z^{\e'''}(T)$
converges to $\tilde Z(T)$ $\tilde P$-a.s.\ and the limit is uniquely
characterized by the SPDE \eqref{eq:8-b-M}, we don't need to
take the subsequences.  This concludes the proof of Theorem 
\ref{thm:0}.

\subsection{Proof of Theorem \ref{thm:1.1}}  \label{section:3.6}

We first prove that the solution $Z^M(t,x), x\in \R$ of the SPDE
\eqref{eq:8-b-M} on $\SS_M$ periodically extended to the whole
line $\R$ weakly converges to the solution $Z(t,x), x\in \R$ of the 
SPDE \eqref{eq:8-b}:

\begin{prop}  \label{prop:SPDE-conv}
Assume that $Z(0,\cdot)\in L_r^2(\R), r>0,$ is given and the initial
value of $Z^M$ is determined by $Z^M(0,x) = Z(0,x)$ for $|x|\le M/2$
and periodically extended to $\R$.  Then, we have the followings.\\
{\rm (1)} $\{Z^M(t,x)\}_{M\ge 1}$ is tight on $C([0,\infty),L_r^2(\R))$.\\
{\rm (2)} $Z^M(t,x)$ weakly converges to $Z(t,x)$ on
the space $C([0,\infty),L_r^2(\R))$ as $M\to\infty$.
\end{prop}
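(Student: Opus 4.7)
The plan is to reduce the SPDEs \eqref{eq:8-b-M} and \eqref{eq:8-b} to the linear SHE \eqref{eq:1.1} via the substitution $\tilde Z(t,x) := e^{-t/24} Z(t,x)$ (and analogously $\tilde Z^M$), which eliminates the linear drift $\tfrac1{24}Z$; it therefore suffices to prove the proposition for the SHE itself. I couple the drivers by fixing a single space-time Gaussian white noise $W$ on $[0,\infty)\times\R$ and realizing the noise on $\SS_M$ as its restriction to $[0,\infty)\times[-M/2,M/2)$, periodically extended. Writing $p^M(t,x,y)=\sum_{k\in\Z}p(t,x,y+kM)$ for the heat kernel on $\SS_M$ and extending $Z^M$ periodically to $\R$, the mild formulations
\begin{align*}
Z^M(t,x) &= \int_\R p^M(t,x,y) Z^M(0,y)\,1_{[-M/2,M/2)}(y)\,dy \\
&\qquad + \int_0^t\!\!\int_\R p^M(t-s,x,y) Z^M(s,y)\,1_{[-M/2,M/2)}(y)\, W(ds,dy),
\end{align*}
and the analogous one for $Z$ (with $p$ in place of $p^M 1_{[-M/2,M/2)}$) live on the same probability space, so the comparison $Z^M\leftrightarrow Z$ can be done in probability.

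For (1), I would first establish the uniform moment bound $\sup_{M\ge 1}\sup_{0\le t\le T}E[\|Z^M(t)\|_{L_r^2}^2]<\infty$ by squaring the mild equation, applying It\^o isometry, and using the standard weighted heat-kernel estimate $\int_\R p(t,x,y)^2 e^{2r\chi(y)}dy\le C_{T,r}\,e^{2r\chi(x)}$ (whose analogue for $p^M$ follows by Poisson summation and periodization of the weight $e^{-2r\chi}$ under $\SS_M$-quotienting). This yields a Volterra inequality $f(t)\le C+C\int_0^t(t-s)^{-1/2}f(s)ds$ with $f(t):=\sup_M E[\|Z^M(t)\|_{L_r^2}^2]$, which closes. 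Temporal regularity comes from Kolmogorov-type increment bounds $E[\|Z^M(t)-Z^M(s)\|_{L_r^2}^{p}]\le C|t-s|^{\gamma}$ for suitable $p\ge 2,\gamma>0$, again uniform in $M$. Tightness in $C([0,T],L_r^2(\R))$ for every $T>0$ follows.

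For (2), decompose $Z-Z^M$ into a deterministic contribution (difference of heat-convolved initial data) and a stochastic-convolution contribution. The deterministic part is controlled using $p(t,x,y)-p^M(t,x,y)=-\sum_{k\ne 0}p(t,x,y+kM)$, which is of order $e^{-cM^2/t}$ for $|x|,|y|\ll M$, together with the $L_r^2$-integrability of $Z(0,\cdot)$ to absorb the missing mass on $\{|y|\ge M/2\}$. For the stochastic part, write it as the sum of a kernel-difference term (handled as above) and a self-referential term $\int_0^t\int_\R p^M(t-s,x,y)(Z-Z^M)(s,y)\,1_{[-M/2,M/2)}(y)\,W(ds,dy)$; a Gr\"onwall--Volterra argument on $g_M(t):=E[\|Z(t)-Z^M(t)\|_{L_r^2}^2]$ then yields $g_M(t)\to 0$ as $M\to\infty$ for each $t$. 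Combined with the tightness from (1) and uniqueness of the mild solution of \eqref{eq:8-b}, this gives weak convergence in $C([0,\infty),L_r^2(\R))$.

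The main technical hurdle is that $p^M$ does not decay at spatial infinity on $\R$ (it is periodic), so the weighted estimates must be handled carefully near $|x|\sim M/2$: I would partition estimates into a bulk region $|x|\le M/4$ where $p^M\approx p$ and the periodic tails are exponentially small, and a boundary region $|x|>M/4$ where the weight $e^{-2r\chi(x)}$ already forces the $L_r^2$-contribution to vanish as $M\to\infty$. This is what makes the Volterra estimate close uniformly in $M$ in spite of the lack of spatial decay of $p^M$.
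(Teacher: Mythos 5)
Your part (1) is essentially the paper's argument: the paper also works from the mild form \eqref{eq:Z-mild}, proves the uniform weighted moment bound $\sup_{M}\sup_{t\le T}\int_{\SS_M}e^{-r\chi_M(x)}E[|Z^M(t,x)|^{2p}]dx<\infty$ using the weighted heat-kernel estimate of Lemma \ref{lem:tight-Z}-(2), and then obtains Kolmogorov-type increment bounds in $(t,x)$ from Lemma \ref{lem:tight-Z}-(1). Your part (2), however, takes a genuinely different route. The paper identifies the limit \emph{softly}: it shows that the law $P^M$ of $Z^M$ solves the $(\mathcal{L}_M^Z,\mathcal{D}(\mathcal{C}_M))$-martingale problem, uses the tightness from (1) to extract subsequential limits, checks that any limit point solves the $(\mathcal{L}^Z,\mathcal{D}(\mathcal{C}))$-martingale problem (the localization of test functions $\fa_i\in C_0^\infty(\R)$ makes $\mathcal{L}_M^Z\Phi\to\mathcal{L}^Z\Phi$ automatic), and concludes by uniqueness of that martingale problem. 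You instead couple the noises on a common probability space and run a quantitative Gr\"onwall--Volterra estimate on $g_M(t)=E[\|Z(t)-Z^M(t)\|_{L_r^2}^2]$. Your approach buys a stronger mode of convergence (in $L^2$ of the coupled pair, hence in probability, which implies the asserted weak convergence) and, in principle, a rate; the price is exactly the technical work you flag at the end --- the kernel-difference estimates $p-p^M$, the missing mass on $\{|y|\ge M/2\}$ against initial data that may grow like $e^{r|y|}$, and the bookkeeping near $|x|\sim M/2$ where $Z^M$ is merely periodic and only the weight $e^{-2r\chi}$ saves the $L_r^2$-contribution. The paper's martingale-problem route sidesteps all of this and reuses machinery already set up for Theorem \ref{thm:2}. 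Both arguments are sound; just note that your self-referential Gr\"onwall term must be closed with the weighted isometry bound $\int p^M(t-s,x,y)^2e^{2r\chi_M(y)}dy\le C(t-s)^{-1/2}e^{2r\chi_M(x)}$ so that the $y$-integration over the boundary region $M/4<|y|<M/2$ (where $Z-Z^M$ does not vanish) is absorbed into $g_M(s)$ rather than into the error term.
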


To prove this proposition, we prepare a lemma.  Recall that
the fundamental solution $p^M$ of the parabolic operator 
$\tfrac{\partial}{\partial t} - \tfrac12 \tfrac{\partial^2}{\partial x^2}$ 
on $\SS_M$ is given by
$$
p^M(t,x,y) = \sum_{n=-\infty}^\infty p(t,x,y+nM),
\quad x,y \in \SS_M = [0,M).
$$
We define a function $\chi_M(x), x\in \R$ by $\chi_M(x)=|x|$
for $|x|\le \frac{M}2$ and then by periodically extending it on $\R$.
The following lemma is an extension of Lemma 6.2 of \cite{Shiga}
to $\SS_M$; see also \cite{F91}.

\begin{lem}  \label{lem:tight-Z}
{\rm (1)}  For every $0<\de<1$ and $T>0$, there exists 
$C=C_{\de,T}>0$ such that
$$
\int_0^{t'}\int_{\SS_M} \left( p^M(t'-s,x',y) - 1_{\{s\le t\}}p^M(t-s,x,y)
\right)^2dy \le C(|t-t'|^{1/2}+|x-x'|^{1-\de}),
$$
holds for $0\le t < t'\le T, x,x'\in \SS_M$.  \\
{\rm (2)} For every $r\in \R$ and $T>0$,
$$
\sup_{M\ge 1}\sup_{0\le t \le T} \sup_{x\in \SS_M}
e^{-r\chi_M(x)}\int_{\SS_M} p^M(t,x,y)e^{r\chi_M(y)}dy<\infty.
$$
\end{lem}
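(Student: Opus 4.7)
The plan is to extend the proof of Lemma 6.2 of \cite{Shiga} from $\R$ to $\SS_M$ via the periodization formula $p^M(t,x,y)=\sum_{n\in\Z}p(t,x,y+nM)$. The two structural ingredients I will use throughout are the semigroup plus translation-invariance identity
\[
\int_{\SS_M}p^M(s,x,y)p^M(s',x',y)\,dy=p^M(s+s',0,x-x'),
\]
and the uniform on-diagonal bound $\sup_{M\ge 1,\,0<t\le T}\sqrt{t}\,p^M(t,0,0)<\infty$, which is immediate from the periodization formula since $e^{-n^2M^2/(2t)}\le e^{-n^2/(2T)}$ for $M\ge 1$, $t\le T$.

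For (1), I would split the $s$-integral at $s=t$. The tail piece $\int_t^{t'}\int_{\SS_M}p^M(t'-s,x',y)^2\,dy\,ds$ reduces by the semigroup identity to $\int_0^{t'-t}p^M(2u,0,0)\,du$, which the on-diagonal bound controls by $C\sqrt{t'-t}$. For the piece over $[0,t]$, I would decompose
\[
p^M(t'-s,x',y)-p^M(t-s,x,y)=[p^M(t'-s,x',y)-p^M(t-s,x',y)]+[p^M(t-s,x',y)-p^M(t-s,x,y)]
\]
and apply $(a+b)^2\le 2(a^2+b^2)$. By the semigroup identity, the $y$-integrals of the squared differences become explicit combinations of values of $p^M(\cdot,0,0)$ and $p^M(\cdot,0,x-x')$, and integration in $s$ reduces matters to one-dimensional quantities. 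The $n=0$ term of the periodization is precisely Shiga's $\R$-estimate: the elementary bound $1-e^{-a}\le a^{(1-\de)/2}$ gives the $|x-x'|^{1-\de}$ contribution, and the convexity of $v\mapsto 1/\sqrt{v}$ controls the second time difference to yield the $|t-t'|^{1/2}$ contribution. The $n\ne 0$ corrections are dominated by $e^{-n^2M^2/(4T)}$ times a Gaussian and contribute negligibly, uniformly in $M\ge 1$.

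For (2), torus translation invariance gives, with $z=y-x\pmod M$,
\[
\int_{\SS_M}p^M(t,x,y)e^{r\chi_M(y)}\,dy=\int_{\SS_M}p^M(t,0,z)e^{r\chi_M(x+z)}\,dz.
\]
Since $\chi_M$ is the torus distance to $0$ on $\SS_M$, the triangle inequality yields $|\chi_M(x+z)-\chi_M(x)|\le\chi_M(z)$, so $e^{-r\chi_M(x)+r\chi_M(x+z)}\le e^{|r|\chi_M(z)}$ for any $r\in\R$. Unfolding the periodization via $w=z+nM$ and using $\chi_M(w)\le|w|$ for all $w\in\R$ reduces the right-hand side to $\int_\R p(t,0,w)e^{|r||w|}\,dw\le 2e^{r^2t/2}$, which is uniform in $M\ge 1$ and $t\in[0,T]$.

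The main obstacle is the uniform-in-$M$ bookkeeping for part (1): one must verify that the $n\ne 0$ corrections to the periodization, while exponentially small in $M^2/t$, have temporal second differences and spatial increments that remain uniformly controlled as $t\downarrow 0$. This is routine but tedious and exploits that $e^{-n^2M^2/(4t)}$ dominates any negative power of $t$ uniformly in $M\ge 1$.
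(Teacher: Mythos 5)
Your proposal is correct and takes essentially the same route as the paper: both parts reduce the $y$-integrals via Chapman--Kolmogorov (your semigroup identity) to on- and near-diagonal values of $p^M$ followed by elementary Gaussian estimates, and part (2) is the identical triangle-inequality argument for $\chi_M$ combined with the exponential moment of $|B(t)|$. The only cosmetic difference is in (1), where the paper invokes the uniform bounds $p^M(t,x,y)\le C/\sqrt{t}$ and $|\partial_x p^M(t,x,y)|\le C/t$ on the periodized kernel directly, whereas you compute the $n=0$ Euclidean term explicitly and discard the $n\ne 0$ corrections as exponentially small --- interchangeable steps.
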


\begin{proof}
To show (1), we first expand the square inside the integral in $y$
and apply Chapman-Kolmogorov's equality for the integral of 
products of $p^M$.  Then, (1) is shown by the following easily
shown uniform bounds on $p^M$:
$$
0<p^M(t,x,y)\le \frac{C}{\sqrt{t}}, \quad
\left|\frac{\partial p^M}{\partial x}(t,x,y)\right| \le \frac{C}t,
$$
for $0<t\le T$ with $C=C_T>0$ which is independent of $M$.
For (2), we note that
$$
\chi_M(x)-|a|\le \chi_M(x+a)\le \chi_M(x)+|a|, \quad x, a \in \R,
$$
and therefore
$$
\int_{\SS_M} p^M(t,x,y)e^{r\chi_M(y)}dy
= E\left[ e^{r\chi_M(x+B(t))}\right]
\le e^{r\chi_M(x)}E\left[ e^{|rB(t)|}\right] \le Ce^{r\chi_M(x)},
$$
where $B(t)$ is a Brownian motion starting at $0$.
\end{proof}

\begin{proof}[Proof of Proposition \ref{prop:SPDE-conv}]
For (1), we follow the proof of Theorem 2.2 in \cite{Shiga}.
The SPDE \eqref{eq:8-b-M} can be rewritten into the mild form:
\begin{align}  \label{eq:Z-mild}
Z^M(t,x) = & \int_{\SS_M} Z^M(0,y) p^M(t,x,y)dy  \\
& + \int_0^t ds  \int_{\SS_M}  e^{\frac1{24}(t-s)}p^M(t-s,x,y)
Z^M(s,y) W(dsdy).   \notag
\end{align}
We first show that
$$
\sup_{M\ge 1}\sup_{0\le t \le T} \int_{\SS_M} e^{-r\chi_M(x)}
E[|Z^M(t,x)|^{2p}] dx<\infty,
$$
for $p\ge 1$ and $T>0$ by using Lemma \ref{lem:tight-Z}-(2).
Then, by Lemma \ref{lem:tight-Z}-(1), we have the bound:
$$
E[|X^M(t,x)- X^M(t',x')|^{2p}]
\le C\{|t-t'|^p+|x-x'|^{2p(1-\de)}\},  
$$
for every $0\le t<t'\le T, \; x, x'\in \SS_M: |x-x'|\le 1$,
where $X^M(t,x)$ is the second term in the right hand side
of \eqref{eq:Z-mild}.  Since the first term is easily treated, this
proves the tightness.

To show (2), we rely on the martingale formulation as in the
proof of Theorem \ref{thm:2}.  We consider the operator
$$
\mathcal{L}^Z\Phi(Z) =\int_\R \left(  \frac12 Z^2(x)
D^2\Phi(x,x;Z) +\Big(\frac12 \partial_x^2Z(x) +\frac1{24} Z(x)\Big)
D\Phi(x;Z) \right) dx,
$$
for $\Phi\in \mathcal{D}(\mathcal{C})$ and 
$\mathcal{L}_M^Z\Phi$ for $\Phi\in \mathcal{D}(\mathcal{C}_M)$
by replacing the integral over $\R$ by $\SS_M$.  Then, the distribution
$P^M$ of  $Z^M(\cdot,\cdot)$ is a solution of 
$(\mathcal{L}_M^Z, \mathcal{D}(\mathcal{C}_M))$-martingale 
problem and $\{P^M\}$ is tight.  Then, it is easy to see that
any weak limit of $P^M$ as $M\to\infty$ is a solution of
$(\mathcal{L}^Z, \mathcal{D}(\mathcal{C}))$-martingale 
problem.  Since the limit is unique, this concludes the proof of (2).

\end{proof}

Since $\nu^M$ (periodically extended on $\R$)
weakly converges to $\nu$ as $M\to\infty$ on $L_r^2(\R), r>1$
(note $E^\nu[e^{2B(x)}] = e^{2|x|}, x\in \R$), this 
proposition shows that $\nu$ is invariant for the tilt variable of the
logarithm of the solution $Z(t,\cdot)$ of the SPDE \eqref{eq:8-b}.

Let $\bar Z(t)$ be the solution of  the SPDE \eqref{eq:8-b} and set
$$
Z(t) := e^{-\tfrac1{24}t} \bar Z(t).
$$
Then,  one can easily see that $Z(t)$ is a solution of \eqref{eq:1.1}
and $Z(t) \sim \bar Z(t)$ by the definition.
Since the tilt variable of $Z^\e(t)$ has $\tilde\mu^\e$ as 
invariant probability measure, we see in the limit the tilt variable
of $\bar Z(t)$ has $\tilde\mu$ as invariant probability measure. 
Since $Z(t) \sim \bar Z(t)$, this concludes that the tilt variable of 
$Z(t)$ also has $\tilde\mu$ as invariant probability measure.
In other words, we obtain the invariance of the
distribution of the geometric Brownian motion for the tilt process
determined by the stochastic heat equation \eqref{eq:1.1}:

\begin{prop} \label{prop:3.7}
For any bounded and continuous function $G=G(Z)$ on 
$\tilde{\mathcal{C}}_+$ and for any $\e>0$, $t\ge 0$, we have that
\begin{equation*}
\int_{\tilde{\mathcal{C}}_+} G(Z(t))d\tilde\mu
= \int_{\tilde{\mathcal{C}}_+} G(Z(0))d\tilde\mu,
\end{equation*}
where $\tilde\mu$ is a probability distribution on $\tilde{\mathcal{C}}_+$ of 
$Z(\cdot)=e^{B(\cdot)}$ with $B(\cdot)\in \tilde{\mathcal{C}}$ distributed under $\nu$.
\end{prop}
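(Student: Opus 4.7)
The plan is to deduce invariance of $\tilde\mu$ for \eqref{eq:1.1} by first establishing invariance of $\tilde\mu$ for the perturbed SPDE \eqref{eq:8-b}, then using the identity $Z(t) = e^{-t/24}\bar Z(t)$ between solutions of \eqref{eq:1.1} and \eqref{eq:8-b}. The key observation is that the deterministic factor $e^{-t/24}$ is absorbed by the equivalence relation defining $\tilde{\mathcal{C}}_+$, so invariance at the quotient level passes from $\bar Z$ to $Z$ automatically. In particular, for any bounded continuous $G$ on $\tilde{\mathcal{C}}_+$ one has $G(Z(t))=G(\bar Z(t))$, so it is sufficient to prove $\int E[G(\bar Z(t))]\,d\tilde\mu = \int G(Z)\,d\tilde\mu$.

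The first step is to combine Theorem \ref{thm:2}-(1) with Theorem \ref{thm:0}: the former gives invariance of $\nu^{\e,M}$ for the tilt process of \eqref{eq:b-4.1-torus}, which at the Cole-Hopf level means that the tilt of $Z^{\e,M}(t,\cdot)$ is $\nu^{\e,M}$-distributed for every $t\ge 0$; letting $\e\downarrow 0$ via Theorem \ref{thm:0} then transports this to the statement that the tilt of $Z^M(t,\cdot)$, the solution of \eqref{eq:8-b-M}, is $\nu^M$-distributed at every time. Next I would let $M\to\infty$, using Proposition \ref{prop:SPDE-conv} to obtain $Z^M(t,\cdot)\Rightarrow \bar Z(t,\cdot)$ in $C([0,\infty),L_r^2(\R))$, together with the weak convergence $\nu^M\Rightarrow \nu$ on $L_r^2(\R)$ for $r>1$ (which is standard for the pinned Brownian motions $B^M$, and is consistent with $E^\nu[e^{2B(x)}]=e^{2|x|}$). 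Combining these, the tilt of $\bar Z(t,\cdot)$ is $\nu$-distributed for every $t\ge 0$, i.e.\ the law of $\bar Z(t)$ on $\tilde{\mathcal{C}}_+$ equals $\tilde\mu$, and the conclusion of the proposition follows by the transfer described above.

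The main technical obstacle will be to splice Theorem \ref{thm:0} (stated at fixed $M$ and comparing laws at a single time) with Proposition \ref{prop:SPDE-conv} (stated for essentially deterministic initial data) into a consistent double limit. Concretely, one needs to verify that starting $Z^M$ from the pushforward of $\nu^M$ under $B\mapsto e^B$ is compatible with the weak convergence $Z^M(t,\cdot)\Rightarrow \bar Z(t,\cdot)$, and that this convergence descends cleanly to the quotient space $\tilde{\mathcal{C}}_+$ without losing information about the tilt distribution. This reduces to exponential moment estimates for $e^{B^M(\cdot)}$ and $e^{B(\cdot)}$ in weighted $L^2$-norms, of the same flavor as those used in the proof of Theorem \ref{thm:0} and Proposition \ref{prop:SPDE-conv}, and no fundamentally new analytic estimate is needed once the bookkeeping is carried out.
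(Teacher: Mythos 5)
Your proposal follows essentially the same route as the paper: invariance of the tilt law $\nu^{\e,M}$ for the approximating equation (Theorem \ref{thm:2}-(1)) is transported through Theorem \ref{thm:0} to give that the tilt of $Z^M(t,\cdot)$ is $\nu^M$-distributed for all $t$, then $M\to\infty$ via Proposition \ref{prop:SPDE-conv} together with $\nu^M\Rightarrow\nu$ yields invariance of $\tilde\mu$ for $\bar Z(t)$ solving \eqref{eq:8-b}, and finally the relation $Z(t)=e^{-t/24}\bar Z(t)$ with $Z(t)\sim\bar Z(t)$ transfers this to \eqref{eq:1.1}. This is exactly the paper's argument, including the observation that the drift factor is absorbed by the equivalence relation on $\tilde{\mathcal{C}}_+$.
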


In order to deduce Theorem \ref{thm:1.1} from Proposition
\ref{prop:3.7}, one needs to now recover the height at the origin $h(t,0)$, which is defined
by $h(t,x) = \log Z(t,x)$.  However, since this does not really
work as in Section \ref{section:2.7},
we consider its smooth approximation.
Let $\eta^\e$ be the function introduced previously, and define a function
$h^\e(x,Z) := \log (Z*\eta^\e(x))$ of $x\in \R$ and $Z\in \mathcal{C}_+$.
Let $Z(t)$ be the solution of the SPDE \eqref{eq:1.1}.  Then, by It\^o's 
formula, the approximated height $h^\e(t,x) := h^\e(x,Z(t))$ satisfies
\begin{equation}\label{smoothed1}
h^\e(t,x) = h^\e(0,x) + \int_0^t b^\e(x,Z(s))ds + \int_0^t \int_\R
\si^\e(x,y,Z(s)) W(dsdy),
\end{equation}
where
\begin{align*}
& b^\e(x,Z) = \frac12 \left\{ \partial_x^2 h^\e(x,Z)
+ (\partial_x h^\e(x,Z))^2
- \frac{(Z^2*(\eta^\e)^2)(x)}{(Z*\eta^\e(x))^2} \right\}, \\
& \si^\e(x,y,Z) = \frac{\eta^\e(x-y)Z(y)}{Z*\eta^\e(x)}.
\end{align*}
The key point is that, as functions of $Z$, both $b^\e$ and $\si^\e$ are
defined on the quotient space $\tilde{\mathcal{C}}_+$.  Therefore, once 
$Z(t)\in\tilde{\mathcal{C}}_+$ is determined by solving the SPDE 
\eqref{eq:1.1}, we can recover its height as
\begin{equation}\label{smoothed2-b}
h^\e(t,0) = h^\e(0,0) + X_t,
\end{equation}
where  $X_t$ is the sum of the second and third terms in the right hand side
of \eqref{smoothed1} with $x=0$, which depends only on $Z(\cdot)$.

\begin{lem}
For any bounded, integrable and continuous function $G=G(h,Z)$ on 
$\mathcal{C}_+ \equiv \R\times\tilde{\mathcal{C}}_+$ and for any $\e>0$, $t\ge 0$, we have that
\begin{equation}\label{smoothed3-b}
\int_{\R\times\tilde{\mathcal{C}}_+} G(h^\e(t,0),Z(t))dh_0d\tilde\mu
= \int_{\R\times\tilde{\mathcal{C}}_+} G(h^\e(0,0),Z(0))dh_0d\tilde\mu.
\end{equation}
\end{lem}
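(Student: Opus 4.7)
The plan is to mimic the proof of Theorem \ref{thm:2.7}: isolate the $h_0$-dependence of the height process at $x=0$ into an additive term, integrate it out by translation invariance of Lebesgue measure, and then invoke Proposition \ref{prop:3.7} for the remaining tilt integral. The deterministic identity \eqref{smoothed2-b} is already exactly in the form needed, since $b^\e(x,Z)$ and $\sigma^\e(x,y,Z)$ are manifestly invariant under the equivalence relation $Z\sim cZ$, so $X_t$ is a functional of the tilt trajectory $\{\tilde Z(s)\}_{0\le s\le t}$ alone.

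Concretely, writing $Z(0) = e^{h_0}\tilde Z$ with $\tilde Z$ distributed under $\tilde\mu$, I would first observe that $h^\e(0,0) = h_0 + f(\tilde Z)$ with $f(\tilde Z) := \log(\tilde Z\ast \eta^\e(0))$, and that the tilt class of $Z(t)$ depends only on $\tilde Z$ and the noise (by linearity of \eqref{eq:1.1}, $Z(t) = e^{h_0}Z'(t)$ with $Z'$ solving the SHE from $\tilde Z$). Applying Fubini (justified by the hypothesis that $G$ is bounded and integrable, so that $\tilde Z\mapsto \int_\R G(h_0,\tilde Z)\,dh_0$ is a bounded continuous function on $\tilde{\mathcal{C}}_+$), the LHS of \eqref{smoothed3-b} becomes
\begin{equation*}
\int_{\tilde{\mathcal{C}}_+} d\tilde\mu(\tilde Z)\, E_{\tilde Z}\!\left[\int_\R G\bigl(h_0 + f(\tilde Z) + X_t,\, Z(t)\bigr)\,dh_0\right].
\end{equation*}
The translation $h_0 \mapsto h_0 - f(\tilde Z) - X_t$ (valid pointwise in $\omega$ and $\tilde Z$) collapses this to $\int_{\tilde{\mathcal{C}}_+} d\tilde\mu\, E_{\tilde Z}\bigl[\int_\R G(h_0,Z(t))\,dh_0\bigr]$, and Proposition \ref{prop:3.7} applied to the bounded continuous tilt functional $\tilde Z' \mapsto \int_\R G(h_0,\tilde Z')\,dh_0$ yields $\int_\R dh_0 \int_{\tilde{\mathcal{C}}_+} G(h_0,\tilde Z)\,d\tilde\mu$. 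Applying the same translation trick at $t=0$ (no invariance needed) rewrites the RHS in the same form, giving equality.

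The only point requiring care is the Fubini/translation step, since $dh_0$ is an infinite measure: the integrability hypothesis on $G$ ensures that $\int_\R |G(h_0,\tilde Z')|\,dh_0$ is a bounded function of $\tilde Z'$, so all interchanges and substitutions are legitimate, and the application of Proposition \ref{prop:3.7} is valid. I do not expect a serious obstacle beyond this bookkeeping, because Proposition \ref{prop:3.7} has already done the hard analytic work (the limits of Sections \ref{section:3.4}--\ref{section:3.6} and the replacement theorem), and \eqref{smoothed2-b} supplies the required decomposition of the height in a form that is linear in $h_0$.
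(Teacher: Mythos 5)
Your proposal is correct and follows essentially the same route as the paper: use the decomposition \eqref{smoothed2-b} to isolate the additive, tilt-measurable increment $X_t$, integrate it out by translation invariance of $dh_0$ (justified by the boundedness and integrability of $G$), and then apply Proposition \ref{prop:3.7} to the remaining tilt integral. The only cosmetic difference is that you normalize away the full offset $f(\tilde Z)+X_t$ on both sides, whereas the paper shifts only by $X_t$ and keeps $h^\e(0,0)$ as the integration variable.
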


\begin{proof}
From \eqref{smoothed2-b} and the translation-invariance of the Lebesgue measure,
the left hand side of \eqref{smoothed3-b} is equal to
$$
\int_{\R\times\tilde{\mathcal{C}}_+} G(h^\e(0,0)+X_t,Z(t))dh_0d\tilde\mu
= \int_{\R\times\tilde{\mathcal{C}}_+} G(h^\e(0,0),Z(t))dh_0d\tilde\mu,
$$
by performing the integral in $dh_0$ first.
But, this is equal to the right hand side of \eqref{smoothed3-b}
by the invariance of $\mu$ under $Z(t)\in\tilde{\mathcal{C}}_+$;
see Proposition \ref{prop:3.7}.
\end{proof}

Letting $\e$ tend to zero in \eqref{smoothed3-b}, we obtain
\eqref{smoothed3-b} also for 
$(h(t,0),Z(t))$.  This implies the invariance of the product measure
$dh_0d\tilde\mu$ for this joint process with the state space
$\R\times\tilde{\mathcal{C}}_+ $.
However, since the image measure of $dh_0d\tilde\mu$ under the
map $(h_0,Z) \in \R\times\tilde{\mathcal{C}}_+ \mapsto e^{h_0}Z \in
\mathcal{C}_+$ is nothing but $\mu$, the proof of Theorem \ref{thm:1.1} is completed.

\vskip 5mm
\noindent
{\bf Acknowledgement.} $\,$ 
The authors thank Makiko Sasada for pointing out
a simple proof of \eqref{eq:1/3}.

\end{document}